\newcommand{\mcm}[3]{\newcommand{#1}[#2]{{\ensuremath{#3}}}} 
\mcm{\tuple}{1}{\langle #1 \rangle}
\mcm{\name}{1}{\ulcorner #1 \urcorner}
\mcm{\Nbb}{0}{\mathbb{N}}
\mcm{\Zbb}{0}{\mathbb{Z}}
\mcm{\Rbb}{0}{\mathbb{R}}
\mcm{\Cbb}{0}{\mathbb{C}}
\mcm{\Qbb}{0}{\mathbb{Q}}
\mcm{\Bcal}{0}{\cal B}
\mcm{\Ccal}{0}{\cal C}
\mcm{\Dcal}{0}{\cal D}
\mcm{\Ecal}{0}{\cal E}
\mcm{\Fcal}{0}{\cal F}
\mcm{\Gcal}{0}{\cal G}
\mcm{\Hcal}{0}{\cal H}
\mcm{\Ical}{0}{\cal I}
\mcm{\Lcal}{0}{\cal L}
\mcm{\Mcal}{0}{\cal M}
\mcm{\Ncal}{0}{\cal N}
\mcm{\Ocal}{0}{{\cal O}}
\mcm{\Pcal}{0}{{\cal P}}
\mcm{\Scal}{0}{{\cal S}}
\mcm{\Tcal}{0}{{\cal T}}
\mcm{\Ucal}{0}{{\cal U}}
\mcm{\Vcal}{0}{{\cal V}}
\mcm{\Xcal}{0}{{\cal X}}
\mcm{\Ycal}{0}{{\cal Y}}
\mcm{\Mfrak}{0}{\mathfrak M}
\mcm{\restric}{0}{\upharpoonright}
\mcm{\upset}{0}{\uparrow}
\mcm{\onto}{0}{\twoheadrightarrow}
\mcm{\smallNbb}{0}{{\small \mathbb{N}}}
\DeclareMathOperator{\preop}{op}
\mcm{\op}{0}{^{\preop}}
\newcommand{\itum}{\item[$\bullet$]}
\newcommand{\se}{\subseteq}
\newcommand{\theoremize}[2]{\newaliascnt{#1}{thm} \newtheorem{#1}[#1]{#2} \aliascntresetthe{#1}}
\theoremstyle{plain}
\newtheorem{thm}{Theorem}[section]
\theoremstyle{definition}
\theoremstyle{plain}
\title{\scshape Infinite graphic matroids \newline
 Part I}
\author{Nathan Bowler \and Johannes Carmesin \and Robin Christian}
\newcommand{\sm}{\setminus}
\DeclareMathOperator{\im}{Im}
\DeclareMathOperator{\Br}{Br}
\begin{document}
 
\maketitle
\begin{abstract}

An infinite matroid is \emph{graphic} if all of its finite minors are graphic
and the intersection of any circuit with any cocircuit is finite.
We show that a matroid is graphic if and only if
it can be represented by a graph-like topological space:
that is, a graph-like space in the sense of Thomassen and Vella. 
This extends Tutte's characterization of finite graphic matroids.

The representation we construct has many pleasant topological properties.
Working in the representing space, we prove that any circuit in a $3$-connected graphic matroid is countable.
\end{abstract}

\section{Introduction}

There is a rich theory describing and employing the relationship between finite graphic matroids and finite graphs. In this work, we attempt to extend this theory to infinite matroids \cite{matroid_axioms}. Instead of beginning with a class of infinite graphs, or compactifications thereof, and investigating the matroids that can be obtained from them (as in, for example, \cite{RD:HB:graphmatroids}), we start by specifying the class of matroids that we are interested in: A matroid is \emph{graphic} if, firstly, all its finite minors are graphic, and secondly, it is 
 \emph{tame} - that is, every circuit-cocircuit intersection is finite. 
We will explain the restriction to tame matroids below.
Very roughly, we shall show that every graphic matroid can be represented by a topological space that looks like a graph.

Before going into more detail, we will discuss some advantages of this approach. 

\begin{enumerate}
\item From a matroidal perspective, the class of graphic matroids
is very structured, and thus one might expect there to be nontrivial restrictions on the members of this class. This turns out to be true: for example, we prove that any
$3$-connected graphic matroid has only countable circuits. 
In order to prove this, we found it necessary to work in the topological space representing the matroid.
 
\item Many theorems about infinite graphs, for example certain characterisations of planarity, have matroidal proofs \cite{RD:HB:graphmatroids}. In order to understand when these matroidal techniques are available in infinite graphs, it seems unavoidable to study the whole class of objects to which these proofs apply, namely graphic matroids.


\item Motivated by the fact that many theorems of finite graph theory are true for the topological cycle matroid, people have tried to extend them even further and have started investigating to which topological spaces these theorems might extend. 
As well as particular motivating examples~\cite{DP:dualtrees}, several different candidate classes of topological space have been suggested: graph-like spaces and hereditarily locally connected metric spaces
by Thomassen and Vella \cite{ThomassenVellaContinua}, and
edge spaces by Vella and Richter  \cite{Vella_Richter}. Graph-like continua have been  studied by Christian \cite{christian_phd} and also by Christian, Richter and Rooney \cite{Christian_Richter_Rooney}. This is far from a complete list.

We believe that an important consideration to keep in mind when choosing the appropriate general notion is how well the topological setup interacts with the theory of infinite matroids. This is because many of the graph-theoretic results being generalized can be fruitfully seen, for finite graphs, from a matroidal point of view. In a context where graph-theoretic, matroidal and topological notions can be mutually translated, the techniques from these fields can be powerfully combined. Because our results below suggest it has this feature, we propose the class of objects called ``graph-like spaces inducing  matroids'' below as a good candidate.

 \end{enumerate}


There are at least two reasons for restricting our attention to tame matroids.
From a topological point of view, we want that the circuits should correspond to subspaces which behave like topological circles, and in particular which are compact. So we want circuit-cocircuit intersections to correspond to topological cuts of compact spaces, which are necessarily finite as shown in \cite{BC:determinacy}.

From a matroidal point of view, we want that graphic matroids are representable over every field. 
For this to make sense, we need an appropriate notion of representability for infinite matroids. One strong candidate for such a notion, due to Bruhn and Diestel,  is \emph{thin-sums representability}  \cite{RD:HB:graphmatroids}. Afzali and Bowler have shown~\cite{THINSUMS} that the class of matroids representable in this sense over a given field is well behaved, being closed under duality and taking minors, provided that we restrict our attention to tame matroids. On the other hand, Bowler and Carmesin have shown~\cite{BC:wild_matroids} that if we also consider wild matroids (those which are not tame) then the class of thin-sums representable matroids is not so well behaved. This suggests that we should include tameness in the definition of representability for infinite matroids.

Another consideration supporting this is that, as Bowler, Carmesin and Postle have shown~\cite{BCP:quircuits}, basic representability results such as the equivalence of many standard characterisations of binary matroids fail for some of the simplest examples of wild matroids. Once more, if we restrict our attention to tame matroids then all is well, as Bowler and Carmesin have shown \cite{BC:rep_matroids+}. In the same paper, Bowler and Carmesin made an investigation like that in this paper: they considered the infinite matroids all of whose finite minors are $k$-representable for some finite field $k$. Once again, an essential first step was to restrict attention to tame matroids: once this is done, the class of matroids all of whose finite minors are representable over $k$ is exactly the class of matroids which are thin-sums representable over $k$.

Tameness also plays a fundamental part in Dress's approach to representability of infinite matroids \cite{dress}. Since we want that graphic matroids are thin-sums representable over every field, we require that they are tame.
We don't seem to lose any important scope by restricting our attention to tame matroids: all of the motivating examples are tame, including all matroids that arise from (compactifications of) infinite graphs.

The simplest examples of graphic matroids are the finitary ones: these are precisely the finite cycle matroids of infinite graphs. 
Further examples are given by those matroids whose circuits are (edge sets of) topological circuits of  compactifications of infinite graphs, such as those studied in \cite{RD:HB:graphmatroids}. A similar class of examples is given by matroids whose circuits are topological circles in spaces like the subsets of the plane indicated by the pictures in \autoref{fig:sierpinski_and_ladder}.

\begin{figure}[htbp]
	\centering
	\subfloat[\label{fig:ladder}]
{\includegraphics[width=7cm]{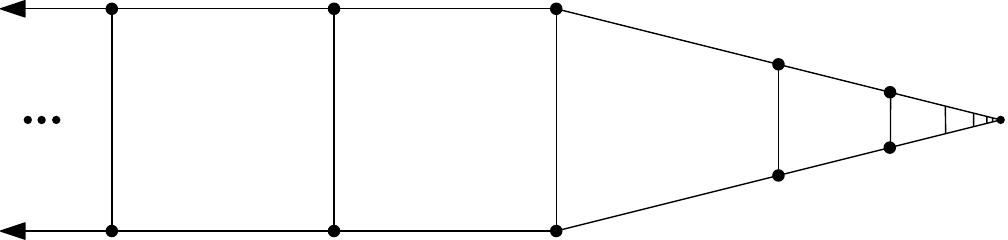}}
	\hspace{1cm}
	\subfloat[\label{fig:sierpinski}]{\includegraphics[width=3cm]{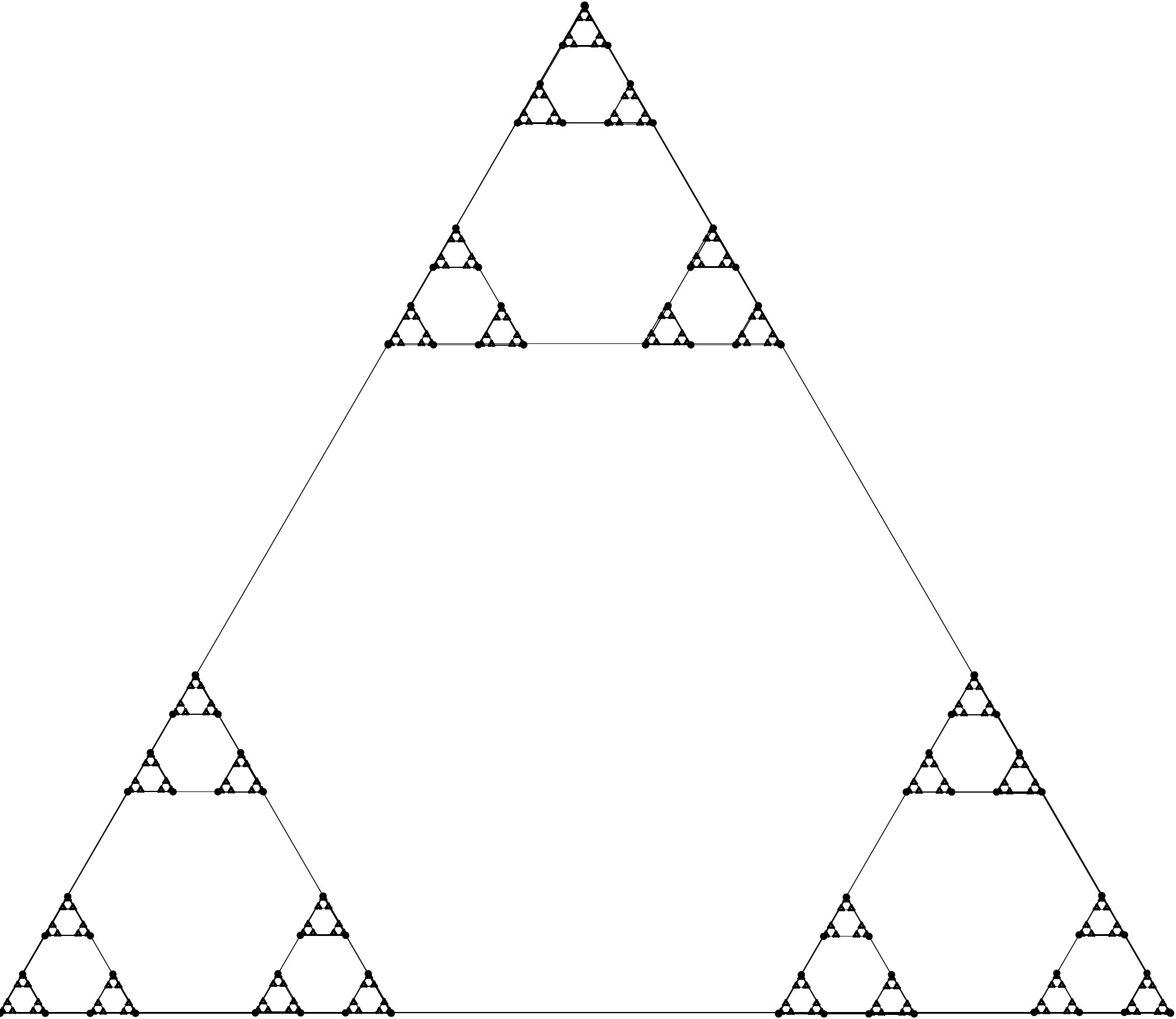}}
	\caption{Subspaces of the plane inducing matroids}
	\label{fig:sierpinski_and_ladder}
\end{figure}
Another rich collection of examples of graphic matroids is constructed in \cite{BC:psi_matroids}.

Our goal is to obtain a characterization similar to those from \cite{BC:rep_matroids}, saying that a matroid has a particular sort of graph-like representation if and only if it is graphic. 
Therefore, we need to find a single unified notion which allows us to represent all graphic
 matroids. We will use graph-like spaces, which were introduced by Thomassen and Vella \cite{ThomassenVellaContinua}. Briefly, for us a graph-like space is a topological space whose points are partitioned into vertices and interior points of edges, and a collection of continuous maps (representing edges) from [0, 1] into that space, sending 0 and 1 to vertices and the other points to interior points of edges, with certain good topological properties. It should be mentioned that although our definition is heavily motivated by that of Thomassen and Vella, it is not quite equivalent.


Before looking at the correspondence between graph-like spaces and matroids, we need to define the circuits and bonds of a graph-like space. When studying cycles of topological spaces the usual approach (see, for example, \cite{RDsBanffSurvey}) has been to consider homeomorphic images of the unit circle. However, this would leave us unable to represent the graphic matroid consisting of a single uncountable circuit. Therefore we use a different, more technical, definition that does allow us to have uncountable circuits. However, it turns out that this is mostly harmless due to the following fact.

\begin{thm}\label{cir_count_thm}
Any circuit in a $3$-connected graphic matroid is countable.
\end{thm}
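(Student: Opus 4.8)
The plan is to work entirely inside the graph-like space $G$ representing $M$, which exists by the main representation theorem of this paper. There the circuit $C$ becomes a compact, topological-circle-like subspace $\hat C \subseteq G$, and the distinct edges of $C$ appear as pairwise disjoint nonempty open arcs (their interiors) inside $\hat C$. The first observation is that this recasts the statement as a chain-condition: in any separable space a family of pairwise disjoint nonempty open sets is countable (assign to each set a point of a fixed countable dense set). Hence it would suffice to show that $\hat C$ is separable; equivalently --- and this is how I would proceed --- I assume for contradiction that $|C|$ is uncountable, i.e.\ that $\hat C$ carries uncountably many pairwise disjoint edge-arcs, and derive a contradiction from $3$-connectivity together with tameness.

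Assuming $C$ uncountable, I would first use the cyclic order on $\hat C$ and a Cantor--Bendixson style argument to extract a \emph{two-sided condensation point} $p$: a point of $\hat C$ such that, in the cyclic order, uncountably many edges of $C$ accumulate on each side of $p$. All but countably many points of $\hat C$ enjoy this property, so such a $p$ certainly exists, and one may take it to be a vertex (or replace it by a nearby vertex). Intuitively $p$ is exactly the place where the ``long-line behaviour'' forced by an uncountable circle must occur, and the goal is to show that $3$-connectivity cannot tolerate it.

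The heart of the proof --- and the step I expect to be the main obstacle --- is to turn $3$-connectivity into enough ``chords'' near $p$ and then to collide these with tameness. Concretely, for every sub-arc $A$ of $\hat C$ with $p$ in its interior, the two endpoints of $A$ form a two-element vertex set; since $M$ is $3$-connected, $G$ cannot be separated by two vertices (a topological counterpart of the absence of $2$-separations), so there must be a chord, a path through $G \sm \hat C$, joining the interior of $A$ to the complementary arc. Shrinking $A$ toward $p$ produces chords whose near endpoints approach $p$, and I would combine compactness of $\hat C$ with the finiteness of every circuit--cocircuit intersection to show that these chords cannot all be absorbed: either they assemble, via the binary (indeed graphic) structure, into a single cocircuit meeting $\hat C$ in infinitely many edges, contradicting tameness; or else some two vertices near $p$ do after all cut an uncountable arc off from the rest, exhibiting a $2$-separation of $M$ and contradicting $3$-connectivity. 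The delicate points are handling the genuinely non-metrizable local structure at $p$ (where no countable sequence need reach $p$ from one side) and making the passage from ``chords accumulating at $p$'' to ``a forbidden infinite cocircuit or a genuine $2$-separation'' precise; this is precisely where the pleasant topological properties of the representing space established earlier should be brought to bear.
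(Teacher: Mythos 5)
There is a genuine gap, and it occurs at the very first step of your plan. Your Cantor--Bendixson claim --- that all but countably many points of $\hat C$ are \emph{two-sided} condensation points --- is false in this non-metrizable setting, and in fact such a point need not exist at all. Take the pseudo-line $L(\omega_1)$ of \autoref{lin_are_pline} and close it into a pseudo-circle by adding a single edge (\autoref{arc_cir}): this ``long circle'' has uncountably many edges, yet every point has a neighbourhood on at least one side containing only countably many edges; uncountably many edges accumulate only at the single closing vertex, and only from one side. (In $\omega_1$ every element has countably many predecessors, so no interior point is even a one-sided condensation point.) Compactness does guarantee at least one \emph{one-sided} condensation point, but your argument needs two-sidedness to produce chords straddling $p$ from shrinking arcs, and the hard case of the theorem is precisely this long-line behaviour in which uncountability never concentrates two-sidedly anywhere. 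This is why the paper's proof is global rather than local: it builds a \emph{partition tree} over the whole pseudo-circle instead of analysing a single accumulation point. (Your opening reduction to separability of $\hat C$ also buys nothing: by \autoref{count_circle} a countable pseudo-circle is a copy of $S^1$, so separability of $\hat C$ is essentially equivalent to the conclusion.)

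Even granting a suitable condensation point, the ``heart'' of your proposal is a plan rather than an argument, and the mechanisms you name are not quite the right ones. In a graph-like space inducing a matroid, finiteness of circuit--cocircuit intersections is automatic (\autoref{gls_is_tame}), so your first horn --- a cocircuit meeting $\hat C$ infinitely, ``contradicting tameness'' --- can never arise; the paper's actual contradiction is to the space inducing a matroid at all, via the forbidden substructure of \autoref{cir_eli_single} (resting on \autoref{cir_eli_3} and \autoref{lem_cir_eli_3_half_way}: infinitely many disjoint chords between two arcs yield an independent set whose closure is connected yet which would need a cocircuit meeting it exactly once). The paper's route is: contract a base $s$ of $M/o$ so that $o$ becomes a spanning circuit of $M/s$ and every chord becomes a single edge (a \emph{bridge}); use $3$-connectedness to get, for any two edges of $C$, a bridge separating them (\autoref{sepbridge}) --- this is the precise form of your ``chords from $3$-connectivity''; show every bridge is eventually captured by the tree via a $2$-separation contradiction (\autoref{every_bridge}); bound the branching and the ``good children'' using \autoref{cir_eli_single} (\autoref{few_sons}, \autoref{good_son}, \autoref{all_but_fin_good}); and finally inject the edge set of $C$ into the countable node set of the tree. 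Your sketch contains the right raw ingredients --- chords forced by $3$-connectivity, and a dichotomy between an impossible cocircuit-like structure and a $2$-separation, both of which do appear in the paper --- but it supplies none of the steps that make them work: the contraction that turns chords into single edges, the disjointness of the chosen chords, and above all a \emph{global counting} scheme; a local contradiction at one point, even if completed, could not by itself establish countability of the whole circuit.
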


This countability forces the circuits to be precisely the homeomorphic images of the unit circle.
However, in order to get to \autoref{cir_count_thm}, we rely on topological arguments for which we must use the more technical notion of topological circuits. 

The notion of cut on which we rely is more straightforward. Consider a pair $(U,V)$ of disjoint open subsets of a graph-like space, partitioning the vertices. The set of edges with one endpoint in each of $U$ and $V$ is a {\em topological cut} of the graph-like space, and the minimal non-empty topological cuts of a graph-like space are its {\em topological bonds}. We say that a graph-like space $G$ induces a matroid $M$ if the topological circuits and bonds of $G$ are exactly the same as the circuits and cocircuits of $M$. 

We define minor operations (contraction and deletion) on graph-like spaces that correspond to the minor operations on matroids (if $G$ induces $M$ then $G/C\backslash D$ induces $M/C\backslash D$). Since graph-like spaces with finitely many edges are (essentially) finite graphs, it follows that if $G$ induces a matroid $M$, then any finite minor of $M$ is graphic. In addition, as noted above, compactness of the topological circuits forces any matroid induced by a graph-like space to be tame.

So any matroid induced by a graph-like space is graphic, and our other main result is that the converse is also true: 

\begin{thm}
A matroid  is induced by some graph-like space if and only if it is graphic.
\end{thm}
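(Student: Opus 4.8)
The forward implication is essentially established by the discussion preceding the statement: a graph-like space with finitely many edges is (essentially) a finite graph, so every finite minor of an induced matroid is graphic, while compactness of the topological circuits forces every circuit--cocircuit intersection to be finite, i.e.\ tameness. So the real work lies in the converse: from a graphic matroid $M$ on a ground set $E$ I must \emph{build} a graph-like space inducing it.

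The plan is to realise the representing space as an inverse limit of finite graphs coming from the finite contraction-minors. For each finite $F \subseteq E$ the minor $M/(E\setminus F)$ is a finite graphic matroid, hence has a graph representation $G_F$ with edge set $F$. Whenever $F \subseteq F'$ one has $M/(E\setminus F) = (M/(E\setminus F'))/(F'\setminus F)$, so contracting the edges of $F'\setminus F$ turns $G_{F'}$ into a graph representing $M/(E\setminus F)$; reading off how vertices are merged yields a surjection $V(G_{F'}) \onto V(G_F)$. I would take the vertex set of the representing space to be the inverse limit $V := \varprojlim_F V(G_F)$ over the directed poset of finite subsets, carrying the inverse-limit (profinite) topology, which is compact and Hausdorff. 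The space $G$ is then obtained by attaching, for each $e \in E$, a copy of $[0,1]$ whose endpoints are the two coherent families of vertices determined by the two ends of $e$ in the $G_F$ with $e \in F$; a basic neighbourhood of a vertex $v = (v_F)$ is then controlled by the neighbourhoods of the $v_F$ in the finite graphs.

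Three things must then be checked: that $G$ satisfies the axioms of a graph-like space; that its topological bonds are exactly the cocircuits of $M$; and that its topological circuits are exactly the circuits of $M$. The bond/cocircuit matching should follow fairly directly from the construction, since the cocircuits of $M$ contained in $F$ are precisely the cocircuits of $M/(E\setminus F)$, hence appear as vertex-cuts of $G_F$, with compactness of $V$ guaranteeing that minimal cuts close up correctly in the limit. The circuit matching is more delicate: given a circuit $C$ of $M$ I must exhibit its closure in $G$ as a topological circle, and conversely show that every topological circle arises this way. Here I expect to use \emph{tameness} crucially, both to keep $C$ compact in the inverse-limit topology and to control how $C$ meets each finite cut, so that its image in every $G_F$ is a disjoint union of paths closing up into a cycle that assembles coherently into a circle.

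The main obstacle, I expect, is the \emph{coherence} of the inverse system. Finite graphic matroids are determined by their graphs only up to $2$-isomorphism (Whitney's theorem), so the graph obtained by contracting $G_{F'}$ is merely $2$-isomorphic to the independently chosen $G_F$, and the vertex-merging surjections $V(G_{F'}) \onto V(G_F)$ need not be canonically defined. Overcoming this requires choosing the finite representations compatibly --- most naturally by passing to the canonical decomposition of each finite minor into its $3$-connected components (the Tutte/Cunningham--Edmonds decomposition), where representations \emph{are} unique, and tracking how these decompositions behave under the contraction maps. Making these choices cohere across the whole directed system, so that the inverse limit is genuinely well defined and the resulting $V$ separates the ends of edges correctly, is where the heart of the argument lies.
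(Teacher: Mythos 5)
There is a genuine gap, and it lies one step before the coherence problem you identify: your inverse system uses only \emph{contraction} minors $M/(E\setminus F)$, and these do not see the vertex structure of $M$ at all when the cocircuits of $M$ are infinite. The bonds of $G_F$ are the cocircuits of $M/(E\setminus F)$, which are exactly the cocircuits of $M$ contained in the finite set $F$; so two threads of your inverse limit $V=\varprojlim_F V(G_F)$ can be separated only by a \emph{finite} cocircuit of $M$. A graphic matroid need not have any: take $M=U_{1,E}$ with $E$ infinite (two vertices joined by infinitely many parallel edges; every finite minor is graphic and $M$ is tame). Here every contraction minor $M/(E\setminus F)$ with $E\setminus F\neq\emptyset$ consists entirely of loops, so each $G_F$ is a single vertex with loops attached, the inverse limit $V$ is a point, and every edge of your limit space is a loop --- its topological circuits are singletons rather than the pairs that are the circuits of $M$. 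Dually, a deletion-only system fails for the matroid with a single infinite circuit (mentioned in the introduction), since circuits of $M\backslash D$ are circuits of $M$ avoiding $D$ and none survive to any finite level. So one is forced to index by general mixed minors $M/C\backslash D$, at which point the ground sets of the finite minors are no longer nested in a way that yields canonical vertex surjections, and the inverse-limit-of-spaces picture largely collapses; the Whitney/2-isomorphism coherence issue you flag then sits on top of this, and the $3$-connected-components fix does not by itself repair the loss of cut data at finite levels.

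The paper takes a different route precisely to avoid gluing spaces along a directed system. Instead of finite graphs it lifts a \emph{finitary combinatorial structure}, the graph framework of \autoref{dfn:g_fram} (a signing $c_o,d_b$ together with side-functions $\sigma_b\colon E\sm b\to\{-1,1\}$ satisfying local cyclic-order conditions): \autoref{fin_fram} places such a structure on every finite graphic minor, and \autoref{fram} transfers it to $M$ by a compactness argument in the product space $\{-1,1\}^H$, where tameness keeps all the relevant index sets finite and Lemma~4.6 of the cited reference supplies, for any finite amount of circuit/cocircuit data, a single finite minor realising it --- this is exactly the mixed-minor bookkeeping your contraction-only system cannot do. The space is then built directly in \autoref{fram_gives_G-like}: the vertex set is $\{-1,1\}^{\Ccal^*(M)}$ (each vertex records a side of every cocircuit, so cocircuits are topological cuts by fiat), circuits become pseudo-circles via the cyclic orders $R_o$, and \autoref{magic_lemma} together with \autoref{never_once} shows the topological circuits and bonds are exactly the circuits and cocircuits of $M$. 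Your reading of the forward implication is correct and matches the paper.
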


 To prove the converse implication, we use a compactness argument similar to that in \cite{BC:rep_matroids}. For this we need a structure of finite character that can be imposed on all finite graphic matroids, such that if a matroid $M$ has such a structure then we can build a graph-like space inducing $M$. We introduce such a structure, which we call a {\em graph framework}.

Graph-like spaces in general may be topologically quite nasty.
However, we are able to prove the following.
\begin{thm}
Let $M$ be a $3$-connected graphic matroid. Then there exists a graph-like space $G$ inducing $M$ with the following properties.
\begin{enumerate}
 \item Let $s$ be an $M$-base. The closure of $s$ is connected and path-connected.
For any $e\in s$, the subspace $s-e$ is disconnected.
\item $G$ is locally path-connected.
\item $G$ is regular. 
\end{enumerate}
\end{thm}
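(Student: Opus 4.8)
The plan is to build $G$ directly from a graph framework for $M$ and then verify the three properties, with the countability of circuits from \autoref{cir_count_thm} as the central tool: it guarantees that each circuit of $M$ is realised as a genuine topological circle, so the guiding picture is that a base $s$ is a spanning tree and each fundamental circuit a circle obtained from tree-arcs together with one extra edge. The construction realises the vertex set $V$ as a subspace of a product $\prod_i V_i$ of the finite vertex sets $V_i$ occurring in the framework, each taken discrete. Since finite discrete spaces are regular and both products and subspaces of regular spaces are regular, $V$ is regular; to extend this to all of $G$ (property 3) I would argue locally, separating a point in the interior of an edge within that edge, and separating a vertex $v$ from a closed set missing it by combining a regular separation in $V$ with short initial segments of the edges incident to $v$. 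That $G$ induces $M$ is the content of the representation theorem, which I would invoke directly rather than reprove.

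Two of the three parts of property 1 are then reasonably direct. Connectivity of $\Cl(s)$ follows from the correspondence between separations of $G$ and topological cuts: a splitting of $\Cl(s)$ into two nonempty relatively clopen pieces would partition the vertices (which lie in $\Cl(s)$, as $s$ is spanning), and since each edge is connected and so lies wholly on one side, it would produce a nonempty topological cut of $G$ meeting no edge of $s$; but such a cut contains a topological bond, i.e.\ a cocircuit of $M$ disjoint from the base $s$, which is impossible. The disconnection statement is the topological shadow of the fundamental cocircuit: for $e\in s$ the set $C^*(e,s)$ is the unique cocircuit contained in $(E\sm s)\cup\{e\}$, it is a topological bond arising from a vertex partition $(U,W)$ whose crossing edges are exactly $C^*(e,s)$, and as $e$ is the only element of $s$ among them, removing the interior of $e$ separates $\Cl(s)$ into the part lying in $U$ and the part lying in $W$, each containing a vertex.

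The main obstacle is path-connectedness, which is needed both for property 1 (that $\Cl(s)$ is \emph{path}-connected) and, locally, for property 2. Any two honest vertices of $\Cl(s)$ are joined by a finite arc inside $s$, since they are linked by a finite path in the spanning tree; the real difficulty lies in reaching the limit vertices of $\Cl(s)$, and in showing that small neighbourhoods of a vertex are path-connected rather than merely connected. This is exactly the setting in which a topologist's-sine-curve obstruction could arise, and it is here that \autoref{cir_count_thm} is indispensable: because every circuit through a vertex $v$ is a genuine topological circle, and because $3$-connectedness forbids the pathological ways in which edges might accumulate at $v$, one can show that each limit vertex is the endpoint of a ray in $s$ that compactifies to an arc, and that the neighbourhoods of $v$ may be shrunk to unions of arcs issuing from $v$, which are manifestly path-connected. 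Carrying this out uniformly is the technical heart of the argument; once it is done, property 2 holds, path-connectedness of $\Cl(s)$ follows, and the theorem is established.
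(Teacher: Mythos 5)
There is a genuine gap, and it sits exactly where you say the ``technical heart'' lies: the path-connectedness claims (both the path-connectedness of the closure of $s$ in part 1 and the local path-connectedness in part 2) are asserted rather than proved, and the tool you lean on, \autoref{countcir}, cannot carry that weight. Countability of circuits tells you that once you \emph{have} a pseudo-arc it is homeomorphic to $[0,1]$ (via \autoref{count_line} and \autoref{p-arcs_count2}), but it gives no mechanism for producing an arc reaching a limit vertex of $\overline s$, nor for shrinking neighbourhoods to connected ones. In the paper this is the content of \autoref{path_con_lem}: the candidate pseudo-arc is built explicitly as a map from the pseudo-line $L(X)$, where $X$ is the set of edges of $s$ whose fundamental cocircuits separate the two endpoints, ordered by the side relation of those cocircuits; the entire difficulty is continuity at the limit endpoint, which is established not by countability but by the forbidden-substructure machinery (\autoref{lem_cir_eli_3_half_way} and \autoref{cir_eli_3}), via the finiteness of the image of the function $\phi$. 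Tellingly, \autoref{path_con} holds for all \emph{connected} matroids with a graph framework, so your proposed mechanism --- ``$3$-connectedness forbids the pathological ways in which edges might accumulate at $v$'' --- misattributes the role of $3$-connectedness: in the paper it enters only through \autoref{countcir} (proved afterwards, in the representing space, by the partition-tree argument) to upgrade pseudo-arcs to genuine arcs, and plays no part in establishing pseudo-arc connectivity itself. Likewise, local path-connectedness (\autoref{locally pseudo-arc}) needs \autoref{local_locally}, which decomposes the relevant topological cut into finitely many cocircuits using binarity of the matroid (\autoref{binary_x}); nothing in your sketch substitutes for this. Note there is no circularity in invoking \autoref{countcir} before Section 8 --- its proof uses only Section 5 material --- but invoking it does not close the gap.

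Two smaller inaccuracies are worth flagging. First, the construction: a graph framework has no ``finite vertex sets $V_i$''; the paper's vertex set is $\{-1,1\}^{\Ccal^*(M)}$, indexed by the cocircuits, with subbasic open sets $U^i_b(\epsilon_b)$. Second, your regularity sketch misdescribes the neighbourhood structure: every basic neighbourhood of a vertex $v$ necessarily contains the \emph{whole} interiors of infinitely many edges not incident with $v$ (all edges on the correct side of each of the finitely many cocircuits used), so ``a regular separation in $V$ plus short initial segments of the edges incident to $v$'' is not an open set. The repair is routine (shrink the $\epsilon$'s by a factor of $2$, as in the case analysis of \autoref{normal}, which in fact proves normality of $G(M,\Gamma)$ and hence regularity of the subspace $\hat G(M,\Gamma)$ in \autoref{reg}), and your treatment of the connectedness of $\overline s$ and of the disconnection after removing $e \in s$ via the fundamental cocircuit is essentially the paper's; but as it stands the proposal proves the easy thirds of the theorem and leaves its core unestablished.
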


The remainder of this paper is organized as follows. In \autoref{sec2}, we give background information on infinite matroids, and prove a few simple lemmas. \autoref{sec3} introduces graph-like spaces, and \autoref{sec4} covers pseudo-arcs and pseudo-circles, analogues in a graph-like space of paths and cycles in finite graphs. 

\autoref{sec5} is concerned with which graph-like spaces induce matroids. In \autoref{sec6}, we define graph frameworks, and show by compactness that a  graphic matroid has a graph framework. We then show how to construct a graph-like space that induces a graphic matroid with a given graph framework. 
In \autoref{sec8}, we study the topological properties of the graph-like space we have constructed.

Finally, in \autoref{sec9}, we prove that $3$-connected matroids induced by a graph-like space have no uncountable circuits. In \autoref{sec10}, we finish with an open problem.

This is the first in a series of 2 papers: the second will focus on cofinitary graphic matroids, which correspond to compact graph-like spaces. In this context, we are able to derive some stronger results. Any compact graph-like space induces a cofinitary graphic matroid. For any 3-connected cofinitary graphic matroid, there is a unique connected compact graph-like space representing it (this is the infinite analogue of a theorem of Whitney). Furthermore, the countable cofinitary graphic matroids are precisely those arising as minors of the matroids induced by Freudenthal compactifications of countable graphs.

\section{Preliminaries}\label{sec2}

Throughout, notation and terminology for (infinite) graphs are those of~\cite{DiestelBook10}, and for matroids those of~\cite{Oxley,matroid_axioms}.

$M$ always denotes a matroid and $E(M)$ (or just $E$), $\Ical(M)$ and $\Ccal(M)$ denote its ground 
set and its sets of independent sets and circuits, respectively. For the remainder of this section we shall recall some basic facts about infinite matroids.

A set system $\Ical\subseteq \Pcal(E)$ is the set of independent sets of a matroid if and only if it satisfies the following {\em independence  axioms\/} \cite{matroid_axioms}.
\begin{itemize}
	\item[(I1)] $\varnothing\in \Ical(M)$.
	\item[(I2)] $\Ical(M)$ is closed under taking subsets.
	\item[(I3)] Whenever $I,I'\in \Ical(M)$ with $I'$ maximal and $I$ not maximal, there exists an $x\in I'\setminus I$ such that $I+x\in \Ical(M)$.
	\item[(IM)] Whenever $I\subseteq X\subseteq E$ and $I\in\Ical(M)$, the set $\{I'\in\Ical(M)\mid I\subseteq I'\subseteq X\}$ has a maximal element.
\end{itemize}

A set system $\Ccal\subseteq \Pcal(E)$ is the set of circuits of a matroid if and only if it satisfies the following {\em circuit  axioms\/} \cite{matroid_axioms}.
\begin{itemize}
\item[(C1)] $\varnothing\notin\Ccal$.
\item[(C2)] No element of $\Ccal$ is a subset of another.
\item[ (C3)](Circuit elimination) Whenever $X\subseteq o\in \Ccal(M)$ and $\{o_x\mid x\in X\} \subseteq \Ccal(M)$ satisfies $x\in o_y\Leftrightarrow x=y$ for all $x,y\in X$, 
then for every $z \in o\setminus \left( \bigcup_{x \in X} o_x\right)$ there exists a  $o'\in \Ccal(M)$ such that $z\in o'\subseteq \left(o\cup  \bigcup_{x \in X} o_x\right) \setminus X$.

\item[(CM)] $\Ical$ satisfies (IM), where $\Ical$ is the set of those subsets of $E$ not including an element of $\Ccal$.
\end{itemize}

For a base $s$ of a matroid $M$, and $e \in E \sm s$, there is a unique circuit $o_e$ with $e \in o_e \se s + e$. We call this circuit the {\em fundamental circuit} of $e$ with respect to $s$. Similarly, for $f \in b$ we call the unique cocircuit $b_f$ with $f \in b_f \se (E \sm s) + f$ the {\em fundamental cocircuit} of $f$ with respect to $s$.

The following straightforward Lemmas can be proved as for finite matroids (see, for example, \cite{BC:rep_matroids+}).

\begin{lem}\label{fdt}
 Let $M$ be a matroid and $s$ be a base.
Let $o_e$ and $b_f$ a fundamental circuit and a fundamental cocircuit with respect to $s$, then
\begin{enumerate}
 \item $o_e\cap b_f$ is empty or $o_e\cap b_f=\{e,f\}$ and
\item $f\in o_e$ if and only if $e\in b_f$.
\end{enumerate}
\end{lem}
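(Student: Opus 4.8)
The plan is to reduce both statements to the standard \emph{orthogonality} property of matroids: a circuit $C$ and a cocircuit $C^*$ can never meet in exactly one element, so that $|C \cap C^*| \neq 1$. This holds for arbitrary matroids and is proved exactly as in the finite case. Indeed, if $C \cap C^* = \{x\}$, let $H = E \sm C^*$ be the hyperplane complementary to the cocircuit; then $C - x \se H$, and since $x \in \Cl(C - x)$ while $H = \Cl(H)$ is a flat, we would get $x \in H$, contradicting $x \in C^*$.

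First I would record the two containments that come directly from the definitions: since $o_e \se s + e$ we have $o_e \sm \{e\} \se s$, and since $b_f \se (E \sm s) + f$ we have $b_f \sm \{f\} \se E \sm s$. It is also worth noting at the outset that $e \in o_e$ and $f \in b_f$ always hold, and that $e \notin s$ whereas $f \in s$, so in particular $e \neq f$.

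For the first part I would prove the inclusion $o_e \cap b_f \se \{e, f\}$ by inspecting an arbitrary element $x$ of the intersection. If $x \neq e$, then $x \in o_e \sm \{e\} \se s$; but $x \in b_f$ means $x = f$ or $x \in b_f \sm \{f\} \se E \sm s$, and the latter is impossible since $x \in s$, forcing $x = f$. Hence $e$ and $f$ are the only possible members of the intersection. Combining $o_e \cap b_f \se \{e, f\}$ with orthogonality, $|o_e \cap b_f| \neq 1$, leaves exactly the two alternatives $o_e \cap b_f = \varnothing$ and $o_e \cap b_f = \{e, f\}$, which is the claim.

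The second part then follows formally from the first, using $e \in o_e$ and $f \in b_f$. If $f \in o_e$ then $f \in o_e \cap b_f$, so the intersection is nonempty and hence equals $\{e, f\}$ by part~1, giving $e \in b_f$; the reverse implication is symmetric, obtaining $f \in o_e$ from $e \in o_e \cap b_f$. The only step with genuine content is the orthogonality inequality, while everything else is bookkeeping with the two defining containments; since the excerpt permits these lemmas to be proved as for finite matroids, the one point deserving care is simply that the closure-and-hyperplane reasoning behind orthogonality transfers unchanged to the infinite setting.
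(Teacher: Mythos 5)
Your proof is correct and follows essentially the same route as the paper's: the containments $o_e \se s+e$ and $b_f \se (E\sm s)+f$ give $o_e \cap b_f \se \{e,f\}$, orthogonality ($|o_e \cap b_f| \neq 1$) yields the dichotomy in part~1, and part~2 is a formal consequence using $e \in o_e$ and $f \in b_f$. The only difference is that you also spell out the closure-and-hyperplane proof of orthogonality, which the paper simply invokes as a standard fact; that argument does transfer verbatim to infinite matroids, so this is a harmless elaboration rather than a different approach.
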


\begin{lem}\label{o_cap_b}
 For any circuit $o$ containing two edges $e$ and $f$, there is a cocircuit $b$ such that $o\cap b=\{e,f\}$.
\end{lem}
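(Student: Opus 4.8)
The plan is to realize both $o$ and the desired cocircuit as \emph{fundamental} objects with respect to a single, cleverly chosen base, and then read off the conclusion directly from \autoref{fdt}.

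First I would build the base. Since $o$ is a circuit, $o-e$ is independent, so applying (IM) with $X=E$ (equivalently, using that every independent set extends to a base) yields a base $s$ with $o-e\se s$. Because $o$ is dependent while $o-e\se s$ and $s$ is independent, the edge $e$ cannot lie in $s$; hence $o\se s+e$ with $e\in o$. By the uniqueness of fundamental circuits recalled just before \autoref{fdt}, the circuit $o$ is exactly the fundamental circuit $o_e$ of $e$ with respect to $s$.

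Next I would choose the cocircuit. Since $f\in o-e\se s$, the fundamental cocircuit $b_f$ of $f$ with respect to $s$ is defined, and I claim $b=b_f$ works. By \autoref{fdt}(1) the intersection $o_e\cap b_f$ is either empty or equal to $\{e,f\}$; but $f$ lies in both $o_e$ (as $f\in o=o_e$) and $b_f$ (a fundamental cocircuit always contains its defining element), so the intersection is nonempty. Therefore $o\cap b=o_e\cap b_f=\{e,f\}$, as required.

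The argument is essentially routine once the base is chosen correctly, and I do not expect any genuine obstacle: \autoref{fdt} packages precisely the circuit/cocircuit intersection behavior we need. The only points requiring care are verifying that $o$ really is the fundamental circuit $o_e$ (this rests on the uniqueness of the circuit through $e$ contained in $s+e$) and, dually, that $f\in s$ so that $b_f$ is well-defined.
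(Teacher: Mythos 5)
Your proof is correct and follows essentially the same route as the paper's: extend $o-e$ to a base, take the fundamental cocircuit $b_f$ of $f$ with respect to that base, and conclude via \autoref{fdt}. Your additional care in identifying $o$ as the fundamental circuit $o_e$ and in ruling out the empty-intersection alternative of \autoref{fdt}(1) (since $f$ lies in both $o_e$ and $b_f$) fills in details the paper leaves implicit, but introduces no new idea.
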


\begin{lem}\label{almost_fundamental_cocircuit}
 Let $I$ be some independent set in some matroid $M$.
Then for each $e\in I$ there is a cocircuit $b$ meeting $I$
precisely in $e$
\end{lem}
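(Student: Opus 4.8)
The plan is to obtain the desired cocircuit as a fundamental cocircuit with respect to a suitably chosen base. First I would extend the independent set $I$ to a base $s$ of $M$. Such an extension exists: applying axiom (IM) with $X = E$ to the independent set $I$ yields a maximal element $s$ of $\{I' \in \Ical(M) \mid I \se I' \se E\}$, and this $s$ is in fact a maximal independent set, i.e.\ a base. Indeed, any independent superset of $s$ still contains $I$ and hence lies in the family, so by maximality it must equal $s$. In particular $e \in I \se s$.

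Next I would consider the fundamental cocircuit $b_e$ of $e$ with respect to $s$, whose existence and uniqueness were recorded above. By definition $b_e \se (E \sm s) + e$, so intersecting with $s$ gives $b_e \cap s = \{e\}$: the element $e$ lies in $b_e \cap s$, while every other element of $b_e$ lies outside $s$. Since $I \se s$, it follows that $b_e \cap I \se b_e \cap s = \{e\}$, and as $e \in b_e \cap I$ we conclude $b_e \cap I = \{e\}$. Taking $b = b_e$ then completes the argument.

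There is essentially no substantial obstacle here; the only point that genuinely uses the infinite theory, rather than a verbatim repetition of the finite argument, is the extension of $I$ to a base, which is handled cleanly by (IM). Everything else is immediate from the definition of the fundamental cocircuit and from the containment $I \se s$.
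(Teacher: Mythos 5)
Your proof is correct and follows exactly the paper's argument: the paper's own proof also extends $I$ to a base and takes $b$ to be the fundamental cocircuit of $e$ with respect to that base. You have merely spelled out the details (the use of (IM) for the base extension and the computation $b_e \cap I = \{e\}$ from $b_e \se (E \sm s) + e$) that the paper leaves implicit.
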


\begin{lem} \label{rest_cir}
 Let $M$ be a matroid with ground set $E = C \dot \cup X \dot \cup D$ and let $o'$ be a circuit of $M' = M / C \backslash D$.
Then there is an $M$-circuit $o$ with $o' \subseteq o \subseteq o' \cup C$.
\end{lem}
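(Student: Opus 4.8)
The deletion plays no role here, so the plan is first to observe that the circuits of $M' = (M/C)\backslash D$ are exactly the circuits of $M/C$ that avoid $D$; hence $o'$ is already a circuit of $M/C$, and $o' \se X \se E \sm C$. The whole problem thus reduces to understanding the circuits of the single contraction $M/C$.

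For this I would work with the closure operator, using the standard identity $\Cl_{M/C}(A) = \Cl_M(A \cup C)\sm C$ for $A \se E \sm C$, which is provable exactly as in the finite case. Fix any $e \in o'$. Since $o'$ is a circuit of $M/C$, the element $e$ lies in $\Cl_{M/C}(o' - e)$, and by the identity this says $e \in \Cl_M((o'-e)\cup C)$. As $e \notin (o'-e)\cup C$, there is an $M$-circuit $o$ with $e \in o \se ((o'-e)\cup C)+e = o' \cup C$. This already delivers the upper bound $o \se o' \cup C$ together with a witness $e \in o \sm C$.

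The main obstacle is to upgrade the loose containment $o\sm C \se o'$ coming from the previous line into the required equality $o\sm C = o'$ (equivalently $o' \se o$): a priori the circuit $o$ produced above could meet only part of $o'$. I would close this gap by showing that $o \sm C$ is \emph{dependent} in $M/C$. Since $o$ is an $M$-circuit containing $e\notin C$, we have $e \in \Cl_M(o-e) \se \Cl_M(((o\sm C)-e)\cup C)$, which by the closure identity means $e \in \Cl_{M/C}((o\sm C)-e)$; hence $o \sm C$ is dependent in $M/C$. But $o\sm C \se o'$, and every proper subset of the circuit $o'$ is independent in $M/C$, so $o\sm C$ cannot be a proper subset of $o'$. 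This forces $o\sm C = o'$, and combined with $o \se o'\cup C$ it yields $o' \se o \se o'\cup C$, as required. The only point needing a little care is the contraction-closure identity above; everything else is a short manipulation.
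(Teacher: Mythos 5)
Your proof is correct, but it takes a genuinely different route from the paper's. The paper's own argument is a two-line independent-set computation: fix a maximal $M$-independent subset $s$ of $C$ and use the defining property of contraction that $I \se E \sm C$ is $M/C$-independent if and only if $I \cup s$ is $M$-independent. Then $s \cup o'$ is $M$-dependent (since $o'$ is $M/C$-dependent) while $s \cup o' - e$ is $M$-independent for every $e \in o'$ (since $o' - e$ is $M/C$-independent); hence any $M$-circuit $o$ inside $s \cup o'$ must contain every $e \in o'$, giving $o' \se o \se o' \cup s \se o' \cup C$ at once. You instead route everything through the closure operator and the identity $\Cl_{M/C}(A) = \Cl_M(A \cup C) \sm C$, first producing $o$ with $e \in o \se o' \cup C$ and then upgrading to $o' \se o$ by showing $o \sm C$ is $M/C$-dependent; that upgrading step is exactly right, and it is the real content of your argument, since a priori the lifted circuit could miss part of $o'$. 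Two caveats on the point you flag yourself: ``provable exactly as in the finite case'' is a little glib, because the usual finite proof of the contraction-closure identity goes through the rank function, which is unavailable for infinite matroids. One must instead argue via independent sets (essentially the same base-of-$C$ device the paper uses, so in some sense your proof contains the paper's proof as a subroutine) or via the cocircuit characterisation of closure: $x \in \Cl_M(Y)$ if and only if no $M$-cocircuit $b$ satisfies $x \in b$ and $b \cap Y = \emptyset$, combined with the observation that the cocircuits of $M/C$ are precisely the $M$-cocircuits avoiding $C$ (immediate from $(M/C)^* = M^* \backslash C$). The cocircuit route also certifies that the identity is not itself secretly proved via the very lemma you are establishing, which would be circular. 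You also implicitly use that every dependent set in an infinite matroid contains a circuit; this is true but rests on (IM) and deserves a citation. In sum, your approach buys a reusable closure identity for infinite matroids, while the paper's direct argument is shorter and needs strictly less machinery.
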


\begin{lem}\label{is_scrawl}
Let $M$ be a matroid, and let $w\subseteq E$. The following are equivalent:
\begin{enumerate}
 \item $w$ is a union of circuits of $M$.
 \item $w$ never meets a cocircuit of $M$ just once.
\end{enumerate}
\end{lem}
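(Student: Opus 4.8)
The plan is to prove the two implications separately. The implication $(1)\Rightarrow(2)$ is immediate from basic orthogonality, while $(2)\Rightarrow(1)$ is the substantive direction; I would reduce it to a statement about a single element and then pass to the restriction $M\restric w$ and dualise, in order to apply \autoref{rest_cir}.

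For $(1)\Rightarrow(2)$: suppose $w$ is a union of circuits and, for contradiction, that some cocircuit $b$ meets $w$ in exactly one edge $e$. Since $w$ is a union of circuits, $e$ lies in some circuit $o\se w$. Then $o\cap b\se w\cap b=\{e\}$ and $e\in o\cap b$, so $o\cap b=\{e\}$, contradicting the fact --- already used in the proof of \autoref{fdt} --- that a circuit and a cocircuit can never meet in exactly one edge.

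For $(2)\Rightarrow(1)$: it suffices to show that every $e\in w$ lies in some circuit contained in $w$, since then $w$ is the union of these circuits. Suppose instead that some $e\in w$ lies in no circuit $o\se w$. The $M$-circuits contained in $w$ are exactly the circuits of the restriction $M\restric w=M\sm(E\sm w)$, so $e$ lies in no circuit of $M\restric w$; that is, $e$ is a coloop of $M\restric w$. In any matroid an element is a coloop precisely when its singleton is a cocircuit, so $\{e\}$ is a cocircuit of $M\restric w$, equivalently a circuit of the dual $(M\restric w)^{*}=M^{*}/(E\sm w)$. Applying \autoref{rest_cir} to the matroid $M^{*}$ with $C=E\sm w$, $X=w$ and $D=\varnothing$, this circuit $\{e\}$ of $M^{*}/(E\sm w)$ lifts to an $M^{*}$-circuit $b$ with $\{e\}\se b\se\{e\}\cup(E\sm w)$. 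A circuit of $M^{*}$ is a cocircuit of $M$, and $b\cap w=\{e\}$ because $b\sm\{e\}\se E\sm w$. Thus $b$ is a cocircuit meeting $w$ exactly once, contradicting $(2)$.

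The only real content is the converse direction, and the points needing care are all in the infinite setting: that ``coloop $\iff$ singleton cocircuit'' and the duality $(M\sm D)^{*}=M^{*}/D$ remain valid for infinite matroids, and that \autoref{rest_cir}, being stated for an arbitrary matroid, may legitimately be applied to $M^{*}$. If one prefers to avoid dualising, the same contradiction is reachable directly: choose a base $I$ of $M\restric w$, which must contain the coloop $e$, extend it to an $M$-base $B$ and take the fundamental cocircuit $b$ of $e$ with respect to $B$, so that $b\cap I=\{e\}$ by construction; for any $f\in w\sm I$ one has $f\in\Cl(I)$ (so in particular $f\notin B$), and hence its fundamental circuit lies in $I+f\se w$, whereupon \autoref{fdt}(2) shows that $f\in b$ would force $e$ into this circuit inside $w$ --- impossible --- so again $b\cap w=\{e\}$. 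I expect the mild duality bookkeeping to be the only obstacle.
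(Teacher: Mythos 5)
Your proposal is correct and is essentially the paper's own argument: the paper fixes $e\in w$ and works in the one-element minor $M/(w-e)\backslash(E\setminus w)$, ruling out the coloop case by the dual of \autoref{rest_cir} together with (2) and then lifting the resulting loop to a circuit $o_e\subseteq w$ via \autoref{rest_cir}, which is the same singleton-lifting argument you carry out in the restriction $M\restric w$ after dualising. The only differences are bookkeeping --- you delete and dualise where the paper also contracts $w-e$, and you obtain the circuit through $e$ by contradiction rather than by a second application of \autoref{rest_cir} --- and your auxiliary facts (coloop $\iff$ singleton cocircuit, $(M\backslash D)^{*}=M^{*}/D$) do hold for infinite matroids as you anticipated.
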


The basic theory of infinite binary matroids is introduced in~\cite{BC:rep_matroids+}. One characterisation of such matroids given there is that every intersection of a circuit with a cocircuit is both finite and of even size.
\begin{lem}\label{binary_x}
Let $M$ be a binary matroid and $X\se E(M)$ with the property that it meets every circuit finitely and evenly. Then $X$ is a disjoint union of cocircuits.
\end{lem}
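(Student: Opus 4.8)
The plan is to produce an explicit partition of $X$ into cocircuits by a maximality argument, where the only place binariness is genuinely used is to guarantee that parities are preserved as cocircuits are stripped off.

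First I would record the easy half. Applying \autoref{is_scrawl} to the dual matroid $M^*$ and translating (circuits of $M^*$ are cocircuits of $M$, and vice versa) gives: a set $W \se E$ is a union of cocircuits of $M$ if and only if $W$ never meets a circuit of $M$ in exactly one element. Since by hypothesis $X$ meets every circuit in an \emph{even} number of elements, it never meets one in exactly one element, so $X$ is a union of cocircuits; in particular every $e \in X$ lies on some cocircuit contained in $X$. I will reuse this observation, noting that it applies verbatim to \emph{any} set $W$ that meets every circuit in an even number of elements.

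Next I would apply Zorn's Lemma to the poset of families of pairwise disjoint cocircuits of $M$, each contained in $X$, ordered by inclusion of families. The union of a chain of such families is again such a family, so a maximal family $\mathcal{F}$ exists; set $Y = \bigcup \mathcal{F}$. By construction $Y$ is a disjoint union of cocircuits contained in $X$, so it remains only to show $Y = X$.

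The heart of the argument, and the step using binariness, is to check that $W := X \setminus Y$ again meets every circuit finitely and evenly. Fix a circuit $o$. Since $Y \se X$ and $o \cap X$ is finite, only finitely many members of $\mathcal{F}$ meet $o$, and disjointness of $\mathcal{F}$ gives $|o \cap Y| = \sum_{b \in \mathcal{F}} |o \cap b|$, a finite sum. Here binariness enters: each summand $|o \cap b|$ is the size of a circuit--cocircuit intersection, hence even, so $|o \cap Y|$ is even, and therefore $|o \cap W| = |o \cap X| - |o \cap Y|$ is finite and even as well. Thus $W$ has the required property. Now if $Y \neq X$, choose $e \in W$; by the first paragraph applied to $W$ there is a cocircuit $b \se W = X \setminus Y$ with $e \in b$, and then $\mathcal{F} \cup \{b\}$ is a strictly larger family of pairwise disjoint cocircuits contained in $X$, contradicting maximality. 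Hence $Y = X$, and $\mathcal{F}$ exhibits $X$ as a disjoint union of cocircuits. I expect the only delicate point to be the parity bookkeeping in this last step: one must confirm that $\sum_{b} |o \cap b|$ really has only finitely many nonzero terms (which is why finiteness of $o \cap X$ is invoked before summing) so that ``a finite sum of even numbers is even'' is legitimate, and that subtracting $|o \cap Y|$ from $|o \cap X|$ preserves both finiteness and evenness. The remaining ingredients — the dual of \autoref{is_scrawl} and the routine Zorn setup — are standard.
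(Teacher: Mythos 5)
Your proof is correct and takes essentially the same route as the paper's: a Zorn's Lemma maximality argument yielding a maximal disjoint union of cocircuits $Y \se X$, with binariness used exactly as in the paper to show that $X \sm Y$ still meets every circuit finitely and evenly, and the dual of \autoref{is_scrawl} plus maximality forcing $X \sm Y = \emptyset$. If anything, your Zorn setup (chains of \emph{families} of pairwise disjoint cocircuits rather than chains of sets each expressible as a disjoint union) is slightly more careful than the paper's, since it keeps the decompositions compatible along the chain.
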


\begin{proof}
By Zorn's Lemma, we can pick $Y\se X$ maximal with the property that it is a disjoint union of cocircuits. As $Y\se X$, the set $Y$ meets every circuit finitely, and so meets every circuit evenly. By the choice of $Y$, the set $X\sm Y$ does not include a circuit.
But $X\sm Y$ meets every circuit evenly, and so is empty by the dual of \autoref{is_scrawl}.
 This completes the proof.
\end{proof}

\begin{lem}\label{magic_lemma}
Suppose that $M$ is a matroid, and ${\cal C}$, ${\cal C}^*$ are collections of subsets of $E(M)$ such that ${\cal C}$ contains every circuit of $M$, ${\cal C}^*$ contains every cocircuit of $M$, and for every $o\in {\cal C}$, $b\in {\cal C}^*$, $|o\cap b|\neq 1$. Then the set of minimal nonempty elements of ${\cal C}$ is the set of circuits of $M$ and the set of minimal nonempty elements of ${\cal C}^*$ is the set of cocircuits of $M$.
\end{lem}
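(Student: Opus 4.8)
The plan is to prove the statement about $\mathcal{C}$ only; the statement about $\mathcal{C}^*$ then follows by applying the proven statement to the dual matroid $M^*$. The point is that the hypotheses are symmetric under simultaneously interchanging $\mathcal{C}$ with $\mathcal{C}^*$ and replacing $M$ by $M^*$: the circuits of $M^*$ are the cocircuits of $M$ and vice versa, so ``$\mathcal{C}^*$ contains every circuit of $M^*$'' and ``$\mathcal{C}$ contains every cocircuit of $M^*$'' both hold, while the condition $|o\cap b|\neq 1$ is literally unchanged. Thus it suffices to show that the minimal nonempty elements of $\mathcal{C}$ are exactly the circuits of $M$, which I would establish by two inclusions.

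For the first inclusion I would show that every circuit $o$ of $M$ is a minimal nonempty element of $\mathcal{C}$. By hypothesis $o\in\mathcal{C}$, and $o$ is nonempty. To see that $o$ is minimal, suppose some nonempty $w\in\mathcal{C}$ satisfied $w\subsetneq o$. Then $w$ is a proper subset of a circuit and hence independent, so choosing any $e\in w$, \autoref{almost_fundamental_cocircuit} produces a cocircuit $b$ with $b\cap w=\{e\}$. But $b\in\mathcal{C}^*$ and $|w\cap b|=1$, contradicting the intersection hypothesis. Hence no such $w$ exists and $o$ is minimal.

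For the reverse inclusion I would take a minimal nonempty $w\in\mathcal{C}$ and argue that it is a circuit. Since $w\in\mathcal{C}$ and every cocircuit of $M$ lies in $\mathcal{C}^*$, the hypothesis $|o\cap b|\neq 1$ tells us that $w$ never meets a cocircuit exactly once; by \autoref{is_scrawl} this forces $w$ to be a union of circuits. As $w$ is nonempty it therefore contains some circuit $o$, which itself lies in $\mathcal{C}$ and is nonempty, so by minimality of $w$ we get $o=w$, i.e.\ $w$ is a circuit. Combining the two inclusions gives the claimed equality for $\mathcal{C}$.

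I do not anticipate a serious obstacle here: the argument amounts to feeding the hypothesis $|o\cap b|\neq1$ into the two earlier lemmas, one of which (\autoref{almost_fundamental_cocircuit}) supplies a cocircuit separating a given edge of a proper subset of a circuit, and the other of which (\autoref{is_scrawl}) identifies sets meeting every cocircuit non-singly as unions of circuits. The only step requiring a little care is the self-duality reduction, where one should verify explicitly that all three hypotheses transfer to $M^*$ before invoking the already-proven half of the statement.
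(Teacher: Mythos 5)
Your proof is correct and takes essentially the same route as the paper: both arguments hinge on feeding the hypothesis $|o\cap b|\neq 1$ into \autoref{is_scrawl} to conclude that every nonempty element of $\mathcal{C}$ contains (indeed is a union of) circuits, and both obtain the statement for $\mathcal{C}^*$ by duality. The only cosmetic difference is that you certify minimality of circuits via \autoref{almost_fundamental_cocircuit}, whereas the paper gets this for free from the union-of-circuits fact together with circuit incomparability.
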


\begin{proof}
The conditions imply that no element of ${\cal C}$ ever meets a cocircuit of $M$ just once, so every element of $\Ccal$ is a union of circuits of $M$ by \autoref{is_scrawl}. Since every circuit of $M$ is in $\Ccal$, the minimal nonempty elements of $\Ccal$ are precisely the circuits of $M$. The other claim is obtained by a dual argument.
\end{proof}

A {\em switching sequence} for a base $s$ in a matroid with ground set $E$ is a finite sequence $(e_i | 1 \leq i \leq n)$ whose terms are alternately in $s$ and not in $s$ and where for $i<n$ if $e_i \in s$ then $e_{i + 1} \in b_{e_i}$ and if $e_i \not \in s$ then $e_{i+1} \in o_{e_i}$.

\begin{lem}\label{swseq}
Let $M$ be a connected matroid with a base $s$, and $e$ and $f$ be edges of $M$. Then there is a switching sequence with first term $e$ and last term $f$.
\end{lem}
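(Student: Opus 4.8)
The plan is to reinterpret switching sequences as walks in a bipartite graph attached to the base $s$, and then to show that this graph is connected whenever $M$ is. Define a bipartite graph $H$ on the parts $s$ and $E\sm s$, joining $g\in s$ to $h\in E\sm s$ exactly when $h\in b_g$; by \autoref{fdt}(2) this holds if and only if $g\in o_h$, so the edge has a symmetric description from either side. Unwinding the definition, a finite sequence is a switching sequence precisely when it is a walk in $H$: if $e_i\in s$ the condition $e_{i+1}\in b_{e_i}$ is an $H$-edge, and if $e_i\notin s$ the condition $e_{i+1}\in o_{e_i}$ is, again by \autoref{fdt}(2), the same kind of $H$-edge traversed the other way (so the alternation of the sequence matches the bipartition of $H$). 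Hence a switching sequence from $e$ to $f$ is exactly a walk in $H$ from $e$ to $f$, and since every element of $E$ is a vertex of $H$ it suffices to prove that $H$ is connected.

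I would prove connectivity of $H$ by showing that its components are separators of $M$. First note that each component $S$ of $H$ is closed under fundamental circuits and cocircuits: if $g\in S\cap s$ then $b_g=\{g\}\cup\{h\in E\sm s: g\in o_h\}\se S$, as every such $h$ is an $H$-neighbour of $g$; dually, if $h\in S\sm s$ then $o_h=\{h\}\cup(o_h\cap s)\se S$. Writing $A=s\cap S$, it follows that $A$ is a base of $M\restric S$: it is independent, and it spans $S$ because every $h\in S\sm s$ satisfies $o_h\se S$, so $h\in\Cl(A)$. The same holds for $T:=E\sm S$ with base $s\cap T$, and $s$ is the disjoint union of $s\cap S$ and $s\cap T$. (Here one uses that a connected matroid with at least two elements has no loops and no coloops, so that no vertex of $H$ is isolated; the one-element case is trivial.)

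The key step is to deduce that no circuit of $M$ meets both $S$ and $T$, i.e.\ that $M=(M\restric S)\oplus(M\restric T)$. For finite $M$ this is immediate, since the above gives $r(S)+r(T)=|s\cap S|+|s\cap T|=|s|=r(E)$, which is exactly the condition that $(S,T)$ is a separation. For the general case I would argue directly that a straddling circuit is impossible. Note that, by the previous paragraph, $S$ is a union of the fundamental circuits it contains and also a union of the fundamental cocircuits it contains, so by \autoref{is_scrawl} and its dual $S$ meets no cocircuit and no circuit exactly once. If $o$ were a circuit with $x\in o\cap S$ and $y\in o\cap T$, then \autoref{o_cap_b} produces a cocircuit $b$ with $o\cap b=\{x,y\}$, so $b$ would straddle as well, and one plays the circuit and cocircuit structure against each other to reach a contradiction, exactly as the decomposition of a matroid along its connectivity classes does in the finite case. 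Making this last reduction airtight for infinite matroids is the part I expect to require the most care: the closure operator of an infinite matroid is \emph{not} of finite character, so one cannot pass to a finite minor capturing the dependency of $o$ and must instead argue entirely through the circuit/cocircuit axioms.

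Finally I would conclude using connectedness of $M$. Each edge of $H$ joins two elements lying on a common circuit (some $o_h$), so every $H$-component is contained in a single connectivity class of $M$; conversely the separator property just established shows every circuit, and hence every connectivity class, lies inside a single $H$-component. Thus the components of $H$ coincide with the connectivity classes of $M$. As $M$ is connected there is only one such class, so $H$ is connected, and therefore a switching sequence from $e$ to $f$ exists.
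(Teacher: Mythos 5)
Your first two paragraphs are essentially the paper's own proof in different clothing: the paper also takes the set $X$ of edges reachable from $e$ by switching sequences (your $H$-component of $e$), shows $s \cap X$ is a base of $X$ because fundamental circuits of elements of $X \setminus s$ stay in $X$, and shows $s \setminus X$ is a base of $E \setminus X$ via \autoref{fdt}, exactly as in your closure argument. But at that point the paper is \emph{done}: its definition of connectedness is the nonexistence of $1$-separations, where a $1$-separation of $M$ is a partition $(A,B)$ with nonempty parts admitting bases $s_A$ of $A$ and $s_B$ of $B$ with $s_A \cup s_B \subseteq s$ for some base $s$ of $M$. You have already produced precisely such a pair, $s_A = s \cap S$ and $s_B = s \cap T$ with $s_A \cup s_B = s$, so if $T \neq \emptyset$ this is a $1$-separation and connectedness forces $T = \emptyset$, i.e.\ $H$ is connected. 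No direct-sum decomposition is needed.

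Instead you route through the stronger claim that no circuit of $M$ meets both $S$ and $T$, and this is where the genuine gap sits --- as you yourself half-concede. The local facts you assemble do not close it: knowing that $S$ and $T$ are each unions of circuits and of cocircuits gives, via \autoref{is_scrawl} and its dual, only that $|o \cap S| \neq 1$ and $|o \cap T| \neq 1$ for any circuit $o$; and applying \autoref{o_cap_b} to a straddling circuit produces a straddling \emph{cocircuit}, which is symmetric rather than contradictory, so ``playing the circuit and cocircuit structure against each other'' goes in a circle. For infinite matroids the statement you need (that a partition across which some base splits into bases of the two sides admits no straddling circuit, equivalently $M = M\restriction S \oplus M\restriction T$) is essentially the transitivity of the connectivity relation, i.e.\ the theorem of Bruhn and Wollan that the paper cites when it remarks that connected matroids are those in which any two edges lie on a common circuit --- a nontrivial result whose proof uses the infinite circuit elimination axiom, not something recoverable by the finite rank count you give or by the sketch that follows it. The fix is simply to delete the entire third and fourth paragraphs and conclude from the base-splitting via the paper's separation-based definition of connectedness, as above; with that change your argument is correct and coincides with the paper's.
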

\begin{proof}
Let $e$ be any edge of $M$, and let $X$ be the set of those $f \in E(M)$ for which there is such a switching sequence. Then $s \cap X$ is a base for $X$, since for any $f \in X \sm s$ we have $o_f \subseteq X$. Similarly, $s \sm X$ is a base for $E(M) \sm X$, since for any $f \in E(M) \sm X\sm s$ and any $g \in o_f$ we have $f \in b_g$ by \autoref{fdt} and so $g \not \in X$. Thus $X$ and $E(M) \sm X$ form a separation of $M$, and since $M$ is connected this means that $X$ must be the whole of $E$, completing the proof.
\end{proof}

A {$k$-separation} of a matroid $M$ is a partition $(A, B)$ of the ground set of $M$ such that each of $A$ and $B$ has size at least $k$ and there are bases $s_A$ and $s_B$ of $A$ and $B$ and $s$ of $A$ such that $|s_A \cup s_B \setminus s| < k$. A 1-separation may also be called a {\em separation}. A matroid without $l$-separations for any $l < k$ is {\em $k$-connected}. A matroid is {\em connected} if it is 2-connected. Connected matroids can equivalently be characterised as those in which any 2 distinct edges lie on a common circuit \cite{BW:mat_con}.

\section{Graph-like spaces}\label{sec3}

The key notion of this section is the following, which is based on a definition from \cite{ThomassenVellaContinua}:

\begin{dfn}\label{def:gls}
A \emph{ graph-like space $G$} is a 
topological space (also denoted $G$) together with 
a \emph{vertex set} $V=V(G)$, an \emph{edge set} $E=E(G)$ and for each $e \in E$ a continuous map $\iota^G_e \colon [0, 1] \to G$ (the superscript may be omitted if $G$ is clear from the context) such that:
\begin{itemize}
\itum The underlying set of $G$ is $V\sqcup [(0,1)\times E]$
\itum For any $x \in (0, 1)$ and $e \in E$ we have $\iota_e(x) = (x, e)$.
\itum $\iota_e(0)$ and $\iota_e(1)$ are vertices (called the {\em endvertices} of $e$).
\itum $\iota_e \restric_{(0, 1)}$ is an open map.
\itum For any two distinct $v, v' \in V$, there are disjoint open subsets $U, U'$ of $G$ partitioning $V(G)$ and with $v \in U$ and $v' \in U'$.
\end{itemize}

\emph{The inner points of the edge $e$} are the elements of $(0,1)\times \{e\}$.
\end{dfn}

Note that $V(G)$, considered as a subspace of $G$, is totally disconnected, and that $G$ is Hausdorff.

Let $e$ be an edge in a graph-like space with $\iota_e(0) \neq \iota_e(1)$. Then $\iota_e$ is a continuous injective map from a compact to a Hausdorff space and so it is a homeomorphism onto its image. The image is compact and so is closed, and therefore is the closure of $(0, 1) \times \{e\}$ in $G$. So in this case $\iota_e$ is determined by the topology of $G$. The same is true if $\iota_e(0) = \iota_e(1)$: in this case we can lift $\iota_e$ to a continuous map from $S^1 = [0, 1]/(0 = 1)$ to $G$, and argue as above that this map is a homeomorphism onto the closure of $(0, 1) \times \{e\}$ in $G$.
In this case, we say that $e$ is a loop of $G$.

Next we shall define maps of graph-like spaces. Let $G$ and $G'$ be graph-like spaces.
Two maps $\varphi_V:V(G)\to V(G')$ and $\varphi_E:E(G)\to (E(G')\times \{+,-\})\sqcup V(G)$
induce a function $\varphi$ sending points of $G$ to points of $G'$ as follows: a vertex $v$ of $G$ is mapped to $\varphi_V(v)$. Let $e$ be an edge, and $(r,e)$ one of its interior points.
If $\varphi_E(e)$ is a vertex, then $(r,e)$ is mapped to $\varphi_E(e)$.
If $\varphi_E(e)=(f,+)$ for some $f\in E(G')$, then $(r,e)$ is mapped to $(r,f)$.
Similarly, if $\varphi_E(e)=(f,-)$ for some $f\in E(G')$, then $(r,e)$ is mapped to $(1-r,f)$.
If a function arising in this way is continuous we call it a \emph{map of graph-like spaces}.
From this definition, it follows that if $v$ is an endvertex of $e$, then $\varphi(v)$ is either an endvertex of or equal to the image of $e$.

Let us consider some examples of graph-like spaces. 
We shall write $[0,1]$ for the unique graph-like space without loops having precisely one edge and two vertices.
There are exactly seven maps of graph-like spaces from $[0,1]$ to two copies of $[0,1]$ glued together at a vertex: four of these have one of the copies of $[0,1]$ as their image and the other three map the whole interval to a vertex.
However, none of these maps is bijective nor has an inverse, even though the underlying topological spaces are homeomorphic.

Figures \ref{fig:ladder} and \ref{fig:sierpinski} from the introduction define graph-like spaces with vertices and edges as in the figures. In each case the topology is that induced by the embedding in the plane suggested by the figures.
For a locally finite graph $G=(V,E)$, the topological space $|G|$ is a graph-like space with
vertex set $V\cup \Omega(G)$ and edge set $E$ (see \cite{DiestelBook10} for the definition of $|G|$).
Note that if $G$ is finite, then $|G|$ is homeomorphic to the geometric realisation of $G$ considered as a simplicial complex.

\begin{lem} \label{finite_gls_is_graph}
Let $G$ be a graph-like space with only finitely many edges and finitely many vertices.
Then $G$ is 
homeomorphic to $|H|$ for some finite graph $H$.
\end{lem}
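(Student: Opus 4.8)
The plan is to exhibit the finite graph $H$ explicitly and to show that the ``obvious'' bijection between $|H|$ and $G$ is a homeomorphism, using the standard fact that a continuous bijection from a compact space to a Hausdorff space is automatically a homeomorphism.

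First I would define $H$ to be the finite graph (allowing loops and parallel edges) with $V(H) = V(G)$ and $E(H) = E(G)$, where each edge $e$ joins the vertices $\iota_e(0)$ and $\iota_e(1)$. Recall that $|H|$ is the quotient of the disjoint union $V(G) \sqcup ([0,1] \times E(G))$ obtained by gluing $(0,e)$ to $\iota_e(0)$ and $(1,e)$ to $\iota_e(1)$ for each $e$; write $q$ for the quotient map. Since $H$ is finite, $|H|$ is compact. Note moreover that, as a set, $|H|$ is exactly $V(G) \sqcup ((0,1) \times E(G))$, the same underlying set as $G$.

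Next I would build the comparison map. Let $\iota \colon V(G) \sqcup ([0,1] \times E(G)) \to G$ be the map which is the identity on $V(G)$ and equals $\iota_e$ on each copy $[0,1] \times \{e\}$. It is continuous because each $\iota_e$ is continuous and the domain carries the disjoint-union topology. Moreover $\iota$ identifies exactly the pairs of points that $q$ identifies: it sends both $(0,e)$ and the vertex $\iota_e(0)$ to $\iota_e(0)$, and similarly at the other endpoint, while being injective on interior points of edges (since $\iota_e(x) = (x,e)$ there). Hence $\iota$ factors as $\iota = \varphi \circ q$ for some $\varphi \colon |H| \to G$, which is continuous by the universal property of the quotient topology. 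On underlying sets $\varphi$ is the identity of $V(G) \sqcup ((0,1) \times E(G))$, so $\varphi$ is a bijection. Finally, $\varphi$ is a continuous bijection from the compact space $|H|$ to the space $G$, which is Hausdorff (as noted after \autoref{def:gls}); therefore $\varphi$ is a homeomorphism and $G \cong |H|$.

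There is no serious obstacle here: essentially all the content is packaged into the compact-to-Hausdorff principle, and finiteness of $H$ is exactly what supplies compactness of $|H|$. The only points needing care are checking that the natural map $\iota$ descends to $|H|$ (so that one genuinely compares $G$ with the geometric realisation rather than with some abstract quotient) and confirming that the descended map is a bijection of underlying sets. I would not expect the open-map axiom of \autoref{def:gls} to play any role in this argument.
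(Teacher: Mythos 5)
Your proof is correct, but it runs in the opposite direction to the paper's, and this is a genuine difference. The paper defines the same graph $H$, then builds a map $\varphi\colon G\to|H|$ (as a map of graph-like spaces), proves $G$ is compact as a finite union of compact subspaces, and does the real work in verifying continuity of $\varphi$ at each vertex $x$ of $|H|$: it picks a neighbourhood $U'$ of $x$ meeting only incident edges, uses the remarks after \autoref{def:gls} to see that $\varphi$ restricted to each closed edge is a homeomorphism, and intersects finitely many open sets $V_e$ to get a neighbourhood of $x$ inside $\varphi^{-1}(U)$; then compact-to-Hausdorff finishes. You instead map $|H|\to G$, which makes both hard steps free: continuity comes from the universal property of the quotient (since $\iota$ is continuous on the disjoint union and its fibres are exactly the gluing classes), and compactness of $|H|$ is immediate from finiteness of $H$, so you never need to prove $G$ compact or do any local analysis at vertices. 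Both proofs invoke the same compact-to-Hausdorff principle, just with the roles of the two spaces swapped; your version is shorter and arguably cleaner. One small caveat on your closing remark: while the open-map axiom of \autoref{def:gls} indeed plays no direct role in your argument, it is not entirely absent, since your proof leans on the Hausdorffness of $G$ (the note following \autoref{def:gls}), and separating, say, two interior points of the same edge by open sets of $G$ is exactly where openness of $\iota_e\restric_{(0,1)}$ earns its keep. Since the paper asserts Hausdorffness outright, citing it as you do is legitimate.
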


\begin{proof}
$G$ is compact, since it is a union of finitely many compact subspaces.
Let $H$ be the graph with edge set $E(G)$ and vertex set  $V(G)$, and in which $v$ is an endpoint of $e$ if and only if this is true in $G$.
We now construct a map $\varphi \colon G \to |H|$ as follows:
taking $\varphi_V$ to be the identity and $\varphi_E$ to be the function sending each edge $e$ to $(e,+)$, we build $\varphi$ as in the definition of a map of graph-like spaces. 

It remains to show that the function $\varphi$ is continuous:
since it is a bijection from 
a compact to a Hausdorff space, it will then be a homeomorphism. 
We begin by noting that for any $e \in E(G)$, the restriction of $\varphi$ to the image of $\iota^G_e$ is a homeomorphism, by the remarks following \autoref{def:gls}.
Now we need to show for any $x\in |H|$ that the inverse image of any open neighbourhood $U$ of $\varphi(x)$ includes an open neighbourhood of $x$.
If $x$ is an interior point of an edge, this is clear.
Otherwise, $x$ is a vertex of $|H|$. Then there is an open neighbourhood $U' \se U$ of $x$ which only meets edges incident with $x$. For each such edge $e$, since the restriction of $\varphi$ to the image of $\iota^G_e$ is a homeomorphism, there is an open set $V_e$ of $G$ with $V_e \cap \im(\iota_e^G) = \varphi^{-1}(U') \cap \im(\iota_e^G)$. Letting $V$ be the intersection of the $V_e$, we obtain that $V$ is an open neighbourhood of $x$ included in $\varphi^{-1}(U)$, completing the proof that $\varphi$ is continuous.
\end{proof}

All the above examples of graph-like spaces will turn out to induce matroids.
Before we can make this more explicit, we must first introduce the notions of topological circuits and bonds in a graph-like space. The discussion of topological circuits will be delayed until the next section, but we will introduce topological bonds now.

\begin{dfn}\label{def:top_bond}
Given a pair of disjoint open subsets of a graph-like space $G$ partitioning the vertices, 
we call the set of those edges having an endvertex in both sets
\emph{a topological cut} of $G$.
A \emph{topological bond} of $G$ is a minimal nonempty topological cut of $G$.
\end{dfn}

Given a graph-like space $G$ and a set of edges $R \se E(G)$, we define the graph-like space $G\restric_R$, the {\em restriction} of $G$ to $R$, to have the same vertex set as $G$ and edge set $R$. Then the ground set of $G \restric_R$ is a subset of that of $G$, and we give it the subspace topology.  Evidently, for any topological cut $b$ of $G$, $b \cap R$ is a topological cut of $G \restric_R$.
The \emph{deletion} of $D$ from $G$, denoted by $G\backslash D$, is $G\restric_{(E\sm D)}$.  
We abbreviate $G\backslash \{e\}$ by $G-e$. The inclusion map $g_D$ from $G\backslash D$ to $G$ is a map of graph-like spaces.

Note that $G \restric R$ has the same vertex set as $G$, even though only the vertices in the closure of $(0,1) \times R$ play an important role in the new space. By analogy to the notation of \cite{DiestelBook10}, we also introduce a notation for the graph-like space whose edges are those in $R$ but whose vertices are those in the closure of $(0, 1) \times R$. We will call this subspace the \emph{standard subspace with edge set $R$}, and denote it $\overline R$. 

Given a graph-like space $G$ and $C\subseteq E(G)$, 
we define the \emph{contraction} $G/C$ of $G$ onto $C$ as follows:

Let $\equiv_C$ be the relation on the vertices of $G$ defined by $u\equiv_C v$ if every topological cut with $u$ and $v$ in different parts meets $C$. It is easy to check that $\equiv_C$ is an equivalence relation. The vertex set of $G/C$ is the set of $\equiv_C$-equivalence classes, and the edge set is $E(G)\setminus C$.

It remains to define the topology of $G/C$.
We shall obtain this as the quotient topology derived from a function $f_C\colon G\to G/C$, to be defined next.

The function $f_C$ sends each vertex to its $\equiv_C$-equivalence class and is bijective on the interior points of edges of $E\sm C$. The two endpoints of an edge in $C$ are in the same equivalence class, and we send all of its interior points to that equivalence class.

Taking this quotient topology ensures that $G/C$ is a graph-like space, and makes $f_C$ a map of graph-like spaces.
In $G/C$, the endpoints of an edge are the equivalence classes of its endpoints in $G$. For any topological cut $b$ of $G$ with $b \cap C = \emptyset$, the two sides of $b$ are closed under $\equiv_C$ by definition, and so $b$ is also a topological cut in $G/C$.
 
We define $G.X:=G/(E\sm X)$ and $G/e:=G/\{e\}$.
It is straightforward to check for disjoint sets $C$ and $D$ that $(G\backslash D)/C$ and $(G/C)\backslash D$ are equal and the following diagram commutes.

$$\xymatrix{G \backslash D \ar[d]_{f_C} \ar[r]^{g_D} & G \ar[d]^{f_C} \\ G / C \backslash D \ar[r]_{g_D} & G / C \\}$$

Contraction behaves especially well when applied to one side of a topological cut.

\begin{lem}\label{cont_cut}
Let $U$ and $V$ be disjoint open sets inducing a topological cut of a graph-like space $G$. Let $C$ be the set of edges of $G$ that have both end-vertices in $U$. Then no pair of distinct vertices $u$ and $v$ of $G$ with $v \in V$ get identified in the contraction $G/C$
\end{lem}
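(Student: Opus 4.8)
The plan is to show directly that for any two distinct vertices $u,v$ with $v\in V$ there is a topological cut separating $u$ and $v$ that is disjoint from $C$; by the definition of $\equiv_C$ this gives $u\not\equiv_C v$, which is exactly what we need. The guiding observation is that, since every edge of $C$ has both of its endvertices in $U$, any topological cut induced by a partition of the vertices in which the whole of $U$ lies on a single side must automatically avoid $C$ (no such edge can have an endvertex on each side). So it suffices to produce such a ``one-sided-in-$U$'' separating cut, and I would split into two cases according to where $u$ lies.

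First I would dispose of the case $u\in U$. Here the given pair $(U,V)$ already does the job: its induced cut separates $u\in U$ from $v\in V$, and since each edge of $C$ has both endvertices in $U$ and hence none in $V$, no edge of $C$ lies in this cut. Thus $u\not\equiv_C v$.

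The remaining, main case is $u\in V$ (with $v\in V$ and $u\neq v$). Here I would invoke the separation property in the last clause of \autoref{def:gls} to obtain disjoint open subsets $W,W'$ of $G$ partitioning $V(G)$ with $u\in W$ and $v\in W'$. I then set $A:=W\cap V$ and $B:=(W'\cap V)\cup U$. One checks that $A$ and $B$ are open in $G$, that they partition $V(G)$, and that $u\in A$ while $v\in B$. Crucially $U\subseteq B$, so every edge of $C$ has both endvertices in $B$; hence the topological cut induced by $(A,B)$ separates $u$ from $v$ and meets no edge of $C$, again giving $u\not\equiv_C v$.

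The only delicate point --- and the place I would be most careful --- is verifying that $A$ and $B$ really are \emph{disjoint open subsets of the space $G$} (not merely of $V(G)$), since this is what the definition of a topological cut demands. Openness is immediate, $A$ being an intersection and $B$ a union of the open sets $U,V,W,W'$. Disjointness follows because $(W\cap V)\cap(W'\cap V)\subseteq W\cap W'=\varnothing$ and $(W\cap V)\cap U\subseteq V\cap U=\varnothing$, while the partition property comes from $A\cup B=((W\cup W')\cap V)\cup U=V\cup U=V(G)$. Everything else is routine bookkeeping, so this case analysis completes the proof.
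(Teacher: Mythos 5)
Your proof is correct and takes essentially the same approach as the paper: both use the vertex-separation property from \autoref{def:gls} to obtain a partition separating $u$ from $v$, then combine it with $(U,V)$ so that all of $U$ lies on one side, producing a cut disjoint from $C$. The paper's single construction $(U\cup U',\, V\cap V')$ (with $u\in U'$, $v\in V'$) handles both of your cases at once, so your case split on whether $u\in U$ is a harmless redundancy rather than a different method.
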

\begin{proof}
Since $u$ and $v$ are distinct vertices of $G$, there are disjoint open sets $U'$ and $V'$ partitioning the vertices of $G$ with $u \in U'$ and $v \in V'$. Then $(U \cup U', V \cap V')$ is a pair of disjoint open subsets of $G$ partitioning the vertices of $G$. The cut it induces does not meet $C$, and it has $u$ and $v$ on opposite sides, so it witnesses that $u$ and $v$ are not identified in $G/C$.
\end{proof}

Thus the singleton $\{v\}$ of each vertex $v$ in $V$ gives a vertex of $G/C$. To simplify our notation, we will at times write as though the vertices in $G/C$ on the $V$ side of the cut were the same as those of $G$, rather than their singletons. 

It is clear that after contracting all the edges on one side of a bond there remain no edges with both end-vertices on that side of the bond. In this case, we can show that there is only one interesting vertex left on that side of the bond.

\begin{lem}\label{id_bond}
Let $U$ and $V$ be disjoint open sets inducing a topological bond $b$ of a graph-like space $G$. Suppose that there are no edges of $G$ with both end-vertices in $V$. Then there is at most one vertex of $V$ which is an endpoint of an edge of $G$.
\end{lem}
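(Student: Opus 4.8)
The plan is to argue by contradiction, building a strictly smaller nonempty topological cut out of $b$ and thereby contradicting its minimality as a bond (\autoref{def:top_bond}). This is in the same spirit as the refinement trick used in \autoref{cont_cut}: given two candidate endpoints in $V$, I would separate them using the vertex-separation axiom for graph-like spaces and then recombine the resulting open sets with $U$ and $V$ so as to isolate the edges at just one of the two vertices.

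Concretely, suppose for contradiction that $v_1$ and $v_2$ are two distinct vertices of $V$, each an endvertex of some edge; say $e_1$ is incident with $v_1$ and $e_2$ with $v_2$. By the last clause of \autoref{def:gls}, there are disjoint open sets $U'$ and $V'$ partitioning $V(G)$ with $v_1\in U'$ and $v_2\in V'$. I would then set $A:=U\cup(V\cap U')$ and $B:=V\cap V'$. Both are open, and a quick check (using that $U,V$ partition $V(G)$ and that $U',V'$ do too) shows that $A$ and $B$ are disjoint open sets partitioning $V(G)$, hence induce a topological cut $b'$ of $G$.

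The heart of the argument is to verify three facts about $b'$, each resting on the hypothesis that no edge of $G$ has both endvertices in $V$. First, $b'$ is \emph{nonempty}: since $v_2\in V$, the other endvertex of $e_2$ is forced into $U\se A$, while $v_2\in B$, so $e_2\in b'$. Second, $b'\se b$: any edge of $b'$ has an endvertex in $B\se V$, and its other endvertex lies in $A$ but cannot lie in $V\cap U'$ (that would place both endvertices in $V$), so it lies in $U$; hence the edge crosses between $U$ and $V$ and lies in $b$. Third, $b'\subsetneq b$: the edge $e_1$ lies in $b$ (its endvertex $v_1\in V$, the other forced into $U$), yet $v_1\in V\cap U'\se A$ and its other endvertex lies in $U\se A$, so both endvertices of $e_1$ lie in $A$ and therefore $e_1\notin b'$. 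Thus $b'$ is a nonempty topological cut strictly contained in $b$, contradicting the minimality of the bond $b$.

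The only real obstacle is bookkeeping: making sure the recombined sets $A$ and $B$ are genuinely open and genuinely partition the vertices, and then tracking each relevant edge's two endvertices through the definitions so that the hypothesis ``no edge has both endvertices in $V$'' is invoked at exactly the right two points — to force the far endvertex of each edge of $b'$ into $U$, and to force the far endvertex of $e_1$ into $U$ as well (this last is precisely what excludes $e_1$ from $b'$ and makes the inclusion proper). No step should require any genuinely new idea beyond this careful tracking.
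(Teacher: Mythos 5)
Your proof is correct and is essentially the paper's own argument: the paper likewise separates the two vertices by disjoint open sets $U'$ and $V'$ and takes the cut induced by $U\cup U'$ and $V\cap V'$, which agrees on the vertices with your $A=U\cup(V\cap U')$ and $B=V\cap V'$ and hence induces the same cut $b'$. Your three verifications (nonemptiness via $e_2$, containment $b'\se b$, and properness via $e_1$) match the paper's, with the use of the no-edge-inside-$V$ hypothesis spelled out more explicitly than in the paper's terser proof.
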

\begin{proof}
Suppose that there are two such vertices $v_e$, an endpoint of $e$, and $v_f$, an endpoint of $f$. Since $v_e \neq v_f$, there are disjoint open subsets $U'$ and $V'$ of $G$ partitioning the vertices with $v_e \in U'$ and $v_f \in V'$. Then $U \cup U'$ and $V \cap V'$ induce a topological cut $b'$ of $G$. Furthermore, $b'$ is nonempty since $f \in b'$, is a subset of $b$ since any edge of $b'$ has one endpoint in $V$, and is a proper subset since $e \in b \setminus b'$. This contradicts the minimality of $b$. 
\end{proof}

\section{Pseudoarcs and Pseudocircles}\label{sec4}

When investigating a topological space, it is common to consider arcs in that space, that is, continuous injections from the unit interval to that space. We must consider maps from a slightly more general kind of domain.
A \emph{pseudo-line} is a compact and connected graph-like space together with a start-vertex 
$s$ and an end-vertex $t$ satisfying the following.
\begin{enumerate}
\item Removing any edge separates $s$ from $t$.
\item Any two vertices can be separated by removing a single edge.
\end{enumerate}

Note that 1 implies that $s\neq t$ provided that the pseudo-line has an edge.
A \emph{pseudo-path from $v$ to $w$ in a graph-like space $G$} is 
a map of graph-like spaces from a pseudo-line to $G$ sending the start-vertex to $v$
and the end-vertex to $w$.
A \emph{pseudo-arc} is an injective pseudo-path.
Note that any pseudo-arc is a homeomorphism onto its image since the 
domain is compact and the codomain is Hausdorff. Thus we will also refer to the images of pseudo-arcs as pseudo-arcs. In particular, a {\em pseudo-arc in} a graph-like space $G$ is the image of such a map (in other words, it is a subspace of $G$ which is also a pseudo-line).

For example, the unit interval $[0,1]$ is a pseudo-line 
with one edge and two vertices.
Note however that not every path in a graph-like space
is necessarily a pseudo-path since a pseudo-path must be a map of graph-like spaces.
For example, $f:[0,1]\to[0,1]$ with $f(x)=x(1-x)$ is a path but not a pseudo-path.

Any arc joining two vertices in a graph-like space may be given the structure of a pseudo-arc,
since its image is a pseudo-line. All pseudo-lines arising in this way are homeomorphic to the unit interval but not 
every pseudo-line has this property.
In fact, we shall show that for any linearly ordered set $P$ we can build an associated pseudo-line $L(P)$
whose edge set is $P$. In particular, there are pseudo-lines of arbitrarily large cardinality.

To construct $L(P)$, we take as our vertex set $V$ the set of initial segments of $P$. 
Next, we take a subbasis of the topology to consist of 
the sets of the type $S(p,r)^+$ or $S(p,r)^-$ defined below.

For every  $p\in P$ and $r\in (0,1)$, let $S(p,r)^-$
contain precisely those vertices which do not contain $p$.
Furthermore, let  $S(p,r)^-$ contain all interior points of edges $x$
with $x < p$ together with $(0,r) \times \{p\}$.

Similarly, let $S(p,r)^+$
contain precisely those vertices which contain $p$.
Furthermore, let  $S(p,r)^+$ contain all interior points of edges $x$
with $x > p$ together with $(r,1) \times \{p\}$.
For the start-vertex we take $\emptyset$ and for the end-vertex we take $P$.

\begin{claim}\label{lin_are_pline}
The space $L(P)$ defined above is a pseudo-line for any linearly ordered set $P$.
\end{claim}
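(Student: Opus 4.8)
The plan is to equip the underlying set of $L(P)$ with a natural linear order $\preceq$, show that the topology generated by the given subbasis is exactly the order topology of $\preceq$, and then read off every required property from order-theoretic facts about $(L(P),\preceq)$. First I would record the basic structural fact that the initial segments of $P$ form a chain under $\subseteq$ (if two were incomparable, comparing a witness in each difference contradicts being downward closed), so that for any initial segment $I$ and any $p\in P$ we have $p\notin I \iff I\subseteq p^{<}$ and $p\in I \iff p^{\le}\subseteq I$, where $p^{<}:=\{y:y<p\}$ and $p^{\le}:=\{y:y\le p\}$. Using this I define $\preceq$ by: on vertices it is $\subseteq$; an interior point $(x,p)$ lies strictly above every vertex $I$ with $p\notin I$ and strictly below every vertex with $p\in I$; and interior points are ordered first by their edge (in $P$) and then by their coordinate. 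The edge map is then forced, with $\iota_p(0)=p^{<}$ and $\iota_p(1)=p^{\le}$ (these are distinct, so there are no loops). A short computation gives $S(p,r)^- = \{z : z\prec (r,p)\}$ and $S(p,r)^+=\{z: z\succ (r,p)\}$, so the subbasic sets are precisely the open rays of $\preceq$ with interior-point endpoints; conversely each ray with a vertex endpoint $I$ is a union of such rays (e.g. $(\leftarrow,I)=\bigcup\{S(p,r)^- : p\in I,\ r\in(0,1)\}$, checked using the chain fact), so the two topologies coincide.

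With this identification the graph-like axioms become routine. Continuity of $\iota_p$ follows by checking that each $\iota_p^{-1}(S(q,r)^\pm)$ equals one of $[0,1]$, $\emptyset$, $[0,r)$ or $(r,1]$; openness of $\iota_p\restric_{(0,1)}$ follows from $\iota_p((a,b))=S(p,a)^+\cap S(p,b)^-$; and the vertex-separation axiom is immediate, since for distinct vertices $I\subsetneq J$ and any $p\in J\sm I$ the sets $S(p,r)^-$ and $S(p,r)^+$ are disjoint, open, partition $V(L(P))$ according to whether they contain $p$, and have $I$ and $J$ on opposite sides.

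The substantial step is compactness, which I would obtain from the standard theorem that a linearly ordered space is compact in its order topology iff the order is complete, equivalently iff it has a least and a greatest element and no gaps. The bottom is $\emptyset$ and the top is $P$, so the work is to exclude gaps. Given a partition $L(P)=A\sqcup B$ into a down-set with no maximum and an up-set with no minimum, I would set $K:=\{p\in P : p^{\le}\in A\}$, verify it is an initial segment, and show every point $\prec K$ lies in $A$ while every vertex $\succ K$ lies in $B$. If $K\in B$ then $K$ is the minimum of $B$, a contradiction; if $K\in A$, then since $A$ has no maximum its elements above $K$ must be interior points of the single edge $q=\min(P\sm K)$ (the chain fact excludes all other edges), and analysing the down-set $\{x:(x,q)\in A\}\subseteq(0,1)$ produces a minimum of $B$ (either some $(x_0,q)$ or the vertex $q^{\le}$), again a contradiction. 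This case analysis is where I expect the main difficulty to lie.

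Finally, connectedness follows from the companion fact that a linearly ordered space is connected iff its order is complete and dense, and $\preceq$ is dense because between any two points one can interpolate an interior point (between vertices $I\subsetneq J$ take $(\tfrac12,p)$ with $p\in J\sm I$; the remaining cases are similar). For the pseudo-line conditions, deleting an edge $p$ removes exactly the fibre $(0,1)\times\{p\}$, so $S(p,r)^-$ and $S(p,r)^+$ restrict to disjoint open sets covering $L(P)-p$ and containing $\emptyset$ and $P$ respectively, which separates the endvertices and gives condition~1 for every edge. For condition~2, given distinct vertices $I\subsetneq J$ I pick $p\in J\sm I$ and use the same separation, which now places $I$ and $J$ on opposite sides. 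Hence $L(P)$ is a compact connected graph-like space satisfying both conditions, that is, a pseudo-line.
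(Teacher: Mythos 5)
Your proof is correct, but it takes a genuinely different route from the paper. The paper argues directly in the given topology: connectedness via a clopen-set argument (tracking the vertex $v=\{p\in P\mid S(p,1/2)^-\subseteq U\}$), and compactness via Alexander's subbase lemma, explicitly extracting a finite subcover from a subbasic cover. You instead observe that $L(P)$ is a linearly ordered topological space: the subbasic sets $S(p,r)^{\pm}$ are exactly the open rays with interior-point endpoints, rays with vertex endpoints are unions of these (using that the initial segments of $P$ form a chain), so the given topology is the order topology of $\preceq$; then compactness and connectedness follow from the standard characterisations of compact and connected orderable spaces (completeness with endpoints, respectively completeness plus density), with the only real work being the gap-exclusion analysis around $K=\{p:p^{\le}\in A\}$ and the edge $q=\min(P\sm K)$. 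Your approach buys conceptual clarity and more: the order $\preceq$ you construct is essentially the order $\leqq_L$ that the paper only introduces later via \autoref{pline_lin_order}, and the LOTS identification yields facts like \autoref{min_element} (closed sets have $\leqq_L$-extrema) for free from completeness; the cost is reliance on an imported (if standard) theorem, where the paper's Alexander-lemma computation is self-contained. One small slip in your sketch: in the subcase where the down-set $T=\{x:(x,q)\in A\}$ attains its supremum $x_0$, the point $(x_0,q)$ is a \emph{maximum of $A$} rather than a minimum of $B$ (the points $(y,q)$ with $y>x_0$ have no least element), so the contradiction there is with $A$ having no maximum; the other subcases do produce $\min B$ as either $(x_0,q)$ or $q^{\le}$ as you say, and the argument goes through once this case is stated correctly.
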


\begin{proof}
It is clear that $L(P)$ is a graph-like space
where the edge $p$ has end vertices $\iota_{p}(0)=\{x\in P|x<p\}$
and $\iota_{p}(1)=\{x\in P|x\leq p\}$.
Furthermore, it is clear that $L(P)$ satisfies conditions 1 and 2 in the definition of pseudo-lines.

To show that $L(P)$ is a pseudo-line, it remains to check that it is connected and compact.
For the  connectedness, 
let $U$ be an open and closed set containing the start-vertex $\emptyset$.
Since for any edge $e$ the subspace topology of $\iota_e([0,1])$ is that of $[0,1]$, which is connected,
the set $\iota_e([0,1])$ is either completely included in $U$ or disjoint from $U$.
Let $v=\{p\in P|S(p,1/2)^-\subseteq U\}$.
Then the vertex $v$ is in $U$ since any neighbourhood of it meets $U$ (even if $v=\emptyset$).  
So since $U$ is open, it includes an open neighbourhood $O$ of $v$.
Since by our earlier remarks $U$ includes all edges $p \in v$ and so also all vertices $w \se v$, we may assume without loss of generality that either $v = P$ or else  $O$ has the form $S(p, r)^-$ for some $p \not \in v$. In the second case we conclude that $p\in v$, which is impossible.
Hence $v=P$. Since the closure of $\bigcup_{p\in P}\iota_p((0,1))$
is the whole of $L(P)$, the closed set $U$ is the whole of $L(P)$.
Hence $L(P)$ is connected, as desired.

It remains to show that $L(P)$ is compact.
By Alexander's theorem, it suffices to check that any open cover by subbasic open
elements has a finite subcover.
Let $L(P)=\bigcup_{i\in I^+} S(p_i,r_i)^+\cup \bigcup_{i\in I^-} S(p_i,r_i)^-$ be an open cover by subbasic open sets.
Let $v=\{p\in P|\exists i\in I^-: p< p_i  \}$.

First we consider the case where there is some $i\in I^+$ with $v\in S(p_i,r_i)^+$.
Then $p_i\in v$, so there is some $j\in I^-$ such that $p_i<p_j$.
This means that $S(p_i,r_i)^+$ and $S(p_j,r_j)^-$ cover $L(P)$.

Otherwise there is some $i\in I^-$ with $v\in S(p_i,r_i)^-$.
Then $p_i\notin v$ and so $p_i$ is maximal amongst the $p_j$ with $j\in I^-$.
Thus $v+ p_i$ is contained in some $S(p_k,r_k)^+$ with $k\in I^+$.
Then $S(p_i,r_i)^-$ and $S(p_k,r_k)^+$, together with some finite 
collection of sets from our cover covering the compact subspace $\iota_{p_i}([0,1])$,
form a finite subcover, completing the proof.
\end{proof}

\begin{eg}
 If $P=\omega_1$, then $L(P)$ is the {\em long line}, which is not homeomorphic to $[0,1]$.
\end{eg}

We shall show below that if a nontrivial pseudo-line
has countably many edges, then it is homeomorphic to the unit interval.
This will follow from the fact that every pseudo-line can be constructed from a linear order in the above way.
To get this, we define for every pseudo-line $L$ a linear order on its edges via
$e<_L f$ if and only if $e$ is in the component of $L-f$ containing the start-vertex.

\begin{lem}\label{anti}
 $e<_Lf$ if and only if $f$ is in the  component of $L-e$ containing the end-vertex of $L$.
In particular, $<_L$ is antisymmetric.
\end{lem}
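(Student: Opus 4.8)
The plan is to reduce the lemma to a clean statement about the two ``sides'' of each edge, and then to a short separation argument inside the connected space $L$.

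First I would establish the basic structural fact that for every edge $g$ of $L$ the space $L-g$ has \emph{exactly two} components. Writing $C_s(g)$ and $C_t(g)$ for the components containing the start-vertex and the end-vertex, I claim that $L = C_s(g) \sqcup C_t(g) \sqcup \iota_g((0,1))$ and that $C_s(g)$ and $C_t(g)$ are both closed in $L$. Since $\iota_g((0,1))$ is open in $L$, the space $L-g$ is closed, hence compact, and $L$ is recovered by gluing the connected arc $\iota_g([0,1])$ back on. As this arc meets $L-g$ only in the two endpoints of $g$, reattaching it can merge at most the two components of $L-g$ containing those endpoints; because $L$ is connected and (by condition 1) $L-g$ is disconnected, it follows that $L-g$ has exactly these two components, that the endpoints of $g$ lie in different ones, and that one contains $s$ and the other $t$. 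Each component is closed in $L-g$, hence in $L$.

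Granting this, for distinct edges $e,f$ the set $\iota_e((0,1))$ is connected and disjoint from $f$, so it lies in exactly one of $C_s(f)$, $C_t(f)$; thus $e<_Lf$ if and only if $e\in C_s(f)$, equivalently $e\notin C_t(f)$, and symmetrically for $f$ with respect to $L-e$. Hence the asserted equivalence ``$e<_Lf$ iff $f\in C_t(e)$'' is precisely the statement that exactly one of $e<_Lf$ and $f<_Le$ holds, from which antisymmetry is immediate. It therefore suffices to prove that $e<_Lf$ and $f<_Le$ can neither both hold nor both fail. For the first, suppose $e\in C_s(f)$ and $f\in C_s(e)$ and set $S=C_s(e)\cup C_s(f)$. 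Using $L=C_s(e)\sqcup C_t(e)\sqcup\iota_e((0,1))$ and the analogue for $f$, together with $\iota_e((0,1))\subseteq C_s(f)$, $\iota_f((0,1))\subseteq C_s(e)$ and $\iota_e((0,1))\cap\iota_f((0,1))=\emptyset$, a short computation gives $L\setminus S=C_t(e)\cap C_t(f)$. Now $S$ is a union of two closed sets and $C_t(e)\cap C_t(f)$ an intersection of two closed sets, so both are closed; they are complementary and each is nonempty (containing $s$ and $t$ respectively), contradicting the connectedness of $L$. The remaining claim, that $e<_Lf$ and $f<_Le$ cannot both fail, is exactly the image of this argument under interchanging the start- and end-vertex: this swap exchanges $C_s$ with $C_t$ throughout while preserving every hypothesis used, and turns ``not both $e\in C_s(f)$ and $f\in C_s(e)$'' into ``not both $e\in C_t(f)$ and $f\in C_t(e)$''.

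The main obstacle is the structural first step: pinning down that deleting an edge yields exactly two \emph{closed} components, rather than merely a separation of $s$ from $t$ that might leave further components around. Once the two sides $C_s(g)$ and $C_t(g)$ are available as closed complementary pieces of $L-g$, the equivalence and antisymmetry follow from the purely set-algebraic separation argument above, which is routine.
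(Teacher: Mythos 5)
Your proof is correct, and it takes a genuinely different route from the paper's for the core of the argument. Both proofs begin with the same structural fact, that $L-g$ has exactly two components, one containing $s$ and one containing $t$, with the endvertices of $g$ split between them; the paper derives this by showing that every nontrivial topological separation of $L-f$ must place $\iota_f(0)$ and $\iota_f(1)$ on opposite sides, while you argue by compactness and gluing $\iota_g([0,1])$ back on. (In both versions the passage to ``at most two components'' silently uses that in a compact Hausdorff space components coincide with quasi-components: a hypothetical third component is closed but not obviously open, and one must surround it with a clopen set avoiding both endvertices of $g$. Your invocation of compactness is exactly what makes this work, though your ``it follows that'' is as quick as the paper's corresponding step, so no complaint there.) After this the proofs diverge. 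The paper argues directly: if $e<_Lf$, then $e$ lies on the $s$-side of $L-f$, so the $t$-side component survives intact in $L-e-f$, still containing $t$ and the endvertex $k_t$ of $f$; reattaching the interior of $f$ then places $f$ in the $t$-component of $L-e$, and the converse is symmetric. You instead observe that, given the dichotomy, the lemma is precisely the statement that for distinct $e,f$ exactly one of $e<_Lf$ and $f<_Le$ holds, and prove the two exclusions by a clopen-partition contradiction: if both held, then $S=C_s(e)\cup C_s(f)$ and $L\setminus S=C_t(e)\cap C_t(f)$ would be complementary nonempty closed sets (your computation of $L\setminus S$ checks out, using $\iota_e((0,1))\subseteq C_s(f)$, $\iota_f((0,1))\subseteq C_s(e)$, and the fact that components of the closed subspace $L-g$ are closed in $L$), contradicting connectedness; the other exclusion follows by the $s\leftrightarrow t$ reversal, which is legitimate since reversing a pseudo-line yields a pseudo-line. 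What each approach buys: the paper's double-deletion bookkeeping is reused almost verbatim to prove transitivity in \autoref{trans}, so it earns its keep there; your version packages the lemma more transparently as asymmetry-plus-totality of the two sides and replaces component-tracking with a one-line separation argument, at the cost of not transferring as directly to the three-edge configuration that transitivity requires.
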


\begin{proof}
Let $s$ and $t$ be the start- and end-vertices of $L$, respectively.
For any nontrivial topological separation $(U, U')$ of $L -  f$, both sides of the topological separation must meet the closure of $\iota_f((0,1))$. If for example $U$ did not, then $(U, U' \cup \iota_f((0,1)))$ would be a nontrivial topological separation of $L$, since $\iota_f((0,1))$ is open. So one side must contain $\iota_f(0)$ and the other must contain $\iota_f(1)$.
Thus there are at most two components of $L-f$, and there are
exactly two since $L-f$ is disconnected.

Each of $\iota_f(0)$ and $\iota_f(1)$ lies in one of these components.
Let $k_s$ be the one in the same component as $s$ and $k_t$ the one in the same component as $t$.

If $e<_Lf$, then $e$ is in the other component than $k_t$.
So even in $L-e-f$, the vertices $t$ and $k_t$ are in the same component.
Hence in $L-e$, the edge $f$ is in the same component as $t$, since it is in the same component as $k_t$.
The reverse implication is symmetric, completing the proof.
\end{proof}

\begin{lem}\label{trans}
The relation  $<_L$ is transitive.
\end{lem}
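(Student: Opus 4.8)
The plan is to reduce everything to a single connectedness argument inside a deletion, using \autoref{anti} as a dictionary between the ``start-side'' and ``end-side'' descriptions of $<_L$. First I would fix notation based on the proof of \autoref{anti}: for each edge $f$ the space $L-f$ has exactly two components, one containing the start-vertex $s$ and one of the two endvertices $\iota_f(0),\iota_f(1)$, and one containing the end-vertex $t$ together with the other endvertex. Call these the \emph{$s$-component} and the \emph{$t$-component} of $L-f$. With this language, $e<_L f$ means precisely that $\iota_e((0,1))$ lies in the $s$-component of $L-f$, while \autoref{anti} says equivalently that $\iota_g((0,1))$ lies in the $t$-component of $L-f$ exactly when $f<_L g$.

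Now suppose $e<_L f$ and $f<_L g$; I want to conclude $e<_L g$. I would work entirely in $L-f$, writing $A$ for its $s$-component and $B$ for its $t$-component. From $e<_L f$ we have $\iota_e((0,1))\se A$, and from $f<_L g$ together with \autoref{anti} we have $\iota_g((0,1))\se B$. The key object is the set $W:=B\cup \iota_f([0,1])$. Since $f$ is not a loop (its two endvertices lie in the distinct components $A$ and $B$), the set $\iota_f([0,1])$ is a homeomorphic copy of $[0,1]$ meeting $B$ in the endvertex of $f$ lying in $B$; hence $W$ is connected as the union of two connected sets with a common point. Moreover $W$ avoids $\iota_e((0,1))$: the latter lies in $A$, which is disjoint from $B$ and meets $\iota_f([0,1])$ only in a vertex, whereas $\iota_e((0,1))$ contains no vertices and no interior point of $f$. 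Thus $W$ is a connected subset of $L-e$ containing both $t$ (as $t\in B$) and $\iota_g((0,1))$, so $g$ lies in the $t$-component of $L-e$. Applying \autoref{anti} once more, in the form ``$e<_L g$ iff $g$ is in the component of $L-e$ containing $t$,'' gives $e<_L g$, as required.

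The main thing to get right is not a computation but the bookkeeping of sides: the argument only closes up because \autoref{anti} lets me read $f<_L g$ as a statement about the $t$-component of $L-f$ (so that $g$ lands in $B$) and then read the conclusion $e<_L g$ off the $t$-component of $L-e$. I would therefore state these two translations explicitly before building $W$, so that no case analysis on which endvertex of $f$ lies in $B$ is needed --- the connectedness of $W$ and its disjointness from $\iota_e((0,1))$ hold regardless. The only points requiring care, and the places I expect the slight friction to be, are verifying that $f$ is genuinely a non-loop edge (guaranteed because its endvertices separate into $A$ and $B$) and that adding the closed edge $\iota_f([0,1])$ to $B$ never reintroduces a point of $e$; both follow immediately from the disjointness of interiors of distinct edges and the fact that interior points of $e$ are not vertices.
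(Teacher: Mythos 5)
Your proof is correct and is essentially the paper's argument in mirror image: the paper places $e$ in the $s$-component and $g$ in the $t$-component of $L-f$ via \autoref{anti} and then uses the $s$-component directly as a connected subset of $L-g$ witnessing $e<_L g$ from the definition, whereas you work on the $t$-side and close with a second application of \autoref{anti}. One minor remark: adjoining $\iota_f([0,1])$ to $B$ is superfluous, since $B$ alone is already a connected subset of $L-e$ containing $t$ and $\iota_g((0,1))$.
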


\begin{proof}
Let $e<_Lf$ and $f<_Lg$ and let $s$ and $t$ be the start- and end-vertices of $L$, as above.
By \autoref{anti}, in $L-f$ the edge $e$ is in the same component as $s$ whereas $g$ is in the same component as $t$. This means that in $L-g$ the edge $e$ is still in the same component as $s$, as desired.
\end{proof}

\begin{lem}\label{L_is_LP}
If $P$ is linearly ordered by $<$, then $<$ and the linear order $<_{L(P)}$ defined above
coincide.
\end{lem}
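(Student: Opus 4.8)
The plan is to prove the two orders agree edge by edge: for distinct $p,q\in P$ I will show that $p<_{L(P)}q$ if and only if $p<q$. Unwinding the definition of $<_{L(P)}$, this amounts to identifying, for a fixed edge $q$, the component of $L(P)-q$ that contains the start-vertex $\emptyset$, and checking that an edge $p$ lies in that component precisely when $p<q$. The guiding idea is that the subbasic open sets $S(q,r)^-$ and $S(q,r)^+$ already cut $L(P)$ along the edge $q$, so after deleting the inner points of $q$ they should exhibit exactly the separation I want.

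Concretely, I would fix any $r\in(0,1)$ and set $A^-:=S(q,r)^-\cap(L(P)-q)$ and $A^+:=S(q,r)^+\cap(L(P)-q)$. The key routine observation is that deleting the edge $q$ removes precisely the points $(0,r)\times\{q\}$ and $(r,1)\times\{q\}$ occurring in the definitions of $S(q,r)^-$ and $S(q,r)^+$; hence $A^-$ does not depend on $r$ and consists exactly of the initial segments not containing $q$ together with the inner points of the edges $x<q$, while $A^+$ consists of the initial segments containing $q$ together with the inner points of the edges $x>q$. Since $<$ is a linear order, every edge other than $q$ is either below or above $q$, and every initial segment either contains $q$ or does not; so $A^-$ and $A^+$ are disjoint, cover $L(P)-q$, and (being intersections of subbasic open sets with the subspace) are open, hence clopen in $L(P)-q$.

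To finish, I would invoke the fact, established in the proof of \autoref{anti}, that $L(P)-q$ has exactly two components, one containing the start-vertex $\emptyset$ and one containing the end-vertex $P$. Because $\emptyset\in A^-$ and $P\in A^+$ while both sets are clopen, the component of $\emptyset$ is contained in $A^-$ and that of $P$ in $A^+$; as the two components partition $L(P)-q$ exactly as $A^-$ and $A^+$ do, the component of the start-vertex must equal $A^-$. An edge $p\ne q$ lies in this component iff its inner points lie in $A^-$, that is, iff $p<q$, which by the definition of $<_{L(P)}$ is precisely the assertion $p<_{L(P)}q$. This yields the desired coincidence of orders.

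I expect no serious obstacle, since the real topological content was already discharged in \autoref{lin_are_pline} and \autoref{anti}, and this lemma is essentially a read-off from the construction. The only point needing care is the bookkeeping in the description of $A^-$ and $A^+$: one must confirm that the overlap points on edge $q$ are exactly what the deletion removes, so that two merely open subbasic sets become a genuine clopen partition. I deliberately offload the connectedness of each side to \autoref{anti} rather than re-running the Alexander-subbase argument from \autoref{lin_are_pline}.
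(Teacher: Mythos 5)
Your proof is correct and takes essentially the same route as the paper: the paper's one-line proof simply asserts the fact you establish in detail, namely that the two components of $L(P)-q$ are $S(q,1/2)^-\setminus((0,1/2)\times\{q\})$ and $S(q,1/2)^+\setminus((1/2,1)\times\{q\})$, which are exactly your sets $A^-$ and $A^+$. Your justification via the clopen partition together with the two-component fact from the proof of \autoref{anti} (applicable because \autoref{lin_are_pline} shows $L(P)$ is a pseudo-line) correctly fills in what the paper leaves implicit.
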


\begin{proof}
This follows from the fact that for any $p\in P$ the two connected components of $L(P)-p$
are $S(p,1/2)^-\sm ((0,1/2)\times \{p\})$ and  $S(p,1/2)^+\sm ((1/2,1)\times \{p\})$.
\end{proof}

\begin{lem}\label{pline_lin_order}
For any pseudo-line $L$, we have $L\cong L((E(L),<_L))$.
\end{lem}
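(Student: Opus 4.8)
The plan is to build an explicit isomorphism $\varphi\colon L\to L\big((E(L),<_L)\big)$ and check it is a homeomorphism respecting the vertex/edge structure. First I would record that $<_L$ really is a linear order on $P:=E(L)$: antisymmetry and transitivity are \autoref{anti} and \autoref{trans}, and totality follows because the proof of \autoref{anti} shows $L-f$ has exactly two components, so the connected interior of any other edge $e$ lies in one of them, giving $e<_L f$ or (via \autoref{anti}) $f<_L e$. For each edge $p$ I write $A_p,B_p$ for the components of $L-p$ containing the start-vertex $s$ and end-vertex $t$; since $L-p$ is closed in $L$ and has exactly two (hence clopen) components, $A_p$ and $B_p$ are closed, so compact. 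I then define $\varphi$ on vertices by $\varphi_V(v)=\{p\in P: v\in B_p\}$ and on edges by the identity, choosing the orientation sign for each edge $p$ so that its $A_p$-endvertex maps to $\iota^{L(P)}_p(0)$.

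The next step is a nesting observation: if $p<_L p'$ then $A_p\se A_{p'}$ and $B_{p'}\se B_p$. Indeed $p'\in B_p$ means the interior of $p'$ misses $A_p$, so the connected set $A_p\ni s$ lies in $L-p'$ and hence in its $s$-component $A_{p'}$; the claim for $B$ is symmetric. From nesting, $\varphi_V(v)$ is down-closed, i.e.\ an initial segment, so $\varphi_V$ does land in $V(L(P))$. A short computation, using nesting together with the fact that $A_p,B_p$ are closed (so they contain the endvertices of $p$ once they contain its interior), shows $\varphi_V$ sends the two endvertices of $p$ to $\{x<p\}$ and $\{x\le p\}$; hence the induced $\varphi$ is genuinely a map of graph-like spaces.

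For bijectivity, injectivity is immediate from condition~2 in the definition of a pseudo-line: distinct vertices $u,v$ are separated by some $L-p$, so they lie on opposite sides $A_p,B_p$ and thus differ on membership of $p$ in their images. For surjectivity I fix an initial segment $I\se P$ and consider the family $\{B_p:p\in I\}\cup\{A_p:p\notin I\}$ of closed subsets of the compact space $L$. Using nesting, any finite subfamily collapses to an intersection $B_{p^*}\cap A_{q^*}$ with $p^*<_L q^*$, which is nonempty because the $A_{q^*}$-endvertex of $q^*$ lies in $B_{p^*}$; so the family has the finite intersection property and its total intersection $C$ is nonempty. No interior point can lie in $C$ (an interior point of $e$ lies in neither $A_e$ nor $B_e$), so $C$ contains a vertex $v$, and by construction $\varphi_V(v)=I$.

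Finally I would verify continuity and conclude. It suffices to show the preimage of each subbasic open set is open. For $S(p,r)^+$ the complement of its preimage is exactly $A_p\cup \iota_p([0,r])$, a union of two closed sets (the second a compact arc), hence closed; symmetrically the complement of the preimage of $S(p,r)^-$ is $B_p\cup\iota_p([r,1])$. Thus $\varphi$ is a continuous bijection from the compact space $L$ to the Hausdorff space $L(P)$, so it is a homeomorphism, and being bijective on vertices and edges it is an isomorphism of graph-like spaces. I expect the main obstacle to be exactly this continuity step, together with the surjectivity of $\varphi_V$; both are tamed by the single observation that $A_p$ and $B_p$ are \emph{closed}, which lets compactness supply the missing vertices and lets the subbasic preimages be handled through closed complements.
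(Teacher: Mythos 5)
Your proposal is correct, and it builds exactly the same isomorphism as the paper: the identity on edges (with orientations normalised so that $\iota_p(0)$ lies on the $s$-side of $L-p$), vertices sent to the set of edges on whose $t$-side they lie, followed by the standard compact-to-Hausdorff reduction, with continuity checked on the subbasic sets $S(p,r)^{\pm}$. Your continuity argument (complement of the preimage is $A_p\cup\iota_p([0,r])$, a union of two closed sets) is just a cleaner phrasing of the paper's manipulation with an auxiliary open set $O$, and your injectivity argument is identical. The one genuine divergence is surjectivity: the paper uses \emph{connectedness} of $L$, pulling back the separation $(U^-,U^+)$ of $L((E(L),<_L))-v$ along the (already established) continuous map $\varphi$ and concluding that the fibre over $v$ must be nonempty; you instead use \emph{compactness}, showing the nested family $\{B_p : p\in I\}\cup\{A_q : q\notin I\}$ of closed sides has the finite intersection property (via the nesting $B_{p'}\se B_p$, $A_p\se A_{p'}$ for $p<_L p'$) and extracting a vertex from the total intersection. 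Both properties are part of the definition of a pseudo-line, so neither route is more economical in hypotheses, but your version is independent of the continuity step and gives an explicit description of the fibre, whereas the paper's is shorter once continuity is in hand. You also make explicit several points the paper leaves to the reader: totality of $<_L$ (needed even to form $L((E(L),<_L))$), the fact that $\varphi_V(v)$ is an initial segment (the paper waves at the proof of \autoref{trans}), and the computation that the endvertices of $p$ map to $\{x<_L p\}$ and $\{x\leq_L p\}$ — the last being what makes $\varphi$ a map of graph-like spaces rather than merely a continuous bijection. These are welcome additions; I see no gap.
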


\begin{proof}
Without loss of generality, each edge $e$ of $L$ is parametrised in such a way that
in $L-e$ its start-vertex $\iota_e(0)$ is in the same component as $s$.
We define a map $\varphi:L\to L((E(L),<_L))$ of graph-like spaces
which is the identity on edges.
A vertex $v$ is mapped to the set of edges $e$ of $L$ such that $v$ is in the same component of $L - e$ as $t$ is.
This is an initial segment by a similar argument to that in the proof of \autoref{trans}.

Since $L$ is compact and  $(E(L),<_L))$ is Hausdorff, it suffices to show that $\varphi$ is continuous and bijective.

For continuity, it suffices to show that the inverse image of every subbasic open set is open. First, consider a subbasic open set $S(e,r)^-$.
The inverse image of $S(e,r)^-$ is by definition the component $C$ of $L-e$ containing $s$ together
with $\iota_e((0,r))$.
By the definition of the subspace topology, there is an $L$-open  set $O$ including $C$ 
that is included in $C\cup \iota_e((0,1))$.
Since $L$ is a graph-like space, the set $\iota_e([r/2,1])$ is closed.
Thus $(O\sm \iota_e([r/2,1]))\cup \iota_e((0,r))$ is open and equal to $\varphi^{-1}(S(e,r)^-)$.
Similarly, one shows that each set of the form $\varphi^{-1}(S(e,r)^+)$ is open.
This completes the proof that $\varphi$ is continuous.

To see that $\varphi$ is injective, note that any two vertices of $L$ can be separated by some edge, thus cannot have the same image.

For surjectivity, let $v$ be a vertex of $(E(L),<_L))$.
If $v$ is the start-vertex then it is $\varphi(s)$. If it is the end-vertex then it is $\varphi(t)$. Otherwise, let $U^-$ be the union of the sets $S(e,r)^-$ with $e\in v$ and $r\in (0,1)$ and let $U^+$ be the union of the sets $S(e,r)^+$ with $e \not \in v$ and $r\in (0,1)$. Then $(U^-, U^+)$ is a nontrivial topological separation of $L((E(L), <_L)) - v$, with interior points of edges on both sides, so $(\varphi^{-1}(U^-), \varphi^{-1}(U^+))$ is a nontrivial topological separation of $L \setminus \varphi^{-1}(\{v\})$, so $\varphi^{-1}(\{v\})$ is nonempty.

\end{proof}

\begin{cor}
Any nontrivial pseudo-line is the closure of the set of interior points of its edges. Any nontrivial pseudo-arc in a graph-like space is the standard subspace corresponding to its set of edges.
\end{cor}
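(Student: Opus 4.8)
The plan is to read off both statements directly from the structure theorem \autoref{pline_lin_order}, which gives $L \cong L((E(L), <_L))$ for any pseudo-line $L$. The key observation is that the defining construction of $L(P)$ builds the space so that its only points are vertices (initial segments of $P$) and interior points of edges, and the topology is generated by the subbasic sets $S(p,r)^\pm$, each of which contains interior points of edges. So the closure statement will amount to checking that every vertex of $L(P)$ lies in the closure of $\bigcup_{p\in P}\iota_p((0,1))$, which was in fact already verified inside the proof of \autoref{lin_are_pline}: there we observed ``since the closure of $\bigcup_{p\in P}\iota_p((0,1))$ is the whole of $L(P)$''.

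For the first sentence, I would argue as follows. Let $L$ be a nontrivial pseudo-line, so $E(L)\neq\emptyset$. By \autoref{pline_lin_order} it suffices to prove the claim for $L(P)$ with $P=(E(L),<_L)$, and since $P$ is nonempty we may fix any vertex $v$ of $L(P)$, i.e.\ an initial segment of $P$. I would show $v$ lies in the closure of the set $W:=\bigcup_{p\in P}\iota_p((0,1))$ of interior points of edges by checking every subbasic open neighbourhood of $v$ meets $W$. A subbasic set containing $v$ has the form $S(p,r)^-$ with $p\notin v$ or $S(p,r)^+$ with $p\in v$; in either case it contains a nonempty open arc of interior points of the edge $p$, so it meets $W$. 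Since the subbasic sets containing $v$ form a neighbourhood subbasis, and finite intersections of them still contain interior points of $p$ (for any single fixed edge witnessing membership), $v\in\overline{W}$. As $L(P)$ is the union of $W$ and its vertices, and all vertices lie in $\overline W$, we conclude $\overline W=L(P)$, which is exactly the assertion that the pseudo-line is the closure of the interior points of its edges.

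For the second sentence, recall that a pseudo-arc in a graph-like space $G$ is, by the remarks in \autoref{sec4}, the homeomorphic image of a pseudo-line $L$, with $E(L)$ mapping to a set $R\se E(G)$ of edges. The standard subspace $\overline R$ was defined as the graph-like space whose edges are those in $R$ and whose vertices are those in the closure of $(0,1)\times R$. The point is that the underlying point set of the pseudo-arc consists of interior points of the edges in $R$ together with their endvertices; the first sentence applied to the pseudo-arc (which is itself a nontrivial pseudo-line) shows that this point set equals the closure of $(0,1)\times R$ taken inside the pseudo-arc, and since the pseudo-arc is a subspace of $G$ this closure agrees with the one computed in $G$. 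Hence the vertex set of the pseudo-arc is precisely the vertices lying in $\Cl_G((0,1)\times R)$, matching the definition of $\overline R$, and the edge sets coincide by construction, giving the desired identification.

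The only genuine subtlety, and the step I would be most careful about, is the interaction between closures taken in the subspace (the pseudo-arc) versus in the ambient space $G$: I must confirm that no ambient vertex outside the pseudo-arc sneaks into $\Cl_G((0,1)\times R)$. This follows because a pseudo-arc, being a homeomorphic image of a compact pseudo-line, is compact and hence closed in the Hausdorff space $G$, so the $G$-closure of $(0,1)\times R$ cannot escape the pseudo-arc. Once this is noted, both assertions reduce cleanly to the already-established fact that the interior points of the edges are dense, and no further calculation is needed.
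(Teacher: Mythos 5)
Your proposal is correct and follows essentially the route the paper intends: the corollary carries no proof in the paper precisely because it is meant to be read off from \autoref{pline_lin_order} together with the explicit description of $L(P)$ (the density of the interior points of edges in $L(P)$ is exactly the fact already invoked, without proof, inside the proof of \autoref{lin_are_pline}), and your handling of the second sentence --- the pseudo-arc is compact, hence closed in the Hausdorff space $G$, so the subspace closure of $(0,1)\times R$ agrees with the ambient one and no outside vertex can enter it --- is the right bookkeeping that the paper leaves implicit.

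One justification needs repair, although the claim it supports is true. You cannot, in a finite intersection of subbasic neighbourhoods of a vertex $v$, find interior points of ``any single fixed edge witnessing membership'': for example, $S(p_1,r_1)^-\cap S(p_2,r_2)^-$ with $p_2<p_1$ contains no interior point of the edge $p_1$, since $S(p_2,r_2)^-$ contains no interior point of any edge $x>p_2$. The correct choice is extremal: among the indices of the sets of type $S(\cdot,\cdot)^-$ occurring in the intersection, take the $<$-least edge $p^*$; then points $(t,p^*)$ with $t$ below the least $r_j$ over those $j$ with $p_j=p^*$ lie in every minus-type set, and they lie in every plus-type set $S(q_k,s_k)^+$ as well, because $q_k\in v$ and $p^*\notin v$ force $q_k<p^*$ ($v$ being an initial segment). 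If no minus-type sets occur, take instead the $<$-greatest edge $q^*$ among the plus-type indices and $t$ above the largest relevant $s_k$; and if the intersection is over the empty family, nontriviality supplies an edge outright. With this amendment every basic open neighbourhood of $v$ meets the set of interior points of edges, and the rest of your argument goes through unchanged.
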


\begin{rem}
Contracting a set of edges of a pseudo-line $L(P)$ corresponds to removing that set of edges from the associated poset $P$.
\end{rem}

\begin{cor}\label{line_contract}
Any contraction of a pseudo-line is a pseudo-line.
\qed
\end{cor}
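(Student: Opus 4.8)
The plan is to reduce everything to the correspondence between pseudo-lines and linearly ordered sets established just above. By \autoref{pline_lin_order}, any pseudo-line $L$ is isomorphic to $L(P)$ for the linear order $P = (E(L), <_L)$, so it suffices to prove that $L(P)/C$ is a pseudo-line for any set $C$ of edges, i.e. any $C \subseteq P$. Let $P'$ denote the linearly ordered set obtained by restricting $<$ to $P \sm C$. The whole content of the proof is then the Remark preceding the corollary, namely that $L(P)/C \cong L(P')$ as graph-like spaces: once this is known, \autoref{lin_are_pline} says $L(P')$ is a pseudo-line, hence so is $L(P)/C$.

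To establish $L(P)/C \cong L(P')$ I would first identify the $\equiv_C$-classes on the vertices of $L(P)$, which are the initial segments of $P$. By \autoref{L_is_LP} the two components of $L(P) - p$ are separated by the bond $\{p\}$, so for each edge $p$ the cut $\{p\}$ separates every initial segment not containing $p$ from every initial segment containing $p$. From this one checks that two initial segments $u \se v$ satisfy $u \equiv_C v$ exactly when $v \sm u \se C$: if some $p \in v \sm u$ lies outside $C$ then $\{p\}$ separates $u$ from $v$ without meeting $C$, while conversely the standard subspace on the edges $v \sm u$ is a pseudo-arc joining $u$ to $v$, and any topological cut separating its endpoints must contain one of its edges, all of which lie in $C$ when $v \sm u \se C$. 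Hence the $\equiv_C$-classes of vertices are precisely the blocks of initial segments differing only in elements of $C$, and sending each block to the initial segment of $P'$ it induces gives a bijection between the vertices of $L(P)/C$ and those of $L(P')$; this bijection is the identity on the surviving edges $P \sm C$ and carries start-vertex and end-vertex correctly.

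The one step needing genuine care, and the step I expect to be the main obstacle, is checking that this vertex bijection is a homeomorphism, i.e. that the quotient topology defining $L(P)/C$ coincides with the subbasic topology defining $L(P')$. I would do this by comparing subbases directly: for $p \in P \sm C$ the preimage under $f_C$ of the subbasic set $S(p,r)^\pm$ of $L(P')$ is, under the vertex identification, exactly the corresponding subbasic set $S(p,r)^\pm$ of $L(P)$, which is open, so the induced map $L(P)/C \to L(P')$ is continuous. Since $L(P)/C$ is compact (a continuous image of the compact $L(P)$) and $L(P')$ is Hausdorff, continuity of a bijection already yields a homeomorphism, so this comparison of subbases suffices. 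The combinatorial identification of the vertices and the preservation of the pseudo-line axioms are then immediate consequences of \autoref{lin_are_pline}, so the only nontrivial labour is this topological bookkeeping.
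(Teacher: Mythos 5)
Your proof is correct and follows exactly the route the paper intends: the corollary is stated with a \qed because it is meant to follow from \autoref{pline_lin_order}, \autoref{lin_are_pline}, and the preceding Remark that contracting a set of edges of $L(P)$ corresponds to removing those edges from $P$. You have simply supplied the verification of that Remark which the paper leaves implicit --- identifying the $\equiv_C$-classes of initial segments, matching the subbases, and upgrading the continuous bijection to a homeomorphism via compactness of the quotient and Hausdorffness of $L(P')$ --- all of which checks out.
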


\begin{cor}\label{count_line}
Any nontrivial pseudo-line with only countably many edges is homeomorphic to the unit interval.
\end{cor}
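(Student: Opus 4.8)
The plan is to combine \autoref{pline_lin_order} with an explicit realisation of $L(P)$ inside the unit interval. By \autoref{pline_lin_order}, a nontrivial pseudo-line $L$ with countably many edges is homeomorphic to $L(P)$ where $P = (E(L), <_L)$ is a (nonempty) countable linear order, so it suffices to show $L(P) \cong [0,1]$ for every such $P$. To build the homeomorphism I would enumerate $P$ and assign to each edge $p$ a weight $\ell_p > 0$ with $\sum_{p\in P}\ell_p = 1$ (for instance $\ell_{p_n} = 2^{-n-1}$). Writing $a_p = \sum_{q<p}\ell_q$ and $b_p = a_p+\ell_p$, I define $\varphi\colon L(P)\to[0,1]$ by sending an interior point $(r,p)$ to $a_p + r\ell_p\in(a_p,b_p)$ and a vertex (initial segment) $v$ to $\sum_{p\in v}\ell_p$; all these sums converge since $\sum_p\ell_p=1$.

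First I would check that $\varphi$ is a bijection. For injectivity, note that $p<q$ forces $b_p\le a_q$, so the open intervals $(a_p,b_p)$ are pairwise disjoint and correctly ordered; this separates interior points of distinct edges, while two distinct initial segments differ by at least one edge and hence have images differing by at least that edge's weight, and a vertex image never lands in an edge interval. For surjectivity, given $t\in[0,1]$ lying in no $(a_p,b_p)$, every edge satisfies $b_p\le t$ or $a_p\ge t$; the intervals of the first kind lie in $[0,t]$ and those of the second in $[t,1]$, so comparing total weights gives $\sum_{b_p\le t}\ell_p = t$, that is $t=\varphi(v)$ for the initial segment $v=\{p: b_p\le t\}$. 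Points lying inside some $(a_p,b_p)$ are clearly in the image.

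It remains to see that the bijection $\varphi$ is a homeomorphism; since $L(P)$ is compact by \autoref{lin_are_pline} and $[0,1]$ is Hausdorff, it is enough to show $\varphi$ is continuous. Here I would introduce the natural linear order $\preceq$ on the points of $L(P)$: order vertices by inclusion, order interior points of a common edge by their first coordinate, place $(r,p)$ strictly between the vertices $\iota_p(0)$ and $\iota_p(1)$, and declare a vertex $v$ to lie below $(r,p)$ exactly when $p\notin v$. A direct check shows that each subbasic set $S(p,r)^-$ is the open ray $\{y : y\prec (r,p)\}$ and each $S(p,r)^+$ the ray $\{y : y\succ(r,p)\}$; conversely the rays with a vertex endpoint are open as well (if $p\in w\setminus v$ witnesses $v\subsetneq w$, then $S(p,\tfrac12)^-$ is a neighbourhood of $v$ contained in $\{y\prec w\}$, since every such point precedes $(\tfrac12,p)\prec w$). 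Hence the topology of $L(P)$ is exactly the order topology of $\preceq$. One then verifies case by case that $\varphi$ is order-preserving, so it is an order isomorphism onto $[0,1]$ with its usual order; an order isomorphism between linearly ordered sets carries order rays to order rays and is therefore a homeomorphism for the order topologies, completing the proof.

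I expect the genuine obstacle to be the topological identification in the last step: verifying that the somewhat unusual subbasis of $L(P)$ generates precisely the order topology of $\preceq$, and in particular that the rays ending at a vertex are open. Once this is in place the continuity of $\varphi$ follows immediately from monotonicity, and the bijection and compactness arguments are routine.
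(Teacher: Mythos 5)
Your proof is correct, but it takes a genuinely different route from the paper. Both arguments begin the same way, using \autoref{pline_lin_order} to reduce to showing $L(P)\cong[0,1]$ for a countable linear order $P$; after that they diverge. The paper avoids ever touching the topology of $L(P)$ directly: it applies Cantor's theorem (countable categoricity of dense linear orders without endpoints) to $\Qbb\times(E(L),<_L)$ to embed the edge order densely into the rationals of $(0,1)$, uses this to equip $[0,1]$ with the structure of a pseudo-line $L'$ whose edge order is $<_L$, and then concludes $L\cong L'$ by invoking the uniqueness statement of \autoref{pline_lin_order} a second time --- the topological work is entirely absorbed into that canonical-form lemma. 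You instead build an explicit homeomorphism: a summable family of positive edge weights gives an arc-length style map, surjectivity follows from a neat total-length comparison (the disjoint edge intervals below $t$ have total weight exactly $t$), and continuity is paid for directly by showing that the subbasis of $L(P)$ generates exactly the order topology of the point order $\preceq$, after which monotonicity finishes the proof. You correctly identify the openness of rays with vertex endpoints as the one nonroutine step, and your witness $S(p,\tfrac12)^-$ handles it (the interior-point case you gloss is indeed trivial, via $S(q,r)^-$ with $r>s$). Two small points to tidy: your sample weights $\ell_{p_n}=2^{-n-1}$ only sum to $1$ when $P$ is infinite, so normalise in the finite case; and note that once $\varphi$ is an order isomorphism between order topologies the compactness argument is not even needed, though it does no harm. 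What your approach buys is elementarity and self-containedness --- no model-theoretic categoricity, and the identification of the topology of a pseudo-line with the order topology of $\leqq_L$ is of independent value (it strengthens what the paper later extracts piecemeal in \autoref{min_open} and \autoref{min_element}); what the paper's approach buys is brevity, by reusing \autoref{pline_lin_order} as a uniqueness theorem instead of verifying continuity by hand.
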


\begin{proof}
Consider the linear order $\Qbb\times (E(L),<_L)$.
This is dense, countable and has neither a largest nor a smallest element.
Since the theory of such linear orders is countably categorical, this
order is isomorphic to the order of the rationals in $(0,1)$. Pick such an isomorphism
and call it $\varphi$.

Next, we define a pseudo-line $L'$ homeomorphic to $[0,1]$ whose edges have the same linear order as $L$. By \autoref{pline_lin_order} this will then show that $L$ is homeomorphic to  $[0,1]$.

The edge set of $L'$ is given by $E(L)$. The vertices are those points in $[0,1]$
that do not lie in between $\varphi(q,e)$ and $\varphi(q',e)$ for any $e\in E(L)$ and $q,q'\in \Qbb$.
Now we give a bijection $\psi$ between the underlying set of $L'$ and $[0,1]$.
We will use this bijection to induce the topology on $L'$.
On the vertices we choose this function to be the identity. 
For any $e\in E(L)$ we map $(0,1)\times\{e\}$ to $[0,1]$ by sending $(r,e)$ to 
$r\cdot \sup_{q\in \Qbb} \varphi(q,e)+ (1-r) \inf_{q\in \Qbb} \varphi(q,e)$.

It is straightforward to check that $L'$ is a pseudo-line and $<_{L'}$ coincides with $<_L$. 
This completes the proof.
\end{proof}

\begin{lem}\label{components_expl_arc}
Let $s_1<_L\ldots <_L s_n$ be finitely many edges of a pseudo-line $L$.
Let $S=\bigcup_{i=1}^n \iota_{s_i}((0,1))$.
Then $L\sm S$ has $n+1$ components each of which is a pseudo-line.
These are $S(s_1,1/2)^-\sm S$, and $S(s_{i+1},1/2)^-\cap S(s_{i},1/2)^+)\sm S$
for $1\leq i\leq n-1$ and $S(s_n,1/2)^+\sm S$.
\end{lem}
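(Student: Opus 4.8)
The plan is to reduce to the standard model $L = L(P)$ and then identify each of the listed sets with a sub-pseudo-line coming from an interval of $P$. By \autoref{pline_lin_order} it suffices to treat the case $L = L(P)$ with $P = (E(L), <_L)$, since the homeomorphism provided there is the identity on edges, and the sets $S(s_i,1/2)^\pm$ are in any case only defined through this model. Write $A_0 = S(s_1,1/2)^- \sm S$, $A_i = \left(S(s_i,1/2)^+ \cap S(s_{i+1},1/2)^-\right)\sm S$ for $1 \le i \le n-1$, and $A_n = S(s_n,1/2)^+ \sm S$. For $0 \le i \le n$ let $P_i$ be the open interval of $P$ strictly between $s_i$ and $s_{i+1}$ (reading $s_0, s_{n+1}$ as $-\infty, +\infty$), so that $P_0, \dots, P_n$ together with $\{s_1,\dots,s_n\}$ partition $P$. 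First I would unwind the definitions of the subbasic sets to describe each $A_i$ explicitly: it consists exactly of those initial segments $v$ of $P$ with $s_i \in v$ and $s_{i+1}\notin v$ (with the obvious one-sided condition when $i=0$ or $i=n$), together with the interior points of the edges $x \in P_i$.

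Next I would verify that $A_0, \dots, A_n$ partition $L \sm S$ and that each is clopen in the subspace $L\sm S$. A point of $L \sm S$ is either an interior point of an edge $x \notin \{s_1,\dots,s_n\}$, and then $x$ lies in exactly one interval $P_i$; or it is a vertex $v$, and then $v \cap \{s_1,\dots,s_n\}$, being an initial segment of the finite chain $s_1 <_L \cdots <_L s_n$, equals $\{s_1,\dots,s_i\}$ for a unique $i$, placing $v$ in $A_i$ and in no other. Each $A_i$ is the intersection with $L\sm S$ of a set open in $L(P)$ (a subbasic set, or the intersection of two), hence is open in $L \sm S$; as there are finitely many and they partition $L\sm S$, each is also closed.

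The main step is to show that each $A_i$ is a pseudo-line, which I would do by exhibiting a homeomorphism $A_i \cong L(P_i)$; then \autoref{lin_are_pline} supplies both connectedness (so the clopen sets $A_i$ are exactly the components) and the pseudo-line property. The natural candidate sends a vertex $v$ of $A_i$ to the initial segment $v \cap P_i$ of $P_i$ (with inverse $w \mapsto \{x \in P : x \le s_i\}\cup w$) and is the identity on edges; note that it carries the boundary vertices $\iota_{s_i}(1)$ and $\iota_{s_{i+1}}(0)$, which do lie in $A_i$, to the start- and end-vertices $\varnothing$ and $P_i$ of $L(P_i)$. To see it is a homeomorphism I would compare subbases: for $p \in P_i$ one checks directly that $S(p,r)^{\pm}\cap A_i$ matches the subbasic set $S(p,r)^{\pm}$ of $L(P_i)$, while for $p$ outside the closed interval, and for $p \in \{s_i, s_{i+1}\}$, the trace $S(p,r)^{\pm}\cap A_i$ is either empty or all of $A_i$; hence the two subbases generate the same topology.

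The bulk of the work, and the only place where genuine care is needed, is this final subbasis comparison — in particular the boundary indices $s_i, s_{i+1}$ and the degenerate case $P_i = \varnothing$, where $A_i$ reduces to the single vertex $\{x \le s_i\}$ and $L(P_i)$ is a one-point (trivial) pseudo-line, so that $A_i$ is still a nonempty component and the count of $n+1$ components is maintained. Everything else is routine bookkeeping with the explicit description of the $S(p,r)^{\pm}$.
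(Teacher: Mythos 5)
Your argument is correct, and it is organized differently from the paper's. The paper disposes of the lemma by induction on $n$: it invokes the single-edge case --- that $L-e$ has exactly two components, $S(e,1/2)^{\pm}\sm((0,1)\times\{e\})$, each a pseudo-line --- which it extracts from \autoref{L_is_LP} (whose proof identifies the two components of $L(P)-p$) together with \autoref{pline_lin_order}, and then peels off the edges $s_1,\dots,s_n$ one at a time. You instead work globally: after the same reduction to the standard model $L(P)$ via \autoref{pline_lin_order}, you exhibit an explicit order-theoretic homeomorphism $A_i\cong L(P_i)$ for each of the $n+1$ pieces at once, with the subbasis comparison done by hand, and then let \autoref{lin_are_pline} supply connectedness and the pseudo-line property so that the clopen pieces are exactly the components. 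The trade-off is clear: the paper's induction is very short but leaves implicit precisely the bookkeeping you carry out --- that each component of $L-e$, with its induced topology, is again of the form $L(P')$ with matching subbasic sets, so that the one-edge case can legitimately be re-applied inside each piece at the next inductive step. Your direct route is longer but self-contained, produces the homeomorphisms $A_i\cong L(P_i)$ explicitly (which makes the claimed descriptions of the components transparent rather than emerging from an induction), and is the only one of the two write-ups that explicitly addresses the degenerate case $P_i=\varnothing$, where the component is a single vertex, i.e.\ a trivial pseudo-line, so that the count of $n+1$ components survives. Both proofs rest on the same foundation, the classification of pseudo-lines by their edge orders; yours simply uses it once for $n$ edges instead of $n$ times for one edge.
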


\begin{proof}
The assertion follows by induction from the following.
Let $e\in L$. Then $L-e$ has two components that are both pseudo-arcs.
These are $S(e,1/2)^-\sm ((0,1)\times \{e\})$ and $S(e,1/2)^+\sm ((0,1)\times \{e\})$. 
This follows from the combination of \autoref{L_is_LP} and \autoref{pline_lin_order}.
\end{proof}

We shall write $\leq_L$ for the linear order obtained from $<_L$ by adding all the relations
$(e,e)$ for every $e\in E(L)$.
Having proved \autoref{pline_lin_order}, we get a linear order on all points of $L$
via $x\leqq_L y$ if in $L-y$ the point $x$ lies in the same component as the start-vertex of $L$.

\begin{lem}\label{min_open}
Let $x$ be a point of a pseudo-line $L$.
The set of all points in $L$ that are $\leqq_L$-smaller than $x$ 
and not equal to $x$ form an open set.
\end{lem}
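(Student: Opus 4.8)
The plan is to pass to the concrete model $L = L(P)$ with $P = (E(L), <_L)$ afforded by \autoref{pline_lin_order}, and then to exhibit the set of points strictly $\leqq_L$-below $x$ as a union of the subbasic open sets $S(p,r)^-$.

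First I would record what $\leqq_L$ looks like in this model. By definition the set in question, call it $D_x$, is exactly the connected component of the start-vertex $s$ in $L - x$ (deleting the single point $x$). Identifying $L$ with $L(P)$, the vertices are the initial segments of $P$, which are totally ordered by inclusion, and this inclusion order agrees with $\leqq_L$; the interior points of an edge $p$ lie between its endvertices $\iota_p(0) = \{q \mid q < p\}$ and $\iota_p(1) = \{q \mid q \le p\}$, are ordered among themselves by their first coordinate, and no vertex lies strictly between $\iota_p(0)$ and $\iota_p(1)$. That the $s$-side component really is the set of concretely-smaller points follows from the explicit component computations already used for \autoref{L_is_LP} and \autoref{components_expl_arc}, now applied to the removal of a single point rather than a whole edge.

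Then I would split into two cases. If $x = (r,p)$ is an interior point of an edge, a direct comparison gives $D_x = S(p,r)^-$ on the nose: the vertices below $x$ are precisely those not containing $p$, and the interior points below $x$ are precisely those of edges $q < p$ together with $(0,r) \times \{p\}$, so $D_x$ is subbasic open. If $x = v$ is a vertex, then a point is strictly below $v$ exactly when it is a vertex $w \subsetneq v$ or an interior point of some edge $p \in v$, and I would verify that $D_v = \bigcup_{p \in v} \bigcup_{r \in (0,1)} S(p,r)^-$; being a union of subbasic open sets, this is open, which completes the proof.

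I expect the vertex case to be the only real obstacle, and there are two points needing care. When $v$ has a greatest element $p_0$, so that $v = \iota_{p_0}(1)$, the entire interior $(0,1) \times \{p_0\}$ of the top edge lies below $v$; this is why a single set $S(p_0,r)^-$ will not suffice and one must take the union over all $r \in (0,1)$, with $r \to 1$ recovering the full edge. Dually, one must rule out spurious points in the union, i.e. check that the vertices appearing are exactly the $w \subsetneq v$; this reduces to the fact that the initial segments of $P$ are totally ordered by inclusion, so that a vertex omitting some element of $v$ is automatically a proper subset of $v$. Everything else is routine unwinding of the definitions of $S(p,r)^-$ and $\leqq_L$.
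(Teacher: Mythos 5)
Your proposal is correct and is essentially the paper's own argument made explicit: the paper likewise notes that by definition of $\leqq_L$ the set in question is a connected component of $L-x$, invokes \autoref{pline_lin_order} (i.e.\ the $L(P)$ model) to see that this component is open in $L-x$, and then concludes it is open in $L$ because $\{x\}$ is closed. Your explicit exhibition of the set as $S(p,r)^-$ in the interior-point case and as $\bigcup_{p\in v}\bigcup_{r\in(0,1)}S(p,r)^-$ in the vertex case is exactly the computation that the paper's appeal to \autoref{pline_lin_order} compresses, including the (shared, lightly treated) step of identifying the start-vertex side of $L-x$ with the concrete down-set, which requires knowing both sides are connected.
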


\begin{proof}
 By the definition of $\leqq_L$ the set of such points is a connected component of $L-x$, so by \autoref{pline_lin_order} it is open in $L-x$, and since $\{x\}$ is closed it is also open in $L$.
\end{proof}

\begin{lem}\label{min_element}
Let $X$ be a nonempty closed subset of a pseudo-line $L$.
Then $X$ contains a $\leqq_L$-smallest and a $\leqq_L$-biggest element.
\end{lem}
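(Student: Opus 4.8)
The plan is to prove the existence of a $\leqq_L$-smallest element of $X$; the existence of a $\leqq_L$-biggest element then follows by the symmetric argument, since reversing the roles of the start- and end-vertex turns $L$ into another pseudo-line whose associated point order is the reverse of $\leqq_L$. The key tool is \autoref{min_open} together with its order-dual: for any point $y$ of $L$, both the set $L_{<y} := \{x \mid x \leqq_L y,\ x \neq y\}$ and the set $L_{>y} := \{x \mid y \leqq_L x,\ x \neq y\}$ are open in $L$. The first statement is \autoref{min_open} directly, and the second is \autoref{min_open} applied to the reversed pseudo-line. Since $X$ is closed in the compact space $L$, it is itself compact, and this compactness is what drives the argument.

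To find the smallest element I would argue by contradiction. Suppose $X$ has no $\leqq_L$-smallest element. Then for every $x \in X$ there is some $x' \in X$ with $x' \leqq_L x$ and $x' \neq x$, so that $x \in L_{>x'}$. Hence the family $\{L_{>x} \mid x \in X\}$ is an open cover of $X$, and by compactness it has a finite subcover $L_{>x_1}, \dots, L_{>x_n}$. Because $\leqq_L$ is a linear order, the finite set $\{x_1, \dots, x_n\}$ has a $\leqq_L$-smallest element $x_i$. As $x_i \in X$, it lies in some $L_{>x_j}$, which means $x_j \leqq_L x_i$ and $x_j \neq x_i$, contradicting the minimality of $x_i$ in $\{x_1, \dots, x_n\}$. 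Therefore $X$ has a $\leqq_L$-smallest element, and running the same covering argument with the rays $L_{<x}$ in place of $L_{>x}$ yields a $\leqq_L$-biggest element.

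The main obstacle, such as it is, lies in justifying the two ingredients cleanly: that $\leqq_L$ is genuinely a linear order on all of $L$ (so that a finite subset has a well-defined minimum), which is asserted in the construction of $\leqq_L$ and rests on \autoref{pline_lin_order}; and that the upper rays $L_{>y}$ are open. For the latter I would make the symmetry explicit: the definition of a pseudo-line is invariant under interchanging $s$ and $t$, and under this interchange the edge order $<_L$, and hence the induced point order $\leqq_L$, are reversed, so \autoref{min_open} for the reversed pseudo-line gives exactly the openness of $L_{>y}$. One could alternatively phrase the whole proof topologically, observing that the openness of all order-rays makes the identity $L \to (L, \text{order topology})$ continuous, so that the image is a compact linearly ordered space and hence order-complete; but the direct compactness argument above is more self-contained and avoids invoking completeness of linearly ordered topological spaces.
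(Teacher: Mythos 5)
Your proof is correct and takes essentially the same approach as the paper: both arguments use compactness of the closed set $X$, cover it by the strict open rays provided by \autoref{min_open} (respectively its dual, which the paper also invokes implicitly when it declares the second half ``symmetric''), and derive a contradiction from a finite subcover via the extreme element of a finite chain. The only cosmetic differences are that the paper treats the biggest element first and reduces the finite subcover to a single set using nestedness, where you instead take the minimum of the finite index set.
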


\begin{proof}
First we show that $X$ contains a $\leqq_L$-biggest element.
Since $L$ is compact, $X$ is compact in the subspace topology.
For every $x\in X$ let $O_x$ be the set of those elements of $X$ that are 
$\leqq_L$-smaller than it but not equal to it. This set is open by \autoref{min_open}. 
Suppose for a contradiction that $X$ does not have a maximal element.
Then the $O_x$ cover $X$. Thus the $O_x$ have a finite subcover.
Since $(L,\leqq_L)$ is a linear order, the $O_x$ are nested.
Thus we may assume that the subcover consists of a single element $O_x$.
This is a contradiction since $x\notin O_x$.

The proof that $X$ contains a $\leqq_L$-smallest element is symmetric.
\end{proof}

The \emph{concatenation of two pseudo-lines $L$ and $M$}
is obtained from the disjoint union of $L$ and $M$ by identifying
the end-vertex of $L$ with the start-vertex of $M$.
\begin{rem}\label{conc_line}
The  concatenation of two pseudo-lines is a pseudo-line.
\qed
\end{rem}

\begin{rem}
Taking the concatenation of 2 pseudo-lines corresponds to taking the disjoint union of the two corresponding posets, where in the new ordering we take all elements of the second poset to be greater than all elements of the first.
\end{rem}

Let $P:L\to G$ and $Q:M\to G$ be two pseudo-arcs
such that the end-vertex $t_P$ of $P$ is the start-vertex $s_Q$ of $Q$.
Then \emph{their concatenation} is the function $f:(L\sqcup M)/(t_P=s_Q)\to G$
which restricted to $L$ is just $P$ and restricted to $M$ is just $Q$.
For a pseudo-arc $Q:M\to G$ and vertices $x$ and $y$ in the image of $Q$, we write $xQy$
for the restriction of $Q$ to those points of $M$ that are both $\leqq_L$-bigger than $Q^{-1}(x)$ and 
$\leqq_L$-smaller than $Q^{-1}(y)$. Note that $xQy$ is a pseudo-arc from $x$ to $y$. 
If $Q$ is a pseudo-arc from $v$ to $w$ and $x$ and $y$ are vertices in the image of $Q$,
we abbreviate $xQw$ by $xQ$ and $vQy$ by $Qy$.

\begin{lem}\label{con_parc}
Let $P:L\to G$ be a pseudo-arc from $x$ to $y$ and $Q:M\to G$ 
be a pseudo-arc from $y$ to $z$.
Then the concatenation of $P$ and $Q$ includes a pseudo-arc from $x$ to $z$
\end{lem}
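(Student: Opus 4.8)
The plan is to build the required pseudo-arc by a ``last exit'' argument: travel along $Q$ from $y$ towards $z$ and locate the last moment at which we are still inside $\im(P)$, then glue the initial piece of $P$ up to that point onto the final piece of $Q$. The whole point is to choose this gluing point so that the two pieces meet in exactly one vertex, which makes their concatenation injective and hence a genuine pseudo-arc rather than merely a pseudo-path.

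First I would observe that $\im(P)$ is closed in $G$: the domain of $P$ is a pseudo-line and so compact, hence $\im(P)$ is a compact subset of the Hausdorff space $G$ and therefore closed. Consequently $A := Q^{-1}(\im(P))$ is a closed subset of the pseudo-line $M$, and it is nonempty since the start-vertex $s_Q$ of $Q$ maps to $y \in \im(P)$. By \autoref{min_element}, $A$ has a $\leqq_M$-largest element $m_0$; set $w := Q(m_0)$. The crux of the argument is then to verify that $m_0$ is in fact a vertex, so that $w$ is a vertex of $G$ and the notation $xPw$ and $wQz$ applies to genuine vertices in the images of $P$ and $Q$.

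To see that $m_0$ is a vertex, suppose instead it were an interior point of some edge $e$ of $M$. Since $Q$ is injective it cannot collapse $e$ to a single vertex, so (as $Q$ is a map of graph-like spaces) $e$ maps onto an edge $f$ of $G$ with every interior point of $e$ sent to an interior point of $f$; in particular $w$ is interior to $f$. Now $w \in \im(P)$ and, by the corollary following \autoref{pline_lin_order}, $\im(P)$ is the standard subspace $\overline{E(P)}$ on the set $E(P)$ of edges traversed by $P$; since $w$ is an interior point of $f$, this forces $f \in E(P)$, and hence every interior point of $f$ lies in $\im(P)$. Thus every interior point of $e$ is mapped into $\im(P)$, i.e.\ $(0,1)\times\{e\} \se A$. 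But some interior points of $e$ are $\leqq_M$-greater than $m_0$, contradicting the maximality of $m_0$.

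Finally, set $P' := xPw$ and $Q' := wQz$, pseudo-arcs from $x$ to $w$ and from $w$ to $z$. By maximality of $m_0$, every point of $Q$ strictly after $m_0$ leaves $\im(P)$, so $\im(Q') \cap \im(P) = \{w\}$; since $\im(P') \se \im(P)$ this gives $\im(P') \cap \im(Q') = \{w\}$. The concatenation $R$ of $P'$ and $Q'$ is then a pseudo-path from $x$ to $z$ whose domain is a pseudo-line by \autoref{conc_line}, and it is injective because $P'$ and $Q'$ are each injective and their images meet only in the single identified vertex $w$. Hence $R$ is a pseudo-arc from $x$ to $z$, and $\im(R) = \im(P') \cup \im(Q') \se \im(P) \cup \im(Q)$, which is exactly the image of the concatenation of $P$ and $Q$. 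I expect the main obstacle to be precisely the bookkeeping that guarantees the single-point overlap: both the closedness of $\im(P)$ (needed to invoke \autoref{min_element}) and the verification that the last-exit point $m_0$ is a vertex are what turn the concatenation into an injective map, and the latter is the one genuinely topological step in an otherwise combinatorial argument.
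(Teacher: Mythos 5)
Your proof is correct and takes essentially the same approach as the paper, just in mirror image: the paper picks the $\leqq_L$-minimal point of $P^{-1}(\im(P)\cap\im(Q))$ (first entry of $P$ into $Q$) where you pick the $\leqq_M$-maximal point of $Q^{-1}(\im(P))$ (last exit of $Q$ from $P$). Both arguments rest on the identical ingredients — compactness/closedness of the images, \autoref{min_element} for the extremal point, the whole-edge observation showing that point is a vertex, and \autoref{conc_line} to glue two subarcs meeting in exactly that one vertex.
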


The corresponding Lemma about arcs needs the requirement that $x\neq z$.
However, we avoid this requirement because there is a pseudo-line whose start- and end-vertex are equal, namely the trivial pseudo-line.

\begin{proof}
Let $I$ be the intersection of the image of $P$ with the image of $Q$, 
which is closed, being the intersection of two closed sets. 
Then $P^{-1}(I)$ is closed as $P$ is continuous, and contains a $\leqq_L$-minimal element $w$ by \autoref{min_element}.

If $w$ is not a vertex, then $P(w)$ is not a vertex and thus is contained in $\iota_e((0,1))$ for some edge $e$. 
Since $P$ and $Q$ both contain the whole of $\iota_e([0,1])$ if they contain some point from $\iota_e((0,1))$, the same is true for $I$.
But then $\iota_e([0,1])\subseteq I$, which contradicts the choice of $w$.
Hence $w$ is a vertex. Let $w' = P(w)$

Thus $w'Q$ is a pseudo-arc. By \autoref{conc_line}, the concatenation of $Pw'$ and $w'Q$ is the desired pseudo-arc since their images meet precisely in $w'$.
\end{proof}

A \emph{pseudo-circle} is a connected and compact graph-like space  $(G, E,V)$ with at least one edge, satisfying the following.
\begin{enumerate}
\item Removing any edge does not disconnect $G$ but removing any pair does.
\item Any two vertices can be separated by removing a pair of edges.
\end{enumerate}

We have the following relation between pseudo-lines and pseudo-circles.
\begin{lem}\label{arc_cir}
Every pseudo-circle $C$ with one edge removed is a pseudo-line
with endvertices the endvertices of the removed edge.

Conversely, let $P$ and $Q$ be pseudo-lines 
where $P$ has endvertices $s_P$ and $t_P$ and $Q$ has endvertices $s_Q$ and $t_Q$.
Then the graph-like space obtained from the disjoint union of 
$P$ and $Q$ by identifying $s_P$ with $t_Q$ and $t_P$ with $s_Q$ is a pseudo-circle or else is the trivial graph-like space.
\end{lem}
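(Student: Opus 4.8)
The plan is to verify the defining axioms directly in each direction, throughout treating the deletion of an edge as passing to a closed subspace and leaning on connectivity to control how separations can occur.

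For the forward direction, write $s=\iota_e(0)$ and $t=\iota_e(1)$ for the endvertices of the removed edge $e$. First I would record that $C-e$ is compact, being the complement in the compact space $C$ of the open set $\iota_e((0,1))$, and connected, directly by the first pseudo-circle axiom (deleting one edge does not disconnect). The first pseudo-line axiom for $C-e$—that deleting any further edge $f$ separates $s$ from $t$—I would obtain from the second pseudo-circle axiom (deleting the pair $\{e,f\}$ disconnects) together with the ``add the interior back'' trick of \autoref{anti}: in any separation $(U,U')$ of $C-\{e,f\}$, if $s$ and $t$ both lay in $U'$ then $(U,U'\cup\iota_e((0,1)))$ would separate $C-f$, contradicting that $C-f$ is connected; hence $s$ and $t$ fall on opposite sides.

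The substantive step is the second pseudo-line axiom: any two distinct vertices $v,w$ of $C-e$ are separated by deleting a single edge. I would begin from a pair $\{g,h\}$ separating $v,w$ in $C$ (second pseudo-circle axiom); if $e\in\{g,h\}$ the other edge already works in $C-e$. Otherwise I first establish that deleting any two distinct edges of $C$ leaves exactly two components: ``at least two'' is the first pseudo-circle axiom, and ``at most two'' follows by the argument of \autoref{anti}, since every separation of $C-\{a,b\}$ must place the two endvertices of $a$ on opposite sides (else re-add $\iota_a((0,1))$ and contradict connectivity of $C-b$), so each component meets the two-element set $\{\iota_a(0),\iota_a(1)\}$. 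Writing $C-\{g,h\}=X\sqcup Y$ with $v\in X$, $w\in Y$ and, after possibly swapping the roles of $v,w$, $\iota_e([0,1])\se X$, I would then show that deleting $e$ splits $X$ into exactly two pieces separating the $X$-endvertices $p_g,p_h$ of $g,h$: a component of $X-e$ avoiding both $p_g,p_h$ would meet $C$ only through $e$, forcing $C-e$ disconnected; and if $p_g,p_h$ shared a component then $C-\{e,g\}$ would be connected, contradicting ``pairs give two components.'' With $p_g,p_h$ separated in $X-e$, assuming neither $g$ nor $h$ separates $v,w$ in $C-e$ would place $v$ simultaneously in the $p_g$-component and the $p_h$-component of $X-e$, a contradiction; so one of $g,h$ separates $v,w$ in $C-e$.

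For the converse, glue $P$ and $Q$ as prescribed, identifying $s_P$ with $t_Q$ (call it $u$) and $t_P$ with $s_Q$ (call it $u'$); then $C$ is compact (a finite union of compact pieces) and connected (the arcs share $u,u'$). If both $P,Q$ are trivial then $C$ is a single vertex; otherwise I claim $C$ is a pseudo-circle. For the first axiom: deleting one edge, say of $P$, splits $P$ into two pieces which the connected arc $Q$ rejoins through $u,u'$, so $C$ stays connected; deleting a pair disconnects in each case—for one edge from each arc the fragments regroup as $(P_s\cup Q_t)\sqcup(P_t\cup Q_s)$ (where $P_s,P_t$ are the components of $P-f_1$ at $s_P,t_P$, and similarly for $Q$), while for two edges of the same arc I would invoke \autoref{components_expl_arc} to split that arc into three pieces, the outer two being rejoined by the other arc and the middle one left isolated. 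For the second axiom I would separate two given vertices by picking one cut edge in each arc and using the second pseudo-line axioms of $P$ and $Q$ to place $v$ and $w$ on prescribed sides, so that they land in the two distinct groups $P_s\cup Q_t$ and $P_t\cup Q_s$; the cases where $v$ or $w$ is a shared vertex $u,u'$, or where both lie on one arc, are handled by the same bookkeeping.

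The main obstacle I expect is the second pseudo-line axiom in the forward direction: reducing a separating \emph{pair} in $C$ to a single separating edge in $C-e$ is where the real content lies, and it rests on two facts that must be extracted from the pseudo-circle axioms rather than assumed—that deleting two edges yields exactly two components, and that connectivity of $C-e$ rules out a ``dangling'' component of $X-e$. Some additional care is needed for the degenerate configurations (a loop edge, or a trivial arc $P$ or $Q$) in both directions.
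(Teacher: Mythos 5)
Your proposal is correct and takes essentially the same route as the paper, whose own proof is a one-liner: the first assertion is declared ``immediate from the definitions'' and, for the converse, it notes only that conditions (1) and (2) are immediate and that compactness follows since a disjoint union of compact spaces is compact and quotients of compact spaces are compact. Your write-up simply fills in the elided details — in particular the one genuinely non-immediate step, reducing a separating \emph{pair} in $C$ to a single separating edge in $C-e$, which you handle soundly via the same re-add-the-edge-interior trick the paper uses in \autoref{anti} — and your caveats about degenerate cases (a loop edge, trivial $P$ or $Q$) are consistent with the paper's conventions, the trivial-trivial gluing being exactly the ``trivial graph-like space'' alternative in the statement.
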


\begin{proof}
The first assertion follows immediately from the definitions.
For the second, note that (1) and (2) in the definition of a pseudo-circle are immediate 
and the graph-like space considered here is compact since the disjoint union of of two compact spaces is compact and any quotient topology of a compact space is again compact.
\end{proof}

Combining \autoref{line_contract} with \autoref{arc_cir}, we obtain the following:

\begin{cor}\label{circle_contract}
Any contraction of a pseudo-circle in which not all edges are contracted is a pseudo-circle.
\qed
\end{cor}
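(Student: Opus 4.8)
The plan is to remove an uncontracted edge to reduce to a pseudo-line, contract there, and then reassemble a pseudo-circle using the converse half of \autoref{arc_cir}. Let $C$ be a pseudo-circle and let $S\subsetneq E(C)$ be the contracted set, so that $C/S$ does not contract every edge; fix some $e\in E(C)\sm S$. Since $e\notin S$, deletion and contraction commute (as recorded in the commuting square following the definition of contraction), so $(C/S)-e=(C-e)/S$. By the first part of \autoref{arc_cir}, $L:=C-e$ is a pseudo-line with endvertices $\iota_e(0)$ and $\iota_e(1)$, and by \autoref{line_contract} the contraction $P:=L/S$ is again a pseudo-line; its endvertices are the $\equiv_S$-classes $[\iota_e(0)]$ and $[\iota_e(1)]$.

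Now $e$ joins exactly these two classes in $C/S$, so $C/S$ is obtained from the pseudo-lines $P$ and $Q:=\overline{\{e\}}$ (a single-edge copy of $[0,1]$) by identifying the endvertices of $Q$ with those of $P$ in the pattern of the converse part of \autoref{arc_cir}, orienting the single edge $Q$ so that $\iota_e(0)\mapsto[\iota_e(0)]$ and $\iota_e(1)\mapsto[\iota_e(1)]$. That lemma then says the result is either trivial or a pseudo-circle; since it still contains $e$ it is not trivial, so $C/S$ is a pseudo-circle. This also covers the degenerate case where both endvertices of $e$ are identified in $C/S$: then $P$ has equal start- and end-vertex and hence, having no edges, is the trivial pseudo-line, the gluing is a single loop, and the converse of \autoref{arc_cir} correctly classifies this as a (nontrivial) pseudo-circle.

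I expect the main obstacle to be justifying the claim ``$C/S$ is obtained from $P$ and $Q$ by this gluing'', i.e.\ that the quotient topology defining $C/S$ coincides with the identification topology of the gluing. Both are compact Hausdorff graph-like spaces (compactness of $C/S$ because it is a quotient of the compact $C$; Hausdorffness because all graph-like spaces are Hausdorff), they carry the same vertices and the same edge set $E(C)\sm S$, and each edge is attached between the same pair of vertices in the two descriptions. Hence the natural edge-identity bijection between them is a continuous bijection from a compact to a Hausdorff space — continuity being checked edge-by-edge on the (homeomorphic) closures of edges and at vertices exactly as in the proof of \autoref{finite_gls_is_graph} — and therefore a homeomorphism of graph-like spaces. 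Since being a pseudo-circle is invariant under such isomorphism, the claim follows.

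To sidestep this identification entirely, one can instead verify the pseudo-circle axioms for $C/S$ directly, still combining the two cited results. The space $C/S$ is compact and connected as a quotient of $C$ and contains the edge $e$. For any single edge $h$ we have $(C/S)-h=(C-h)/S$, which is a pseudo-line by \autoref{arc_cir} and \autoref{line_contract}, hence connected; so removing one edge never disconnects. For any pair $\{f,g\}$ we have $(C/S)-f-g=(C-f-g)/S$, and this stays disconnected: the pseudo-circle $C$ loses connectivity when $f,g$ are removed, each edge of $S$ lies within a single component of $C-f-g$, and the empty cut separating two components avoids $S$, so contraction never identifies vertices across it. Finally, two distinct vertices of $C/S$ are separated by a pair $\{e,f\}$, where $f$ is an edge separating them in the pseudo-line $(C/S)-e=L/S$ (invoking its axiom~2). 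This establishes all the pseudo-circle axioms.
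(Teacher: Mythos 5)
Your proof is correct and takes exactly the route the paper intends: the paper's entire proof of \autoref{circle_contract} is the one-line observation that it follows by combining \autoref{line_contract} with \autoref{arc_cir}, i.e.\ delete an uncontracted edge, contract the resulting pseudo-line, and reattach the edge via the converse half of \autoref{arc_cir}. Your extra care over the commutation $(C/S)-e=(C-e)/S$, the identification of the quotient topology with the gluing (or the alternative direct verification of the axioms), and the degenerate single-loop case simply fills in details the paper leaves implicit.
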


Using \autoref{count_line} we get:

\begin{cor}\label{count_circle}
Any countable pseudo-circle is homeomorphic to $S^1$.
\qed
\end{cor}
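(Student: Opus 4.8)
The plan is to realise the pseudo-circle $C$ as a union of two arcs glued at their two endpoints, mirroring the decomposition of $S^1$ into two semicircles, with one arc supplied by \autoref{count_line}. I would first dispose of the degenerate case where $C$ has exactly one edge $e$: since $C$ is connected with a single edge, every vertex lies in $\iota_e([0,1])$, so $V(C) \se \{\iota_e(0), \iota_e(1)\}$, and since removing $e$ may not disconnect $C$ while $V(C)$ is totally disconnected, $V(C)$ must be a single vertex. Then $e$ is a loop and $C$ is the closure of $(0,1)\times\{e\}$, which is homeomorphic to $S^1$ by the remarks following \autoref{def:gls}.

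Assuming now that $C$ has at least two edges, fix an edge $e$ and set $L := C - e$. By the first part of \autoref{arc_cir}, $L$ is a pseudo-line whose endvertices $u, w$ are exactly the endvertices of $e$; it is nontrivial (it still has an edge) and has only countably many edges, so \autoref{count_line} gives a homeomorphism $L \cong [0,1]$. Because a nontrivial pseudo-line has distinct endvertices we have $u \neq w$, so $e$ is not a loop and $\iota_e([0,1]) \cong [0,1]$, meeting $L$ exactly in $\{u, w\}$.

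To finish, I would glue $\iota_e([0,1])$ back onto $L$. As point sets $C = L \cup \iota_e([0,1])$, and both are closed in $C$ — the arc $\iota_e([0,1])$ is compact, and $L = C \sm ((0,1)\times\{e\})$ is closed since $(0,1)\times\{e\}$ is open. Writing $S^1$ as two arcs $A, B$ meeting in two points $p, q$, I would choose homeomorphisms $L \to A$ and $\iota_e([0,1]) \to B$ each sending $u \mapsto p$ and $w \mapsto q$; these agree on the overlap $\{u,w\}$, so the pasting lemma for closed covers glues them into a continuous bijection $C \to S^1$, which is a homeomorphism because $C$ is compact and $S^1$ Hausdorff. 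The substantive work is entirely in \autoref{count_line} and \autoref{arc_cir}, so the corollary is almost immediate; the only care required is the bookkeeping of this paragraph — that the two arcs are closed and overlap in precisely $\{u,w\}$, and that $u \neq w$ so $e$ gives an honest arc — plus the separate one-edge case, which I regard as the only real (if minor) obstacle.
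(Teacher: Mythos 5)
Your proof is correct and is essentially the paper's own argument: the paper derives \autoref{count_circle} immediately from \autoref{count_line} via \autoref{arc_cir}, i.e.\ delete an edge, observe the remainder is a countable pseudo-line homeomorphic to $[0,1]$, and glue the edge back. You merely make explicit the bookkeeping the paper leaves implicit (the one-edge loop case, closedness of the two pieces, and the compact-to-Hausdorff argument), all of which is fine --- the only point worth a word is that a homeomorphism $L \to [0,1]$ automatically sends the endvertices to $\{0,1\}$ since they are the only non-cut points, which justifies your choice of endpoint-preserving homeomorphisms.
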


\begin{dfn}\label{dfn:cyc_ord}
A \emph{cyclic order on a set $X$} is a relation $R\subseteq X^3$, written $[a, b, c]_R$, that satisfies the following axioms:
\begin{enumerate}
 \item Cyclicity: If $[a, b, c]_R$ then $[b, c, a]_R$.
\item Asymmetry: If $[a, b, c]_R$ then not $[c, b, a]_R$.
\item Transitivity: If $[a, b, c]_R$ and $[a, c, d]_R$ then $[a, b, d]_R$.
\item Totality: If $a, b$, and $c$ are distinct, then either $[a, b, c]_R$ or $[c, b, a]_R$.
\end{enumerate}
\end{dfn}

Combining \autoref{pline_lin_order} and \autoref{L_is_LP} with \autoref{arc_cir}
we obtain the following.

\begin{cor}\label{cyclic_order}
The edge set of a pseudo-circle $C$ has a canonical cyclic order $R_C$ (up to choosing an orientation).
Conversely, for any nonempty cyclic order there exists a 
pseudo-circle (unique up to isomorphism) such that its edge set has the same cyclic order.
\qed
\end{cor}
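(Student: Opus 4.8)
The plan is to prove both directions by cutting and regluing at a single edge, using \autoref{arc_cir} to pass between pseudo-circles and pseudo-lines and \autoref{pline_lin_order} together with \autoref{L_is_LP} to pass between pseudo-lines and linear orders. For the first assertion, I would fix an edge $e$ of $C$ together with an orientation (a choice of which endvertex of $e$ is the start). By \autoref{arc_cir}, $C-e$ is then a pseudo-line, carrying the linear order $<_{C-e}$ on $E(C)\setminus\{e\}$. I define $R_C$ to be the cyclic order obtained from this linear order by treating $e$ as the seam at which the two ends are joined: for distinct edges none of which is $e$, $[a,b,c]_{R_C}$ holds exactly when $a,b,c$ occur in this cyclic sense under $<_{C-e}$, and when one of the three is $e$ I use $[e,a,b]_{R_C}\Leftrightarrow a<_{C-e}b$ together with cyclicity. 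Verifying the four axioms of \autoref{dfn:cyc_ord} is routine, since cyclicity, asymmetry, transitivity and totality for $R_C$ all reduce to the corresponding order-properties of $<_{C-e}$.

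The substantive point, and the step I expect to be the main obstacle, is that $R_C$ does not depend on the chosen cut edge, so that it is genuinely canonical up to the orientation (reversal) ambiguity. To see this I would compare the cut at $e$ with the cut at a second edge $e'$. Inside the pseudo-line $C-e$, removing $e'$ splits it, by \autoref{components_expl_arc}, into the initial pseudo-arc $A=\{x:x<_{C-e}e'\}$ and the final pseudo-arc $B=\{x:x>_{C-e}e'\}$. The space $C-e-e'$ is exactly $A\sqcup B$, and it is also what one obtains by removing $e$ from the pseudo-line $C-e'$; hence in $C-e'$ the edge $e$ separates $A$ from $B$, forcing $<_{C-e'}$ to place all of one arc below $e$ and all of the other above it. Comparing the two resulting seam-descriptions shows that both cuts describe the edges of $C$ in the same cyclic sequence (the concatenation $e,\,A,\,e',\,B$ read up to rotation), so the induced cyclic orders coincide; the two global orientations correspond precisely to this sequence and its reverse, which is the claimed ``up to orientation'' freedom.

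For the converse, let $(X,R)$ be a nonempty cyclic order. I would pick $e\in X$ and define a relation on $X\setminus\{e\}$ by $a<_e b\Leftrightarrow[e,a,b]_R$; the cyclic-order axioms give at once that $<_e$ is a linear order (asymmetry and totality from the corresponding axioms via cyclicity, and transitivity directly from the transitivity axiom applied to $[e,a,b]$ and $[e,b,c]$). Using the construction of \autoref{lin_are_pline} I form the pseudo-line $L=L((X\setminus\{e\},<_e))$ and then, by the converse part of \autoref{arc_cir}, glue the single-edge pseudo-line carrying $e$ to $L$, identifying the endvertices of $e$ with the start- and end-vertices of $L$, to obtain a pseudo-circle $C$ with edge set $X$; the same recipe yields a single loop when $X=\{e\}$. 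Cutting $C$ at $e$ returns $L$, whose order is $<_e$ by \autoref{L_is_LP}, so the definition from the first part gives $[e,a,b]_{R_C}\Leftrightarrow a<_e b\Leftrightarrow[e,a,b]_R$; since a cyclic order is determined by the linear order read off from any single cut, this forces $R_C=R$.

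Finally, for uniqueness I would argue that if $C'$ is any pseudo-circle with edge set $X$ and $R_{C'}=R$, then cutting at $e$ yields a pseudo-line $C'-e$ whose order satisfies $a<_{C'-e}b\Leftrightarrow[e,a,b]_{R_{C'}}\Leftrightarrow a<_e b$, so $C'-e$ and $L$ carry the same associated linear order and hence $C'-e\cong L$ by \autoref{pline_lin_order}. Re-attaching $e$ along its two endvertices then produces an isomorphism $C'\cong C$, the only remaining freedom being the choice of orientation, which is exactly the freedom already present in $R_C$. The technical heart of the whole argument is thus the cut-independence in the second paragraph; everything else is bookkeeping that translates the linear-order facts proved earlier into their cyclic counterparts.
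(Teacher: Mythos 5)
Your proposal is correct and takes essentially the same route as the paper, which gives no written proof at all beyond the remark that the corollary follows by combining \autoref{pline_lin_order}, \autoref{L_is_LP} and \autoref{arc_cir} --- exactly the three lemmas on which your cut-and-reglue argument rests. Your write-up merely supplies the details (notably the independence of the induced cyclic order from the chosen cut edge, and the reattachment step in the uniqueness argument) that the paper leaves implicit.
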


We also get a cyclic order $R'_C$ on the set of all points of a pseudo-circle $C$, corresponding to the order $\leqq_L$ on the set of points of a pseudo-line $L$. Once more there are two canonical choices of cyclic order on $C$, one for each orientation of $C$; in fact, we shall take this as our definition of an orientation of $C$. For us, an orientation of a pseudo-circle $C$ is a choice of one of the two canonical cyclic orders of the points of $C$.

Let $s\subseteq o$ and let $R\subseteq o^3$ be a cyclic order.
The \emph{cyclic order of $s$ inherited from $R$} is $R$ restricted to $s^3$.
We say that $e,g$ are \emph{clockwise adjacent} in the cyclic order $R$ if $[e,g,f]_R$ for any other $f$ in $o$.
In a finite cyclic order, for each $e$ there is a unique $g$ clockwise adjacent to $e$, which we denote by $n(e)$.

Combining \autoref{components_expl_arc} with \autoref{arc_cir}
we obtain the following.

\begin{cor}\label{components_expl}
Let $s$ be a finite nonempty set of edges of a pseudo-circle $C$.
Let $S=\bigcup_{e\in s} \iota_{e}((0,1))$.
Then $L\sm S$ has $|s|$ components each of which is a pseudo-line.

For each such component there is a unique $e\in s$ such that the 
component contains precisely those edges $f$ with $[e,f,n(e)]_{R_C}$, where $n(e)$ is taken with respect to the induced cyclic order on $s$. 
\qed
\end{cor}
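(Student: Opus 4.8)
The plan is to cut the pseudo-circle open at one edge of $s$, reducing the statement to the already-proved pseudo-line version \autoref{components_expl_arc}, and then to translate the explicit description of the resulting components into the language of the cyclic order $R_C$. First I would fix one of the two canonical cyclic orders $R_C$ (that is, an orientation of $C$) and pick any edge $e_0 \in s$. By \autoref{arc_cir}, $L := C - e_0$ is a pseudo-line whose endvertices are the endvertices of $e_0$. The key observation is that, as topological spaces, $C \sm S = L \sm S'$, where $S' = \bigcup_{e \in s - e_0}\iota_e((0,1))$: both have underlying point set $V(C) \sqcup [(0,1)\times(E(C)\sm s)]$, and the subspace topology of $C\sm S$ inside $C$ agrees with that of $L \sm S'$ inside $L$, which is itself a subspace of $C$. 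Hence the components of $C \sm S$ are exactly those of $L \sm S'$, and I may work entirely in $L$.

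Writing $s - e_0 = \{s_1, \ldots, s_{n-1}\}$ with $s_1 <_L \cdots <_L s_{n-1}$, where $n = |s|$, \autoref{components_expl_arc} gives that $L \sm S'$ has $(n-1)+1 = |s|$ components, each a pseudo-line, namely $S(s_1,1/2)^- \sm S'$, the sets $S(s_{i+1},1/2)^- \cap S(s_i,1/2)^+ \sm S'$ for $1 \le i \le n-2$, and $S(s_{n-1},1/2)^+\sm S'$. This establishes the first assertion.

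It remains to match these components to the cyclic order, which is the main point. Here I would use that the cyclic order $R_C$ of \autoref{cyclic_order} is precisely the cyclization of $<_L$ with $e_0$ placed at the seam; after orienting appropriately this reads $a <_L b \iff [e_0,a,b]_{R_C}$ for edges $a,b \neq e_0$. Consequently $s_1,\ldots,s_{n-1}$ are the edges of $s$ listed in clockwise order immediately after $e_0$, so that $n(e_0)=s_1$, $n(s_i)=s_{i+1}$ for $1\le i\le n-2$, and $n(s_{n-1})=e_0$. Any edge $f\notin s$ has its (connected) interior disjoint from $S$ and so lies in exactly one component, and I would read off the membership from the subbasic descriptions: $f$ lies in $S(s_1,1/2)^-\sm S'$ iff $f<_L s_1$ iff $[e_0,f,n(e_0)]_{R_C}$; $f$ lies in the $i$-th middle component iff $s_i<_L f<_L s_{i+1}$ iff $[s_i,f,n(s_i)]_{R_C}$, using transitivity and cyclicity of $R_C$; and $f$ lies in $S(s_{n-1},1/2)^+\sm S'$ iff $f>_L s_{n-1}$ iff $[s_{n-1},f,n(s_{n-1})]_{R_C}$. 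Thus each component is the arc between some $e\in s$ and $n(e)$ and contains exactly those $f$ with $[e,f,n(e)]_{R_C}$; uniqueness of $e$ is immediate since distinct components are disjoint.

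I expect the main obstacle to be the bookkeeping of this last step: pinning down the exact relationship between the linear order $<_L$ on $C-e_0$ and the cyclic order $R_C$, in particular getting the behaviour at the seam $e_0$ correct so that the two extreme components correspond to $n(e_0)=s_1$ and $n(s_{n-1})=e_0$, and checking that the betweenness conditions $[e,f,n(e)]_{R_C}$ follow cleanly from the axioms in \autoref{dfn:cyc_ord}. Everything else is a direct appeal to \autoref{arc_cir} and \autoref{components_expl_arc}.
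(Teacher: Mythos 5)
Your proposal is correct and is essentially the paper's own argument: the paper proves this corollary simply by saying ``Combining \autoref{components_expl_arc} with \autoref{arc_cir} we obtain the following,'' and your write-up is exactly that combination, fleshed out --- cutting $C$ open at an edge $e_0 \in s$, applying the pseudo-line version to $s - e_0$, and translating $<_L$ into $R_C$ via the cyclization identity $a <_L b \iff [e_0,a,b]_{R_C}$. The bookkeeping at the seam, which you flag as the main obstacle, is handled correctly (your transitivity/cyclicity computation for the middle components checks out), so there is nothing missing relative to the paper.
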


For a graph-like space $G$, we also use the term pseudo-circle to describe an injective map of graph-like spaces from a pseudo-circle to $G$, as well as the image of such a map. In particular, a \emph{pseudo-circle in $G$} is the image of such a map (or, in other words, it is a subspace of $G$ which is also a pseudo-circle). If $G$ is a graph-like space and $C$ is a pseudo-circle in $G$, the set of edges of $C$ is called a \emph{topological circuit} of $G$. Thus the pseudo-circles in $G$ are precisely the standard subspaces of $G$ corresponding to the topological circuits.

\begin{lem}\label{never_once}
The intersection of a topological circuit with a topological cut is never only one edge.
\end{lem}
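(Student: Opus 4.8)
The plan is to reduce to a purely intrinsic statement about pseudo-circles and then exploit the order structure of the associated pseudo-line. Suppose $o$ is a topological circuit of $G$, realised as the edge set of a pseudo-circle $C$, and that $b$ is a topological cut induced by disjoint open sets $U,V$ partitioning $V(G)$; suppose for contradiction that $o\cap b=\{e\}$. Intersecting with the subspace $C$, the sets $U\cap C$ and $V\cap C$ are open in $C$, their restrictions partition $V(C)$, and the cut they induce on $C$ is exactly $\{f\in E(C):f\in b\}=o\cap b=\{e\}$. Thus it suffices to show that a single edge is never a topological cut of a pseudo-circle. Writing $x,y$ for the endvertices of $e$, we have $x\in U$ and $y\in V$ (so $e$ is not a loop and $x\neq y$), while every other edge of $C$ is \emph{monochromatic}, i.e.\ has both endvertices on the same side.

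Next I would delete $e$. By \autoref{arc_cir}, $L:=C-e$ is a pseudo-line with endvertices $x$ and $y$; orient it so that $x$ is its start-vertex and $y$ its end-vertex. Then $L$ is connected, $U\cap L$ and $V\cap L$ are disjoint open sets partitioning $V(L)$ with $x\in U$ and $y\in V$, and every edge of $L$ is monochromatic. The heart of the matter is that the colouring can only change at vertices, since the two endvertices of each edge lie on the same side; connectedness of $L$ should then force a single colour, contradicting $x\in U$, $y\in V$. The main obstacle to making this precise is that $U$ and $V$ need not cover the interior points of the edges, so one cannot simply split $L$ into two clopen pieces: an edge may carry a stray ``bubble'' of $V$ in its interior even though both its endvertices lie in $U$.

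To sidestep this I would work with the linear order $\leqq_L$ on the points of $L$ and with \emph{vertices only}. Let $m$ be the $\leqq_L$-smallest element of the closure $\overline{\{v\in V(L):v\in V\}}$ of the set of $V$-vertices, which exists by \autoref{min_element}. Since a $U$-neighbourhood of $x$ contains no $V$-vertex, $m\neq x$; and since every point interior to an edge has a vertex-free open neighbourhood (the edge-interior itself), $m$ cannot be such a point, so $m$ is a vertex. As $U$ and $V$ are disjoint and open, $m$ lies in no $U$-neighbourhood avoiding the $V$-vertices, so $m$ is itself a $V$-vertex, while by minimality every vertex $\leqq_L$-below $m$ is a $U$-vertex. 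Now I distinguish two cases. If $m$ is the $\leqq_L$-larger endvertex of some edge $g$, then its other endvertex lies $\leqq_L$-below $m$, hence is a $U$-vertex, yet monochromaticity forces it onto the $V$-side, a contradiction. Otherwise no edge ends at $m$ from below, so the points approaching $m$ from below are all vertices; combined with the fact that a non-start point of a pseudo-line always lies in the closure of the points $\leqq_L$-below it (a short consequence of \autoref{min_open} together with connectedness of $L$), this shows $m$ is a limit of $U$-vertices, so the open set $V\ni m$ meets $U$, again a contradiction. I expect the delicate step to be precisely this interplay with the uncovered edge interiors; the device of taking the smallest element of the closure of the set of $V$-\emph{vertices} is what forces the witnessing point back onto a vertex, where monochromaticity and the limit structure of the pseudo-line can be brought to bear.
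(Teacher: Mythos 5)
Your proposal is correct, and its opening move is exactly the paper's: restrict the cut to the pseudo-circle and contradict the defining property that removing one edge does not disconnect a pseudo-circle. The difference lies in how that contradiction is certified. The paper's proof of \autoref{never_once} is essentially a one-sentence appeal to this property, and the obstacle you identify is real: since $U$ and $V$ need not cover the interior points of edges, the sets $U\cap(\overline o - e)$ and $V\cap(\overline o - e)$ do not directly witness a disconnection of $\overline o - e$. One standard way to close this (implicit in the paper, and carried out explicitly in the tidying step of the proof of \autoref{fin_fram}) uses compactness of $\overline o$: only finitely many edges can have both endvertices on one side but a stray interior ``bubble'' on the other, and their interiors can be reassigned while preserving openness, because the closed image $\iota_g([0,1])$ of such an edge $g$ is disjoint from the other side's vertex set. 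Your alternative --- passing to the pseudo-line $L=C-e$ via \autoref{arc_cir} and taking the $\leqq_L$-least element $m$ of the closure of the set of $V$-vertices, using \autoref{min_element} and \autoref{min_open} --- avoids the compactness tidying altogether and is a sound, self-contained substitute; your Case A and the limit argument at the start-vertex are correct as written.

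One sentence in your Case B is not literally true: if no edge has $m$ as its $\leqq_L$-larger endvertex, the points approaching $m$ from below need \emph{not} all be vertices --- in $L(P)$ with $P$ of order type $\omega$, the end-vertex is approached by interior points of every edge. What you actually need, and what does hold, is that every neighbourhood of $m$ contains a \emph{vertex} strictly below $m$. This follows from the explicit subbasis of $L(P)$ (available for $L$ by \autoref{pline_lin_order}): a basic neighbourhood of $m$ is a finite intersection of sets $S(p_i,r_i)^{+}$ with $p_i\in m$ and $S(q_j,s_j)^{-}$ with $q_j\notin m$, and if it contains an interior point of an edge $g$ with $g\in m$, then $p_i\leq g < q_j$ for all $i,j$, so it also contains the vertex $\iota_g(1)=\{x\leq g\}$; by the case hypothesis $\iota_g(1)\neq m$, so it is strictly below $m$ and hence a $U$-vertex by minimality. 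With this one-line repair, $m\in V$ is a limit of $U$-vertices and your contradiction stands, so the proof is complete.
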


\begin{proof}
Suppose for a contradiction that there are a topological circuit $o$ and a topological cut $b$ that intersect in only one edge $f$.
In the graph-like space $\overline o$, the set $b\cap o$ is a topological cut consisting of a single edge $f$. This contradicts the fact that removing any edge does not disconnect the pseudo-circle $\overline o$, which completes the proof.
\end{proof}

We can also show that the intersection of topological circuits with topological cuts is finite. In fact, we can prove something a little more general.

\begin{lem}\label{gls_is_tame}
Let $o$ be a set of edges in a graph-like space $G$ such that $\overline o$ is compact. The the intersection of $o$ with any topological cut $b$ is finite.
\end{lem}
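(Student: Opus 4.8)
The plan is to reduce the statement to a direct compactness argument carried out inside $\overline o$. Write $b$ as the topological cut induced by a pair of disjoint open sets $U, V$ partitioning $V(G)$. Since the vertex set of $\overline o$ is a subset of $V(G)$ (each endvertex $\iota_e(0), \iota_e(1)$ of an edge $e\in o$ lies in the closure of $(0,1)\times\{e\}\se (0,1)\times o$, hence is a vertex of $\overline o$), the sets $U\cap\overline o$ and $V\cap\overline o$ are disjoint open subsets of $\overline o$ partitioning its vertices. An edge $e\in o$ lies in $b$ exactly when its two endvertices lie on opposite sides of this partition, so it suffices to bound the number of such edges.

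Next I would set up an open cover of $\overline o$ consisting of $U\cap\overline o$, $V\cap\overline o$, and the interior $(0,1)\times\{e\}$ of each edge $e\in o$. By the open-map axiom in \autoref{def:gls}, each $(0,1)\times\{e\}$ is open in $G$, hence open in $\overline o$. These sets cover $\overline o$: every vertex of $\overline o$ lies in $U$ or in $V$, and every non-vertex point of $\overline o$ is an interior point of some edge of $o$. Since $\overline o$ is compact, this cover has a finite subcover, which can involve only finitely many of the edge-interiors, say those of $e_1,\dots,e_k$.

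Finally I would show that every edge of $o\cap b$ occurs among $e_1,\dots,e_k$. Suppose $e\in o$ is not one of them. Its interior $(0,1)\times\{e\}$ is disjoint from the interiors of the other edges and from all vertices, so it is covered by $(U\cap\overline o)\cup(V\cap\overline o)$; being connected, it lies entirely in $U$ or entirely in $V$, say $U$. But if $e\in b$ then $\iota_e([0,1])$ is a connected set meeting both $U$ (which contains its interior and one endvertex) and $V$ (which contains its other endvertex), so $\iota_e([0,1])\cap U$ and $\iota_e([0,1])\cap V$ would split $\iota_e([0,1])$ into two nonempty disjoint relatively open pieces, contradicting connectedness. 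Hence $e\notin b$, and therefore $o\cap b\se\{e_1,\dots,e_k\}$ is finite.

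The main obstacle is purely one of bookkeeping rather than of genuine difficulty: one must check that the proposed family really is an open cover of $\overline o$ (which is where the open-map axiom is used, to know the edge-interiors are open), and one must be careful to run the connectedness contradiction on the whole arc $\iota_e([0,1])$ rather than on its interior alone. Everything else is a standard finite-subcover extraction.
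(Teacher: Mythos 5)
Your proof is correct and is essentially the paper's own argument: the same open cover of $\overline o$ by $U\cap\overline o$, $V\cap\overline o$ and the edge interiors, the same finite-subcover extraction, and the same connectedness argument showing edges outside the subcover cannot lie in $b$. Your extra care in running the connectedness contradiction on the closed arc $\iota_e([0,1])$ is a harmless elaboration of the step the paper states more tersely.
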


\begin{proof} Let $b$ be induced by the open sets $U$ and $U'$. The sets $U \cap \overline o$ and $U' \cap \overline o$, together with all the sets $(0, 1) \times \{e\}$ with $e \in o$, comprise an open cover of $\overline o$. So there is a finite subcover, which can only contain $(0, 1) \times \{e\}$ for finitely many edges $e$. For any other edge $f$ of $o$ we must have $(0, 1)\times \{f\} \subseteq U \cup U'$, and it must be a subset either of $U$ or of $V$ since it is connected: in particular, no such $f$ can be in $b$.
\end{proof}

\section{Graph-like spaces inducing matroids}\label{sec5}

If for a graph-like space $G$ there is a matroid $M$ on $E(G)$ whose circuits are precisely the topological circuits of $G$ and whose cocircuits are precisely the topological bonds of $G$, then we say that  \emph{$G$ induces $M$}, and we may denote $M$ by $M(G)$.
Note that there can only be one such matroid since a matroid is uniquely defined by its set of circuits.

The graph-like spaces in Figures \autoref{fig:ladder} and \autoref{fig:sierpinski} both induce matroids: for the first, this is not hard to check explicitly. The second induces a matroid because it is compact: we shall show in \cite{BCC:graphic_matroids_partII} that any compact graph-like space induces a matroid. Similarly, for any finitely separable graph $G$ the space $|G|$ induces the topological cycle matroid $M_C(G)$. The one-point compactification of a locally finite graph $G$ induces the algebraic cycle matroid $M_A(G)$; if $G$ is not locally finite and does not have a subdivision of the Bean graph, a similar construction can be used to construct a noncompact graph-like space that induces $M_A(G)$.
Finally, the geometric realisation of $G$ induces the finite cycle matroid $M_{FC}(G)$.

\begin{lem}\label{minor_consistency}
Let $G$ be a graph-like space, and suppose $G$ induces a matroid $M$. Then for any $C, D\subseteq E(M)$, the graph-like space $G/C\backslash D$ induces $M/C\backslash D$. 
\end{lem}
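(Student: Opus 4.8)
The plan is to apply \autoref{magic_lemma} to $N := M/C\backslash D$, taking for $\Ccal$ the set of topological circuits of $H := G/C\backslash D$ and for $\Ccal^*$ the set of topological cuts of $H$. By \autoref{never_once} no topological circuit meets a topological cut in exactly one edge, so the intersection hypothesis of \autoref{magic_lemma} holds automatically, and it suffices to verify the two containments: every circuit of $N$ is a topological circuit of $H$, and every cocircuit of $N$ is a topological cut of $H$. Granting these, \autoref{magic_lemma} identifies the minimal nonempty topological circuits of $H$ with the circuits of $N$, and the minimal nonempty topological cuts of $H$ with the cocircuits of $N$. Since a topological bond is by definition a minimal nonempty topological cut, the latter already gives that the topological bonds of $H$ are exactly the cocircuits of $N$. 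On the circuit side I will additionally show that the topological circuits of $H$ form an antichain, so that ``minimal nonempty'' may be dropped.

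For the cocircuit containment, let $b'$ be a cocircuit of $N$, i.e.\ a circuit of $N^* = M^*/D\backslash C$. By \autoref{rest_cir} applied to $M^*$ there is an $M^*$-circuit (that is, an $M$-cocircuit) $b$ with $b'\se b\se b'\cup D$; in particular $b\cap C = \varnothing$ and $b\sm D = b'$. Since $G$ induces $M$, the set $b$ is a topological bond, hence a topological cut, of $G$. As $b$ avoids $C$, its two sides are closed under $\equiv_C$, so $b$ is a topological cut of $G/C$, and restricting to the edge set of $H$ shows that $b\sm D = b'$ is a topological cut of $H$. This direction uses only that cuts descend under contraction and restrict under deletion, so it is clean.

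The circuit containment is the crux. Let $o'$ be a circuit of $N$. By \autoref{rest_cir} there is an $M$-circuit $o$ with $o'\se o\se o'\cup C$, so $o\cap D = \varnothing$ and $o\sm C = o'$. Since $G$ induces $M$, the standard subspace $\overline o$ is a pseudo-circle in $G$; as $o$ avoids $D$ it lies in $G\backslash D$, and by \autoref{circle_contract} the internal contraction $\overline o/(o\cap C)$ is again a pseudo-circle with edge set $o'$ (here $o'\neq\varnothing$ ensures not all edges are contracted). It remains to show that $f_C$ realises exactly this internal contraction on $\overline o$; equivalently, that $f_C$ does not identify two vertices $u,v$ of $\overline o$ kept distinct by $\overline o/(o\cap C)$. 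Suppose it did. Then some topological cut of $\overline o$ with all edges in $o'$ separates $u$ from $v$, and travelling around the pseudo-circle from $u$ towards $v$ in each direction until the first $o'$-edge is met produces a pair $e_1,e_2\in o'$ whose removal already separates $u$ from $v$ in $\overline o$. Applying \autoref{o_cap_b} in $N$ to the circuit $o'$ and the edges $e_1,e_2$ yields a cocircuit $b^\flat$ of $N$ with $o'\cap b^\flat = \{e_1,e_2\}$. As in the cocircuit containment, \autoref{rest_cir} lifts $b^\flat$ to an $M$-cocircuit $\hat b$ with $b^\flat\se\hat b\se b^\flat\cup D$, so $\hat b\cap C=\varnothing$ and, since $o\cap D=\varnothing$, also $\hat b\cap o = b^\flat\cap o' = \{e_1,e_2\}$. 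Because $G$ induces $M$, the set $\hat b$ is a topological cut of $G$ that avoids $C$ and meets $\overline o$ only in $\{e_1,e_2\}$; its two sides therefore separate the two arcs of $\overline o - e_1 - e_2$, and in particular separate $u$ from $v$. This contradicts $u\equiv_C v$, so no over-identification occurs. Hence $f_C$ maps $\overline o$ homeomorphically onto a pseudo-circle in $H$ with edge set $o'$, and $o'$ is a topological circuit of $H$. This non-over-identification step is the main obstacle: one must convert a separation living only inside the pseudo-circle into a genuine topological cut of the ambient space avoiding $C$, which is precisely what the fundamental-cocircuit lemma \autoref{o_cap_b}, read in the minor $N$, supplies.

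Finally, to drop ``minimal'' on the circuit side, I show the topological circuits of $H$ form an antichain. If $o'\subsetneq o$ were both topological circuits, choose $e\in o\sm o'$ and $f\in o'$. Then $\overline{o'} - f$ is a connected pseudo-line joining the two endvertices of $f$ (by \autoref{arc_cir}) and lies inside $\overline o - e - f$, whereas removing the pair $\{e,f\}$ from the pseudo-circle $\overline o$ disconnects it into two arcs placing these two endvertices in different components (by \autoref{components_expl}). This contradiction rules out proper containment. Thus the minimal nonempty topological circuits are all of them, and together with the cocircuit computation this shows that $H$ induces $N$.
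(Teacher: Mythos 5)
Your proof is correct and takes essentially the same route as the paper's: verify that circuits and cocircuits of $M/C\backslash D$ are topological circuits and cuts of $G/C\backslash D$, invoke \autoref{magic_lemma} with \autoref{never_once}, lift via \autoref{rest_cir} and its dual, and rule out over-identification under $f_C$ via \autoref{o_cap_b} plus a lifted $M$-cocircuit avoiding $C$, just as the paper does (your antichain argument merely makes explicit a minimality point the paper glosses). One small patch: the ``first $o'$-edge'' from $u$ need not exist when $o'$-edges accumulate, so take instead the first edge of the separating cut itself (its edges lie in $o'$, and the cut is finite since the pseudo-circle $\overline{o}$ is compact, by the argument of \autoref{gls_is_tame}), which serves the same purpose.
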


\begin{proof} Let ${\cal C}$ and ${\cal C}^*$ be respectively the collection of topological circuits and the collection of topological cuts of $G/C\backslash D$. We will show that every circuit of $M/C\backslash D$ is in ${\cal C}$, and that every cocircuit of $M/C\backslash D$ is in ${\cal C}^*$. Lemma \ref{never_once} states that for every $o\in {\cal C}$, $b\in {\cal C}^*$, $|o\cap b|\neq 1$, so it will follow by Lemma \ref{magic_lemma} that the topological circuits of $G/C\backslash D$ are the circuits of $M/C\backslash D$ and that the minimal topological cuts (i.e. the topological bonds) of $G/C\backslash D$ are the cocircuits of $M/C\backslash D$, completing the proof.

Let $o$ be a circuit of $M/C\backslash D$. By \autoref{rest_cir} there is a circuit $o'$ of $M$ such that $o\subseteq o'\subseteq o\cup C$. Since $o'$ is a circuit of $M$, there is a pseudo-circle $O$ in $G$ with edge-set $o'$. Let $f_C: G\to G/C$ be as in the definition of the contraction $G/C$. Then $f_C\restric_O$ is a map of graph-like spaces from $O$ to a subspace of $G/C\backslash D$ that has edge-set $o$. If it describes a contraction of $O\cap C$, then Lemma \ref{circle_contract} implies that $o$ is a circuit of $G/C\backslash D$ as required. Otherwise, some vertex of $G / C \backslash D$ must contain two vertices $p$ and $q$ of $O$ such that their deletion from the pseudo-circle $O$ leaves two elements $e$ and $f$ of $o$ in different components of $O-p-q$. Then by \autoref{o_cap_b} there is a cocircuit $b$ of $M/C \backslash D$ with $o \cap b = \{e, f\}$. Using the dual of \autoref{rest_cir}, there is a cocircuit $b'$ of $M$ with $b \se b' \se b \cup D$, so that $o' \cap b' = \{e, f\}$. $b'
$ is a topological bond of $G$ not meeting $C$ and with $p$ and $q$ on opposite sides, contradicting the assumption that they are identified when we contract $C$.

Let $b$ be a cocircuit of $M/C\backslash D$. It follows by the dual of Lemma \ref{rest_cir} that there is a cocircuit $b'$ of $M$ (hence also a topological cut of $G$) such that $b\subseteq b'\subseteq b\cup D$. Let $U, V$ be the disjoint open sets in $G$ that partition $V(G)$ so that the set of edges with an end in each of $U$ and $V$ is $b'$. Let $f_C: G\mapsto G/C$ be the map of graph-like spaces describing the contraction of $C$ from $G$. Since $b'$ is disjoint from $C$, $f_C$ does not identify any element of $U$ with any element of $V$. Thus $f_C(U), f_C(V)$ are open sets in $G/C\backslash D$, and $b$ is the set of edges with an end in each, showing that $b$ is a topological cut of $G/C\backslash D$, as required. 
 
\end{proof}

\subsection{Properties of graph-like spaces inducing matroids}

Not all graph-like spaces induce matroids. In this subsection, we will consider some substructures of graph-like spaces which prevent them from inducing matroids.

   \begin{figure} [htpb]   
\begin{center}
   	  \includegraphics[height=2cm]{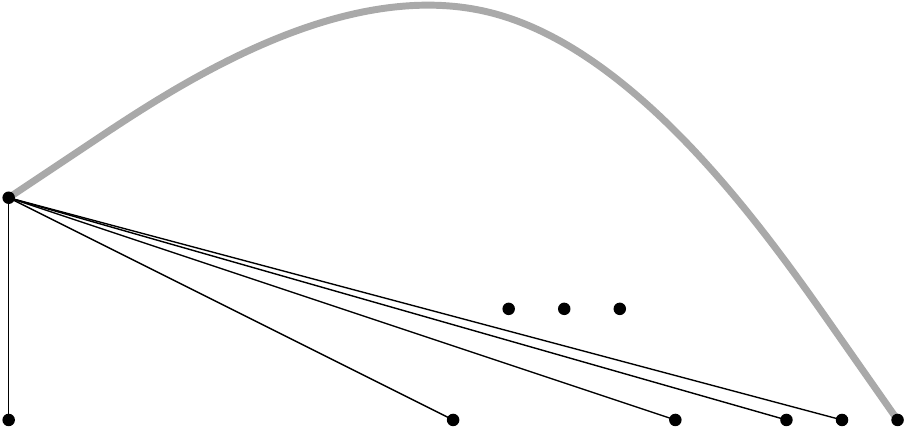}
   	  \caption{The graph-like space $F$.}
   	  \label{alm:bean}
\end{center}
   \end{figure}

The graph-like space $F$ of \autoref{alm:bean}, whose topology is that induced by the embedding in the plane suggested by the figure, does not induce a matroid.
Indeed, the curved gray edge is not a topological bond but also does not lie on a topological circuit.
Since in any matroid any edge either is a cocircuit or lies on a circuit, $F$ does not induce a matroid.

For a similar reason the graph-like space depicted in \autoref{fig_2}
does not induce a matroid.

   \begin{figure} [htpb]   
\begin{center}
   	  \includegraphics[height=3 cm]{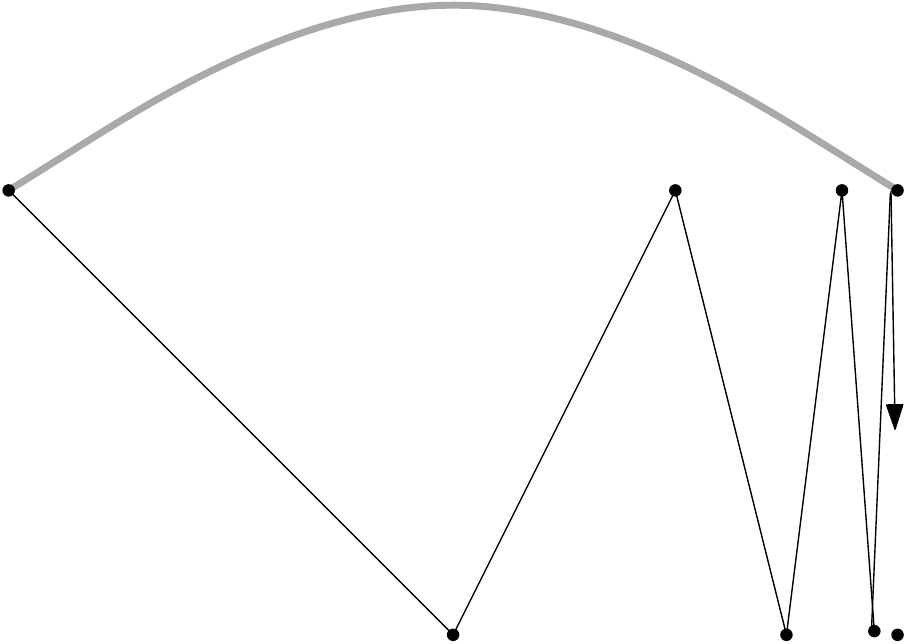}
   	  \caption{A graph-like space that does not induce a matroid.}
   	  \label{fig_2}
\end{center}
   \end{figure}

A consequence of \autoref{minor_consistency} is that no graph-like space inducing a matroid can have $F$ as a minor.
At the end of this section, we shall describe a more complex substructure that witnesses that any graph-like space including it, does not induce a matroid.

We say that a subset $H$ of a graph-like space $G$ is pseudo-arc connected if either it is a connected subset of the set of interior points of some edge of $G$ or else any two vertices in $H$ can be joined by a pseudo-arc of $G$ which is included in $H$ and any interior point of an edge $e$ of $H$ can be joined to an end-vertex of $e$ by an arc of $H \cap \iota_e``[0, 1]$.

We might hope that connected standard subspaces of graph-like spaces would necessarily be pseudo-arc connected. However, this is not true, as the standard subspace given by the black edges in \autoref{alm:bean} shows.
Even if the graph-like space induces a matroid, the property may still not hold, as illustrated in \autoref{fig:not_path_con}. However, in that example there is no pseudo-arc at all in the whole graph-like space from the vertex at the top to any other vertex.
The next lemma guarantees a pseudo-arc between two points of a connected standard subspace
provided that neither of these two obstructions occurs.

   \begin{figure} [htpb]   
\begin{center}
   	  \includegraphics[height=2cm]{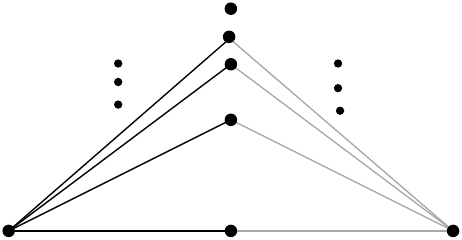}
   	  \caption{A graph-like space with a connected standard subspace, depicted in black, that is not pseudo-arc connected.}
   	  \label{fig:not_path_con}
\end{center}
   \end{figure}

\begin{lem}\label{p-arc_inside}
Let $X$ be a  connected standard subspace of a graph-like space $G$ inducing some matroid $M$. Assume there is an $X$-$X$-pseudo-arc $P$
meeting $X$ only in its two endvertices $x$ and $y$.
Then there is an $x$-$y$-pseudo-arc included in $X$. 
\end{lem}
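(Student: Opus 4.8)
The plan is to build the desired $x$--$y$ pseudo-arc inside $X$ by combining the given external pseudo-arc $P$ with pseudo-arcs inside $X$, using the matroid structure of $M$ to control how these fit together. Since $X$ is a connected standard subspace inducing a matroid, the edge set of $X$ is a union of circuits (equivalently, a scrawl in the sense of \autoref{is_scrawl}): indeed, $X$ connected means it cannot be split by a topological bond, so no cocircuit of $M$ meets $E(X)$ in a single edge, and \autoref{is_scrawl} gives that $E(X)$ is a union of topological circuits. The key point is that $P$, together with $X$, furnishes a circuit of $M$ that I can exploit.

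First I would consider the pseudo-arc $P$ from $x$ to $y$ meeting $X$ only at its endvertices, and let $p$ denote its edge set. The idea is that $p$ behaves like a path attached to the connected subspace $X$ at two points, so closing it up through $X$ should produce a topological circuit. More precisely, I expect to argue that because $X$ is connected and induces a matroid, there is a topological circuit $o$ of $G$ with $p \subseteq o \subseteq p \cup E(X)$: one way is to pick inside $X$ some pseudo-arc structure connecting $x$ to $y$ using only edges of $X$ and show, via \autoref{o_cap_b} and \autoref{never_once}, that any cocircuit meeting $p \cup E(X)$ does so in at least two edges, so that $p \cup E(X)$ is a scrawl containing $p$ and hence contains a circuit through the edges of $p$. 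Then $o \setminus p \subseteq E(X)$, and the pseudo-circle $O$ with edge set $o$, after removing the edges of $p$, yields by \autoref{arc_cir} a pseudo-line whose image lies in $\overline{o \setminus p} \subseteq X$ and which runs from $x$ to $y$.

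The main obstacle I anticipate is precisely ensuring that the circuit $o$ can be chosen so that $o \setminus p$ actually reaches from $x$ to $y$ \emph{within} $X$, rather than merely being some circuit whose edges lie in $p \cup E(X)$ but which fails to connect $x$ to $y$ inside $X$, or which reuses only part of $P$. I would control this by working with the two endvertices: since $P$ meets $X$ only at $x$ and $y$, the edges of $p$ are genuinely outside $X$, so in the pseudo-circle $O$ the complementary pseudo-arc $\overline{o \setminus p}$ (obtained by \autoref{arc_cir} from removing the edges of $p$, or by \autoref{components_expl} if $p$ has several edges) has its endvertices among $x$ and $y$. Care is needed because $p$ may consist of many edges, so $O$ minus the inner points of $p$ may split into several pseudo-line components by \autoref{components_expl_arc}; I would then select the component, or concatenate the components using \autoref{con_parc}, joining consecutive endvertices that lie in $X$, to assemble a single pseudo-arc from $x$ to $y$ whose edges all lie in $o \setminus p \subseteq E(X)$.

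Finally, I would verify that the resulting subspace is genuinely a pseudo-arc included in $X$: its edges lie in $E(X)$ by construction, and since it is a pseudo-line built from standard subspaces of pseudo-circles of $G$, its point set is the standard subspace on those edges, hence contained in $X$ because $X$ is standard. By \autoref{con_parc} the concatenation of the relevant pieces contains a genuine $x$--$y$ pseudo-arc, which is the desired conclusion. The delicate bookkeeping will be matching up the endvertices of the component pseudo-lines so that their concatenation is well-defined and lands inside $X$; this is where I expect the real work, and where the hypothesis that $P$ meets $X$ only in $x$ and $y$ is essential, since it guarantees the external edges $p$ form a single ``bridge'' whose removal from $O$ leaves precisely the pieces inside $X$.
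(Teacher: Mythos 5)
Your proposal fails at its foundation: the matroidal claims you use to manufacture the circuit $o$ are wrong, and the one concrete route you offer is circular. Connectedness of $X$ does \emph{not} imply that no cocircuit meets $E(X)$ just once: a pseudo-arc is itself a connected standard subspace with independent edge set, and by \autoref{almost_fundamental_cocircuit} each of its edges lies in a cocircuit meeting $E(X)$ in exactly that edge. So $E(X)$ need not be a union of circuits, and likewise $p \cup E(X)$ need not be a scrawl (attach a pendant edge to a pseudo-arc $X$ to see this), so \autoref{is_scrawl} cannot give you a circuit through $p$; neither \autoref{o_cap_b} nor \autoref{never_once} supplies the claim that every cocircuit meets $p\cup E(X)$ at least twice. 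Your alternative route --- ``pick inside $X$ some pseudo-arc connecting $x$ to $y$ using only edges of $X$'' --- assumes exactly the conclusion of the lemma; as the discussion around \autoref{fig:not_path_con} makes clear, connected standard subspaces need not be pseudo-arc connected, which is the entire point of this statement. The paper obtains its circuit non-circularly and more economically: fix a \emph{single} edge $e$ of $P$; every topological cut separating the endvertices of $e$ contains an edge of $E(X)\cup E(P)-e$, because $X$ together with $P$ minus the interior of $e$ is connected, so these endvertices are identified in $G/(E(X)\cup E(P)-e)$ and $\{e\}$ is a topological circuit there; then \autoref{rest_cir} lifts it to an $M$-circuit $o'$ with $e \in o' \subseteq E(X)\cup E(P)$. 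Note that only one edge of $P$ is needed in the circuit, not $p \subseteq o$.

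The assembly step would also fail as described. The endpoints of the components of $\overline{o}$ minus the interiors of the $p$-edges are endvertices of edges of $p$; these lie on $P$, and apart from $x$ and $y$ they are \emph{not} in $X$ (since $P\cap X=\{x,y\}$), so there are no ``consecutive endvertices that lie in $X$'' to join, and \autoref{con_parc} concatenates pseudo-arcs sharing an endpoint --- it cannot bridge disjoint components. Moreover \autoref{components_expl_arc} and \autoref{components_expl} apply only to finitely many removed edges, while $E(P)$ may be infinite. The paper sidesteps both problems: removing the single edge $e$ from the pseudo-circle $C$ of $o'$ leaves one pseudo-arc $R$; since $X$ is closed, $R$ has a $\leqq_R$-first point $v_1$ and a last point $v_2$ in $X$ by \autoref{min_element}; these also lie on the closed set $P$, hence are $x$ and $y$, and they are distinct because $E(P)$ is independent; finally $v_1Rv_2$ meets $P$ only in its endpoints, and since every point of $R$ lies in the closed set $X\cup P$, it follows that $v_1Rv_2 \subseteq X$ is the desired pseudo-arc. (Your closing observation can be salvaged: any nontrivial component of $\overline{o'}$ minus the $p$-interiors lies in the closed set $X$ with both endpoints in $\{x,y\}$, and disjoint components cannot share $x$, so there is exactly one such component --- but this still presupposes a sound, non-circular construction of the circuit.)
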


\begin{proof}
If $P$ is a trivial pseudo-arc then $x = y$ and we are done. Otherwise, let $e$ be any edge of $P$, and let the end-vertices of $e$ be $p$ and $q$.
By the definition of contraction, $\{e\}$ becomes a topological circuit in $G/(E(X) \cup E(P) - e)$.
So by \autoref{rest_cir}, $\{e\}$ extends to an $M$-circuit $o'$ by adding contracted edges.
Let $C$ be the corresponding pseudo-circle, and let $R$ be the pseudo-arc formed by removing $e$ from $C$. Since $X$ is closed, there must be a first point $v_1$ of $R$ in $X$. Since $P$ is closed, this point also must lie on $P$, and hence it is either $x$ or $y$. Similarly, the last point $v_2$ of $R$ in $X$ is also $x$ or $y$. Since $P$ is independent, the points $v_1$ and $v_2$ cannot be equal.
Since $X$ and $P$ are closed, and just meet in two points, the arc $v_1Rv_2$ does not meet $P$ outside its endpoints. In particular, it is included in $X$. Thus it is the 
desired $x$-$y$-pseudo-arc, completing the proof.
\end{proof}

\begin{prop}\label{weak_base_arccon}
Let $G$ be a graph-like space inducing a connected matroid $M$ with a base $s$.
Then for any edges $e$ and $f$ of $M$, and any endvertices  $v$ of $e$ and $w$ of $f$, there is a unique pseudo-arc from $v$ to $w$ that uses only edges in $s$.
\end{prop}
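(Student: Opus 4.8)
The plan is to prove existence and uniqueness separately, with existence being the substantial half.

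\emph{Uniqueness.} Suppose there were two distinct pseudo-arcs $P_1,P_2$ from $v$ to $w$, both using only edges of $s$. Since a nontrivial pseudo-arc equals the standard subspace of its edge set, $P_1\neq P_2$ forces $E(P_1)\neq E(P_2)$; I would pick $g\in E(P_1)\setminus E(P_2)$ and set $C=(E(P_1)\cup E(P_2))-g\subseteq s$. First I would record the elementary fact that if two vertices are joined by a pseudo-path using only edges of $C$, then they are identified in $G/C$: any topological cut separating them must cut the connected image of that pseudo-path, hence meets $C$, so the vertices are $\equiv_C$-equivalent. Removing the open edge $g$ splits $P_1$ into two pseudo-arcs which join the two endvertices of $g$ to $v$ and to $w$ respectively through edges of $C$, while $P_2$ joins $v$ to $w$ through edges of $C$; hence the endvertices of $g$ are joined by a pseudo-path in $\overline C$ and so are identified in $G/C$. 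Thus $\{g\}$ is a topological circuit of $G/C$, i.e.\ a circuit of $M/C$ by \autoref{minor_consistency}, and \autoref{rest_cir} produces an $M$-circuit $o$ with $g\in o\subseteq\{g\}\cup C\subseteq s$, contradicting the independence of the base $s$.

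\emph{Existence.} I would adapt the separation argument from the proof of \autoref{swseq}. Fix $v$ and let $W$ be the set of vertices reachable from $v$ by a pseudo-arc lying in $\overline s$; by \autoref{con_parc} reachability is transitive, and every vertex on such a pseudo-arc lies in $W$ (take the appropriate subarc). Let $X$ be the set of edges having at least one endvertex in $W$. The key point is that $X$ is in fact the set of edges with \emph{both} endvertices in $W$: for $g\in s$ this is immediate, since one extends a pseudo-arc ending at an endvertex of $g$ across $g$ via \autoref{con_parc}; and for $g\notin s$ it follows because the fundamental circuit $o_g\subseteq s+g$ is a pseudo-circle whose deletion of $g$ is, by \autoref{arc_cir}, a pseudo-arc in $\overline s$ joining the two endvertices of $g$. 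That same pseudo-arc shows, for $g\notin s$, that $o_g-g\subseteq s\cap X$ whenever $g\in X$ and $o_g-g\subseteq s\setminus X$ whenever $g\notin X$ (any endvertex of an edge of $o_g-g$ lying in $W$ would pull both endvertices of $g$ into $W$). Consequently $s\cap X$ spans $X$ and $s\setminus X$ spans $E\setminus X$, both being independent as subsets of $s$; since their union is the base $s$, the pair $(X,E\setminus X)$ is a $1$-separation of $M$ as soon as both parts are nonempty. As $e\in X$ (its endvertex $v$ lies in $W$) and $M$ is connected, this forces $X=E$. In particular every endvertex of $f$, and hence $w$, lies in $W$, yielding the required pseudo-arc from $v$ to $w$ in $\overline s$.

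I expect the main obstacle to be the existence half, and specifically the translation between topological reachability inside $\overline s$ and a matroid separation. The delicate step is verifying the double description of $X$ and that $X$ is spanned from within by $s\cap X$ while $E\setminus X$ is spanned by $s\setminus X$; this is exactly where the pseudo-circle structure of fundamental circuits (\autoref{arc_cir}) and the concatenation lemma (\autoref{con_parc}) do the real work, converting the purely matroidal connectivity hypothesis into the existence of a path entirely inside the base. Uniqueness, by contrast, should reduce cleanly to the contraction-to-a-loop trick together with \autoref{rest_cir}.
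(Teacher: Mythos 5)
Your proof is correct, but both halves take genuinely different routes from the paper's. For uniqueness, the paper argues purely topologically: picking $e_0\in R_1\setminus R_2$, it uses closedness of $R_1\cap R_2$ to locate the last common point $b$ before $e_0$ and the first common point $a$ after it, so that $aR_1b$ and $bR_2a$ are internally disjoint and $aR_1bR_2a$ is a pseudo-circle all of whose edges lie in $s$ --- a circuit inside a base, contradiction. Your contraction-to-a-loop argument replaces this extremal-point construction with the minor machinery (\autoref{minor_consistency} plus \autoref{rest_cir}); it is valid, and like the paper's proof it never uses that $v$ and $w$ are endvertices of edges (cf.\ \autoref{unique_rem}). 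The one step you gloss is the ``elementary fact'' that vertices joined by a pseudo-path with edges in $C$ are $\equiv_C$-equivalent: saying the cut ``must cut the connected image'' is too quick, since the two open sets inducing a topological cut partition the vertices but need not cover edge-interiors, so an empty trace does not visibly disconnect the image. The clean argument pulls the cut back to the domain pseudo-line (vertices go to vertices, each edge maps to an edge of $C$ or to a vertex, so no edge of the pull-back crosses) and applies \autoref{min_element} to the closed set of vertices on the far side to produce a crossing edge of $C$; still, the paper itself asserts statements of exactly this kind without proof (e.g.\ ``its endvertices are identified by the definition of contraction'' in the proof of \autoref{standard_subspace_conny}), so this is at the paper's own level of rigour. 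For existence, the paper invokes \autoref{swseq} as a black box and stitches together intervals of the fundamental-circuit pseudo-arcs $\overline{o_{e_i}}\setminus e_i$ along the switching sequence via \autoref{con_parc}; you instead transplant the separation argument from the \emph{proof} of \autoref{swseq} into the topological setting, with ``pseudo-arc-reachable through $s$'' in place of ``switching-sequence-reachable''. That buys a one-pass argument with no intermediate vertices $v_i$ and no interval bookkeeping (and shows at once that \emph{all} endvertices of edges lie in one reachability class), at the cost of redoing the spanning verifications that \autoref{swseq} already encapsulates. Two cosmetic repairs: pick $g$ in the symmetric difference of $E(P_1)$ and $E(P_2)$ (WLOG in $E(P_1)\setminus E(P_2)$), and define $W$ as the set of endpoints of pseudo-arcs from $v$ \emph{using only edges of $s$} rather than ``lying in $\overline s$'', so that the trivial arc puts $v\in W$ even if $v\notin\overline s$, which is what your claim $e\in X$ needs.
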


\begin{proof}
By \autoref{swseq}, we can find a switching sequence $(e_i | 1 \leq i \leq n)$ for $s$ with first term $e$ and last term $f$. Pick a sequence $(v_i | 1 \leq i \leq n)$, with first term $v$ and last term $w$, where for each $i$ the vertex $v_i$ is an endvertex of $e_i$. Then for any $i < n$ we can find a pseudo-arc from $v_i$ to $v_{i+1}$ using only edges of $s$: if $e_i \in s$ then we take an interval of the pseudo-arc $\overline{o_{e_{i+1}}} \sm e_{i+1}$, and if $e_i \not \in s$ then we take an interval of the pseudo-arc $\overline{o_{e_i}} \sm e_i$. Repeatedly applying \autoref{con_parc} we find the desired pseudo-arc from $v$ to $w$.

To show uniqueness, we suppose for a contradiction that there are 2 distinct such pseudo-arcs $R_1$ and $R_2$. Then without loss of generality there is an edge $e_0$ in $R_1 \setminus R_2$.

Let $a\in R_1\cap R_2$ be the $\leqq_{R_1}$-smallest point that is still $\leqq_{R_1}$-bigger than any point on $e_0$; such a point exists as the intersection of the two pseudo-arcs is closed.
Similarly, let $b\in R_1\cap R_2$ be the $\leqq_{R_1}$-biggest point that is still $\leqq_{R_1}$-smaller than any point on $e_0$.
Then $aR_1b$ and $bR_2a$ are internally disjoint. Therefore $aR_1bR_2a$ is a pseudo-circle
all of whose edges are in $s$, a contradiction.
\end{proof}

\begin{rem}\label{unique_rem}
The proof of uniqueness above does not make use of the assumption that $v$ and $w$
are endvertices of edges.
\end{rem}

Let us call the pseudo-arc whose uniqueness is noted above
$vsw$ by analogy to the special case where $s$ is a pseudo-arc. Next, we give a precise description of $vsw$.

\begin{prop}\label{weak_base_arccon_2}
 The pseudo-arc $vsw$ contains precisely those edges of $s$ whose fundamental cocircuit with respect to $s$
separates $v$ from $w$.
Its linear order is given by $e\leq f$ if and only if $e$ lies on the same side as $v$ of the fundamental cocircuit $b_f$ of $f$.
\end{prop}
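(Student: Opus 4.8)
The plan is to exploit the single fact that the fundamental cocircuit $b_g$ of an edge $g \in s$ satisfies $b_g \cap s = \{g\}$: since $b_g \se (E \sm s) + g$, the only edge of $s$ crossing the bond $b_g$ is $g$ itself. As $G$ induces $M$, $b_g$ is a topological bond, so it is induced by a pair of disjoint open sets $U_g, U_g'$ partitioning $V(G)$, and $g \in b_g$ forces the two endvertices of $g$ to lie on opposite sides. Write $R := vsw$ for the unique pseudo-arc from $v$ to $w$ using only edges of $s$ guaranteed by \autoref{weak_base_arccon}, and assume throughout that $v \in U_g$, relabelling the sides if necessary. Every edge $e \in s$ with $e \neq g$ lies entirely on one side of $b_g$, since $e \notin b_g$ forces the connected set $(0,1) \times \{e\}$ into $U_g$ or $U_g'$; this makes the phrase ``$e$ lies on the same side as $v$'' well defined for edges of $s$.

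First I would prove the membership claim. If $b_g$ separates $v$ from $w$, so $v \in U_g$ and $w \in U_g'$, then $R \cap U_g$ and $R \cap U_g'$ are disjoint nonempty open subsets of the connected space $R$, hence cannot cover it; any point of $R$ outside $U_g \cup U_g'$ must be an interior point of an edge of $s$ crossing $b_g$, and the only such edge is $g$, so $g \in E(R)$. For the converse, suppose $g \in E(R)$. By the pseudo-line structure (\autoref{anti}), $R - g$ has exactly two components, $C_v$ containing $v$ and $C_w$ containing $w$. Since $g$ is the unique edge of $s$ in $b_g$ and $g$ is no longer present in $C_v$, the argument of the previous sentence shows that $C_v$ lies entirely on one side of $b_g$, hence $C_v \se U_g$; likewise $C_w$ lies on a single side. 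The two endvertices of $g$ lie on opposite sides of $b_g$, and the endvertex of $g$ belonging to $C_v$ lies in $U_g$, so the other endvertex, which belongs to $C_w$, lies in $U_g'$; thus $C_w \se U_g'$ and $b_g$ separates $v$ from $w$. This proves the first assertion.

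The order statement then falls out. For $f \in E(R)$ the previous paragraph shows that $b_f$ separates $v$ from $w$ and gives $C_v \se U_f$ and $C_w \se U_f'$, where $C_v, C_w$ are the components of $R - f$. By definition $e <_R f$ means that $e$ lies in the component $C_v$ of $R - f$ containing the start-vertex $v$; since the edges of $C_v$ all lie in $U_f$, this says exactly that $e$ is on the same side as $v$. Conversely, any $e \in E(R) \sm \{f\}$ lies in $C_v$ or in $C_w$, and as the edges of $C_w$ lie in $U_f'$, an edge on the $v$-side must lie in $C_v$, i.e. $e <_R f$. Adjoining the reflexive case $e = f$ yields the stated description of $\leq_R$. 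The only delicate point, and the step I would treat most carefully, is the converse of the membership claim: showing that the two components of $R - g$ sit on opposite sides of $b_g$, whose key input is that the two endvertices of $g$ are themselves separated by $b_g$ while $g$ is the sole edge of $s$ crossing it.
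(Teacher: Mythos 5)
Your overall strategy is the paper's: the forward inclusion from connectedness of $R = vsw$, the converse by splitting $R$ at the edge and placing the two pieces on opposite sides of its fundamental cocircuit, and the order statement read off from which piece contains $v$. (The paper splits $R$ at the endvertices $z_1,z_2$ of the edge into the subarcs $vRz_1$ and $wRz_2$ rather than into the components of $R-g$; that difference is cosmetic.) However, one step fails as written, and it is load-bearing for both directions of your membership claim: the assertion that an edge $e \in s \sm \{g\}$ has its interior contained in one side ``since $e \notin b_g$ forces the connected set $(0,1)\times\{e\}$ into $U_g$ or $U_g'$''. Connectedness only yields this if $(0,1)\times\{e\}$ is already known to be covered by $U_g \cup U_g'$, and in a graph-like space it need not be: by \autoref{def:top_bond}, the open sets inducing a cut are only required to partition the \emph{vertices}, and an edge with both endvertices in $U_g$ may well have interior points outside $U_g \cup U_g'$. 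The paper itself confronts exactly this phenomenon in the proof of \autoref{fin_fram} (``edges in $o$ with both endpoints in $U_b$ but some interior point not in $U_b$''). Consequently your claim that ``any point of $R$ outside $U_g \cup U_g'$ must be an interior point of an edge of $s$ crossing $b_g$'' is unjustified, and with it both the disconnection argument in the forward direction and the assertion that $C_v$ lies in a single side in the converse. (Note that the well-definedness of ``$e$ lies on the same side as $v$'' does not need your argument at all: for $e \notin b_f$ both endvertices of $e$ lie in the same part of the vertex partition, by the definition of a topological cut.)

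The gap is repairable by the device the paper uses in \autoref{fin_fram}, and this is the missing ingredient you should supply: since the pseudo-arc $R$ is compact, the cover of $R$ by $U_g \cap R$, $U_g' \cap R$ and the sets $(0,1)\times\{e\}$ for $e \in E(R)$ has a finite subcover (compare the proof of \autoref{gls_is_tame}), so only finitely many edges of $E(R)$ have interior points escaping $U_g \cup U_g'$. Adding each such interior to the side containing its endvertices and removing the compact, hence closed, set $\iota_e([0,1])$ from the other side is a finite sequence of modifications preserving openness, disjointness and the vertex partition, hence the cut $b_g$; for the resulting sets your argument goes through verbatim, since the only edge of $E(R) \se s$ whose interior can still escape is the crossing edge $g$ itself. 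To be fair, the paper's own proof is terse at exactly this point (it asserts $vRz_1 \se A_1$ and $wRz_2 \se A_2$ without discussing interior points), but it never commits to the false covering statement, whereas your writeup leans on it explicitly twice, so it should be patched rather than left implicit.
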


\begin{proof}
 Let $R$ be the pseudo-arc from $v$ to $w$ using edges in $s$ only.
Since $R$ is connected, it must contain all edges 
whose fundamental cocircuit with respect to $s$
separates $v$ from $w$.

On the other hand let $e$ be an edge on $R$.
Let $z_1$ and $z_2$ be the endvertices of $e$, with $z_1\leqq_R z_2$.
Then by the above we can join $v$ to $z_1$ by the pseudo-arc $vRz_1$ and $w$ to $z_2$
by the pseudo-arc $wRz_2$.
In $G$ with the fundamental cocircuit of $e$ removed, $z_1$ and $z_2$ lie on different sides, which we will call $A_1$ and $A_2$.
Since $vRz_1\subseteq A_1$ and $wRz_2\subseteq A_2$, 
the fundamental cocircuit of $e$ separates $v$ from $w$, which completes the proof of the first part.

The second part is immediate from the definitions.
\end{proof}

The next lemma gives a useful forbidden substructure 
for graph-like spaces inducing matroids, inspired by \autoref{alm:bean}.

\begin{lem}\label{lem_cir_eli_3_half_way}
 Let $G$ be a graph-like space, and let $v$ be a vertex in it.
Let $\{Q_{n}|n\in \Nbb\}$ be a set of pseudo-arcs starting at $v$, and vertex-disjoint apart from that. Suppose also that  the union of the edge sets of the $Q_n$ is independent.
Let $y$ be a point in the closure of the set of their endvertices. Assume there is a nontrivial $v$-$y$-pseudo-arc $P$ that is vertex-disjoint from all the $Q_n-v$.

Then $G$ does not induce a matroid.
\end{lem}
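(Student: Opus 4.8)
The plan is to assume for contradiction that $G$ induces a matroid $M$ and to pass to a minor in which the configuration becomes a copy of the ``bean'' obstruction of \autoref{alm:bean}: an edge running to a limit vertex, which is visibly neither on a topological circuit nor in a topological bond. Concretely, I would fix one edge $f_P$ of the nontrivial pseudo-arc $P$, set $C=E(P)\sm\{f_P\}$ and $D=E(G)\sm\big(\bigcup_n E(Q_n)\cup E(P)\big)$, and consider $G':=G/C\backslash D$. By \autoref{minor_consistency}, $G'$ induces $M':=M/C\backslash D$, so it suffices to show that $G'$ induces no matroid.

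First I would describe $G'$, \emph{granting} the non-collapse facts $[v]\neq[y]$ and $[t_n]\neq[y]$ discussed below. Deleting $D$ leaves only the edges of the $Q_n$ and of $P$, and contracting $C$ collapses $P$ to a single edge: by \autoref{line_contract} together with the remark that contracting edges of a pseudo-line deletes them from its poset, the image of $\overline{E(P)}$ in $G'$ is the one-edge pseudo-line $L(\{f_P\})$, whose two distinct endpoints I call $[v]$ and $[y]$ (the classes of $v$ and of $y$). Since $y$ lies on no $Q_n$ and the $P$-vertices collapsed into $[y]$ all lie strictly beyond $f_P$, and assuming no $t_n$ is identified with $y$, the class $[y]$ meets no $Q_n$; hence in $G'$ the vertex $[y]$ is incident only to $f_P$, i.e. has degree one. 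As $f_C$ is continuous, $[y]=f_C(y)$ still lies in the closure of $\{[t_n]\mid n\in\Nbb\}$, and the $[t_n]$ survive as vertices joined to $[v]$ through their $Q_n$.

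Next I would derive the contradiction. Because $[v]\neq[y]$, the edge $f_P$ is not a loop of $G'$; a pseudo-circle through $f_P$ would then have at least two edges and would contain the degree-one vertex $[y]$, so deleting $f_P$ from it would disconnect it, contradicting the first axiom for pseudo-circles. Thus $f_P$ lies on no topological circuit, i.e. on no circuit of $M'$. In any matroid a non-loop element lying on no circuit is a coloop, so $\{f_P\}$ is a cocircuit of $M'$; as $G'$ induces $M'$, this makes $\{f_P\}$ a topological bond. Let $(U,V)$ induce this bond, with $f_P$ its unique crossing edge; then $[v],[y]$ lie on opposite sides, say $[y]\in V$ and $[v]\in U$ (the other case is symmetric). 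Since $V$ is open and contains the limit $[y]$ of the $[t_n]$, some $[t_n]\in V$; the pseudo-arc $Q_n$ then runs from $[v]\in U$ to $[t_n]\in V$ and, being connected, must cross the cut in an edge other than $f_P$ — contradicting that $f_P$ is the only crossing edge. Hence $G'$, and therefore $G$, induces no matroid.

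The step I expect to be the main obstacle is the \textbf{non-collapse} claim, chiefly $[v]\neq[y]$ (and likewise $[t_n]\neq[y]$): one must exhibit a topological cut of $G$ separating $v$ from $y$ while avoiding every edge of $C=E(P)\sm\{f_P\}$. Starting from disjoint open sets separating the vertices $v$ and $y$ (available since $G$ is graph-like), I would reroute the partition along $\overline{E(P)}$ so that $f_P$ is the only $P$-edge crossing, using the two components of $\overline{E(P)}-f_P$ from \autoref{L_is_LP}. The delicate point is openness of the resulting sets at the accumulation vertex $y$: every neighbourhood of $y$ meets infinitely many $Q_n$, so one must verify that each such $Q_n$ can be thrown entirely onto the $v$-side or else crossed only in its own (non-$C$) edges — and this is exactly where the vertex-disjointness of the $Q_n$, together with $y\notin\overline{E(Q_n)}$ for each fixed $n$, is used. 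Once this separating cut is constructed the non-collapse facts follow, and the rest of the argument is routine.
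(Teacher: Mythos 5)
There is a genuine gap, and it sits at the heart of your argument: the step ``a pseudo-circle through $f_P$ would contain the degree-one vertex $[y]$, so deleting $f_P$ from it would disconnect it, contradicting the first axiom for pseudo-circles'' is false as stated and cannot be repaired by degree considerations. Removing a single edge from a pseudo-circle never disconnects it --- by \autoref{arc_cir} it yields a pseudo-line, which is connected. More importantly, a vertex of a pseudo-circle need not be an endvertex of two (or even one) of its edges: it can be a limit of infinitely many edges, exactly as the endpoint of the pseudo-line $L(P)$ for $P$ of order type $\omega$ is incident with no edge. Here $[y]$ lies in the closure of $\{[t_n]\}$, so a pseudo-circle of $G'$ consisting of $f_P$ together with infinitely many edges drawn from infinitely many $Q_n$'s, accumulating at $[y]$, is not excluded by $[y]$ having only one incident edge. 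Ruling out precisely this configuration is the real content of the lemma, and it is where the hypothesis that $\bigcup_n E(Q_n)$ is independent must enter --- note that your proof never uses independence at all, which is a structural sign it cannot be complete. The paper's proof does exactly this missing work: given a putative pseudo-circle $K$ meeting both $P$ and some $Q_m$, it walks along $K-f$ in both directions until first hitting the closed set $P$, obtaining two disjoint pseudo-arcs $L_1,L_2$, and uses independence together with the uniqueness of pseudo-arcs inside an independent edge set (\autoref{unique_rem}) to force $v\in L_1\cap L_2$, a contradiction.

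A secondary issue is that the non-collapse claims you flag as ``the main obstacle'' are left unproven, and your proposed repair (hand-building a topological cut avoiding $C$ and checking openness at the accumulation vertex $y$) is a detour: under your standing assumption that $G$ induces $M$, the fact that a pseudo-line contains no pseudo-circle (essentially \autoref{never_once} applied inside $\overline{E(P)}$) shows $E(P)$ is $M$-independent, whence $f_P$ is not a loop of $M'=M/C\backslash D$ and so $[v]\neq[y]$ follows for free from \autoref{minor_consistency}. But the analogous identifications $[t_n]=[y]$, or identifications among vertices of different $Q_n$'s (which would create new topological circuits in $G'$ among the $Q$-edges), are genuinely possible a priori and are not addressed. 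By contrast the paper avoids minors entirely: having shown $\bigl(\bigcup_n Q_n\bigr)\cup P$ includes no pseudo-circle, it invokes \autoref{almost_fundamental_cocircuit} to get a cocircuit meeting this independent set in exactly one edge $e\in P$, and derives the contradiction from connectedness of $\bigl(\bigcup_n Q_n\bigr)\cup(P-e)$, which holds because $y$ is a limit of the endvertices $t_n$. Your final step (the cut $\{f_P\}$ versus the arcs $Q_n$ reaching a neighbourhood of $[y]$) is a sound analogue of that endgame, but without a correct replacement for the first half your proof does not go through.
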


\begin{proof}

First, we shall show that $\left (\bigcup_{n\in \Nbb} Q_n\right )\cup P$ does not include a pseudo-circle. Suppose for a contradiction that
it includes a pseudo-circle $K$. Then $K$ must include some edge $e$ from $P$ and some edge $f$ from $Q_m$ for some $m\in \Nbb$. Going along $K$ starting from $f$ until we first hit 
the closed set $P$, we get two disjoint pseudo-arcs $L_1$ and $L_2$, one for each cyclic order of $K$. Formally, we consider the pseudo-arc $K-f$ endowed with the linear order $\leqq_{K-f}$. 
Let $s$ be its start vertex and $t$ be its endvertex.
Let $l_1$ be the first point of $K-f$ in $P$, and let $l_2$ be the last point of $K-f$ in $P$. Then $L_1=s(K-f)l_1$ and $L_2=l_2(K-f)t$.

We shall show that each of these pseudo-arcs contains $v$.
Since $f$ and $P-v$ are in different components of $(P\cup Q_m)-v$, each $L_i$ contains either $v$ or some edge $f'$ in some $Q_l$ with $l\in \Nbb-m$. Note that $fL_if'$ is included in $\bigcup_{n\in \Nbb} Q_n$ and is an $f$-$f'$-pseudo-arc. By the independence of $\bigcup_{n\in \Nbb} Q_n$  and \autoref{unique_rem}, it must be that
$fL_if' = fQ_mvQ_lf'$. In particular, $v\in L_i$, as desired.
This contradicts that $L_1$ and $L_2$ are disjoint.
Thus $\left (\bigcup_{n\in \Nbb} Q_n\right )\cup P$ does not include a pseudo-circle.

Now suppose for a contradiction that $G$ induces a matroid $M$.
We pick $e\in P$ arbitrarily.
Since $\left (\bigcup_{n\in \Nbb} Q_n\right )\cup P$
is $M$-independent as shown above, by \autoref{almost_fundamental_cocircuit} there must be a
cocircuit meeting $\left (\bigcup_{n\in \Nbb} Q_n\right )\cup P$
precisely in $e$.
This cocircuit defines a topological cut of $G$ with the two endvertices of $e$ on different sides. This contradicts that 
$\left (\bigcup_{n\in \Nbb} Q_n\right )\cup (P-e)$ is connected. 
\end{proof}

 \autoref{fig:forbidden_substructure}:
   \begin{figure} [htpb]   
\begin{center}
   	  \includegraphics[height=3cm]{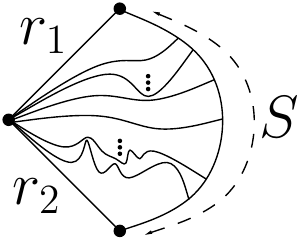}
   	  \caption{The situation of \autoref{cir_eli_3}.}
   	  \label{fig:forbidden_substructure}
\end{center}
   \end{figure}

\begin{lem}\label{cir_eli_3}
Let $G$ be a graph-like space in which there is a pseudo-circle $C$
with a vertex $v$ of $C$ that is indicent with two edges $r_1$
and $r_2$ of $C$.
Let $S$ be the pseudo-arc with edge set $E(C)-r_1-r_2$.
Assume there are infinitely many pseudo-arcs $Q_n$ starting at $v$ to points in $S$
that are vertex-disjoint aside from $v$.

If $\bigcup_{n\in \Nbb} Q_n$ does not include a pseudo-circle, 
then $G$ does not induce a matroid.
\end{lem}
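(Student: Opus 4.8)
The plan is to reduce this statement to the already-established forbidden substructure of \autoref{lem_cir_eli_3_half_way}. That lemma forbids a vertex $v$ together with vertex-disjoint pseudo-arcs $Q_n$ starting at $v$ whose union is independent, a point $y$ in the closure of their endvertices, and a nontrivial $v$-$y$-pseudo-arc $P$ avoiding all the $Q_n - v$. So the main task is to manufacture such a configuration from the data we are given here: the pseudo-circle $C$, the two edges $r_1, r_2$ at $v$, the pseudo-arc $S = \overline{E(C) - r_1 - r_2}$, and the infinitely many $Q_n$.

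First I would establish independence of $\bigcup_{n \in \Nbb} Q_n$. The hypothesis is only that this union includes no pseudo-circle; but for a graph-like space inducing a matroid $M$, a set of edges is $M$-independent exactly when it includes no circuit, i.e.\ no pseudo-circle (using \autoref{minor_consistency} together with the fact that circuits are topological circuits, hence edge-sets of pseudo-circles). So under the contradiction hypothesis that $G$ induces $M$, the no-pseudo-circle assumption gives $M$-independence of $\bigcup_n Q_n$ directly. (If $\bigcup_n Q_n$ already contains a pseudo-circle, then it is dependent, but that case is excluded by hypothesis, so we are left precisely in the independent case.)

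Next I would locate the point $y$ and the pseudo-arc $P$ required by \autoref{lem_cir_eli_3_half_way}. Since the endvertices of the $Q_n$ all lie on the pseudo-line $S$, and $S$ is compact, the set of those endvertices has a limit point $y \in S$ (indeed every infinite subset of the compact $S$ has a closure point, by \autoref{min_element}-style compactness of pseudo-lines). Now I must produce a nontrivial $v$-$y$-pseudo-arc $P$ that is internally disjoint from all the $Q_n - v$. Here is where I would use the pseudo-circle $C$ itself: $C$ consists of the two edges $r_1, r_2$ at $v$ together with the pseudo-arc $S$ joining their other endvertices, so $C$ furnishes two pseudo-arcs from $v$ into $S$, namely $\overline{r_1} \cdot (\,\cdot\,)$ and $\overline{r_2} \cdot (\,\cdot\,)$ running along $S$. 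Taking the one that reaches $y$ — i.e.\ the sub-pseudo-arc of $C$ running from $v$ through (say) $r_1$ along $S$ up to $y$ — gives a nontrivial $v$-$y$-pseudo-arc $P$ whose edges lie in $E(C)$. The point of choosing $P \se E(C)$ is that its edges are \emph{not} among the edges of the $Q_n$ provided we first discard finitely many $Q_n$: since the $Q_n$ are pairwise vertex-disjoint apart from $v$ and $P \se C$ meets only finitely many of them (a pseudo-arc contained in the compact $C$ can only pass through vertices lying on $P$, and each $Q_n$ meets $P$ in at most its single endvertex on $S$), I can pass to an infinite subfamily of the $Q_n$ disjoint from $P - v$ while keeping $y$ in the closure of their endvertices.

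With $v$, the (thinned) family $Q_n$, the limit point $y$, the independent union, and the internally disjoint $v$-$y$-pseudo-arc $P$ all in hand, \autoref{lem_cir_eli_3_half_way} applies verbatim and yields that $G$ does not induce a matroid, contradicting our assumption and completing the proof. The step I expect to be the main obstacle is the second one: arranging that $P$ genuinely reaches the chosen limit point $y$ while remaining internally disjoint from the surviving $Q_n$. One must be careful that $y$ is approached from the correct side of $C$ (so that the arc of $C$ running to $y$ can serve as $P$) and that thinning the family to achieve disjointness from $P - v$ does not destroy $y$ as a closure point of the remaining endvertices; this is where the compactness of $S$ and the careful bookkeeping of which $Q_n$ meet $P$ will have to be done honestly.
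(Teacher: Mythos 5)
Your overall skeleton is exactly the paper's: reduce to \autoref{lem_cir_eli_3_half_way}, note that (under the contradiction hypothesis that $G$ induces $M$) the no-pseudo-circle assumption gives $M$-independence of $\bigcup_{n\in\Nbb}Q_n$, take $y$ a limit point of the endvertices, and find a nontrivial $v$-$y$-pseudo-arc $P\se C$ avoiding the $Q_n-v$. But the one substantive claim you offer for the crucial last step is false: you assert that a pseudo-arc $P\se C$ running from $v$ along $S$ up to $y$ ``meets only finitely many of the $Q_n$''. There is no reason for this. The endvertices of the $Q_n$ may accumulate at $y$ from precisely the side of $S$ that your $P$ traverses (indeed, if $y$ is a limit of endvertices approached from below in $\leqq_S$ and you route $P$ through the edge on the low side, then $P$ passes through infinitely many of those endvertices). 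Discarding the offending $Q_n$ then either leaves finitely many arcs or destroys $y$ as a limit point of the surviving endvertices --- the very failure mode you flag at the end but do not resolve. Since \autoref{lem_cir_eli_3_half_way} needs $P$ vertex-disjoint from \emph{all} the retained $Q_n-v$ while $y$ remains in the closure of their endvertices, this is a genuine gap, not mere bookkeeping.

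The paper closes it with a monotonicity argument you are missing: first assume without loss of generality that each $Q_n$ meets $S$ only in its endvertex (truncate at the first point of $S$; note you also need this WLOG, since otherwise intermediate vertices of $Q_n$ on $S$ could lie on $P$), then apply Ramsey's theorem (equivalently, extract a monotone subsequence in the linear order $\leqq_S$) to get an infinite $N\se\Nbb$ whose endvertices form an increasing or decreasing sequence with limit $y$. Because all these endvertices now lie strictly on \emph{one} side of $y$ in $S$, the $v$-$y$-pseudo-arc $P\se C$ that approaches $y$ from the \emph{other} side --- going through $r_1$ or $r_2$ as appropriate --- avoids every endvertex of the $Q_n$ with $n\in N$, and it is nontrivial precisely because it must include $r_1$ or $r_2$. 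With that subfamily and that $P$, \autoref{lem_cir_eli_3_half_way} applies and finishes the proof. So your choice of which way around $C$ to route $P$ cannot be made independently of the family; it is dictated by the direction of monotone convergence, and the Ramsey step is what guarantees such a direction exists.
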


\begin{proof}
Without loss of generality, we may assume that the pseudo-arcs $Q_n$ only meet $S$ in their end-vertices.
By Ramsey's theorem there is an infinite subset $N$ of $\Nbb$ such that the endpoints in $S$ of the $Q_n$ for $n\in N$ form a sequence that is either increasing or decreasing
with respect to the linear order $\leqq_{S}$ of the pseudo-arc $S$.
Let $y$ be their limit point.
Let $P$ be the $v$-$y$-pseudo-arc included in $C$ that avoids all the endpoints of those $Q_n$ with $n\in N$. Note that $P$ is nontrivial since it has to include either $r_1$ or $r_2$.
Applying \autoref{lem_cir_eli_3_half_way} now gives the desired result.
\end{proof}

\begin{cor}\label{cir_eli_single}
Let $G$ be a graph-like space, $C$ a pseudo-circle of $G$, and $r_1$ and $r_2$ distinct edges of $C$. Let $S_1$ and $S_2$ be the two components of  $C\sm \{r_1, r_2\}$. If there is an infinite set $W$ of edges of $G$ each with one end-vertex in $S_1$ and the other in $S_2$ and with all of their end-vertices in $S_2$ distinct, then $G$ does not induce a matroid.
\end{cor}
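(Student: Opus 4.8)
The plan is to reduce to \autoref{cir_eli_3} by contracting the arc $S_1$ so that $r_1$ and $r_2$ come to share a common vertex. Suppose for contradiction that $G$ induces a matroid $M$, and set $G' := G/E(S_1)$; by \autoref{minor_consistency} this induces $M' := M/E(S_1)$. Since $E(C)$ is a circuit of $M$ and $E(S_1)$ is a proper subset of it, the set $E(C)\sm E(S_1) = \{r_1, r_2\}\cup E(S_2)$ is a circuit of $M'$ (the standard matroid fact that a circuit minus a proper subset is a circuit of the corresponding contraction), hence a topological circuit of $G'$; let $C'$ be the corresponding pseudo-circle. I would then apply \autoref{cir_eli_3} inside $G'$, to $C'$, using the images of the edges of $W$ as the required family of pseudo-arcs.

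First I would pin down the structure of $C'$. Contracting all edges of the pseudo-arc $\overline{S_1}$ collapses it to a single vertex $v$: any topological cut of $G$ separating two vertices of $\overline{S_1}$ must cross the connected pseudo-arc $\overline{S_1}$ and so meet $E(S_1)$, so by the definition of contraction all vertices of $\overline{S_1}$ are identified in $G'$. As $r_1$ and $r_2$ each have exactly one endvertex in $\overline{S_1}$, both now meet $v$; and in a pseudo-circle two edges incident with a common vertex are necessarily clockwise adjacent, since by \autoref{arc_cir} the space $C'-r_1$ is a pseudo-arc and by \autoref{pline_lin_order} an endvertex of a pseudo-arc is incident with at most one edge (apply this to the end $v$, forcing $r_2$ to be the first edge). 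Thus $r_1$ and $r_2$ are adjacent in $C'$ and the complementary arc $S := E(C')-r_1-r_2$ has edge set $E(S_2)$. Each $w\in W$ has one endvertex in $S_1$, which maps to $v$, and one endvertex $u_w\in S_2$, which maps to a point of $S$; so the image $Q_w$ of $w$ is a single-edge pseudo-arc from $v$ to a point of $S$. Their union is a star centred at $v$, which contains no pseudo-circle, because any pseudo-circle inside it would contain some $w$ and hence the degree-one vertex $f(u_w)$, impossible on a pseudo-circle (removing an edge incident with a degree-one vertex would disconnect it). \autoref{cir_eli_3} then yields that $G'$ does not induce a matroid, contradicting that it induces $M'$.

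The hard part will be verifying that the crossing edges retain distinct, non-degenerate endpoints in the quotient: I must rule out that two distinct $u_w$, or some $u_w$ and $v$, get identified by the contraction, since otherwise the $Q_w$ would fail to be vertex-disjoint apart from $v$ (or some $Q_w$ would collapse to a loop, breaking the no-pseudo-circle argument above). The tool for this is \autoref{cont_cut}: for each $u_w$ I would exhibit a topological cut of $G$ having all endvertices of the edges of $E(S_1)$ on one side $U$ and $u_w$ on the other side $V$; since contracting $E(S_1)$ identifies no more than contracting the whole set of edges with both endvertices in $U$, \autoref{cont_cut} then shows $u_w$ is identified with nothing, giving simultaneously $f(u_w)\neq v$ and pairwise distinctness of the $f(u_w)$. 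Producing these cuts — separating each $u_w\in S_2$ from the whole of $\overline{S_1}$ inside $G$, and not merely inside $C$, where the extra edges of $W$ also cross — is the delicate point requiring care, and it is exactly here that the hypothesis that the $S_2$-endvertices of the edges of $W$ are distinct is used.
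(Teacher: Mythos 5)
Your overall strategy is exactly the paper's: contract $E(S_1)$ so that all vertices of $\overline{S_1}$ collapse to a single vertex $v$ incident with $r_1$, $r_2$ and every edge of $W$, then apply \autoref{cir_eli_3} with the $Q_n$ given by single edges of $W$. But the step you explicitly leave open --- producing a topological cut of $G$ with $\overline{S_1}$ entirely on one side and the vertices $u_w$ on the other, so that \autoref{cont_cut} can preserve distinctness under the contraction --- is precisely the substance of the paper's proof, and it is filled by a one-line matroid argument that you had available but did not invoke. Since you are assuming for contradiction that $G$ induces a matroid $M$, \autoref{o_cap_b} applied to the circuit $E(C)$ and the two edges $r_1, r_2$ yields a cocircuit $b$ with $b \cap E(C) = \{r_1, r_2\}$. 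As a topological bond, $b$ is induced by disjoint open sets $(U,V)$ partitioning the vertices; since no edge of $S_1$ or $S_2$ crosses this cut while $r_1$ does, the connected sets $\overline{S_1}$ and $\overline{S_2}$ lie on opposite sides. So a \emph{single} cut does the whole job (no per-vertex cuts are needed): every edge of $E(S_1)$ has both ends in $U$, every $u_w$ lies in $V$, in fact $W \se b$, and one application of \autoref{cont_cut} --- together with your correct observation that $\equiv_{E(S_1)}$ is finer than the relation for the full set of edges with both ends in $U$ --- shows that no $u_w$ is identified with anything, giving $f(u_w) \neq v$ and pairwise distinctness at once. Note also that you misplace the role of the hypothesis that the $S_2$-endvertices are distinct: it is not what produces the cut, but what the cut argument then preserves, ensuring the $Q_w$ are vertex-disjoint apart from $v$ as \autoref{cir_eli_3} requires.

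Separately, your justification that the star contains no pseudo-circle is faulty as stated. Removing an edge from a pseudo-circle \emph{never} disconnects it --- by \autoref{arc_cir} it yields a pseudo-line, which is connected --- and pseudo-circles genuinely can have vertices of degree one: glue a single edge $w$ to the pseudo-line $L(\Zbb)$ as in \autoref{arc_cir}; since $\Zbb$ has no least or greatest element, both endvertices of $w$ are incident with no edge of $L(\Zbb)$ and so have degree one. Hence ``any pseudo-circle inside the star would contain the degree-one vertex $f(u_w)$, which is impossible'' does not stand. The conclusion is nonetheless true, and the paper's argument is the correct fix: a pseudo-circle $K \se \overline{W}$ with at least $3$ edges would have all of its edges incident with $v$; deleting one of them leaves a pseudo-line in which $v$ is simultaneously an endvertex and incident with at least two edges, contradicting the fact (visible in the $L(P)$ presentation of \autoref{pline_lin_order}, where an endvertex meets an edge only at a minimum or maximum of $P$) that an endvertex of a pseudo-line is incident with at most one edge. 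The $1$- and $2$-edge cases are then excluded exactly by $f(u_w) \neq v$ and the pairwise distinctness of the $f(u_w)$, which the cut argument above supplies.
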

\begin{proof}
Let $G'$ be the graph-like space obtained from $G$ by contracting all edges of $S_1$. Then in $G'$, there is a vertex $v$ that is endvertex of all edges in $W$.
On the other hand, the other endvertices are distinct for any two edges in $W$.
Indeed, let $b$ be the cocircuit meeting $C$ in precisely $r_1$ and $r_2$. Then $W\se b$
and no two endvertices in $S_2$ are identified.
 
The set $\overline W$ cannot include a pseudo-circle with at least 3 edges since then $v$ would be an endvertex of at least 3 edges of that pseudo-circle, which is impossible. So by \autoref{cir_eli_3} with each of the $Q_n$ given by a single edge of $W$, we obtain that $G'$ does not induce a matroid. By \autoref{minor_consistency}, nor does $G$.
\end{proof}

We have now identified a few forbidden substructures for graph-like spaces that induce matroids. If we delete a single vertex in such a graph-like space we should not get more of these substructures. 
This motivates the following open problem:

\begin{oque}
Let $G$ be a graph-like that induces a matroid, and let $v$ be one of its vertices.
Let $G-v$ be the graph-like space obtained from $G$ by deleting $v$ and all its incident edges. Must $G-v$ induce a matroid?
\end{oque}

The main applications of these forbidden substructure results will come in Sections \ref{sec8} and \ref{sec9}. First, though, we conclude this section by giving an application which shows that the `essential' vertices of graph-like spaces inducing connected matroids, namely those vertices that lie on at least one pseudo-circle, are well behaved. We begin with some basic results about such vertices.

\begin{lem}\label{edge_of_b}
Let $G$ be a graph-like space inducing a connected matroid, let $e$ be some edge, and let $x$ be a vertex lying on some pseudo-circle $C$.
Then there is a pseudo-circle that contains $x$ and contains $e$. 
\end{lem}

\begin{proof}
If $e$ lies on $C$, we are done.
Otherwise, we pick a $G$-circuit 
$\hat C$ containing an edge of $C$ and $e$. Then $\hat C-e$ is a pseudo-arc which has a first vertex $a_1$ and a last vertex $a_2$ on $C$. Note that $a_1\neq a_2$ since $\hat C$ and $C$ have an edge in common which is not $e$. Then one of $a_1Ca_2$ or $a_2Ca_1$ contains $x$, say 
$a_1Ca_2$. Let $P$ be the $a_1$-$a_2$-pseudo-arc included in $\hat C$ that contains $e$.
Then the concatenation of $a_1Ca_2$ and $P$ is the desired circuit.
\end{proof}

With a similar argument to the one used in this proof, one can deduce the following.

\begin{cor}\label{cor8_2}
 Let $G$ be a graph-like space inducing a connected matroid and let $v$ and $w$ be vertices of $G$ each of which lies on a pseudo-circle.
Then there is a pseudo-circle through both $v$ and $w$.
\qed
\end{cor}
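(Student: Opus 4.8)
The plan is to adapt the splicing construction from the proof of \autoref{edge_of_b}, first using that lemma itself to reduce to the case of two pseudo-circles sharing an edge. Let $C$ be a pseudo-circle through $v$, which exists by hypothesis. If $w$ already lies on $C$ (in particular if $v=w$), then $C$ is the desired pseudo-circle and we are done, so assume $w\notin C$. Since $w$ lies on some pseudo-circle by hypothesis, I would apply \autoref{edge_of_b} with the vertex $w$ and an arbitrary edge $e_0$ of $C$ to obtain a pseudo-circle $C'$ that contains both $w$ and $e_0$. In particular $C'$ and $C$ share the edge $e_0$.

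Next I would combine $C'$ and $C$ into a single pseudo-circle through both $v$ and $w$, exactly as two arcs are spliced in \autoref{edge_of_b}. Since $e_0\in E(C)$, both endvertices of $e_0$ lie on $C$, so by \autoref{arc_cir} the space $C'-e_0$ is a pseudo-arc both of whose endpoints lie on $C$. The set of points of $C'-e_0$ that also lie on $C$ is closed, being the intersection of the closed sets $C$ and $C'-e_0$. Using \autoref{min_element}, let $a_1$ be the $\leqq$-largest such point lying below $w$ and $a_2$ the $\leqq$-smallest such point lying above $w$ in the linear order of the pseudo-arc $C'-e_0$. As $w\notin C$ lies strictly between $a_1$ and $a_2$, we have $a_1\neq a_2$, the sub-pseudo-arc $A:=a_1(C'-e_0)a_2$ has $w$ in its interior, and by the choice of $a_1,a_2$ the interior of $A$ is disjoint from $C$. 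Let $B$ be the arc of $C$ from $a_1$ to $a_2$ that contains $v$.

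Finally, the arcs $A$ and $B$ meet only in their common endpoints $a_1$ and $a_2$: the interior of $A$ avoids $C\supseteq B$, while the interior of $B$ lies in $C$ and hence avoids the interior of $A$. So $A$ and $B$ are internally disjoint, whence by \autoref{arc_cir} their union is a pseudo-circle in $G$, and it contains $v$ (lying on $B$) and $w$ (lying in the interior of $A$), as required. The one step needing genuine care, and the main obstacle, is precisely this internal disjointness: the glued object is a pseudo-circle \emph{in} $G$ (that is, the image of an injective map of graph-like spaces) only because $a_1$ and $a_2$ are chosen as the nearest points of $C'-e_0$ on $C$ on either side of $w$, which is exactly what forces the two spliced arcs to overlap only at their endpoints.
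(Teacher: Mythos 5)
Your proof is correct and is precisely the ``similar argument'' the paper alludes to after \autoref{edge_of_b}: you invoke that lemma to obtain a pseudo-circle $C'$ through $w$ sharing an edge $e_0$ with the pseudo-circle $C$ through $v$, and then splice as in the proof of \autoref{edge_of_b}. Your one modification --- bracketing $w$ by the nearest points $a_1, a_2$ of $(C'-e_0)\cap C$ on either side of $w$ (via \autoref{min_element}) rather than taking the globally first and last intersection points --- is exactly the adjustment needed to keep $w$ on the spliced circle, and it is sound.
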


If $b$ is a topological bond in a graph-like space then it is possible for there to be a vertex of $\overline b$ which is not an endpoint of any edge in $b$. But none of these points can lie on any pseudo-circle of $G$, if $G$ induces a connected matroid.

\begin{lem}\label{closure_bond_circuit}
Let $G$ be a graph-like space inducing a connected matroid $M$.
Let $b$ be an $M$-bond and $x$ be a point in $\overline b$ but not an endvertex of any edge of $b$. 
Then $x$ does not lie on any $G$-pseudo-circle.
\end{lem}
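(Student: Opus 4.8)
The plan is to assume for contradiction that $x$ lies on a pseudo-circle and then to manufacture one of the forbidden substructures of the previous subsection---concretely, the configuration of \autoref{cir_eli_single}, a pseudo-circle crossed by infinitely many edges whose endvertices on one side are distinct.

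First I would record the basic topological fact that drives everything: since $x\in\overline b$ is a vertex that is not an endvertex of any edge of $b$, every neighbourhood of $x$ meets the interiors of infinitely many edges of $b$. Indeed, if some neighbourhood $N$ of $x$ met the interiors of only finitely many edges $e_1,\dots,e_k\in b$, then $x$ would lie in the closure of $\bigcup_{i\le k}(0,1)\times\{e_i\}$, which equals $\bigcup_{i\le k}\iota_{e_i}([0,1])$, a finite union of closed sets; but the only vertices of this set are the endvertices of the $e_i$, contradicting the hypothesis on $x$. Now suppose $x$ lies on a pseudo-circle, and let $U,V$ be disjoint open sets inducing $b$ with $x\in U$ (without loss of generality). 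Fixing one edge $e_0\in b$, \autoref{edge_of_b} provides a pseudo-circle $\hat C$ through $x$ and $e_0$; since $e_0$ has an endvertex in each of $U$ and $V$, the circle $\hat C$ reaches $V$, and by \autoref{gls_is_tame} it meets $b$ in only finitely many edges.

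Next I would collapse the two sides of the bond by contraction. Contracting the set $C_V$ of all edges with both endvertices in $V$ yields, by \autoref{minor_consistency}, a space $G'=G/C_V$ inducing $M/C_V$, in which $b$ is still a bond; as $G'$ has no edge with both endvertices on the $V$-side, \autoref{id_bond} produces a single vertex $v^*$ that is the common $V$-endvertex of every edge of $b$. By \autoref{cont_cut} the vertex $x$ and the $U$-endvertices of the edges of $b$ remain distinct, and by \autoref{circle_contract} the image of $\hat C$ is again a pseudo-circle. Choosing the two edges $e_0,e_0'\in b$ of $\hat C$ that bound the $U$-arc through $x$, this image has the shape $e_0+A'+e_0'$, where $A'$ is a pseudo-arc lying in $U$ and passing through $x$ (the remaining parts of $\hat C$ having collapsed through $v^*$). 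I would then contract the $U$-internal edges off $A'$ so that the remaining bond edges $e_n$ ($n\in\Nbb$, distinct from $e_0,e_0'$) run from $v^*$ to endvertices $u_n$ on $A'$, with the accumulation from the first step forcing the $u_n$ to accumulate at $x$ along $A'$. Taking $r_1=e_0$, $r_2=e_0'$ splits the pseudo-circle into the trivial component $S_1=\{v^*\}$ and $S_2=A'$; the family $\{e_n\}$ then has one endvertex $v^*\in S_1$ and the other $u_n\in S_2$ with the $u_n$ distinct, which is exactly the situation of \autoref{cir_eli_single}. Hence the contracted space induces no matroid, and by \autoref{minor_consistency} neither does $G$---the desired contradiction.

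The hard part is the final step: converting the accumulation of \emph{edge interiors} at $x$ into an accumulation of \emph{endvertices} $u_n$ lying on the arc $A'$, and guaranteeing these endvertices are distinct. The accumulation of interiors is automatic, but pinning the endvertices onto $A'$ requires that the relevant $U$-internal edges actually connect the $u_n$ to $A'$, and controlling this under contraction is the delicate point. One must also dispose of the degenerate case in which only finitely many distinct endvertices occur---that is, infinitely many of the $e_n$ are parallel---where \autoref{cir_eli_single} does not apply; there I would argue directly that $x$ cannot lie on a pseudo-circle, since any pseudo-circle meeting two parallel edges consists of exactly those two edges (their two-element edge set is already a circuit) and so cannot have $x$ as a limit vertex.
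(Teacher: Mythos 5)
Your setup is sound and in fact coincides with the paper's own opening moves: assume a pseudo-circle through $x$, use \autoref{edge_of_b} and \autoref{gls_is_tame} to arrange that it meets $b$ in a nonempty finite set, contract the side of $b$ away from $x$ so that \autoref{id_bond} and \autoref{cont_cut} yield a single vertex $v^*$ carrying all the far endvertices of $b$, and extract a pseudo-circle $e_0 + A' + e_0'$ through $v^*$ and $x$, where $A'$ is the component of $\overline{o}\setminus(o\cap b)$ containing $x$ (the paper gets this from \autoref{components_expl}). One small slip here: when the circle meets $b$ in more than two edges, its full image under the contraction is a wedge of circles at $v^*$, not a pseudo-circle --- \autoref{circle_contract} governs the abstract contraction of the circle, not its image in $G'$, which may identify additional vertices (this is exactly the failure mode handled explicitly in the proof of \autoref{minor_consistency}). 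This is repairable by passing directly to the sub-configuration $e_0+A'+e_0'$, as the paper does.

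The final step, however --- the one you yourself flag as the hard part --- is a genuine gap, and it is not patchable along the lines you sketch. First, ``contracting the $U$-internal edges off $A'$'' is unsafe: a path of such edges between two distinct vertices of $A'$ acts as a chord, and contracting it identifies those vertices, degenerating the pseudo-circle; there is no way to choose the contraction so that $A'$ survives while the bond endvertices migrate onto it. Second, and more fundamentally, nothing forces the near endvertices of the edges of $b$ to accumulate at $x$, to lie on $A'$, or to become attachable to $A'$: the hypothesis gives accumulation of edge \emph{interiors} only, and the possibility that interiors accumulate at a vertex that is an endvertex of nothing is precisely the pathology at issue (compare the space $F$ of \autoref{alm:bean}). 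Indeed \autoref{cir_eli_single} delivers the \emph{opposite} of the configuration you are trying to build: for any pseudo-arc $P$ from $x$ to $v^*$ inside the pseudo-circle, only finitely many edges of $b$ can have their other endvertex on $P$. The paper proves exactly this finiteness and then turns it to advantage: letting $b'$ be this finite set, $x$ still lies in $\overline{b\setminus b'}$ (here the hypothesis that $x$ is no endvertex of $b$ is used), and \autoref{p-arc_inside}, applied with $X=\overline{b\setminus b'}$ and connecting arc $P$, produces a $v^*$--$x$ pseudo-arc inside $\overline{b\setminus b'}$; since every edge of $b$ is incident with the endpoint $v^*$, this arc must be a single edge of $b$ ending at $x\in P$, hence an edge of $b'$ --- contradiction. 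The ingredient missing from your proposal is \autoref{p-arc_inside}, which converts external connectivity into a pseudo-arc inside the standard subspace of the bond; without it, no direct exhibition of a forbidden substructure will close the argument. (Your fallback for the parallel case is also incorrect as stated --- a pseudo-circle containing one of two parallel edges need not contain the other --- but this is moot given the main gap.)
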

\begin{proof}
Let $U$ and $V$ be disjoint open sets inducing $b$, with $x \in V$. Let $C$ be the set of edges with both endpoints in $U$.

Suppose for a contradiction that there is a topological circuit $o$ with $x \in \overline o$. By \autoref{edge_of_b}, we may assume that $o \cap b$ is nonempty. By \autoref{gls_is_tame} we know that $o \cap b$ is finite. So by \autoref{components_expl}, $\overline o \setminus (o \cap b)$ consists of finitely many pseudo-arcs, and the endpoints of these pseudo-arcs are all distinct endpoints of edges in $b$. Let $R$ be the pseudo-arc  in this collection containing $x$. By \autoref{cont_cut}, $R$ is also a pseudo-arc of $G/C$.

By \autoref{id_bond} there is a unique vertex $u$ of $G/C$ which contains all the endpoints in $U$ of the edges in $b$. 
So if the edges in $b$ whose endpoints in $V$ are the endpoints of $R$ are $e$ and $f$, then $R$ together with $e$ and $f$ gives a pseudo-circle in $G/C$ containing both $u$ and $x$. 
Let $P$ be a pseudo-arc from $x$ to $u$ in this pseudo-circle.

Now let $b'$ be the set of edges in $b$ with an endpoint other than $u$ lying on $P$. Then $b'$ is finite by \autoref{cir_eli_single}, so $x \in \overline{b \sm b'}$, and so applying \autoref{p-arc_inside} with $X = \overline{b \sm b'}$ we get that there is a $u$-$x$-pseudo-arc $Q$ included in $\overline{b  \sm b'}$. This pseudo-arc can only consist of a single edge, which would have to lie in $b'$, which is the desired contradiction.
\end{proof}

From a matroidal perspective, points not lying on pseudo-circles are dispensible.

\begin{dfn}
If $G$ is a graph-like space, we take $\hat{G}$ to be the subspace with the same edges and with the vertices of $G$ that lie on at least one edge or pseudo-circle of $G$.
\end{dfn} 

\begin{lem}\label{delete_vertices_fine}
If a graph-like space $G$ induces a matroid $M(G)$, then so does $\hat G$ and $M(\hat G) = M(G)$.
\end{lem}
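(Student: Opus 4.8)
The plan is to verify directly, using \autoref{magic_lemma}, that the topological circuits of $\hat G$ are the circuits of $M = M(G)$ and that its topological bonds are the cocircuits, so that $\hat G$ induces $M$ by definition; since a matroid is determined by its circuits, this gives $M(\hat G) = M(G)$ at once. First I would record that $\hat G$ is itself a graph-like space: it is $G$ with some vertices deleted, none of them an endvertex of an edge, so every $\iota^G_e$ still takes values in $\hat G$, the corestriction of the open map $\iota_e\restric_{(0,1)}$ to the subspace $\hat G$ is again open, and the vertex-separation axiom for $\hat G$ is obtained from that of $G$ by intersecting the separating open sets with $\hat G$. Because $\hat G$ is a graph-like space, \autoref{never_once} gives $|o \cap b| \neq 1$ whenever $o$ is a topological circuit and $b$ a topological cut of $\hat G$. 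Writing $\Ccal$ for the topological circuits of $\hat G$ and $\Ccal^*$ for its topological cuts, it then suffices to show that $\Ccal$ contains every circuit of $M$ and $\Ccal^*$ every cocircuit, and apply \autoref{magic_lemma}.

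For the circuits, I would take a circuit $o$ of $M$, i.e.\ a topological circuit of $G$, with associated pseudo-circle $\overline o$. Each vertex of $\overline o$ is a point of $\overline o$ and so lies on a pseudo-circle of $G$; by the definition of $\hat G$ it is therefore retained. Hence $\overline o \se \hat G$ as a set, and as $\hat G$ carries the subspace topology, the subspace $\overline o$ is unchanged in passing from $G$ to $\hat G$ and remains a pseudo-circle. Thus $o \in \Ccal$. (In the reverse direction any pseudo-circle in $\hat G$ is also a subspace of $G$, hence a pseudo-circle in $G$, so the two spaces have literally the same topological circuits and $\Ccal = \Ccal(M)$.)

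For the cocircuits, I would take a cocircuit $b$ of $M$, a topological bond of $G$ induced by disjoint open $U, V$ partitioning $V(G)$. Then $U \cap \hat G$ and $V \cap \hat G$ are disjoint open subsets of $\hat G$ partitioning $V(\hat G)$. As the deleted vertices are not endvertices of edges, every edge has both endvertices in $\hat G$, so an edge has one endvertex in $U$ and the other in $V$ precisely when this holds for $U \cap \hat G$ and $V \cap \hat G$; hence $U \cap \hat G, V \cap \hat G$ induce the same edge set $b$ and $b \in \Ccal^*$. (The same argument shows every topological cut of $G$ lies in $\Ccal^*$.)

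With both containments established, \autoref{magic_lemma} identifies the minimal nonempty members of $\Ccal^*$ --- the topological bonds of $\hat G$ --- with the cocircuits of $M$, while the circuit paragraph already identifies $\Ccal$ with the circuits of $M$; so $\hat G$ induces $M$. The one point needing care is that deleting vertices could in principle create topological cuts of $\hat G$ not arising from cuts of $G$, which is exactly why I route the bond half through \autoref{magic_lemma}: any such extra cut still meets every circuit in not exactly one edge, hence is a union of cocircuits, and so cannot be minimal unless it is itself a cocircuit.
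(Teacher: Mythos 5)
Your proposal is correct and takes essentially the same approach as the paper: the paper's entire proof is the one-line application of \autoref{magic_lemma} with $\Ccal$ the topological circuits and $\Ccal^*$ the topological cuts of $\hat G$, exactly as you do. Your write-up merely makes explicit the details the paper leaves implicit --- that $\hat G$ is itself a graph-like space (so \autoref{never_once} applies), that every circuit and cocircuit of $M(G)$ survives as a topological circuit resp.\ cut of $\hat G$, and that possible new cuts of $\hat G$ are harmlessly absorbed by \autoref{magic_lemma} --- all of which is sound.
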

\begin{proof}
We apply \autoref{magic_lemma} with $M = M(G)$, $\Ccal$ the set of topological circles in $\hat G$ and $\Ccal^*$ the set of topological cuts in $\hat G$. 
\end{proof}

It is clear from \autoref{closure_bond_circuit} that any vertex of the standard subspace $\overline b$ with $b$ a bond of $\hat G$ must be an endpoint of an edge of $G$.

\autoref{cor8_2} tell us that if $\hat G(M,\Gamma)$ induces a connected matroid, then
it is $2$-connected in the sense that
any two of its vertices can be joined by internally disjoint pseudo-arcs.

\section{Existence}\label{sec6}

Let $G$ be a graph-like space inducing a matroid $M$. Then every finite minor of $M$ is induced by a finite minor of $G$ (finite in the sense that it only has finitely many edges) by \autoref{minor_consistency}. But this finite minor must consist simply of a graph, together with a (possibly infinite) collection of spurious vertices, by \autoref{finite_gls_is_graph} applied to the closure of the set of edges. In particular, every finite minor of $M$ is graphic. We also know that $M$ has to be tame, by \autoref{gls_is_tame}. The aim of this section is to prove that these conditions are also sufficient to show that $M$ is induced by some graph-like space. More precisely, we wish to show:

\begin{thm}\label{main}
Let $M$ be a matroid. The following are equivalent.
\begin{enumerate}
\item There is a graph-like space $G$ inducing $M$. 
\item $M$ is tame and every finite minor of $M$ is the cycle matroid of some graph.
\end{enumerate}
\end{thm}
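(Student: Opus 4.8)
The plan is to prove the two implications separately, with essentially all the work in $(2)\Rightarrow(1)$.

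For $(1)\Rightarrow(2)$ almost everything is already in place. Given a graph-like space $G$ inducing $M$, tameness is immediate from \autoref{gls_is_tame}: every circuit $o$ of $M$ is a topological circuit, so $\overline o$ is a pseudo-circle and hence compact, and therefore $o$ meets every topological cut — in particular every cocircuit — finitely. For the finite-minor condition I would take any finite minor $M/C\backslash D$, chosen so that $E\sm(C\cup D)$ is finite; by \autoref{minor_consistency} it is induced by $G/C\backslash D$, and applying \autoref{finite_gls_is_graph} to the closure of its finitely many edges (which has at most twice as many vertices) exhibits it as the cycle matroid of a finite graph, the remaining spurious vertices affecting neither circuits nor bonds. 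Hence every finite minor is graphic.

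The substance is $(2)\Rightarrow(1)$, which I would attack by a compactness argument of the kind used in \cite{BC:rep_matroids}. The idea is to isolate a purely combinatorial, finitary structure — a \emph{graph framework} — which can be read off from any finite graph, is compatible with the minor operations so that each finite minor of $M$ furnishes one on its ground set, and suffices, when present on all of $E(M)$, to build a graph-like space inducing $M$. Concretely, I would fix a base $s$ of $M$ and encode incidence data relative to $s$: by \autoref{weak_base_arccon_2} an endvertex of an edge is determined by which fundamental cocircuits $b_f$ (for $f\in s$) it lies on the $v$-side of, so a vertex can be recorded as a sign vector in $\{+,-\}^{s}$, and a framework is then an assignment of such a vector to each end of each edge together with the data of which ends are identified, subject to consistency conditions — most importantly that the two ends of an edge $e$ differ exactly on $o_e\sm\{e\}$, and that the edges crossing each prospective vertex cut form the expected cocircuits. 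Tameness enters here: it guarantees that each fundamental circuit meets each fundamental cocircuit finitely, so these requirements are genuinely finitary and can be phrased as constraints on finite sets of coordinates.

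With finite character in hand the compactness step is clean. The candidate frameworks form a closed subset of a product of finite sets, hence a compact space by Tychonoff, and the restriction of a framework to a finite edge set is a framework of the corresponding finite minor, which exists because that minor is graphic by hypothesis. The finite frameworks are thus simultaneously satisfiable, and compactness (via the finite intersection property, or a Zorn argument on the finite-character property) yields a graph framework on all of $E(M)$. The remaining, and in my view hardest, step is to manufacture an honest graph-like space $G$ from this combinatorial framework and to verify that it induces exactly $M$. I would take the vertex set to be the identification classes prescribed by the framework, attach each edge to its two prescribed endvertices, and — the delicate part — install a topology via subbasic neighbourhoods of vertices described by the sign vectors, in the spirit of the construction of $L(P)$ in \autoref{lin_are_pline}, so that the vertex cuts of the framework become exactly the topological cuts extending the cocircuits of $M$. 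To finish I would invoke \autoref{magic_lemma}: since \autoref{never_once} already gives $|o\cap b|\neq 1$ for every topological circuit $o$ and topological cut $b$, it suffices to show that every circuit of $M$ appears as a topological circuit and every cocircuit as a topological cut, whence the minimal such objects coincide with the circuits and cocircuits of $M$. Realising each matroid circuit as a genuine \emph{pseudo}-circle — rather than merely a connected standard subspace — is where the forbidden-substructure results of \autoref{sec5} and the careful choice of topology must do their work, and I expect this circuit-realisation, together with checking the graph-like axioms for the constructed topology, to be the main obstacle.
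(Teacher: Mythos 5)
Your direction $(1)\Rightarrow(2)$ matches the paper exactly (\autoref{gls_is_tame}, \autoref{minor_consistency}, \autoref{finite_gls_is_graph}), and the skeleton of $(2)\Rightarrow(1)$ --- a finitary structure present on finite graphic matroids, lifted by a Tychonoff/finite-intersection argument, then used to build a space, finishing via \autoref{never_once} and \autoref{magic_lemma} --- is also the paper's plan. But the concrete framework you propose has a real defect: your consistency conditions do not encode graphicness. An assignment of end-vectors in $\{+,-\}^{s}$ in which the two ends of $e$ differ exactly on $\{f\in s : e\in b_f\}$ exists over \emph{any} tame binary matroid (fix one end arbitrarily and flip the prescribed coordinates), so a non-graphic regular matroid such as $M^*(K_5)$ would carry your structure unless your second clause does all the work; yet that clause --- that the edges crossing ``each prospective vertex cut'' form cocircuits --- quantifies over arbitrary partitions of a vertex set that is itself built from the data, hence is not determined by finitely many coordinates and is invisible to the compactness argument. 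The paper's graph framework (\autoref{dfn:g_fram}) instead attaches a sidedness function $\sigma_b$ to \emph{every} cocircuit together with orientation data $c_o,d_b$, subject to conditions (1)--(4) whose entire purpose is to induce a genuine cyclic order $R_o$ on each circuit; by tameness each condition involves only finitely many edges, which is what makes \autoref{fram} work --- and note that even there one cannot simply ``restrict a framework to a finite edge set'': the proof needs Lemma 4.6 of \cite{BC:rep_matroids} to produce a finite minor whose circuits and cocircuits have the correct traces on the relevant finite set.

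The step you defer as ``the main obstacle'' is precisely where these cyclic orders are indispensable, and your proposed substitute does not fill the gap. The paper's proof that every circuit of $M$ is a topological circuit of the constructed space (\autoref{cir_is_top}) starts from the abstract pseudo-circle furnished by \autoref{cyclic_order} applied to $R_o$ and writes down an explicit injective map into $G(M,\Fcal)$, verifying injectivity and continuity against the subbasic sets $U^i_b(\epsilon_b)$ coordinate by coordinate using conditions (1)--(4). Your data contain no cyclic orders, so there is no candidate pseudo-circle to map in, and no evident way to recover one from base-relative sign vectors at limit vertices. Moreover, the tools you propose for the circuit-realisation --- the forbidden-substructure results of \autoref{sec5} --- run in the wrong direction: they show that certain spaces \emph{fail} to induce matroids, and cannot certify that your constructed space does induce $M$. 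So the gap is concrete: without the $\sigma_b$/cyclic-order layer your framework is too weak for the final verification, while your global cut condition is too strong (non-finitary) for the compactness step; the paper's definition is calibrated exactly to avoid both failures.
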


The forward implication was proved above. The rest of this section will be devoted to proving the reverse implication. The strategy is as follows: we consider an extra structure that can be placed on certain matroids, with the following properties:

\begin{itemize}
\item There is such a structure on any matroid induced by a graph-like space (in particular, there is such a structure on any finite graphic matroid).
\item Given such a structure on a matroid $M$, we can obtain a graph-like space inducing $M$.
\item The structure is finitary.
\end{itemize}

Then we proceed as follows: given a tame matroid all of whose finite minors are graphic, we obtain a graph framework on each finite minor. Then the finitariness of the structure, together with the tameness of the matroid, allows us to show by a compactness argument that there is a graph framework on the whole matroid. From this graph framework, we build the graph-like space we need.

\subsection{Graph frameworks}

A \emph{signing} for a tame matroid $M$ is a choice of functions $c_o \colon o \to \{-1, 1\}$
for each circuit $o$ of $M$ and $d_b \colon b \to \{-1, 1\}$ for each cocircuit $b$ of $M$ such that for any circuit $o$ and cocircuit $b$ we have
$$\sum_{e \in o \cap b} c_o(e)d_b(e) = 0 \, ,$$
where the sums are evaluated over $\Zbb$. The sums are all finite since $M$ is tame. 
A tame matroid is \emph{signable} if it has a signing.

Signings for finite matroids were introduced in \cite{white}, where it was shown that a finite matroid is signable if and only if it is regular, i.e. representable over any field. This result was extended to tame infinite matroids, for a suitable infinitary notion of representability, in \cite{BC:rep_matroids}. In \cite{THINSUMS} it is shown that the standard matroids associated to graphs are all signable. The construction for a graph $G$ is as follows: we begin by choosing some orientation for each edge of $G$ (equivalently, we choose some digraph whose underlying graph is $G$). We also choose a cyclic orientation of each circuit of the matroid and an orientation of each bond used as a cocircuit of the matroid. Then $c_o(e)$ is $1$ if the orientation of $e$ agrees with the orientation of $o$ and $-1$ otherwise. Similarly, $d_b(e)$ is $1$ if the orientation of $e$ agrees with that of $b$ and $-1$ otherwise. Then the terms $c_o(e)d_b(e)$ are independent of the orientation of $e$: such a term is 1 if $o$ 
traverses $b$ at 
$e$ in a forward direction, and $-1$ if $o$ traverses $b$ at $e$ in the reverse direction. Since $o$ must traverse $b$ the same number of times in each direction, all the sums in the definition evaluate to 0.

We therefore think of a signing, in a graphic context, as providing information about the cyclic orderings of the circuits and about the direction in which each edge in a given bond points relative to that bond. In order to reach the notion of a graph framework, we need to modify the notion of a signing in two ways. Firstly, we need to add some extra information specifying on which side of a bond $b$ each edge not in $b$ lies. Secondly, we need to add some conditions saying that these data induce well-behaved cyclic orderings on the circuits. 

Recall that if $s$ has a cyclic order $R$, then we say that $p,q\in s$ are {\em clockwise adjacent} in $R$
if $[p,q,g]_R$ is in the cyclic order for all $g\in s-p-q$.

\begin{dfn}\label{dfn:g_fram}
A \emph{graph framework} on a matroid $M$ consists of a signing of $M$ 
and a map $\sigma_b:E\sm b\to \{-1,1\}$ for every cocircuit $b$, which we think of as telling us which side of the bond $b$ each edge lies on, satisfying certain conditions. First, we require that these data induce a cyclic order $R_o$ for each circuit $o$ of $M$:
For distinct elements $e$, $f$ and $g$ of $M$, we take $[e,f,g]_{R_o}$ if and only if 
 both $e,f,g\in o$ and there exists a cocircuit $b$ of $M$ such that $b\cap o=\{e,f\}$ and $\sigma_b(g)=c_o(f)d_b(f)$. That is, we require that each such relation $R_o$ satisfies the axioms for a cyclic order given in \autoref{dfn:cyc_ord}. In particular, by asymmetry and totality, we require that this condition is independent from the choice of $b$: if $o$ is a circuit with distinct elements $e$, $f$ and $g$, and $b$ and $b'$ are cocircuits such that $o\cap b=o\cap b'=\{e,f\}$, then $\sigma_b(g)= c_o(f)d_b(f)$ if and only if $\sigma_{b'}(g) = c_o(f) d_{b'}(f)$.
Let $o$ be a circuit, $b$ be a cocircuit and $s$ be a finite set with $b\cap o\subseteq s\subseteq o$. Then $s\subseteq o$ inherits a cyclic order $R_o \restric_s$from $o$.
Our final conditions are as follows: for any two $p,q\in s$ clockwise adjacent in $R_o \restric_s$ we require:
\begin{enumerate}
 \item If $p,q\in b$, then $c_o(p) d_b(p)= - c_o(q) d_b(q)$.
\item If $p,q\notin b$, then $\sigma_b(p)=\sigma_b(q)$.
\item If $p\in b$ and $q\notin b$, then $c_o(p) d_b(p)= \sigma_b(q)$.
\item If $p\notin b$ and $q\in b$, then $c_o(q) d_b(q)= -\sigma_b(p)$.
\end{enumerate}
\end{dfn}

Graph frameworks behave well with respect to the taking of minors. Let $M$ be a matroid with a graph framework, and let $N = M/C\backslash D$ be a minor of $M$. For any circuit $o$ of $N$ we may choose by \autoref{rest_cir} a circuit $o'$ of $M$ with $o \se o' \se o \cup C$. This induces a function $c_{o'} \restric_o \colon o \to \{-1, 1\}$. Similarly for any cocircuit $b$ of $N$ we may choose a cocircuit $b'$ of $N$ with $b \se b' \se b \cup D$, and this induces functions $d_{b'} \restric_b \colon b \to \{-1, 1\}$ and $\sigma_{b'} \restric_{E(N) \sm b} \colon E(N) \sm b \to \{-1, 1\}$. Then these choices comprise a graph framework on $N$, with $R_o$ given by the restriction of $R_{o'}$ to $o$.

Next we show that every matroid induced by a graph-like space has a graph framework. Let $M$ be a matroid induced by a graph-like space $G$. Fix for each topological bond of $G$ a pair $(U_b, V_b)$ of disjoint open sets in $G$ inducing $b$, and fix an orientation $R'_{\overline o}$ of the pseudo-circle $\overline o$ inducing each topological circle $o$ (recall from \autoref{sec4} that an orientation of a pseudo-circle is a choice of one of the two canonical cyclic orders of the set of points). For each topological circuit $o$, let the function $c_o \colon o \to \{-1, 1\}$ send $e$ to 1 if $[\iota_e(0), \iota_e(0.5), \iota_e(1)]_{R'_{\overline o}}$, and to $-1$ otherwise. For each topological bond $d_b$, let the function $d_b \colon b \to \{-1, 1\}$ send $e$ to 1 if $\iota_e(0) \in U_e$ and to $-1$ if $\iota_e(0) \in V_e$. Finally, for each topological bond $d_b$, let the function $\sigma_b \colon E \setminus b \to \{-1, 1\}$ send $e$ to $-1$ if the end-vertices of $e$ are both in $U_b$ and to $1$ if they are 
both in $V_b$.

\begin{lem}\label{fin_fram}
The $c_o$, $d_b$ and $\sigma_b$ defined above give a graph framework on $M$.
\end{lem}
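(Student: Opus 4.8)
The plan is to reduce every requirement of \autoref{dfn:g_fram} to a single geometric picture: that of the oriented pseudo-circle $\overline o$ meeting the cut $(U_b,V_b)$. The central fact I would establish first is a \emph{crossing lemma}. Fix a circuit $o$ and a cocircuit $b$, and let $s$ be any finite set of edges with $o\cap b\se s\se o$ (the intersection $o\cap b$ is finite by \autoref{gls_is_tame}). By \autoref{components_expl}, deleting the interiors of the edges of $s$ from $\overline o$ leaves $|s|$ components, each a pseudo-arc. I claim each component lies entirely on one side of the cut, i.e.\ is contained in $U_b$ or in $V_b$: a component is connected and contains no interior point of an edge of $o\cap b$, so once the cut sets are known to cover it, connectedness forces it onto one side. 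A short case analysis on the four sign combinations then shows that for $e\in o\cap b$ the product $c_o(e)d_b(e)$ equals $1$ exactly when the positive traversal of $\overline o$ crosses from $U_b$ into $V_b$ at $e$, and $-1$ when it crosses the other way; while for $g\notin b$ the value $\sigma_b(g)$ records directly which side the (necessarily same-sided) endvertices of $g$ lie on. In short, running once around the oriented circle the side toggles exactly at the edges of $b$, and the two sign functions record the direction of each toggle and the side of each non-toggling edge.

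Granting the crossing lemma, the signing identity $\sum_{e\in o\cap b}c_o(e)d_b(e)=0$ is immediate: listing the edges of $o\cap b$ in the cyclic order of $\overline o$, consecutive ones bound an arc lying on a single side, so the toggles alternate between the two directions; since the traversal returns to its starting side, the numbers of $+1$ and $-1$ toggles agree and the sum vanishes.

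Next I would prove that the combinatorially defined relation $R_o$ coincides with the cyclic order $R_{\overline o}$ of the edges induced by the chosen orientation $R'_{\overline o}$. Given distinct $e,f,g\in o$, choose by \autoref{o_cap_b} a cocircuit $b$ with $o\cap b=\{e,f\}$; deleting $e$ and $f$ splits $\overline o$ into exactly two arcs $A_1$ (from $f$ to $e$ in the positive direction) and $A_2$, one in $U_b$ and the other in $V_b$ by the crossing lemma. Unwinding the definitions, $\sigma_b(g)=c_o(f)d_b(f)$ holds precisely when $g$ lies on $A_1$, which is exactly the condition $[e,f,g]_{R_{\overline o}}$; hence $[e,f,g]_{R_o}\iff[e,f,g]_{R_{\overline o}}$. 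This one identification does a great deal of work: it shows the defining condition for $R_o$ is independent of the auxiliary $b$, and it shows $R_o$ satisfies the cyclicity, asymmetry, transitivity and totality of \autoref{dfn:cyc_ord}, since it is literally an orientation cyclic order of a pseudo-circle.

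Finally, the four clockwise-adjacency conditions all fall out of the crossing lemma applied with the given $s$ (which contains $o\cap b$). If $p,q$ are clockwise adjacent in $R_o\restric_s$ then no edge of $s$, and in particular no edge of $b$, lies on the arc from $p$ to $q$, so that arc stays on one side of the cut; reading off the side and the toggle directions at $p$ and $q$ then yields each case: two consecutive toggling edges toggle oppositely (condition~1); two consecutive non-toggling edges share a side (condition~2); and a toggling edge adjacent to a non-toggling one matches the post-toggle, resp.\ pre-toggle, side (conditions~3 and~4). I expect the one genuinely delicate point to be the point-set topology inside the crossing lemma, namely verifying that the chosen open sets $U_b,V_b$ actually cover each arc so that connectedness may be invoked; this I would secure using compactness of $\overline o$ together with \autoref{gls_is_tame} and \autoref{components_expl}, arranging (or checking) that every edge of $o$ outside $b$ has its whole image on a single side of the cut.
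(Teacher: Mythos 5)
Your proposal is correct and takes essentially the same route as the paper's proof: the same compactness adjustment of $(U_b,V_b)$ so that every arc of $\overline o$ obtained via \autoref{components_expl} lies wholly on one side of the cut, the same identification of $R_o$ with the orientation order $R'_{\overline o}$ using a cocircuit supplied by \autoref{o_cap_b} (which simultaneously gives independence of $b$ and the cyclic-order axioms), and the same one-side-per-arc case analyses for conditions (1)--(4). The only difference is organizational: you isolate the one-side-per-arc fact as a ``crossing lemma'' and use its toggle-alternation consequence to verify the signing identity $\sum_{e\in o\cap b}c_o(e)d_b(e)=0$ explicitly, a point the paper's proof of this lemma leaves implicit, having sketched it beforehand only for graphs.
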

\begin{proof}
The key point will be that the cyclic ordering $R_o$ we obtain on each circuit $o$ will be that induced by the chosen orientation $R'_{\overline o}$. So let $o$ be a topological circuit of $G$. First we show that for any distinct edges $e$, $f$ and $g$ in $o$ and any topological bond $b$ with $o \cap b = \{e, f\}$ we have $\sigma_b(g)=c_o(f)d_b(f)$ if and only if $[\iota_e(0.5), \iota_f(0.5), \iota_g(0.5)]_{R_{\overline o}}$. For any edge $e \in b$ we define $\iota^b_e \colon [0, 1] \to G$ to be like $\iota_e$ but with the orientation changed to match $b$. That is, we set $\iota^b_e(r) = \iota_e(r)$ if $\iota_e(0) \in U_b$ and $\iota_e^b(r) = \iota_e(1-r)$ if $\iota_e(0) \in V_b$.

Since the pseudo-circle $\overline o$ with edge set $o$ is compact, there can only be finitely many edges in $o$ with both endpoints in $U_b$ but some interior point not in $U_b$, so by adding the interiors of those edges to $U_b$ if necessary we may assume without loss of generality that there are no such edges, and similarly we may assume that if an edge of $o$ has both endpoints in $V_b$ then all its interior points are also in $V_b$. Thus the two pseudo-arcs obtained by removing the interior points of $e$ and $f$ from $\overline o$ are both entirely contained in $U_b \cup V_b$.
 Since each of these two pseudo-arcs is connected and precisely one endvertex of $e$ is in $U_b$, we must have that one of these pseudo-arcs, which we will call $R^U$ is included in $U_b$. And the other, which we will call $R^V$, is included in $V_b$. The end-vertices of $R^U$ must be $\iota_e^b(0)$ and $\iota_f^b(0)$, and those of $R^V$ must be $\iota_e^b(1)$ and $\iota_f^b(1)$.

Suppose first of all that $\sigma_b(g) = 1$. Let $R$ be the pseudo-arc $\iota_f^b(0) f \iota_f^b(1) R^V \iota_e^b(1)$. Then $c_o(f)d_b(f) = 1$ if and only if the ordering along $R$ agrees with the orientation of $\overline o$, which happens if and only if $[\iota_f(0.5), \iota_g(0.5), \iota_e(0.5)]_{R'_{\overline o}}$, which is equivalent to $[\iota_e(0.5), \iota_f(0.5), \iota_g(0.5)]_{R'_{\overline o}}$. The case that $\sigma_b(g) = -1$ is similar. This completes the proof that  for any distinct edges $e$, $f$ and $g$ in $o$ and any topological bond $b$ with $o \cap b = \{e, f\}$ we have $\sigma_b(g)=c_o(f)d_b(f)$ if and only if $[\iota_e(0.5), \iota_f(0.5), \iota_g(0.5)]_{R'_{\overline o}}$.

In particular, the construction of \autoref{dfn:g_fram} really does induce cyclic orders on all the circuits. We now show that these cyclic orders satisfy $(1)$-$(4)$. Let $o$, $b$, $s$, $p$ and $q$ be as in \autoref{dfn:g_fram}. Without loss of generality $\overline o$ is the whole of $G$. We may also assume without loss of generality that all edges $e$ are oriented so that $c_o(e) = 1$. Since $\overline o$ is compact we may as before assume that all interior points of edges not in $s$ are in either $U_b$ or $V_b$. Thus each of the pseudo-arcs obtained by removing the interior points of the edges in $s$, as in \autoref{components_expl}, is entirely included in $U_b$ or $V_b$. Since they both lie on one of these pseudo-arcs, $\iota_p(1)$ and $\iota_q(0)$ are either both in $U_b$ or both in $V_b$. We shall deal with the case that both are in $V_b$: the other is similar. In case $(1)$, we get $d_b(p) = 1$ and $d_b(q) = -1$. In case $(2)$, we get $\sigma_b(p) = \sigma_b(q) = 1$. In case $(3)$, we get $d_b(p) = 
1$ and $\sigma_b(
q) = 1$. Finally in case $(4)$ we get $\sigma_b(p) = 1$ and $d_b(q) = -1$. Since we are assuming that $c_o(p) = c_o(q) = 1$, in each case the desired equation is satisfied. This completes the proof.
\end{proof}

Since a graph framework is a finitary structure, we can lift it from finite minors to the whole matroid.

\begin{lem}\label{fram}
Let $M$ be a tame matroid such that every finite minor is a cycle matroid of a finite graph.
Then $M$ has a graph framework.
\end{lem}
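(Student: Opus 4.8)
The plan is to lift a graph framework from the finite minors by a compactness argument, in the spirit of \cite{BC:rep_matroids}. Since every finite minor of $M$ is by hypothesis the cycle matroid of a finite graph, its geometric realisation is a graph-like space inducing that minor, so by \autoref{fin_fram} every finite minor of $M$ carries a graph framework; the task is to glue these together. To do this cleanly I would work in the compact space $\Omega := \{-1,1\}^{I}$, where the index set $I$ ranges over all the entries that a framework must specify: the values $c_o(e)$ for a circuit $o$ and $e\in o$, the values $d_b(f)$ for a cocircuit $b$ and $f\in b$, and the values $\sigma_b(g)$ for a cocircuit $b$ and $g\in E\sm b$. Equipping each factor with the discrete topology, $\Omega$ is compact by Tychonoff's theorem, and a point of $\Omega$ is exactly a candidate assignment of framework data.

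Next I would observe that, because $M$ is tame, each defining requirement of \autoref{dfn:g_fram} is a condition on only finitely many coordinates. The signing equation $\sum_{e\in o\cap b}c_o(e)d_b(e)=0$ involves only the finitely many edges of $o\cap b$; the conditions $(1)$--$(4)$ refer to a finite set $s$ and hence to finitely many edges; and the cyclic-order axioms of \autoref{dfn:cyc_ord} refer only to triples and quadruples. The one subtlety is that $R_o$ is defined through an existential quantifier over cocircuits. To make this genuinely finitary (indeed clopen), I would use \autoref{o_cap_b} to fix, for each circuit $o$ and each pair $\{e,f\}\se o$, a single cocircuit $b_{o,\{e,f\}}$ with $b_{o,\{e,f\}}\cap o=\{e,f\}$, and evaluate $[e,f,g]_{R_o}$ through this chosen witness. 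On the closed subset of $\Omega$ where the independence-from-the-choice-of-$b$ requirement of \autoref{dfn:g_fram} holds, this fixed-witness reading agrees with the definition, so the set $\mathcal{F}\se\Omega$ of graph frameworks is an intersection of clopen sets; concretely $\mathcal{F}=\bigcap_J\mathcal{F}_J$, where $J$ ranges over finite sets of axiom-instances and each $\mathcal{F}_J$ (the assignments meeting the instances in $J$) is clopen.

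By compactness it then suffices to verify the finite intersection property, i.e. that $\mathcal{F}_{J}\neq\varnothing$ for every finite set $J$ of instances. Such a $J$ mentions only finitely many circuits $o_1,\dots,o_k$ and cocircuits $b_1,\dots,b_l$ (the latter including the chosen witnesses occurring in $J$) together with finitely many edges; let $F$ be a finite set of edges containing all of these and all the (finite) intersections $o_i\cap b_j$. I would then produce a finite minor $N=M/C\backslash D$ with finite ground set $X\supseteq F$, obtained by contracting the parts of the $o_i$ lying outside $X$ and deleting the remaining edges outside $X$; these prescriptions are consistent because any edge outside $X$ lying in some $o_i$ and some $b_j$ would lie in $o_i\cap b_j\se F\se X$. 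Choosing $X$ large enough, each $o_i$ restricts to an $N$-circuit $o_i\cap X$ and each $b_j$ to an $N$-cocircuit $b_j\cap X$ (cf. \autoref{rest_cir}), with $(o_i\cap X)\cap(b_j\cap X)=o_i\cap b_j$ and with the induced cyclic orders agreeing on $F$. Being a finite minor, $N$ is graphic and hence carries a graph framework $\varphi$ by the first paragraph; pulling $\varphi$ back along these restrictions, and assigning the remaining coordinates arbitrarily, yields a point of $\Omega$ satisfying every instance in $J$, so $\mathcal{F}_J\neq\varnothing$.

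The main obstacle is the minor construction in this last step: one must choose $X$ so that the finitely many designated circuits and cocircuits of $M$ genuinely cut down to circuits and cocircuits of the \emph{single} finite minor $N$, with matching intersections and with the framework-induced cyclic orders on $N$ restricting to $R_{o_i}$ on $F$ (which in turn requires the chosen witness cocircuits to restrict compatibly). Verifying the minimality of $o_i\cap X$ as an $N$-circuit, and dually for the $b_j$, after contracting the possibly infinite tails $o_i\sm X$, is the delicate point; everything else is the bookkeeping that checks that the framework axioms for $\varphi$ on $N$ translate coordinate by coordinate into the instances of $J$.
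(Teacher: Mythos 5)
Your overall scheme coincides with the paper's: encode candidate framework data as a point of the compact product $\{-1,1\}^I$, use tameness to see that each axiom instance constrains only finitely many coordinates and so cuts out a clopen set, and reduce via Tychonoff to finite satisfiability, which is supplied by \autoref{fin_fram} applied to a suitable finite minor. Your fixed-witness treatment of $R_o$ via \autoref{o_cap_b} is a legitimate variant of the paper's device (the paper instead adds explicit coordinates for the triples in $o^3$ and closed sets $C_{o,b,g}$ tying them to the $c,d,\sigma$-data); provided you include the independence-of-witness instances among your axiom instances, that part is sound.

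The genuine gap is exactly where you write ``the delicate point'': the existence of the finite minor. Your recipe --- contract $\bigl(\bigcup_i o_i\bigr)\sm X$ and delete the rest --- does not in general make $o_i\cap X$ an $N$-circuit, because contracting the tail of one designated circuit can destroy another. For instance, in the cycle matroid of a $4$-cycle $e_1e_2e_3e_4$ with a chord $f$, designating $o_1=\{e_1,e_2,e_3,e_4\}$ and $o_2=\{e_1,e_2,f\}$ with $f\notin X$ forces you to contract $f$ (killing $o_1$, which splits into the circuits $\{e_1,e_2\}$ and $\{e_3,e_4\}$) or delete it (killing the trace of $o_2$). In a finite example one enlarges $X$ to swallow $f$, but in an infinite matroid the edges outside any finite $X$ that mediate such interactions may be infinite in number, so no choice of finite $X$ rescues the naive prescription. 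Moreover your demand is stronger than what is needed: since the closed sets constrain only coordinates over the finite set $F$, it suffices to have $M'$-circuits $o'$ and $M'$-cocircuits $b'$ with $o'\cap F=o\cap F$, $b'\cap F=b\cap F$ and $o'\cap b'=o\cap b$ --- the $o'$ need not be the literal restriction $o\cap X$ --- and the paper obtains precisely this from Lemma 4.6 of \cite{BC:rep_matroids}, a nontrivial result it cites rather than reproves. Without that lemma or an equivalent argument, your finite-intersection step, and hence the whole compactness argument, remains unproven; the rest of your proposal is correct bookkeeping around this missing core.
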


\begin{proof}
By \autoref{fin_fram} we get a graph framework on each finite minor of $M$.
We will construct a graph framework for $M$ from these graph frameworks by a compactness argument. 
Let $\Ccal$ and $\Ccal^*$ be the sets of circuits and of cocircuits of $M$. Let $H= \bigcup_{o\in \Ccal} o\times\{o\} \sqcup \bigcup_{b\in \Ccal^*} b\times\{b\}
  \sqcup \bigcup_{\tilde b\in \Ccal^*} (E\sm \tilde b)\times\{\tilde b\}  \sqcup \bigcup_{o \in \Ccal} o \times o^3$.
Endow $X=\{-1,1\}^{H}$ with the product topology.
Any element in $X$ encodes a choice of functions $c_o\colon e \mapsto  x(o, e)$ for every circuit $o$,
functions $d_b \colon e \mapsto x(b, e)$ and $\sigma_b \colon e \mapsto x(\tilde b, e)$ for every cocircuit $\tilde b$, and ternary relations $R_o = \{(e, f, g) \in o^3 | x(e, f, g) = 1\}$ for each circuit $o$.

To comprise a graph framework, these function have to satisfy several properties.
These will be encoded by the following six types of closed sets.

For any circuit $o$ and cocircuit $b$, 
let $C_{o,b}=\{x\in X| \sum_{e\in o\cap b} x(o,e) x(b,e)=0 \}$.
Note that the functions $c_o$ and $d_b$ corresponding to any $x$ in the intersection of all these closed sets will
form a signing.

Secondly, for every circuit $o$, distinct edges $e,f,g\in o$ and cocircuit $b$ 
such that $o\cap b= \{e, f\}$, let
$C_{o,b,g}=\{x\in X| x(o, e, f, g) = x(\tilde b,g)x(o,f)x(b,f)\}$.
So $x$ is in the intersection of these closed sets if and only if the cyclic orders encoded by $x$ are given as in \autoref{dfn:g_fram}.

Thirdly any circuit $o$ and distinct elements $e$, $f$, $g$ of $o$ we set $C_{o,e,f,g,{\rm Cyc}} = \{x\in X| x(o, e, f, g) = x(o,f,g,e)\}$.
Note that for any $x$ and $o$ in the intersection of all these closed sets the relation $R_o$ derived from $x$ will satisfy the Cyclicity axiom. Similarly we get sets $C_{o,e,f,g,{\rm AT}}$ encoding the Asymmetry and Totality axioms and $C_{o,e,f,g,h,{\rm Trn}}$ encoding the Transitivity axiom.

Finally, for every circuit $o$, cocircuit $b$, finite set $s$ with $o\cap b\subseteq s$, and $p, q \in s$ distinct,
let $C_{b,o,s,p,q}$ denote the set of those $x$ such that, if $p$ and $q$ are clockwise adjacent with respect to $R_o \restric_s$, then the appropriate condition of (1)-(4) from \autoref{dfn:g_fram} is satisfied.

By construction, any $x$ in the intersection of all those closed sets gives rise to a graph framework.
As $X$ has the finite intersection property, it remains to show that any finite intersection of those closed sets is nonempty.
Given a finite family of those closed sets, let $B$ and $O$ be the set of all those cocircuits and circuits, respectively, that appear in the index of these sets.
Let $F$ be the set of those edges that either appear in the index of one of those sets or are contained in some set $s$ or appear as the intersection of a circuit in $O$ and a cocircuit in $B$. As the family is finite and $M$ is tame, the sets $B$,$O$ and $F$ are finite.

By Lemma 4.6 from \cite{BC:rep_matroids} we find a finite minor $M'$ of $M$ satisfying the following.
\begin{equation*}\label{fin_min}
\begin{minipage}[c]{0.8\textwidth}
For every $M$-circuit $o\in O$ and every $M$-cocircuit $b\in B$, there are $M'$-circuits $o'$ and $M'$-cocircuits $b'$ with $o'\cap F=o\cap F$ and $b'\cap F=b\cap F$ and $o' \cap b' = o \cap b$.
\end{minipage}
\end{equation*}

By \autoref{fin_fram} $M'$ has a graph framework $((c_o'|o\in \Ccal(M')), (d_b'|b\in \Ccal^*(M')), (\sigma_b'|b\in \Ccal^*(M')))$, giving cyclic orders $R'_{o'}$ on the circuits $o'$. Now by definition any $x$ with $c_o\restric_F=c_o'\restric_F$ and $d_b\restric_F=d_b'\restric_F$ and $\sigma_b\restric_F=\sigma_b'\restric_F$ and $R_o \restric_{o'} = R_{o'}$ for $o\in O$ and $b\in B$ will lie in the intersection of all the closed sets in the finite family, as required.
This completes the proof.
\end{proof}

\subsection{From graph frameworks to graph-like spaces}

In this subsection, we prove the following lemma, which, together with \autoref{fram}, gives the reverse implication of \autoref{main}.

\begin{lem}\label{fram_gives_G-like}
Let $M$ be a tame matroid with a graph framework $\Fcal$. Then there exists
a graph-like space $G = G(M, \Fcal)$ inducing $M$.
\end{lem}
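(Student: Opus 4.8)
The plan is to fix an $M$-base $s$ and to read off, from the graph framework $\Fcal$, enough combinatorial data to build $G$ by hand. The coordinates of the construction will be the fundamental cocircuits $(b_f \mid f \in s)$: each such $b_f$ is to become a topological bond, and a vertex of $G$ will be recorded by the side of each $b_f$ on which it lies. Concretely, I would take $V(G)$ to be a subset of the Cantor cube $\{-1,1\}^s$ (one coordinate per base edge), the coordinate $f$ recording the side of $b_f$. After fixing once and for all an orientation of each edge, the two endvertices of an edge $e$ are determined coordinatewise by $\Fcal$: for $f$ with $e \notin b_f$ both ends take the value $\sigma_{b_f}(e)$, while for $f$ with $e \in b_f$ the two ends take opposite values, with $d_{b_f}(e)$ dictating which end gets $+1$. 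I would let $V(G)$ be the closure in $\{-1,1\}^s$ of the set of these edge-ends; the extra limit points are spurious vertices, which are harmless and can in any case be removed afterwards by passing to $\hat G$ (see \autoref{delete_vertices_fine}).

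The topology on $G = V(G) \sqcup ((0,1)\times E)$ I would specify by a subbasis generalising the one used for $L(P)$ in \autoref{lin_are_pline}: for $f \in s$, a sign $\epsilon$ and suitable thresholds, take the set of all vertices with coordinate $f$ equal to $\epsilon$, together with the interiors of the edges lying on the $\epsilon$-side of $b_f$ and the appropriate initial or final sub-interval of each edge actually crossing $b_f$. Equivalently, one may give $G$ the initial topology induced by projections $\chi_f \colon G \to [0,1]$ that parametrise the crossing edges of $b_f$ and collapse each side of $b_f$ to the corresponding endpoint. Either way, checking the axioms of \autoref{dfn:gls} is then routine: the crucial last axiom is immediate because $\{-1,1\}^s$ is totally disconnected and Hausdorff, so two vertices differing in coordinate $f_0$ are separated by the clopen partition whose induced cut is exactly $b_{f_0}$; continuity of each $\iota_e$ and the open-map condition are verified directly against the subbasis.

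With $G$ in hand, I would show it induces $M$ via \autoref{magic_lemma}, taking $\Ccal$ to be the topological circuits and $\Ccal^*$ the topological cuts of $G$. It suffices to check that every $M$-circuit is a topological circuit and every $M$-cocircuit is a topological cut, since \autoref{never_once} supplies the hypothesis that these never meet in a single edge. For a cocircuit $b$, the framework assigns a side to each edge via $\sigma_b$ and $d_b$; using that $s$ is a base and the consistency built into $\Fcal$, I would argue that this side function is compatible with the coordinate description of the vertices, so that the two sides of $b$ are genuinely disjoint open sets partitioning $V(G)$ and induce the cut $b$. For a circuit $o$, the framework produces the cyclic order $R_o$ of \autoref{dfn:g_fram}; by \autoref{cyclic_order} there is an abstract pseudo-circle $C$ whose edge set carries exactly this cyclic order, and I would check that the obvious edge-preserving map $C \to G$ is an injective map of graph-like spaces onto the standard subspace $\overline o$, so that $o$ is a topological circuit. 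Conditions $(1)$--$(4)$ of \autoref{dfn:g_fram} are precisely what guarantee that consecutive edges of $R_o$ meet at a common vertex of $G$ with matching orientations, which is what makes this map well defined and continuous.

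The main obstacle is this last point: verifying that each $M$-circuit genuinely sits inside $G$ as an embedded pseudo-circle realising $R_o$, and dually that each cocircuit's two sides are really open. The difficulty is that the vertices of $G$ are defined globally, by all fundamental cocircuits at once, whereas the cyclic order $R_o$ and the side functions $\sigma_b, d_b$ are given locally per circuit and cocircuit; reconciling them is exactly the role of the framework axioms. Concretely, one must show that as one traverses the edges of $o$ in the order $R_o$, the endvertices read off in $\{-1,1\}^s$ agree at each junction, so that the $\iota_e$ glue to a continuous injection, and that the limiting behaviour (for circuits with infinitely many edges) stays inside $V(G)$ and is compact. Here tameness, via finiteness of $o \cap b_f$, keeps each coordinate eventually constant along the traversal, which is what forces the traversal to converge to a genuine vertex of $G$ rather than to escape it.
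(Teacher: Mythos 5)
Your overall architecture matches the paper's (vertices as sign vectors in a Cantor cube, subbasic open sets with thresholds on the crossing edges, realising each circuit as a pseudo-circle via \autoref{cyclic_order}, and concluding with \autoref{never_once} and \autoref{magic_lemma}), but your one design change --- indexing the cube only by the fundamental cocircuits of a fixed base $s$, rather than by \emph{all} cocircuits as the paper does with $V = \{-1,1\}^{\Ccal^*(M)}$ --- opens a genuine gap at exactly the step you flag as needing care. To invoke \autoref{magic_lemma} you must show that \emph{every} $M$-cocircuit $b$ is a topological cut of your $G$, i.e.\ that its two sides are disjoint open sets partitioning $V(G)$. In the paper's construction this is immediate, since $b$ itself is a coordinate and the sets $U_b^{\pm 1}(\frac{1}{2})$ witness it. In your coarser topology, generated only by fundamental-cocircuit coordinates, this is a substantive claim you do not prove: for a non-fundamental $b$, the side containing a limit vertex $v$ (a point of the closure that is not an endvertex of any edge) is not even defined by the framework data, since $\sigma_b$ and $d_b$ assign sides only to edges; after defining it by limits you would still have to show that membership in a side of $b$ is determined by finitely many fundamental coordinates at each vertex. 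Nothing in the axioms of \autoref{dfn:g_fram} obviously yields this, and "the consistency built into $\Fcal$" is doing unearned work here.

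The same deficiency resurfaces in your injectivity check for the circuit map. The paper proves injectivity on vertices by taking, for two positions of the abstract pseudo-circle separated by edges $e, f \in o$, a cocircuit $b$ with $o \cap b = \{e,f\}$ from \autoref{o_cap_b}, and reading off opposite values in the coordinate $b$. That cocircuit need not be fundamental with respect to your fixed base, so it is not a coordinate of your cube. A fundamental cocircuit $b_g$ with $|b_g \cap o| \geq 4$ can fail to distinguish two positions on $o$: by condition (1) of \autoref{dfn:g_fram} the values $c_o d_{b_g}$ alternate around the cyclic order, so positions lying in intervals of the same parity receive the same coordinate value. You would therefore need a new argument that for any circuit $o$ and any two positions on it, \emph{some} fundamental cocircuit assigns them different values --- true for spanning trees of finite graphs, but requiring proof in the general tame setting; if it fails, endvertices are spuriously identified, the standard subspace $\overline{o}$ is no longer a pseudo-circle, and extra topological circuits may appear. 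Both gaps are precisely what the paper's seemingly wasteful choice of the full cube $\{-1,1\}^{\Ccal^*(M)}$ (with the spurious vertices later discarded, as in your appeal to \autoref{delete_vertices_fine}) is designed to avoid.
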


We take our notation for the graph framework as in \autoref{dfn:g_fram}.

We begin by defining $G$. The vertex set will be $V=\{-1,1\}^{\Ccal^*(M)}$, and of course the edge set will be $E(M)$. As in \autoref{def:gls}, the underlying set of the topological space $G$ will be $V\sqcup ((0,1)\times E)$.

Next we give a subbasis for the topology of  $G$.
First of all, for any open subset $U$ of $(0, 1)$ and any edge $e \in E(M)$ we take the set $U \times \{e\}$ to be open.
The other sets in the subbasis will be denoted  $U_b^i(\epsilon_b)$ 
where $i\in \{-1,1\}$, $b\in \Ccal^*(M)$ and $\epsilon_b:b\to (0,1)$.
Roughly, $U_b^1(\epsilon_b)$ should contain everything that is above $b$ and
$U_b^{-1}(\epsilon_b)$ should contain everything that is below $b$, so that 
removing the edges of $b$ from $G$ disconnects $G$. In other words, 
$G\sm (\bigcup_{e\in b}(0,1)\times \{e\})$ should be disconnected because the open sets $U_b^1(\epsilon_b)$ and $U_b^{-1}(\epsilon_b)$ should partition it (for every $\epsilon_b$).
Formally, we define $U_b^i(\epsilon_b)$ as follows.

\begin{eqnarray*}
  U_b^i(\epsilon_b)&=&
\{v\in V| v(b)=i\} \cup \bigcup_{e\in E\sm b, \sigma_b(e)=i} (0,1)\times \{e\} \\
&\cup & \bigcup_{e\in b, d_b(e)=i} (1-\epsilon_b(e),1)\times \{e\} \cup
\bigcup_{e\in b, d_b(e)=-i} (0,\epsilon_b(e))\times \{e\}
\end{eqnarray*}

To complete the definition of $G$, it remains to define the maps $\iota_e$ for every $e\in E(M)$. For each $r\in (0,1)$, we must set $\iota_e(r)=(r,e)$. For $r\in\{0,1\}$, we let:
\[
 \iota_e(0)(b)=\begin{cases}
         \sigma_b(e) & \text{if } e\notin b\\
	 -d_b(e) & \text{if } e\in b\\
         \end{cases};
 \iota_e(1)(b)=\begin{cases}
         \sigma_b(e) & \text{if } e\notin b\\
	 d_b(e) & \text{if } e\in b\\
         \end{cases}; \  \
\]

Note that $\iota_e$ is continuous and $\iota_e \restric_{(0, 1)}$ is open.
This completes the definition of $G$.
Next, we check the following.

\begin{lem}\label{check_g-like}
$G$ is a graph-like space.
\end{lem}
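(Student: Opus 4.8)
The plan is to verify each of the five conditions in \autoref{def:gls} in turn. First I would confirm the underlying set is correct ($V \sqcup ((0,1) \times E)$) and that $\iota_e(x) = (x,e)$ for $x \in (0,1)$; these are immediate from the construction. The continuity of $\iota_e$ and openness of $\iota_e \restric_{(0,1)}$ are already noted in the text, so they reduce to checking that the preimages of the subbasic sets $U_b^i(\epsilon_b)$ behave correctly under $\iota_e$, which I would do by a direct case analysis using the definition of $\iota_e(0)$ and $\iota_e(1)$.

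The genuinely substantive conditions are the third and fifth. For the third (that $\iota_e(0)$ and $\iota_e(1)$ are vertices), I would simply observe that $\iota_e(0)$ and $\iota_e(1)$ are explicitly defined as elements of $V = \{-1,1\}^{\Ccal^*(M)}$, so they are vertices by construction. The more delicate part is the fifth condition, that any two distinct vertices $v, v'$ can be separated by disjoint open sets partitioning $V(G)$. Given distinct $v, v' \in \{-1,1\}^{\Ccal^*(M)}$, there is some cocircuit $b$ with $v(b) \neq v'(b)$, say $v(b) = i$ and $v'(b) = -i$. The natural candidate separating sets are $U_b^{i}(\epsilon_b)$ and $U_b^{-i}(\epsilon_b)$ for some choice of $\epsilon_b$. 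I would check that these two sets are disjoint, that together they contain all of $V(G)$ (every vertex $w$ has $w(b) \in \{-1,1\}$, hence lies in exactly one of the two), and that $v, v'$ land in the appropriate sets.

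The main obstacle I anticipate is verifying that $U_b^{i}(\epsilon_b)$ and $U_b^{-i}(\epsilon_b)$ are genuinely disjoint and jointly cover all of $V(G)$, not just the vertices. The vertex parts are disjoint and cover $V$ trivially since $\{w : w(b) = i\}$ and $\{w : w(b) = -i\}$ partition $V$. For an edge $e \notin b$, the interior $(0,1) \times \{e\}$ lands entirely in exactly one of the two sets according to $\sigma_b(e)$, so there is no overlap and no omission there. For an edge $e \in b$, the two sets capture the two ends $(1-\epsilon_b(e), 1) \times \{e\}$ and $(0, \epsilon_b(e)) \times \{e\}$ (distributed between the two sides according to $d_b(e)$), and these are disjoint precisely because $\epsilon_b(e) \in (0,1)$, but crucially the middle portion $[\epsilon_b(e), 1-\epsilon_b(e)] \times \{e\}$ is covered by \emph{neither} set. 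This is exactly as it should be: the two separating sets are required only to partition $V(G)$, the vertices, and need not cover the inner points of edges in $b$. So the correct statement to verify is that $U_b^i(\epsilon_b)$ and $U_b^{-i}(\epsilon_b)$ are disjoint open sets whose union contains $V$, with $v$ in the first and $v'$ in the second; the inner points of the edges of $b$ are deliberately left out, which is permitted. Once this is checked, all five axioms are in place and $G$ is a graph-like space.
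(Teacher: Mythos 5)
Your overall strategy is exactly the paper's: the paper likewise dismisses the first four axioms as immediate and devotes its entire proof to the fifth, separating distinct vertices $v,v'$ by choosing a cocircuit $b$ with $v(b)\neq v'(b)$ and taking the pair $U_b^1(\epsilon_b)$, $U_b^{-1}(\epsilon_b)$. Your observation that these sets need only partition $V(G)$ and may legitimately omit the middle portions of edges in $b$ is also correct and well placed.

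However, there is one concrete error in your disjointness check. For an edge $e\in b$ with, say, $d_b(e)=i$, the set $U_b^{i}(\epsilon_b)$ contains $(1-\epsilon_b(e),1)\times\{e\}$ while $U_b^{-i}(\epsilon_b)$ contains $(0,\epsilon_b(e))\times\{e\}$. You claim these are ``disjoint precisely because $\epsilon_b(e)\in(0,1)$,'' but that is false: if $\epsilon_b(e)>1/2$ the intervals $(0,\epsilon_b(e))$ and $(1-\epsilon_b(e),1)$ overlap, so $U_b^{i}(\epsilon_b)$ and $U_b^{-i}(\epsilon_b)$ fail to be disjoint, and for the same reason your ``middle portion $[\epsilon_b(e),1-\epsilon_b(e)]\times\{e\}$'' is then empty rather than uncovered. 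The condition you actually need is $\epsilon_b(e)\leq 1-\epsilon_b(e)$, i.e.\ $\epsilon_b(e)\leq 1/2$ for every $e\in b$, and the paper imposes exactly this restriction (``for any $\epsilon_b$ with $\epsilon_b(e)\leq 1/2$ for each $e\in E(M)$''). The gap is trivially repaired --- fix, say, the constant function $\epsilon_b \equiv 1/2$ --- but as written your justification is not valid for an arbitrary $\epsilon_b$, so you should state the choice of $\epsilon_b$ explicitly rather than asserting disjointness for all admissible values.
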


\begin{proof}
The only nontrivial thing to check is that for any distinct $v, v' \in V$, there are disjoint open subsets $U, U'$ of $G$ partitioning $V(G)$ and with $v \in U$ and $v' \in U'$.
Indeed, if $v\neq v'$, there is some $b\in\Ccal^*$ such that $v(b)\neq v'(b)$, and then 
for any $\epsilon_b$ with $\epsilon_b(e)\leq1/2$ for each $e\in E(M)$, the sets $U_b^1(\epsilon_b)$ and $U_b^{-1}(\epsilon_b)$
have all the necessary properties.

\end{proof}

Having proved that $G$ is a graph-like space, it remains to show that $G$ induces $M$. This will be shown in the next few lemmas.

\begin{lem}\label{cir_is_top}
 Any circuit $o$ of $M$ is a topological circuit of $G$.
 \end{lem}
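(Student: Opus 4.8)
The goal is to show that any matroid circuit $o$ is realized as the edge set of a genuine pseudo-circle in the constructed space $G$. My plan is to build this pseudo-circle directly from the cyclic order $R_o$ that the graph framework assigns to $o$. By \autoref{cyclic_order}, any nonempty cyclic order determines a pseudo-circle $C$ (unique up to isomorphism) whose edge set carries that cyclic order; so I would take $C$ to be the abstract pseudo-circle with edge set $o$ and cyclic order $R_o$, and construct an injective map of graph-like spaces $\varphi\colon C\to G$ whose image is $\overline o$. The identity on edges is the obvious candidate for $\varphi_E$ (each edge sent to $(e,+)$), and the real work is to define $\varphi_V$ on the vertices of $C$ and check that the resulting $\varphi$ is a continuous, injective map of graph-like spaces.

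First I would recall the internal structure of $C$ via \autoref{components_expl}: for any finite nonempty set $s\se o$, removing the interiors of the edges in $s$ splits $C$ into $|s|$ pseudo-arc components, one sitting ``between'' each clockwise-adjacent pair in the induced cyclic order $R_o\restric_s$. A vertex $x$ of $C$ is determined by the coherent family of choices, one for each such finite $s$, of which component it lies in. To define $\varphi_V(x)$, which must be an element of $V=\{-1,1\}^{\Ccal^*(M)}$, I need to specify a sign for every cocircuit $b$. The natural rule: look at $b\cap o$, which is finite by tameness; using $s \supseteq b\cap o$ locate which ``gap'' of $C$ the vertex $x$ falls into; then read off from $\sigma_b$ and the $d_b$-data which side of $b$ that gap lies on, and set $\varphi_V(x)(b)$ to that sign. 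The four clockwise-adjacency conditions (1)--(4) of \autoref{dfn:g_fram} are exactly what guarantees that the side is constant along each gap and that this assignment is well-defined and independent of the auxiliary $s$.

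The heart of the argument, and the main obstacle, is checking that this $\varphi$ is continuous and that its image is precisely $\overline o$ with the correct subspace topology. Continuity reduces, as in the proof of \autoref{pline_lin_order}, to checking that preimages of subbasic opens are open. For the intervals $U\times\{e\}$ this is immediate since $\varphi$ is the identity on edge interiors. For a subbasic set $U_b^i(\epsilon_b)$, I would argue that its preimage is built from a gap-structure of $C$ determined by the finite set $b\cap o$: using \autoref{components_expl} the preimage should be a union of the interiors of appropriately-signed edges together with finitely many of the pseudo-arc gaps lying on side $i$ of $b$, plus the relevant partial edge-intervals for edges of $b\cap o$ themselves. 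Verifying this preimage is open in $C$ is where the framework conditions do their real work, since they pin down exactly which edges and which gaps carry sign $i$. Injectivity of $\varphi_V$ follows because two distinct vertices of $C$ are separated by removing a single pair of edges (the defining property of pseudo-circles combined with \autoref{components_expl}), and for such a separating pair there is, by \autoref{o_cap_b}, a cocircuit $b$ with $b\cap o$ equal to that pair, on which the two vertices receive opposite $\varphi_V$-values. Finally, once $\varphi$ is shown to be a continuous injective map of graph-like spaces from a compact space into the Hausdorff space $G$, it is automatically a homeomorphism onto its image, and that image is the standard subspace $\overline o$; hence $o$ is a topological circuit of $G$, as required.
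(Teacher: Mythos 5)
Your proposal is correct and takes essentially the same route as the paper's own proof: the paper likewise builds the abstract pseudo-circle $C$ from the cyclic order $R_o$ via \autoref{cyclic_order}, defines the vertex map by reading off, for each cocircuit $b$ with $b\cap o\neq\emptyset$, the side of $b$ determined by the gap of $C$ containing the vertex (concretely $f(v)(b)=d_b(p_{v,b})$ for the edge $p_{v,b}$ clockwise adjacent to $v$ in $(b\cap o)\cup\{v\}$, with the framework conditions ensuring consistency), proves injectivity by separating two vertices with a pair of edges and invoking \autoref{o_cap_b}, and verifies continuity on subbasic opens $U_b^i(\epsilon_b)$ using \autoref{components_expl} before concluding by the compact-to-Hausdorff argument. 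The steps you leave schematic --- the explicit sign formula, the case $b\cap o=\emptyset$ (handled by condition (2) of \autoref{dfn:g_fram}), and the check that the vertex map agrees with edge endpoints --- are exactly the routine verifications the paper spells out in detail.
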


The proof, though long, is simply a matter of unwinding the above definitions, and may be skipped.

\begin{proof}
By the symmetry of the construction of $G$, we may assume without loss of generality that $c_o(e) = 1$ for all $e \in o$.
The graph framework of $M$ induces a cyclic order $R_o$ on $o$. From this cyclic order we get a corresponding pseudo-circle $C$ with edge set $o$ by \autoref{cyclic_order}.
We begin by defining a map $f$ of graph-like spaces from $C$ to $G$ as follows.
First we define $f(v)$ for a vertex $v$ by specifying $f(v)(b)$ for each cocircuit $b$ of $M$.

If $b\cap o=\emptyset$, then  $(f(v))(b)=\sigma_b(e)$ for some $e\in o$.
This is independent of the choice of $e$ by condition (2) in the definition of graph frameworks.
This ensures that $f^{-1}(U_b^{i}(\epsilon_b))=C$ if $i=\sigma_b(e)$, and
$f^{-1}(U_b^{i}(\epsilon_b))=\emptyset$ if $i=-\sigma_b(e)$.

If $b\cap o=:s$ is nonempty, then $s$ is finite as $M$ is tame.
The cyclic order of $o$ induces a cyclic order on $s \cup \{v\}$: choose $p_{v,b}$ so that $p_{v,b}$ and $v$ are clockwise adjacent in this cyclic order.
We take $(f(v))(b)= d_b(p_{v,b})$.

Finally, we define the action of $f$ on interior points of edges by $f(\iota^C_e(r)) = \iota^G_e(r)$ for $r \in (0, 1)$. We may check from the definitions above that this formula also holds at $r = 0$ and $r = 1$. First we deal with the case that $r=0$. We check the formula pointwise at each cocircuit $b$ of $M$. In the case that $b \cap o = \emptyset$, we have $f(\iota^C_e(0))(b) = \sigma_b(e) = \iota^G_e(0)(b)$. Next we consider those $b$ with $e \in b$. Let $s = o \cap b$, so that $p_{\iota^C_e(0),b}$ and $e$ are clockwise adjacent in $s$. Thus $f(\iota^C_e(0))(b) = d_b(p_{\iota^C_e(0), b}) = -d_b(e) = \iota^G_e(0)(b)$ by condition (1) in the definition of graph frameworks and our assumption that $c_o(f) = 1$ for any $f \in o$. The other possibility is that $b \cap o$ is nonempty but $e \not \in b$. In this case, let $s = b \cap o + e$, so that $p_{\iota^C_e(0),b}$ and $e$ are clockwise adjacent in $s$. Thus $f(\iota^C_e(0))(b) = d_b(p_{\iota^C_e(0)}) = \sigma_b(e) = \iota^G_e(0)$ by condition (3) in the 
definition of graph frameworks and our assumption on $c_o$. The equality $f(\iota^C_e(1)) = \iota^G_e(1)$ may also be checked pointwise. The cases with $e \not \in b$ are dealt with as before, but the case $e \in b$ needs a slightly different treatment: we note that in this case $p_{\iota^C_e(1), b} = e$, so that $f(\iota^C_e(1))(b) = d_b(e) = \iota^G_e(1)$.

It is clear by definition that $f$ is injective on interior points of edges. To see that $f$ is injective on vertices, let $v$ and $w$ be vertices of $C$ such that $f(v)=f(w)$ and suppose for a contradiction that $v\neq w$.
Since $C$ is a pseudo-circle, there are two edges $e$ and $f$ in $C$ such that $v$ and $w$ lie in different components of $C\backslash\{e,f\}$. By \autoref{o_cap_b}, there is a cocircuit $b$ of $M$ with
$o\cap b=\{e,f\}$. Without loss of generality we have $e = p_{v,b}$. It follows that $f = p_{w,b}$. Since $e$ and $f$ are clockwise adjacent in the induced cyclic order on $\{e,f\}$, we have $f(v)(b) = d_b(e) = -d_b(f) = -f(w)(b)$ by condition (1) in the definition of graph frameworks and our assumption that $c_o(f) = 1$ for any $f \in o$. This is the desired contradiction. So $f$ is injective.

To see that $f$ is continuous, we consider the inverse images of subbasic open sets of $G$. It is clear that for any edge $e$ and any open subset $U$ of $(0, 1)$, $f^{-1}(\{e\} \times U) = \{e\} \times U$ is open in $C$, so it remains to check that each set of the form $f^{-1}(U^i_b(\epsilon_b))$ is open in $C$. If $b \cap o = \emptyset$ then this set is either empty or the whole of $C$. So suppose that $b \cap o \neq \emptyset$, and let $x \in f^{-1}(U^i_b(\epsilon_b))$. If $x$ is an interior point of an edge $e$ then it is clear that some open neighborhood of $x$ of the form $\{e\} \times U$ is included in $f^{-1}(U^i_b(\epsilon_b))$. 

We are left with the case that $x$ is a vertex and $s = b \cap o \neq \emptyset$. By \autoref{components_expl}, the component of $C \backslash s$ containing $x$ is the pseudo-arc $A$ consisting of all points $y$ on $C$ with $[a, y, b]_{R_C}$, together with $a$ and $b$, for some vertices $a = \iota^C_p(1)$ and $b = \iota^C_q(0)$, where for any vertex $v$ of $A$ we have $p_{v,b} = p$ and where $p$ and $q$ are clockwise adjacent in the restriction of $R_o$ to $s$. Since $f(x) \in U^i_b(\epsilon_b)$, we have $i = f(x)(b) = d_b(p)$ and so for any other vertex $v$ of $A$ we also have $f(v)(b) = d_b(p) = i$, so that $f(v) \in U^i_b(\epsilon_b)$. For any edge $e$ of $A$, applying condition (3) in the definition of graph frameworks to $p$ and $e$ in the set $s + e$ gives $\sigma_b(e) = d_b(p) = i$, so that $f''(0,1) \times e  = (0,1) \times e \se U^i_b(\epsilon_b)$. By definition, we have $(1 - \epsilon_b(p), 1) \times \{p\} \se U^i_b(\epsilon_b)$, and using condition (1) in the definition of graph frameworks we get 
$d_b(q) = -d_b(p) = -i$, so that $(0, \epsilon_b(q)) \times \{q\} \se  U^i_b(\epsilon_b)$. We have now shown that every point $y$ of $C$ with $[\iota_p^C(1 - \epsilon_b(p)), y, \iota^C_q(\epsilon_b(q))]_{R_C}$ is in  $f^{-1}(U^i_b(\epsilon_b))$. But the set of such points is open in $C$, which completes the proof of the continuity of $f$.

We have shown that the map $f$ is a map of graph-like spaces from the pseudo-circle $C$ to $G$ and that the edges in its image are exactly those in $o$, so that $o$ is a topological circuit of $G$ as required.

\end{proof}

It is clear that any cocircuit of $M$ is a topological cut of $G$, as witnessed by the sets $U_b^{-1}(\frac12)$ and $U_b^1(\frac12)$. Combining this with Lemmas \ref{cir_is_top} and \ref{never_once}, we are in a position to apply \autoref{magic_lemma} with $\Ccal$ the set of topological circuits and $\Dcal$ the set of topological cuts in $G$. The conclusion is \autoref{fram_gives_G-like}, which together with \autoref{fram} gives us \autoref{main}.

\section{Properties of the graph-like space $G(M,\Gamma)$}\label{sec8}

In this section, we shall show that $G(M,\Gamma)$ has many nice topological properties.

\begin{thm}\label{path_con}
Let $M$ be a connected matroid with a graph-framework $\Gamma$ on it, and let $s$ be an $M$-base.

Then any two vertices of $\hat G(M,\Gamma)$ can be joined by a pseudo-arc using edges from $s$ only.
\end{thm}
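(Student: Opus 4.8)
Throughout set $G=\hat G(M,\Gamma)$, which induces $M$ by \autoref{delete_vertices_fine}, so that every lemma of the previous sections applies to it. Fix an edge $e_0$ of $M$ and an endvertex $x_0$ of $e_0$. By \autoref{con_parc} the concatenation of a $v$-$x_0$-pseudo-arc with an $x_0$-$w$-pseudo-arc contains a $v$-$w$-pseudo-arc, and uses only the edges of its two halves; so it suffices to join every vertex of $G$ to $x_0$ by a pseudo-arc using only edges of $s$. For a vertex that is an endvertex of an edge this is precisely \autoref{weak_base_arccon}. Since the vertices of $\hat G$ are exactly those lying on an edge or on a pseudo-circle, the only case left is a vertex $v$ lying on a pseudo-circle $C$ but on no edge; write $o=E(C)$, a circuit of $M$.

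First I would show that $v$ lies in the standard subspace $\overline s$ (the same argument gives $x_0\in\overline s$, since $e_0$ lies on a circuit, and in fact shows every vertex of $G$ lies in $\overline s$). As $v$ is a limit vertex of $C$ there are edges $e_n\in o$, with an endvertex $p_n$ of each, such that $p_n\to v$. If $e_n\in s$ then $s$-edges already appear next to $p_n$; if $e_n\notin s$ then the fundamental circuit satisfies $o_{e_n}\se s+e_n$, and the $s$-pseudo-arc $\overline{o_{e_n}}-e_n$ has $p_n$ among its endvertices, so again $s$-edges occur arbitrarily close to $p_n$. Letting $p_n\to v$ shows that $s$-edges accumulate at $v$, i.e.\ $v\in\overline s$. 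Consequently it is enough to prove that $\overline s$ is pseudo-arc-connected; note that it is connected, as \autoref{weak_base_arccon} already joins any two endvertices of $s$-edges inside it.

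To join the limit vertex $v$ to $x_0$ inside $\overline s$, I would first use \autoref{cor8_2} --- both $v$ and $x_0$ lie on pseudo-circles --- to find a pseudo-circle $C'$ through $v$ and $x_0$, and let $P$ be one of the two $v$-$x_0$-pseudo-arcs on $C'$. The intersection $P\cap\overline s$ is closed and contains the endvertices $v,x_0$, so each component of the open remainder $P\sm\overline s$ is a sub-pseudo-arc of $P$ whose two endvertices lie in $\overline s$ and which otherwise avoids $\overline s$. Applying \autoref{p-arc_inside} with $X=\overline s$ to each such excursion replaces it by a pseudo-arc lying in $\overline s$ with the same endvertices; splicing these replacements in for the excursions, while retaining the parts of $P$ already inside $\overline s$, produces a $v$-$x_0$-pseudo-arc contained in $\overline s$, hence using only edges of $s$.

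The main obstacle is the splicing step: since $E(C')\sm s$ may be infinite there can be infinitely many excursions, and I must verify that the object assembled by replacing all of them at once is a genuine (compact) pseudo-arc and not merely a path. Equivalently, writing $A$ for its edge set --- which by \autoref{weak_base_arccon_2} is forced to be the set of those $f\in s$ whose fundamental cocircuit $b_f$ separates $x_0$ from $v$, ordered by $e<f$ iff $\sigma_{b_f}(e)=x_0(b_f)$ --- this is the assertion that $A$ meets every cocircuit $b$ in a finite set, which is exactly what makes the pseudo-line $L(A)$ supplied by \autoref{lin_are_pline} map continuously and injectively into $G$ (injectivity being immediate, as distinct initial segments differ on some $f\in A$ and hence on the coordinate $b_f$, where the two images take the values $x_0(b_f)\neq v(b_f)$). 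Tameness enters here through \autoref{gls_is_tame} applied to the compact replacement arcs, and \autoref{closure_bond_circuit} --- which gives $v\notin\overline b$ for every bond $b$, as $v$ lies on a pseudo-circle but on no edge --- prevents the edges of $A\cap b$ from accumulating at $v$; ruling out accumulation at the interior limit vertices of $L(A)$ is the delicate point, and is precisely where the forbidden-substructure analysis of \autoref{sec5} is needed. Once this finiteness is in hand, $L(A)$ is compact and $G$ is Hausdorff, so the continuous injection is a homeomorphism onto its image, which is the required pseudo-arc.
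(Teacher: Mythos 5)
Your reduction to joining each vertex to a fixed endvertex $v_0$ of an edge $e_0\in s$ via \autoref{con_parc}, and your identification of the edge set $A$ (the edges of $s$ whose fundamental cocircuit separates the two vertices, linearly ordered by which side of each fundamental cocircuit they lie on) together with the pseudo-line $L(A)$ mapping into $G$, is exactly the setup of the paper's \autoref{path_con_lem}. But your proof has a genuine gap at precisely the point you flag yourself: you never prove that $A$ meets every cocircuit $b$ finitely (equivalently, that $A\cap b$ is bounded away from the far endpoint $t$ of $L(A)$); you only announce that this ``is precisely where the forbidden-substructure analysis of \autoref{sec5} is needed''. That claim is the entire content of the paper's argument: it takes a pseudo-circle $C$ through $x$ sharing an edge with $b$ (\autoref{edge_of_b}), removes the finitely many edges of $C\cap b$, considers the component $C'$ containing $x$, assigns to each endvertex $w$ of an edge of $C'$ the $\geq_{P_w}$-maximal edge $\phi(w)$ of $P_w=v_1sw$ lying in $b$, shows $\phi$ has finite image by contracting everything on the far side of $b$ and applying \autoref{cir_eli_3} to the arcs $k(w_i)P_{w_i}w_i$ (whose disjointness and independence must themselves be checked via \autoref{unique_rem} and \autoref{cont_cut}), then locates a $\phi$-class $Z$ with $x$ in its closure, shows its common value $a$ lies in $A$, and uses it to verify \eqref{conti}. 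None of this is routine, and nothing in your sketch substitutes for it.

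Moreover, the one concrete mechanism you do offer is circular: you argue that \autoref{closure_bond_circuit} gives $v\notin\overline b$ and that this ``prevents the edges of $A\cap b$ from accumulating at $v$''. But order-cofinality of $A\cap b$ at $t$ does not imply topological accumulation of those edges at $v=P(t)$ --- that implication is exactly what continuity of the map at $t$ would provide, and that continuity is what you are trying to prove; there is no compactness available to force the relevant endvertices to cluster anywhere at all. (A smaller misdiagnosis: continuity at the \emph{interior} points of $L(A)$ is automatic, since every point strictly below $t$ has a neighbourhood inside a proper initial segment, which is the compact pseudo-arc $P_f$ from $v_0$ to $\iota_f(1)$ supplied by \autoref{weak_base_arccon} and \autoref{weak_base_arccon_2}; the endpoint $t$ is the only delicate point.) Your opening splicing strategy --- a pseudo-circle through $v$ and $x_0$ via \autoref{cor8_2}, with excursions outside $\overline s$ replaced using \autoref{p-arc_inside} --- is a genuinely different idea, but as you yourself observe it collapses to the same unproved finiteness claim once infinitely many excursions occur, so it does not furnish an alternative route either.
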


First, we prove that the following special case of this theorem already implies the theorem.

\begin{lem}\label{path_con_lem}
Let $M$ be a connected matroid with a graph-framework $\Gamma$ on it, and let $s$ be an $M$-base. Let $v_0$ be an endvertex of some edge $e_0\in s$. 

Then any vertex $x$ of $\hat G(M,\Gamma)$ can be joined to $v_0$ by a pseudo-arc $P$ using edges from $s$ only.
\end{lem}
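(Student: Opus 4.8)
The plan is to split according to how $x$ witnesses its membership in $\hat G := \hat G(M,\Gamma)$. Throughout I work inside $\hat G$, which induces the connected matroid $M$ by \autoref{delete_vertices_fine}, so that \autoref{weak_base_arccon} and \autoref{weak_base_arccon_2} apply. If $x$ is an endvertex of some edge $f$, then $v_0$ and $x$ are endvertices of the edges $e_0$ and $f$, and \autoref{weak_base_arccon} immediately supplies the required $s$-pseudo-arc $v_0 s x$. So I may assume that $x$ lies on a pseudo-circle but is not the endvertex of any edge; then $x$ is a ``limit vertex''.

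By \autoref{edge_of_b} there is a pseudo-circle $C_0$ through $x$ and $e_0$. Since $x$ is not an endvertex of any edge, it corresponds to a cut of $E(C_0)$ with no maximum below it and no minimum above it, so by \autoref{min_element} and \autoref{components_expl_arc} there are endvertices $w$ of edges of $C_0$ in every neighbourhood of $x$. For each such $w$, \autoref{weak_base_arccon} gives the unique $s$-pseudo-arc $v_0 s w$, and \autoref{weak_base_arccon_2} identifies its edge set as $\{g\in s \mid b_g \text{ separates } v_0 \text{ from } w\}$, linearly ordered by $g\le g'$ iff $g$ lies on the $v_0$-side of the bond $b_{g'}$. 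Crucially, this order condition does not mention $w$: since $b_{g'}\cap s=\{g'\}$ by \autoref{fdt}, the edge $g$ lies entirely on one side of $b_{g'}$, so ``$g$ on the $v_0$-side of $b_{g'}$'' is intrinsic.

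Guided by this I set $T=\{g\in s \mid b_g \text{ separates } v_0 \text{ from } x\}$, equipped with the same intrinsic order. Because the two sides of each topological bond $b_g$ are \emph{open} and edge-endvertices $w$ occur in every neighbourhood of $x$, for each fixed $g$ the membership $g\in v_0 s w$ agrees with $g\in T$ for all such $w$ close enough to $x$; consequently the orders on the various $v_0 s w$ all restrict to the order on $T$, so $(T,\le)$ is a linear order and the standard subspace $\overline T\se\overline s$ is a pseudo-line with start-vertex $v_0$ (as in \autoref{pline_lin_order}). It remains to show that the other endvertex of $\overline T$ is exactly $x$. For this I would check, cocircuit by cocircuit, that the sign vector of the far endvertex of $\overline T$ coincides with $x$ as an element of $\{-1,1\}^{\Ccal^*(M)}$: for a fundamental bond $b_g$ the side is forced by whether $g\in T$, and for a general bond $b$ one reads off the side of $x$ as the eventual common side of the approximating $w$, using the compatibility conditions (1)--(4) of \autoref{dfn:g_fram} relating $\sigma_b$, $d_b$ and the induced cyclic orders.

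The main obstacle is precisely this last step: pinning the far endpoint of the limiting pseudo-arc to $x$, rather than to some vertex lying ``beyond'' it. The danger is that infinitely many chords accumulate at $x$ and let the $s$-pseudo-arcs $v_0 s w$ drift away from $x$ in the limit. This is exactly the kind of configuration excluded by the forbidden-substructure result \autoref{cir_eli_single}: since $\hat G$ does induce a matroid, no such accumulating family of chords can exist, and combined with the openness of the bond-sides this keeps the side of every bond on which $x$ lies equal to the eventual side of the nearby $w$. Once the far endvertex is identified with $x$, the pseudo-line $\overline T$ is the desired pseudo-arc from $v_0$ to $x$ using only edges of $s$, completing the proof.
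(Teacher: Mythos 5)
Your setup coincides exactly with the paper's: your $T$ is the paper's $X$ (the edges of $s$ whose fundamental cocircuit separates $v_0$ from $x$), with the same order, and the fact that initial segments are pseudo-arcs follows as in the paper from \autoref{weak_base_arccon} and \autoref{weak_base_arccon_2}. The genuine gap is at the point you yourself flag as ``the main obstacle'': continuity of the limiting map at the top point, i.e.\ the statement that for \emph{every} bond $b$ some tail of $T$ lies entirely on the $x$-side of $b$. Your openness argument is only pointwise: for each \emph{fixed} bond $b_g$ it shows that endvertices $w$ in the $x$-side satisfy $g\in v_0sw$ iff $g\in T$, which suffices for linearity of $\le$ but says nothing uniform. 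Since basic neighbourhoods of $x$ constrain only finitely many bonds at a time while $T$ may be infinite, the ``eventual common side'' need not stabilise along $T$: the pseudo-arcs $v_0sw$ can enter and leave the far side of a bond $b$ through infinitely many distinct edges of $b$ even while their endpoints $w$ converge to $x$. Moreover, your appeal to \autoref{cir_eli_single} does not close this, because that corollary requires an infinite family of single \emph{edges} (chords) with one endvertex on each of two segments of a pseudo-circle, whereas the potentially accumulating objects here are pseudo-arc segments crossing $b$, not edges of $G$.

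What is missing is precisely the construction that constitutes the bulk of the paper's proof: fix a vertex $v_1$ on the far side of $b$ lying on $P$; use \autoref{edge_of_b} to get a pseudo-circle $C$ through $x$ meeting $b$; for each endvertex-of-an-edge $w$ on the component $C'$ of $C$ minus $b\cap C$ containing $x$, consider $P_w=v_1sw$ and its $\leq_{P_w}$-last edge $\phi(w)$ in $b$; show the segments $k(w_i)P_{w_i}w_i$ beyond the last crossing are pairwise disjoint with independent union; contract the whole far side $F$ of $b$, so that by \autoref{id_bond} all attachment vertices merge into a single vertex $V_1$ of $G'=G/F$ and these segments become a star of pseudo-arcs from $V_1$ to the pseudo-circle $C'+r_1+r_2$; then apply \autoref{cir_eli_3} (the pseudo-arc version, not \autoref{cir_eli_single}) together with \autoref{minor_consistency} to conclude that $\phi$ has finite image. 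Even granting that, one still needs the remaining steps: a $\phi$-class $Z$ whose closure contains $x$, the argument (using closedness of the sides of $b_a$) that the common value $a$ of $Z$ lies in $X$, and the final contrapositive step showing that every $f\in X$ not on the $x$-side of $b$ satisfies $f\le a$, which is what yields the required tail. None of this appears in your proposal, so it is incomplete at exactly the point where the real work of the lemma is done.
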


\begin{proof}[Proof that \autoref{path_con_lem} implies \autoref{path_con}.]
Let $x$ and $x'$ be two vertices of $\hat G(M,\Gamma)$. 
If $s=\emptyset$, then every edge is a loop and $x=x'$.
Hence we may assume that there is some edge $e_0$ in $s$.
Let $v_0$ be an endvertex of $e_0$.
Assuming \autoref{path_con_lem}, we obtain an $x$-$v_0$-pseudo-arc and
an $x'$-$v_0$-pseudo-arc both using only edges of $s$.
The concatenation of these two pseudo-arcs includes the desired $x$-$x'$-pseudo-arc by 
\autoref{con_parc}.
\end{proof}

\begin{proof}[Proof of \autoref{path_con_lem}]

First we define the set $X$ of those edges which we expect to lie on $P$.
Let $X$ be the set of those edges $e$ in $s$ whose fundamental cocircuit $b_e$ 
has $v_0$ and $x$ on different sides (when we consider $b_e$ as a topological bond).

To show that $X$ is the edge set of some pseudo-arc, we first define a linear order $\leq$ on $X$
via $e\leq f$ if and only if $e$ is on the same side of $b_f$ as $v_0$. 
By reversing the maps $\iota_e$ if necessary, we may assume
that $\iota_e(0)$ is not separated from $v_0$ by $b_e$ for all $e\in X$.
By \autoref{weak_base_arccon} and \autoref{weak_base_arccon_2}, 
for any $f\in X$ the set $X_f:=\{e\in X|e\leq f\}$ is the edge set of a $v_0$-$\iota_f(1)$-pseudo-arc $P_f$, where $\leq_{P_f}$ is given by $\leq$.

Hence, if $e\leq f$, then $P_e$ is an initial segment of $P_f$.
This gives rise to the following definition.
Let $P$ be the map from the pseudo-line $L(X)$, where $X$ is ordered by $\leq$, to $G$
sending each point $\leqq_{L(X)} \iota_f(1)$ to its image under $P_f$, and the endpoint $t$ of $L(X)$ to $x$.

By the above, this is well-defined and injective and continuous at each point other than $t$. Thus it remains to prove that $P$ is continuous at $t$. This is clear if $X$ is empty, so we will assume from now on that it is nonempty.
From the construction of the topology of $\hat G(M,\Gamma)$, it suffices to prove the following.
\begin{equation}\label{conti}
\begin{minipage}[c]{0.8\textwidth}
For any bond $b$, there is some $y<t$ in $L(X)$ such that $P(y)Px$
is included in a single side of the bond.
\end{minipage}
\end{equation}

This is true if $P$ does not meet $b$ at all. 
Otherwise let $v_1$ be a vertex on the other side of $b$ than $x$ that is an  endvertex of an edge of $P$.
 
By the construction of $\hat G(M,\Gamma)$, the vertex $x$ lies on some $G$-circuit $C$.
We may assume that $C$ has an edge in common with $b$ by \autoref{edge_of_b} applied with some edge $e\in b$.

Let $D$ be the space obtained by removing the interior points of the (finitely many) edges lying on $C$ and in $b$ from $C$. Let $C'$ be the connected component of $D$ containing $x$. Let $W$ be the set of vertices of $C'$ that are endvertices of edges. For each $w \in W$, let $P_w = v_1sw$ be the unique pseudo-arc from $v_1$ to $w$ using only edges from $s$. Then there are only finitely many edges in both $P_w$ and $b$, so there is a $\geq_{P_w}$-maximal such edge, which we will call $\phi(w)$. We use the expression $k(w)$ to denote the endvertex of $\phi(w)$ lying on the same side of $b$ as $v_1$.

We shall show that the function $\phi$ defined as above has only a finite image. Suppose instead, for a contradiction, that we can find an infinite sequence $(w_i)$ with all the $\phi(w_i)$ distinct. 
If for any $i$ and $j$ the pseudo-arcs $k(w_i)P_{w_i}w_i$ and $k(w_j)P_{w_j}w_j$ meet, say at some vertex $v$, then $P_{w_i}v=P_{w_j}v$ by  \autoref{unique_rem}.
And it follows that $\phi(w_i)=\phi(w_j)$, and so $i=j$.
Thus for different values of $i$, the pseudo-arcs $k(w_i)P_{w_i}w_i$ are disjoint.

In order to obtain a contradiction, we shall search for a forbidden substructure
in a minor $G'$ of $G$. 
Let $F$ be the set of those edges that are on the side of $b$ that does not contain $x$.
Let $G'=G/F$, and $M'=M/F$.
Let $V_1$ be the vertex of $G'$ that contains $v_1$.
Since $C$ and $b$ have an edge in common, we can find two edges $r_1,r_2\in C\cap b$ such that $C'+r_1+r_2$ is a pseudo-circle in the minor.
Each $k(w_i)P_{w_i}w_i$ is a pseudo-arc from $V_1$ to $w_i$ by \autoref{cont_cut}

Next, we show that the union of the edge sets of $k(w_i)P_{w_i}w_i$ is independent. 
Suppose not for a contradiction so that this set includes a pseudo-circle $K$ of $G'$.
Then $K$ contains points $x$ and $y$ in distinct pseudo-arcs  $k(w_i)P_{w_i}w_i$ and $k(w_j)P_{w_j}w_j$.
It also includes a pseudo-arc $K'$ joining these two points that avoids $V_1$.
So $K'$ is also a pseudo-arc of $G$ by \autoref{cont_cut}.
Since $K'\se s$, and by \autoref{unique_rem}, we must have that $v_1\in K'$.
This is a contradiction. So the union of the 
edge sets of $k(w_i)P_{w_i}w_i$ is independent. 
Hence we may apply \autoref{cir_eli_3} in $G'$ to the $k(w_i)P_{w_i}w_i$
 and $C'+r_1+r_2$.
Since $G'$ induces a matroid by \autoref{minor_consistency}, we get a contradiction.
Hence $\phi$ has a finite image.

This defines a partition of $W$ into finitely many classes where a class consists of those points with the same image. Hence there is a class $Z$ that has $x$ in its closure.
Let $a$ be the $\phi$-value of the edges in that class.

Next, we show that $a\in X$. This is true if $a$ lies on $v_0Pv_1$. Otherwise
since $a$ lies on the pseudo-arcs $P_{z}z$ for $z\in Z$,
all the $z$ lie on the opposite side from $v_1$, so also the opposite side from $v_0$, of the fundamental cocircuit $b_a$ of $a$.
Since this side is closed, $x$ also lies on that side. So $b_a$ separates $v_0$ from
$x$, and hence $a\in X$.

Now we are in a position to prove (\ref{conti}).
For $y$ we pick the endvertex of $a$ on the same side of $b$ as $x$. 
We are to show that $P(y)Px$ is included in the side of $b$ that contains $x$.
So let $f \in X$ such that $f$ doesn't lie on the same side of $b$ as $x$.
None of the paths $yP_z$ with $z \in Z$ contains $f$, so all such $z$ lie on the same side of $b_f$ as $y$, and so $x$ also lies on the same side of $b_f$ as $y$, so $a \geq f$. Taking the contrapositive, we obtain \eqref{conti}, completing the proof.
\end{proof}

\begin{cor}\label{path_con_cor_1}
Let $M$ be a connected matroid with a graph-framework $\Gamma$ on it, and let $s$ be a set of edges of $M$.
The following are equivalent:
\begin{enumerate}
 \item $s$ is spanning in $M$.
\item $\hat G(M, \Gamma) \restric_s$ is connected.
\item $\hat G(M, \Gamma) \restric_s$ is pseudo-arc connected.
\end{enumerate}
\end{cor}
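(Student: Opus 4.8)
The plan is to prove the cycle of implications $(1)\Rightarrow(3)\Rightarrow(2)\Rightarrow(1)$, since $(3)\Rightarrow(2)$ is immediate (pseudo-arc connectedness is a strengthening of connectedness: any two vertices joined by a pseudo-arc certainly lie in a common connected subspace). The heavy lifting is in $(1)\Rightarrow(3)$, but this is exactly where \autoref{path_con} does the work for us.

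For $(1)\Rightarrow(3)$, suppose $s$ is spanning in $M$. Then for the matroid $M' = M/(E\sm s)$ every edge of $s$ becomes a loop, but more to the point, since $s$ is spanning, each edge $e\in E\sm s$ has a fundamental circuit $o_e\se s+e$, so contracting $E \sm s$ leaves a base of $M$ contained in $s$; in fact I would simply extend a base of $M$ inside $s$ (possible since $s$ is spanning and so contains a base $s_0$). Now I want to apply \autoref{path_con} to $\hat G(M,\Gamma)$ with the base $s_0 \se s$. That theorem gives, for any two vertices of $\hat G(M,\Gamma)$, a pseudo-arc joining them using only edges of $s_0 \se s$. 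The point is then to transfer this statement to the \emph{restriction} $\hat G(M,\Gamma)\restric_s$: a pseudo-arc of $\hat G$ whose edges all lie in $s$ is in particular a subspace of $\overline{s}$, hence of $G\restric_s$, so it is a pseudo-arc there too. This requires checking that the subspace topology inherited from $G\restric_s$ on such an arc agrees with that from $G$, which holds because a pseudo-arc is the standard subspace on its edge set and its topology is determined by its edges (by the corollary following \autoref{pline_lin_order}). Finally, one must confirm the interior-of-edge condition in the definition of pseudo-arc connectedness: for an interior point of an edge $e\in s$, it can be joined within $e$ to an endvertex, and that endvertex lies on a pseudo-arc of $s$-edges by \autoref{path_con}. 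This gives pseudo-arc connectedness of $\hat G(M,\Gamma)\restric_s$.

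For $(2)\Rightarrow(1)$, I argue the contrapositive: suppose $s$ is not spanning, so there is an edge $e$ whose fundamental-circuit analysis shows $e$ is not in the closure of $s$; equivalently, by \autoref{almost_fundamental_cocircuit} applied to a base $s_0\se \text{cl}(s)$ extended by $e$, or more directly, there is a cocircuit $b$ of $M$ disjoint from $s$. Such a $b$ is a topological cut of $G$ (hence of $\hat G$) induced by open sets $U,V$ partitioning the vertices, and since $b\cap s=\emptyset$, the restricted space $\hat G\restric_s$ has no edge crossing between $U$ and $V$. Provided both sides actually contain relevant vertices of $\hat G\restric_s$, this disconnects $\hat G\restric_s$. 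The one subtlety is ensuring that $\hat G\restric_s$ genuinely meets both sides of $b$ (i.e.\ that the disconnection is nontrivial rather than leaving one side empty of the relevant points); this follows because $b$ is nonempty, so its edges have endvertices on each side, and these endvertices lie on pseudo-circles or edges and hence survive in $\hat G$.

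The main obstacle will be the transfer of the pseudo-arc in step $(1)\Rightarrow(3)$ from $G$ to the restriction $\hat G\restric_s$, and the bookkeeping around the definition of pseudo-arc connectedness (especially the clause about interior points of edges). The conceptual content is entirely contained in \autoref{path_con}; the remaining work is verifying that ``pseudo-arc of $G$ using only $s$-edges'' coincides with ``pseudo-arc of $\hat G\restric_s$,'' which rests on the fact, already established, that a pseudo-arc is the standard subspace of its edge set and so its topology depends only on those edges. I expect the proof to be short precisely because \autoref{path_con} has absorbed the difficulty.
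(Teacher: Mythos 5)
Your proposal is correct and follows essentially the same route as the paper: the paper also deduces $(1)\Rightarrow(3)$ directly from \autoref{path_con} (implicitly extending a base inside the spanning set $s$), notes $(3)\Rightarrow(2)$ is immediate, and closes the cycle by observing that connectedness of $\hat G(M,\Gamma)\restric_s$ forces $s$ to meet every topological bond and hence be spanning --- which is exactly the contrapositive of your $(2)\Rightarrow(1)$ argument via a cocircuit disjoint from $s$. Your extra bookkeeping (transferring pseudo-arcs to the restriction via standard subspaces, and checking both sides of the cut are inhabited in $\hat G$) is sound and merely makes explicit what the paper's two-sentence proof leaves implicit.
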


\begin{proof}
By the above theorem, 1 implies 3 which clearly implies 2. If $\hat G(M, \Gamma) \restric_s$
is connected, then $s$ meets every topological bond of $\hat G(M, \Gamma)$, and hence is spanning in $M$.
\end{proof}

\begin{cor}\label{path_con_cor_2}
Let $M$ be a connected matroid with a graph-framework $\Gamma$ on it, and let $s$ be a set of edges of $M$.
The following are equivalent:
\begin{enumerate}
 \item $s$ is an $M$-base.
\item $s$ is minimal with the property that $\hat G(M, \Gamma) \restric_s$ is connected.
\item $s$ is minimal with the property that $\hat G(M, \Gamma) \restric s$ is pseudo-arc connected.
\end{enumerate}
\end{cor}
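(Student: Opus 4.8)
The plan is to deduce \autoref{path_con_cor_2} from \autoref{path_con_cor_1}, which characterizes spanning sets of $M$ as exactly the edge sets $s$ for which $\hat G(M,\Gamma)\restric_s$ is connected (equivalently, pseudo-arc connected). Since an $M$-base is, by definition, a minimal spanning set, and a minimal spanning set is exactly a set that is minimal among those inducing connected (equivalently pseudo-arc connected) subspaces, the three conditions should collapse onto one another almost formally. The work is really just a translation exercise using the previous corollary.

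Concretely, I would argue as follows. First I would show that (1) implies (2). Suppose $s$ is an $M$-base; then $s$ is spanning, so by \autoref{path_con_cor_1} the subspace $\hat G(M,\Gamma)\restric_s$ is connected. For minimality, take any $e\in s$; since a base is a minimal spanning set, $s-e$ is not spanning, so again by \autoref{path_con_cor_1} the subspace $\hat G(M,\Gamma)\restric_{s-e}$ is disconnected. Hence $s$ is minimal with the property of inducing a connected subspace, giving (2). An identical argument, invoking the equivalence of conditions (1) and (3) in \autoref{path_con_cor_1}, shows that (1) implies (3).

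Next I would close the loop by showing (2) implies (1) and (3) implies (1). If $s$ is minimal with $\hat G(M,\Gamma)\restric_s$ connected, then in particular $\hat G(M,\Gamma)\restric_s$ is connected, so $s$ is spanning by \autoref{path_con_cor_1}. I then claim $s$ contains no circuit: if some circuit $o\subseteq s$, pick $e\in o$; since $o-e$ is independent while $o$ is a circuit, $s-e$ is still spanning (deleting one edge of a circuit cannot shrink the span), so $\hat G(M,\Gamma)\restric_{s-e}$ is connected, contradicting minimality. Thus $s$ is spanning and independent, i.e.\ an $M$-base. The argument for (3) implies (1) is the same, using condition (3) of \autoref{path_con_cor_1} in place of condition (2).

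The potential subtlety — and the only place requiring care — is the claim that deleting a single edge of a circuit from a spanning set leaves it spanning. This holds because if $o\subseteq s$ is a circuit and $e\in o$, then $e\in\operatorname{cl}(o-e)\subseteq\operatorname{cl}(s-e)$, so $\operatorname{cl}(s-e)=\operatorname{cl}(s)=E$; this uses only the basic closure axioms valid for infinite matroids. Given this, the proof is short, and I expect no real obstacle beyond faithfully invoking the right clause of \autoref{path_con_cor_1} in each implication. I would write it as a single short paragraph noting that minimal spanning sets are precisely bases, and that \autoref{path_con_cor_1} lets us replace ``spanning'' by either ``inducing a connected subspace'' or ``inducing a pseudo-arc connected subspace'' throughout.
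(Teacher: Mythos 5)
Your proposal is correct and takes essentially the same route as the paper, whose entire proof is the one-line observation that the statement follows from \autoref{path_con_cor_1} together with the fact that bases are precisely the minimal spanning sets. You simply spell out that fact (including its validity for infinite matroids via the closure axioms), which the paper takes for granted.
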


\begin{proof}
This follows from the last corollary and the fact that bases are minimal spanning sets.
\end{proof}

\begin{lem}\label{cut_bond}
Let $G$ be a graph-like space inducing a matroid.
Then every topological cut $t$ is a disjoint union of topological bonds.
\end{lem}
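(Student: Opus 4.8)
The plan is to reduce the statement to the binary-matroid decomposition result \autoref{binary_x}. Write $M = M(G)$, so that the topological circuits of $G$ are the circuits of $M$ and the topological bonds of $G$ are its cocircuits. The key step, and the one I expect to be the main obstacle, is the following parity claim: for every topological cut $t$ and every topological circuit $o$, the intersection $t \cap o$ is finite and of \emph{even} size. Finiteness is immediate from \autoref{gls_is_tame}, since $\overline o$ is a pseudo-circle and hence compact; everything therefore rests on evenness.

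To prove evenness, let $t$ be induced by disjoint open sets $U, V$ partitioning $V(G)$, and put $C = \overline o$. By compactness of $C$, the cover of $C$ by $U$, $V$ and the edge interiors $\iota_e((0,1))$ for $e \in o$ has a finite subcover; since distinct edge interiors are disjoint, all but finitely many edges $e \in o$ satisfy $\iota_e((0,1)) \subseteq U \cup V$. Let $E_0$ be the finite exceptional set and set $s = (t \cap o) \cup E_0$, which is finite. Removing the interiors of the edges of $s$ from $C$ yields, by \autoref{components_expl}, exactly $|s|$ pseudo-arc components arranged cyclically, any two consecutive ones being separated by a single removed edge whose two endvertices are the corresponding endpoints of the two components. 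Each such component $K$ is connected and consists only of vertices (all of which lie in $U \cup V$) together with interiors of its own edges, which lie in $U \cup V$ because those edges are not in $E_0$; hence $K \subseteq U \cup V$, and being connected it lies entirely in $U$ or entirely in $V$. Assign to each component the side on which it lies. For two consecutive components separated by a removed edge $g$: if $g \notin t$ its two endvertices lie on the same side, so the two components share a side, whereas if $g \in t$ the endvertices lie on opposite sides, so the two components lie on opposite sides. Travelling once around the cyclic sequence of components, the side thus flips exactly at the edges of $t \cap o$, and consistency of this two-colouring around a cycle forces the number of flips, namely $|t \cap o|$, to be even. (This is consistent with \autoref{never_once}, which already excludes $|t \cap o| = 1$.)

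With the claim in hand the rest is bookkeeping. Taking $o$ to be an arbitrary circuit of $M$ and $b$ an arbitrary cocircuit of $M$ (hence a topological bond, which is in particular a topological cut), the claim shows that every circuit--cocircuit intersection of $M$ is finite and of even size; by the characterisation of binary matroids recalled from \cite{BC:rep_matroids+}, $M$ is binary. I then apply \autoref{binary_x} with $X = t$: the claim guarantees that $t$ meets every circuit of $M$ finitely and evenly, so $t$ is a disjoint union of cocircuits of $M$, that is, a disjoint union of topological bonds of $G$. The degenerate case $t = \varnothing$ is the empty such union, completing the proof.
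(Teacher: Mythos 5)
Your proof is correct and takes essentially the same route as the paper's: finiteness of circuit--cut intersections from \autoref{gls_is_tame}, evenness via \autoref{components_expl} and the observation that the resulting pseudo-arc components alternate sides of $t$, from which binarity of $M$ follows, and then \autoref{binary_x} applied to $X = t$. Your compactness refinement with the exceptional set $E_0$ just makes explicit a detail the paper's terse evenness step glosses over (edge interiors need not lie in $U \cup V$), and it is the same device the paper itself uses in the proof of \autoref{fin_fram}.
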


\begin{proof}
By \autoref{gls_is_tame}, every circuit meets $t$ finitely. In fact, every circuit must meet $t$ in an even number of edges, by \autoref{components_expl} and the fact that the components  mentioned in that Corollary must lie on alternate sides of $t$.

Applying this in all the cases where $t$ is a topological bond shows that the induced matroid is binary, so we may apply \autoref{binary_x} to obtain the desired result.
\end{proof}

\begin{thm}\label{standard_subspace_conny}
Let $M$ be a connected matroid with a graph-framework $\Gamma$ on it.
For any standard subspace $\bar X$ of $\hat G(M,\Gamma)$ the following are equivalent.
\begin{enumerate}
 \item $\bar X$ is connected.
\item $\bar X$ is pseudo-arc connected.
\item $\bar X$ contains an edge
of every topological cut of which it meets both sides. 
\item $\bar X$ contains an edge
of every topological bond of which it meets both sides.
\end{enumerate}
\end{thm}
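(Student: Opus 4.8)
The plan is to run the cycle of implications $(1)\Rightarrow(2)\Rightarrow(3)\Rightarrow(4)\Rightarrow(1)$. Throughout I write $X$ for the edge set of $\bar X$ and use that $\hat G(M,\Gamma)$ induces $M$, so its topological bonds are exactly the cocircuits of $M$. I take ``$\bar X$ meets both sides of a cut'' to mean ``$\bar X$ has a vertex in each of the two open sets inducing the cut'', which is the reading that makes the statement correct: the standard subspace of a single edge $e$ does not meet both sides of the (empty) cut induced by an open interval inside the interior of $e$, even though both open sets meet $\bar X$. The two routine steps are $(3)\Rightarrow(4)$, immediate since every topological bond is a topological cut, and $(2)\Rightarrow(1)$, immediate since a pseudo-arc is connected and, by the definition of pseudo-arc connectedness, every interior point of an $X$-edge is joined inside $\bar X$ to an endvertex, so any two points of $\bar X$ lie in a common connected subset.

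The substantial implication is $(1)\Rightarrow(2)$. Given vertices $x,y$ of $\bar X$, \autoref{path_con} supplies a pseudo-arc $P$ from $x$ to $y$ in $\hat G(M,\Gamma)$ using only edges of some base $s$. Since $\bar X$ is a standard subspace it is closed, so $P^{-1}(\bar X)$ is closed in the pseudo-line $P$ and its complement is a disjoint union of order-open intervals. For each maximal such ``excursion'' the two boundary points must be vertices of $\bar X$ (an interior point of an $X$-edge cannot bound an excursion, since near it $P$ runs along that edge, which lies in $\bar X$), and the restriction of $P$ to the excursion is an $\bar X$--$\bar X$-pseudo-arc meeting $\bar X$ only in its two endvertices. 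Hence \autoref{p-arc_inside} replaces it by a pseudo-arc inside $\bar X$ with the same endvertices. Performing all these replacements and retaining the parts of $P$ already in $\bar X$ yields a pseudo-path from $x$ to $y$ inside $\bar X$, from which a pseudo-arc can be extracted by \autoref{con_parc} and \autoref{min_element}. I expect this reassembly to be the main obstacle: organising the possibly infinite family of internal pseudo-arcs, together with the surviving parts of $P$, into a single pseudo-line mapping into $\bar X$ and verifying that the result is genuinely a pseudo-path.

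For $(2)\Rightarrow(3)$, suppose $\bar X$ is pseudo-arc connected and meets both sides $U,V$ of a cut $t$, via vertices $u\in U$ and $v\in V$, and suppose for contradiction that $X\cap t=\varnothing$. Join $u$ to $v$ by a pseudo-arc $R\subseteq\bar X$, so every edge of $R$ lies in $X$ and hence has both endvertices on the same side of $t$. Colour each point of $R$ by that common side (for interior points of edges) or by its own side (for vertices). Because $U$ and $V$ are open, this colouring is locally constant even at the limit vertices of $R$: a small order-interval around a vertex $w\in U$ lies inside any neighbourhood of $w$ contained in $U$, so every vertex and every edge in it is coloured by $U$. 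Thus the colouring is continuous, $R$ is connected, and $u,v$ receive the same colour, contradicting $u\in U$, $v\in V$. Hence $X\cap t\neq\varnothing$.

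The other key step is $(4)\Rightarrow(1)$, which I argue contrapositively. If $\bar X=A\sqcup B$ is a separation into nonempty relatively clopen sets, then each piece contains a whole edge of $X$ (a connected edge-closure meeting a clopen piece lies inside it), hence a vertex; fix vertices $a\in A$ and $b_0\in B$. Now choose the base $s$ of $M$ so that $s\cap X$ is a base of $M\restric X$. The crucial point is that the fundamental circuit $o_g$ of any $g\in X\sm s$ then lies in $X$: by uniqueness of fundamental circuits it coincides with the fundamental circuit of $g$ in $M\restric X$ with respect to $s\cap X$. Consequently, for every $f\in s\sm X$ the fundamental cocircuit $b_f$ is disjoint from $X$, since by \autoref{fdt} an edge $g\in X\sm s$ lies in $b_f$ exactly when $f\in o_g\subseteq X$, which is impossible. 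By \autoref{weak_base_arccon} the unique $s$-pseudo-arc $asb_0$ cannot lie in $\bar X$ (else $a,b_0$ would be in one component), so by \autoref{weak_base_arccon_2} it uses some $f\in s\sm X$ whose fundamental cocircuit $b_f$ separates $a$ from $b_0$. This $b_f$ is a topological bond with $X\cap b_f=\varnothing$ that $\bar X$ meets on both sides (at $a$ and $b_0$), contradicting $(4)$. The clean device making this work is exactly the choice of a base extending a base of $M\restric X$, which confines the fundamental circuits of $X$-edges to $X$ and so produces an $X$-avoiding separating cocircuit for free.
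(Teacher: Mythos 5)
Your cycle stands or falls with $(1)\Rightarrow(2)$, and that is where there is a genuine gap --- one you flag yourself, but which is fatal rather than merely technical. \autoref{p-arc_inside} hands you, for each excursion, \emph{some} pseudo-arc inside $\bar X$ with the right endpoints, with no control whatever over where it goes. If the excursions are infinite in number and accumulate at a point $z$ of $P$, the replacement arcs for arbitrarily short excursions near $z$ may be arbitrarily long detours through $\bar X$; then the glued function need not be continuous at $z$, the union of the retained pieces and the replacements need not be closed (let alone compact), and distinct replacements may meet one another and the retained pieces, destroying injectivity. \autoref{con_parc} only licenses finitely many concatenations, and no lemma in the paper organises an infinite uncontrolled family of pseudo-arcs into a single pseudo-line mapping into $\bar X$. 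Since your established implications are $2\Rightarrow1$, $2\Rightarrow3$, $3\Rightarrow4$ and $4\Rightarrow1$, condition $(2)$ is unreachable from $(1)$, $(3)$ or $(4)$ without this step, so the equivalence is not proved.

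The paper avoids the surgery altogether, and --- pleasingly --- by exactly the device you deploy in your $(4)\Rightarrow(1)$ step: it proves $(3)\Rightarrow(2)$ by taking $s_0$ a base of $M$ restricted to the edges of $\bar X$, extending it to a base $s$ of $M$, and showing the \autoref{path_con} arc between two points of $\bar X$ lies in $\bar X$ outright, so that no excursions ever arise. The mechanism: by $(3)$, any topological cut $b$ separating the two endpoints contains an edge $e\in X$, and then $b$ must meet $s_0$, since otherwise the fundamental circuit $o_e\se s_0+e$ would meet $b$ exactly once, contradicting \autoref{never_once}; hence the endpoints are identified in $G/s_0$, while the arc's edges outside $s_0$ remain independent in $M/s_0$, so the arc would close into a pseudo-circle with independent edge set unless all its edges lie in $s_0\se X$. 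You can graft this in directly: your colouring argument works essentially verbatim with $\bar X$ in place of the arc $R$ (local constancy at a vertex follows because a basic open set $\bigcap_m U^{i_m}_{b_m}(\epsilon_m)$ of $\hat G(M,\Gamma)$ contains the adjacent endvertex of any edge it partially contains), giving $(1)\Rightarrow(3)$, after which the contraction argument yields $(3)\Rightarrow(2)$. The rest of your proposal is sound and partly novel: the reading of ``meets both sides'' as ``has a vertex on each side'' is the correct one, $(2)\Rightarrow(3)$ is fine, and your $(4)\Rightarrow(1)$ --- extending a base of $M\restric X$ so that fundamental cocircuits of edges in $s\sm X$ avoid $X$, then invoking \autoref{weak_base_arccon_2} --- is a correct alternative to the paper's route through \autoref{cut_bond}, which instead derives $4\Rightarrow3$ from the fact that every topological cut is a disjoint union of bonds.
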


\begin{proof}
Clearly 2 implies 1 and 1 implies 3. To see that 3 implies 2, 
let $s_0$ be a base of $M$ restricted to the edges of $\bar X$.
Extend $s_0$ to a base $s$ of $M$.
By \autoref{path_con} any two points in $\bar X$ are joined by a pseudo-arc using only edges of $s$.
When we contract $s_0$, then the edge set of this pseudo-arc remains independent, 
and its endvertices are identified by the definition of contraction. 
Hence this pseudo-arc does not contain any edge outside $s_0$.

The conditions 3 and 4 are equivalent: clearly 3 implies 4, and the converse follows from  \autoref{cut_bond}. This completes the proof
\end{proof}

Similar to the notion of local path connectedness is that of local pseudo-arc connectedness. A graph-like space $G$ is \emph{locally pseudo-arc connected} if every
open neighborhood $U$ of any vertex $x$ of $G$ includes an open neighborhood $U'$ of $x$ which is pseudo-arc connected in $G$. Note that if we put on $U$ the additional restriction that it is a basic open neighbourhood, then this would not change the meaning of this definition.

\begin{thm}\label{locally pseudo-arc}
Let $M$ be a connected matroid with a graph-framework $\Gamma$ on it.
Let $X$ be a set of edges such that the standard subspace $\bar X$ of
$\hat G(M,\Gamma)$ is pseudo-arc connected.

Then $\bar X$ is locally pseudo-arc connected.
\end{thm}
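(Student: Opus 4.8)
The plan is to establish local pseudo-arc connectedness at each vertex, since the definition only quantifies over vertices. So I fix a vertex $x$ of $\bar X$ and an open neighbourhood $U$ of $x$ in $\bar X$. The subbasic sets $U\times\{e\}$ (with $U$ open in $(0,1)$) contain no vertices, so after shrinking I may assume $U=\bar X\cap\bigcap_{j=1}^n U_{b_j}^{i_j}(\epsilon_{b_j})$ for finitely many bonds $b_j$, where necessarily $i_j=x(b_j)$. The aim is to produce a pseudo-arc connected open $U'$ with $x\in U'\se U$.

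For $U'$ I would take the \emph{pseudo-arc component of $x$ in $U$}: the set of points of $U$ joinable to $x$ by a pseudo-arc contained in $U$. Then $x\in U'\se U$, and $U'$ is pseudo-arc connected almost for free: two of its vertices are joined through $x$ by \autoref{con_parc}, and an interior point of an edge lying in $U'$ is joined to an endvertex of that edge along the edge itself, since the witnessing pseudo-arc enters through such an endvertex and $U$ is open. The whole problem thus reduces to showing that $U'$ is open. (Taking the component rather than all of $U$ is essential, since a partially-contained bond edge of $U$ whose attaching endvertex lies on the far side of some other $b_j$ is stranded and must be excluded.)

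To control pseudo-arcs I would fix a base $s$ of $M$ with $s\cap X$ a base of $M\restric X$; as in the proof of \autoref{standard_subspace_conny}, any two points of $\bar X$ are then joined inside $\bar X$ by a unique pseudo-arc using only edges of $s\cap X$, which I denote $psw$ and analyse via \autoref{weak_base_arccon} and \autoref{weak_base_arccon_2}. The crux is a local continuity statement, the analogue of equation \eqref{conti}: \emph{for every point $p$ of $\bar X$ and every neighbourhood $N$ of $p$ there is a neighbourhood $N'\se N$ of $p$ with $psw\se N$ for all $w\in N'$.} Granting this, openness follows quickly: given $p\in U'$ with a connecting pseudo-arc $R\se U$ from $x$ to $p$, I choose a basic neighbourhood $N\se U$ of $p$ and the corresponding $N'$; for $w\in N'$ the concatenation of $R$ and $psw\se N\se U$ contains a pseudo-arc from $x$ to $w$ inside $U$ by \autoref{con_parc}, so $w\in U'$ (interior points being handled by travelling along their edge). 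Hence $N'\cap\bar X\se U'$, and $U'$ is open.

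The hard part will be proving the continuity statement, and this is where the forbidden-substructure machinery enters. It suffices to treat $N$ a single subbasic set $U_c^{\ell}(\cdot)$ with $\ell=p(c)$, i.e.\ to find a neighbourhood of $p$ within which the arcs $psw$ never cross the bond $c$ to its far side. If no such neighbourhood existed there would be points $w_i\to p$ with each $psw_i$ meeting the far side of $c$, hence using an edge of $c$; passing to a subsequence yields infinitely many base-pseudo-arcs out of $p$ all crossing the fixed bond $c$. I would then argue exactly as in \autoref{path_con_lem} (the step showing the auxiliary function $\phi$ has finite image): contract the far side of $c$, use \autoref{edge_of_b} to produce a pseudo-circle through the image of $p$ meeting $c$ in two edges, and observe that these crossing arcs realise the configuration forbidden by \autoref{cir_eli_3} (equivalently \autoref{cir_eli_single}), contradicting the fact that the relevant minor induces a matroid by \autoref{minor_consistency}. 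This contradiction gives the continuity statement and with it the theorem.
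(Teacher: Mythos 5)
Your overall architecture---take $U'$ to be the pseudo-arc component of $x$ in $U$, reduce openness of $U'$ to a continuity statement for the base-arcs $psw$, and attack that statement with the forbidden-substructure machinery---is coherent, and the reduction itself is essentially sound (modulo one fixable wrinkle: a pseudo-arc through an interior point of an edge contains the \emph{whole} edge, so $U'$ as you literally define it excludes the partially-included segments of edges of $U$ hanging off vertices of $U'$, and is then not open at the endvertices of such edges; you must adjoin those segments, which is permitted by the paper's weaker clause for interior points in the definition of pseudo-arc connectedness). However, the continuity statement is where all the content of the theorem sits, and your proof of it has two genuine gaps. First, you cannot ``pass to a subsequence $w_i\to p$'': the vertex set of $G(M,\Gamma)$ carries the product topology on $\{-1,1\}^{\Ccal^*(M)}$, which is not first countable, so the failure of the continuity statement yields only one witness per neighbourhood of $p$. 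After thinning the witnesses so that the last $c$-edges of the arcs $psw_i$ are pairwise distinct (which you need, via uniqueness of base arcs, to make the terminal segments of the $psw_i$ vertex-disjoint), you have no control over whether $p$---or any usable point---lies in the closure of the chosen endpoints, and such a closure point is exactly what \autoref{lem_cir_eli_3_half_way} demands.

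Second, and more seriously, \autoref{cir_eli_3} does not apply to your configuration. Its hypotheses require the infinitely many arcs $Q_n$ to end at points of the arc $S$ of a pseudo-circle through the common hub vertex. In \autoref{path_con_lem} this held because the endpoints $w$ ranged over the fixed pseudo-circle $C'$; the circle structure is what supplied, via Ramsey, a monotone subfamily of endpoints, a limit point $y$ on $S$, and the nontrivial hub-to-$y$ pseudo-arc avoiding all the $Q_n$. Your endpoints $w_i$ are scattered points of $\bar X$ near $p$ lying on no common pseudo-circle; the circle you obtain from \autoref{edge_of_b} passes through $p$, not through the $w_i$, so neither \autoref{cir_eli_3} nor a direct appeal to \autoref{lem_cir_eli_3_half_way} (which would require a $V_1$-$y$-pseudo-arc vertex-disjoint from all the contracted arcs) is available. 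It is instructive that the paper's proof never proves your pointwise continuity statement at all: it contracts the edges with both ends on the far side, considers the cut $D'$ between the pseudo-arc component $Y$ of $x$ and the rest, and proves in Sublemma \autoref{local_locally} that $D'$ is a disjoint union of \emph{finitely many} cocircuits---using binarity (\autoref{binary_x}) and an injection of those cocircuits into the finitely many contracted vertices supplied by \autoref{id_bond} and \autoref{never_once}---and then takes $U'$ to be the basic open set determined by extensions of those finitely many cocircuits, so that openness is automatic. To repair your route you would need an analogous finiteness statement (only finitely many edges of $s\cap c$ have fundamental cocircuits separating $p$ from points in every neighbourhood of $p$), and that is precisely what your appeal to \autoref{cir_eli_3} fails to deliver.
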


\begin{proof}
We must show that for each $x\in \bar X$ and each basic open set $U$ of $\hat G(M, \Gamma)$ containing $x$ there is an open set $U'$ of $\hat G(M, \Gamma)$ containing $x$ and such that $U' \cap \bar X \subseteq U \cap \bar X$ and $U' \cap \bar X$ is pseudo-arc connected in $\bar X$. If $x$ is an interior point of an edge $e$, it suffices to take $U' = U \cap \iota_e``(0, 1)$. So we may assume that $x$ is a vertex. 

Since $U$ is a basic open set, there are finitely many bonds
$b_1,\ldots, b_n$ such that $U=\bigcap_{m=1}^n U_{b_m}^{i_m}(\epsilon_m)$ for some $i_m$ and $\epsilon_m$. Let $V= \bigcup_{m=1}^n U_{b_m}^{1-i_m}(1-\epsilon_m)$.
Then $U$ and $V$ are disjoint and partition the vertices.
We can now simplify the situation a little by moving to a carefully chosen minor.

Let $P$ be the set of edges with both endvertices in $V$, 
and let $M'=(M/P)\restric_{X\sm P}$ and let  $G'=(\hat G(M,\Gamma)/P)\restric_{X\sm P}$. 
By \autoref{cont_cut}, no vertex of $U$ gets identified with any other vertex. 
Let $V'$ be the set of vertices of $G'$
that have at least one vertex of $V$ in their contraction-equivalence class.
Note that in $V'$ there are at most $n$ vertices that are endpoints of edges, by \autoref{id_bond}.
Then $V'$ and $U$ partition $V(G')$.

Now let $D$ be the set of edges of $G'$ with one endvertex in $V'$.
So $D$ is a topological cut of $G'$.
Let $Y$ be the set of vertices of $G'$ to which there is some pseudo-arc from $x$ in $G'\sm D$ containing $x$, and let $D'$
be the set of edges in $D$ with one endpoint in $Y$. We now break $D'$ up into finitely many bonds, which we can use to construct the basic open set $U'$.

\begin{sublem}\label{local_locally}
 $D'$ is a disjoint union of finitely many cocircuits $d_1\ldots d_k$ of $M'$.
\end{sublem}

Before proving \autoref{local_locally}, we explain how to use it to build a set $U'$ with the desired properties.

By the dual of \autoref{rest_cir}, each cocircuit $d_i$ extends to a cocircuit $d_i'$ of $M$ by using additionally only edges from
$E\sm (P \cup X)$.
We set
$U'=\bigcap_{m=1}^k U_{d_m'}^{x(d_m')}(\epsilon'_m)$.
Here the $\epsilon'_m(e)$ have to be chosen small enough such that
 $\iota_e((0,1))\cap U_x\se \iota_e((0,1))\cap U$. Let us take for
$\epsilon'_m(e)$ the minimum of the values $\epsilon_m(e)$ and $1-\epsilon_m(e)$ over all $m$ such that $e\in b_m$.

Next we will show that the set of vertices in $U'\cap \bar X$ is precisely $Y$. By the construction of $U$ and $U'$ as basic open sets, and by the construction of $Y$, this will suffice to show both that $U' \cap \bar X \subseteq U \cap \bar X$ and that $U' \cap \bar X$ is pseudo-arc connected in $\bar X$, completing the proof.

We will denote the set of vertices of $U' \cap \bar X$ by $Z$.

First let $y\in Y$. By assumption there is an $x$-$y$-pseudo-arc in $G'$ that avoids $D$.
Since none of its vertices got identified when constructing $G'$ from $G$, this pseudo-arc is also a pseudo-arc of $G$. Since it contains $x$ and avoids each bond $d'_m$, it must be included in $U'$, yielding that $y\in U'$, and so $y \in Z$.

Now let $y \in Z$, and suppose for a contradiction that $y \not \in Y$. 
Since $y\in \bar X$ and $\bar X$ is pseudo-arc connected, 
the vertices $x$ and $y$ can be joined by a pseudo-arc $R$ in $\hat G(M,\Gamma)$ using edges from $X$ only. This pseudo-arc must meet $D'$, but since by \autoref{gls_is_tame} it meets $D$ only finitely, it can only meet $D'$ finitely. Let $e$ be the last edge on $R$ which lies in $D'$, and let $a$ be the endpoint of $e$ which is further along $R$.

Then $a$ and $y$ can be linked by the pseudo-arc $aR$, which avoids each $d_m'$ and contains $y$, and so is a subset of $U'$. Thus $a \in U'$. By the construction of $D'$, this implies that $a \in Y$, and so by the construction of $Y$ and \autoref{con_parc} we get $y \in Y$, which is the desired contradiction. Thus $Z = Y$, as required.

This completes the proof of \autoref{locally pseudo-arc}, assuming \autoref{local_locally}.
It remains to prove \autoref{local_locally}.

\begin{proof}[Proof of \autoref{local_locally}]
Note that every circuit of $M'$ meets $D'$ finitely since it is included in the topological cut $D$.
First we show that every circuit $o$ meets $D'$ evenly. By \autoref{components_expl},
$\bar o\sm D$ is a disjoint union of finitely many pseudo-arcs. Some of these are completely included in $U$, the others in $V'$. For each such pseudo-arc $P$ included in $U$ there are precisely two edges in $o\cap D$ which meet $P$, and each edge of $D$ is in precisely one
such pair. If $P$ meets $Y$, then every vertex of $P$ must be in $Y$.
So for each $P$ either both edges are in $D'$ or neither is. So $o \cap D'$ is a disjoint union of such pairs, and so has even size.

By \autoref{binary_x}, $D'$ is a disjoint union of $M'$-cocircuits $d_m$:
$D'=\dot \bigcup_{m\in W} d_m$.
We must show that $W$ is finite.
For each $d_m$ we pick an edge $e_m \in d_m$. Let $v_m$ be the endvertex of $e_m$ that is in $V'$.

If there are $m$ and $n$ such that $v_m=v_n$, then we can join the other endvertices of the edges $e_m$ and $e_n$, which are both in $Y$,  by a pseudo-arc using no edges of $D'$. Together with $e_m$ and $e_n$ this gives a pseudo-circle. Since this pseudo-circle does not meet $d_m$ just once by \autoref{never_once}, we must have $e_n\in d_m$. Since the $d_m$ are disjoint, we get that $d_m=b_n$, and thus $m=n$.
This defines an injective map from $W$ into the finite set of vertices of $V'$ that are endvertices of edges.
Thus $W$ is finite, which completes the proof.
\end{proof}

\end{proof}

\begin{cor}\label{loc_con}
Let $M$ be a connected matroid with a graph-framework $\Gamma$ on it.
Then $\hat G(M,\Gamma)$ is locally pseudo-arc connected.
\qed
\end{cor}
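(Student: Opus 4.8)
The plan is to derive the corollary as the special case of \autoref{locally pseudo-arc} in which the edge set $X$ is the whole ground set $E(M)$. To do this I need two things: first, that $\hat G(M,\Gamma)$ is itself a standard subspace, namely $\overline{E(M)}$, so that \autoref{locally pseudo-arc} applies to it; and second, that this standard subspace is pseudo-arc connected, which is the hypothesis of that theorem. Once both are in place the conclusion is immediate.

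For the first point, recall that by definition the vertices of $\hat G(M,\Gamma)$ are exactly those vertices lying on at least one edge or pseudo-circle. A vertex lying on an edge is an endvertex of that edge, and a vertex lying on a pseudo-circle $C$ is a vertex of the standard subspace corresponding to $E(C)$ and so lies in the closure of $(0,1)\times E(C)$; in either case the vertex lies in the closure of $(0,1)\times E(M)$. Hence every vertex of $\hat G(M,\Gamma)$ is a vertex of the standard subspace $\overline{E(M)}$, and conversely every vertex of $\overline{E(M)}$ is by construction a vertex of $\hat G(M,\Gamma)$. Thus $\hat G(M,\Gamma)=\overline{E(M)}$, and equivalently $\hat G(M,\Gamma)\restric_{E(M)}$ (which keeps all vertices) coincides with this standard subspace.

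For the second point, the set $E(M)$ is trivially spanning in $M$, so \autoref{path_con_cor_1} applied with $s=E(M)$ gives that $\hat G(M,\Gamma)\restric_{E(M)}=\hat G(M,\Gamma)$ is pseudo-arc connected; this in turn rests on \autoref{path_con}. Applying \autoref{locally pseudo-arc} with $X=E(M)$ then yields that $\overline{E(M)}=\hat G(M,\Gamma)$ is locally pseudo-arc connected, as required. The only step needing any care is the identification of $\hat G(M,\Gamma)$ with the standard subspace $\overline{E(M)}$; thereafter both hypotheses of \autoref{locally pseudo-arc} hold by direct citation, so there is no further obstacle.
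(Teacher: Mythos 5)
Your proposal is correct and is essentially the paper's intended argument: the corollary carries the \qed precisely because it is the instance $X=E(M)$ of \autoref{locally pseudo-arc}, with pseudo-arc connectedness of $\hat G(M,\Gamma)$ supplied by \autoref{path_con} (equivalently \autoref{path_con_cor_1} with the trivially spanning set $s=E(M)$). Your care over identifying $\hat G(M,\Gamma)$ with the standard subspace $\overline{E(M)}$ (taken inside $\hat G(M,\Gamma)$, where every vertex lies in the closure of the inner points of edges by the definition of $\hat G$) is exactly the small point the paper leaves implicit.
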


The last part of this subsection is concerned with separability properties
of $G(M,\Gamma)$ and $\hat G(M,\Gamma)$. First we recall some basic definitions:
A topological space is \emph{regular (or $T3$)} if for any closed set $A$ and any 
$a\not\in A$ there exist disjoint open sets $O_1$ and $O_2$
such that $A\se O_1$ and $a\in O_2$.
A topological space is \emph{normal (or $T4$)} if for any two disjoint
closed sets $A_1$ and $A_2$ there exist disjoint open sets $O_1$ and $O_2$
such that $A_i\subseteq O_i$ for $i=1,2$. Note that every Hausdorff space that is normal is regular, and that any subspace of a regular space is regular. We shall rely on the standard fact that any compact topological space is normal.

\begin{prop}\label{normal} 
$G(M,\Gamma)$ is normal.
\end{prop}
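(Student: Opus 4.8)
The plan is to split $G(M,\Gamma)$ into its compact vertex part and its edge-interior part, observe that each is normal, check directly that the whole space is regular, and then mesh a separation of the two parts. Throughout write $G$ for $G(M,\Gamma)$.

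First I would record the two structural facts that drive everything. The vertex set $V$, with the subspace topology, is exactly the product space $\{-1,1\}^{\Ccal^*(M)}$: indeed $U_b^i(\epsilon_b)\cap V=\{v\mid v(b)=i\}$ while the sets $U\times\{e\}$ miss $V$, and these traces generate the product topology. Hence $V$ is compact by Tychonoff, and it is closed in $G$ because its complement $\bigcup_e\iota_e((0,1))$ is open. Being compact Hausdorff, $V$ is therefore normal. On the other side, $G\setminus V=\bigcup_e\iota_e((0,1))$ is a topological disjoint union of open intervals — each $\iota_e((0,1))$ is clopen in $G\setminus V$, since its closure in $G$ only adds the two endvertices — so $G\setminus V$ is metrizable and in particular normal.

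Next I would prove that $G$ is regular by showing that the closed neighbourhoods of each point form a base. At an interior point of an edge this is immediate. At a vertex it reduces to the single computation $\overline{U_b^i(\epsilon_b)}\se U_b^i(\epsilon_b')$ whenever $\epsilon_b'>\epsilon_b$ pointwise: a vertex $w$ with $w(b)=-i$, an interior point of an edge $e\notin b$ with $\sigma_b(e)=-i$, and the mid-parts of the edges of $b$ all have neighbourhoods missing $U_b^i(\epsilon_b)$, whereas the near-end parts $[1-\epsilon_b(e),1]$ resp.\ $[0,\epsilon_b(e)]$ of edges of $b$ — together with the relevant endvertices, whose $b$-coordinate equals $d_b(e)=i$ resp.\ $-d_b(e)=i$ — lie inside $U_b^i(\epsilon_b')$. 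Intersecting finitely many such cut sets yields the statement for an arbitrary basic neighbourhood $\bigcap_m U_{b_m}^{i_m}(\epsilon_m)$, so $G$ is regular.

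With regularity and the two normal pieces in hand, given disjoint closed sets $A_1,A_2$ I would separate as follows. The traces $A_i\cap V$ are disjoint compact subsets of $V$, each disjoint from the closed set $A_{3-i}$; so by regularity and compactness I can choose open $U_i\supseteq A_i\cap V$ with $\overline{U_1}\cap\overline{U_2}=\varnothing$ and $\overline{U_i}\cap A_{3-i}=\varnothing$. Then, inside the normal space $G\setminus V$, I separate the disjoint closed sets $A_i\cap(G\setminus V)$ by open $R_i$ with disjoint closures, chosen additionally inside $(G\setminus V)\setminus\overline{U_{3-i}}$ (which still contains $A_i\cap(G\setminus V)$, as $\overline{U_{3-i}}\cap A_i=\varnothing$). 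Finally I set
$$O_i=(U_i\cup R_i)\setminus(\overline{U_{3-i}}\cup\overline{R_{3-i}}),$$
which are open and disjoint, and a short case check (splitting $A_i$ into its $V$-part and its edge-part) gives $A_i\se O_i$. The main obstacle is exactly this meshing step: because a neighbourhood of a vertex in $G$ meets infinitely many edges at once, one must ensure that the closure of the edge-part set $R_{3-i}$ does not creep back onto the compact vertex set $A_i\cap V$. This is precisely what the placement $R_{3-i}\se(G\setminus V)\setminus\overline{U_i}$ buys: any $p\in A_i\cap V\se U_i$ has a neighbourhood contained in $U_i$, which is disjoint from $R_{3-i}$, so $p\notin\overline{R_{3-i}}$. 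Both this meshing and the regularity computation rely on the compactness of $V$ (to keep the separating data finite) and on $A_{3-i}$ being closed and hence bounded away from the compact set $A_i\cap V$.
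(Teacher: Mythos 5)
Your proof is correct, but it takes a genuinely different route from the paper's. The paper also begins with the compactness of $V=\{-1,1\}^{\Ccal^*(M)}$ (Tychonoff) and uses normality of $V$ to pick $U_1,U_2$ around the vertex traces, but from there it works pointwise: for each $x\in A_1$ it chooses a basic neighbourhood avoiding $A_2$ and then \emph{halves} all its parameters (the radius at interior edge points, and each $\epsilon^m$ at vertices), and it verifies by a four-case analysis (edge/edge, edge/vertex, vertex/edge, vertex/vertex) that any two halved neighbourhoods attached to points on opposite sides are disjoint, so that $O_i=\bigcup_{x\in A_i}O_{i,x}$ gives the separation; no regularity lemma and no decomposition of the space appear. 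You instead prove regularity of $G$ first via the closure computation $\overline{U_b^i(\epsilon_b)}\se U_b^i(\epsilon_b')$ for pointwise larger $\epsilon_b'$ --- which is correct, and is really the same shrinking phenomenon that powers the paper's halving trick, repackaged as a lemma --- then observe that $G\sm V$ is a clopen-in-itself sum of open intervals, hence metrizable and normal, and mesh the two separations with $O_i=(U_i\cup R_i)\sm(\overline{U_{3-i}}\cup\overline{R_{3-i}})$. Your placement constraint $R_{3-i}\se(G\sm V)\sm\overline{U_i}$ is exactly what is needed to prevent $\overline{R_{3-i}}$ (closure taken in $G$) from creeping onto $A_i\cap V$, and your case check goes through; the one step you leave terse --- that inside the metrizable part one can simultaneously arrange disjoint closures and the containment constraints --- is a routine double application of normality (separate $A_1\cap(G\sm V)$ from $(A_2\cup\overline{U_2})\cap(G\sm V)$, then $A_2\cap(G\sm V)$ from the closure of the first set together with $\overline{U_1}$). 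What your route buys: regularity of $G$ as a standalone fact (the paper only obtains regularity of $\hat G$ afterwards, in \autoref{reg}, as a consequence of normality) and a modular argument assembled from standard general-topology facts in place of the paper's bespoke four-case disjointness verification. What the paper's route buys: it is entirely self-contained, never needs metrizability of the edge part, and its halved-$\epsilon$ construction treats the vertex--edge interaction uniformly, with no separate meshing step to police.
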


\begin{proof}
First, we show that the subspace $V$ consisting of the vertices of $G(M, \Gamma)$ is normal. Recall that $V$ is $\{-1, 1\}^{\Ccal^*(M)}$. It is immediate from the definition of the topology of $G(M, \Gamma)$ that the subspace topology on $V$ is simply the product topology induced from the discrete topology on each factor $\{-1, 1\}$. So by Tychonov's Theorem, $V$ is compact, and therefore normal.

Now let $A_1$ and $A_2$ be disjoint closed subsets of $G(M, \Gamma)$. By normality of $V$, we can find open sets $U_1$ and $U_2$ such that $A_i \cap V \subseteq U_i$ for $i = 1,2$ and such that $U_1 \cap U_2 \cap V = \emptyset$. For any element $x$ of $A_1$, we now define an open set $O_{1,x}$ as follows:

If $x$ is an interior point of an edge $A_1$, say $x = (r,e)$, then let $\epsilon_{1, x}> 0$ be such that $(r - \epsilon_{1,x}, r + \epsilon_{1,x}) \subseteq \iota_e^{-1}(E \setminus A_2)$, which is possible since $A_2$ is closed, and let $O_{1,x} = \iota_e``(r - \frac{\epsilon_{1,x}}2, r + \frac{\epsilon_{1,x}}2)$. If $x$ is a vertex then pick some basic open neighbourhood $U_{1,x} = \bigcap_{m = 1}^{n_{1,x}} U^{x(d^m_{1,x})}_{d^m_{1,x}}(\epsilon^m_{1,x})$ of $x$ with $U_{1,x} \subseteq U_1 \setminus A_2$, and let $O_{1,x} = \bigcap_{m = 1}^n U^{x(d^m_{1,x})}_{d^m_{1,x}}\left(\frac{\epsilon^m_{1,x}}2\right)$.

We define sets $O_{2,y}$ for each $y \in A_2$ similarly. The key fact we need is that, for any $x \in A_1$ and $y \in A_2$, we have $O_{1,x} \cap O_{2, y} = \emptyset$. Suppose not for a contradiction. If $x$ and $y$ are both interior points of edges, then they must be interior points of the same edge, say $x = (r, e)$ and $y = (s, e)$, and we must have $(r - \frac{\epsilon_{1,x}}2, r + \frac{\epsilon_{1,x}}2) \cap (s - \frac{\epsilon_{2,y}}2, s + \frac{\epsilon_{2,y}}2) \neq \emptyset$, so that $|r - s| < \frac{\epsilon_{1,x} + \epsilon_{2,y}}2$. On the other hand, since $y \not \in \iota_e``(r - \epsilon_{1,x}, r + \epsilon_{1,x})$, we have $|r - s| \geq \epsilon_{1,x}$ and similarly $|r-s| \geq \epsilon_{2,y}$, so that $|r-s| \geq \frac{\epsilon_{1,x} + \epsilon_{2,y}}2$, giving the desired contradiction in this case.

If $x$ is an interior point of an edge, say $x = (r, e)$, and $y$ is a vertex, then let $p = (s, e)$ be any point contained in $O_{1,x} \cap O_{2,y}$. Since $x \not \in U_{2,y}$ but $p \in O_{2,y}$, there must be some $m \leq n_{2,y}$ such that $e \in d^m_{2,y}$: pick such an $m$ with $\epsilon^m_{2,y}$ minimal. Without loss of generality we may assume that $y(d^m_{2,y}) = -1$. By construction, $s < \frac{\epsilon^m_{2,y}}2$ and $|r-s| < \frac{\epsilon_{1,x}}2$, so that $r < \frac{\epsilon_{1,x} + \epsilon_{2,y}}2$. On the other hand, since $x \not \in U_{2,y}$ we have $r \geq \epsilon^m_{2,y}$, and since also $r \geq \epsilon_{1,x}$ we have $r \geq \frac{\epsilon_{1,x} + \epsilon_{2,y}}2$, giving the desired contradiction in this case. The case that $x$ is a vertex and $y$ is an interior point of an edge is similar.

If $x$ and $y$ are both vertices, let $p$ be any point in $O_{1,x} \cap O_{2,y}$. If $p$ were a vertex, it would have to be in both $U_1$ and $U_2$, which is impossible, so it must be an interior point of an edge, say $e$. Then at least one end-vertex of $e$ lies in each of $U_1$ and $U_2$, so $e$ has precisely one end-vertex in each of these sets. Without loss of generality, $\iota_e(0) \in U_1$ and $\iota_e(1) \in U_2$. Now there must be some $m_1 \leq n_{1,x}$ with $e \in d^m_{1,x}$ and $x(d^m_{1, x}) = -1$ and some $m_2 \leq n_{2,y}$ with $e \in d^m_{2,y}$ and $y(d^m_{2,y}) = 1$. Thus $p \in U^{x(d^{m_1}_{1,x})}_{d^{m_1}_{1,x}}(\frac{\epsilon^{m_1}_{1,x}}2) \cap U^{y(d^{m_2}_{2,y})}_{d^{m_2}_{2,y}}(\frac{\epsilon^{m_2}_{2,y}}2) \cap \iota_e``(0, 1) = \emptyset$, which is the desired contradiction in this case.

Having dealt with all possible cases, we may deduce that $O_{1, x} \cap O_{2, y} = \emptyset$ for each $x \in A_1$ and each $y \in A_2$. Now let $O_1 = \bigcup_{x \in A_1} O_{1, x}$ and $O_2 = \bigcup_{y \in A_2} O_{2,y}$. Then the $O_i$ are open, we have $A_i \subseteq O_i$ for $i = 1,2$ and $O_1 \cap O_2 = \bigcup_{x \in A_1} \bigcup_{y \in A_2} O_{1,x} \cap O_{2, y} = \emptyset$.

\end{proof}

\begin{cor}\label{reg}
 $\hat G(M,\Gamma)$ is regular.
\end{cor}

\begin{proof}
This follows from the fact that singletons are closed as 
$\hat G(M,\Gamma)$ is Hausdorff, and the fact that $G(M,\Gamma)$ is regular.
\end{proof}

With constructions like that of example 87 in \cite{counterexamples_in_topology}, it is possible to build a subspace of $G(M,\Gamma)$ that is not normal.
However, we do not know whether $\hat G(M,\Gamma)$ is normal.

\section{Countability of circuits in the 3-connected case}\label{sec9}

Our aim in this section is to prove the following:
\begin{thm}\label{countcir}
Any topological circuit in a graph-like space inducing a 3-connected matroid is countable.
\end{thm}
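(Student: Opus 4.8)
The plan is to argue by contradiction. Suppose $o$ is an \emph{uncountable} topological circuit of a graph-like space $G$ inducing a $3$-connected matroid $M$, and let $C=\overline o$ be the associated pseudo-circle, carrying the cyclic order $R_C$; since by \autoref{count_circle} a countable pseudo-circle is homeomorphic to $S^1$, uncountability of $o$ means $R_C$ is genuinely uncountable. As $3$-connectivity implies connectivity, the whole apparatus of the earlier sections applies, and by \autoref{delete_vertices_fine} I may assume $G=\hat G$, so that the vertices in play lie on pseudo-circles. The first concrete step is to extract an accumulation point: the midpoints $\{\iota_e(\tfrac12)\mid e\in o\}$ form an uncountable subset of the compact space $C$, so they have a complete accumulation point $y\in C$. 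Deleting a single edge $r_0$ from $C$ turns it into a pseudo-line with a linear order $<_L$ on $o-r_0$, and I fix a strictly monotone sequence $e_1<_L e_2<_L\cdots$ of edges whose midpoints converge to $y$ from one side.

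The role of $3$-connectivity is to manufacture ``ears'' crossing the cuts of $C$ near $y$. For each $e_n$, \autoref{o_cap_b} yields a cocircuit $b_n$ with $o\cap b_n=\{r_0,e_n\}$; topologically $b_n$ is the cut of $C$ at the two points $r_0$ and $e_n$. Since $M$ is $3$-connected and its ground set is infinite, $M$ has no cocircuit of size two, so each $b_n$ contains a further edge $g_n\notin\{r_0,e_n\}$ whose two endvertices lie on opposite sides of this cut. Fixing a base $s\supseteq o-r_0$, each such $g_n$ is a cotree edge, and by \autoref{fdt} we have $g_n\in b_n$ if and only if $e_n$ lies on the fundamental circuit $o_{g_n}$. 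I now split into two cases according to how concentrated the crossing edges are: either (a) infinitely many of the $g_n$ are distinct, or (b) some single cotree edge $g$ lies in $b_n$ for infinitely many $n$, i.e.\ $o_g\cap o$ is infinite and contains a subsequence of the $e_n$ accumulating at $y$.

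In case (a) I choose two edges $r_1,r_2$ of $C$ on either side of the relevant stretch near $y$, splitting $C$ into arcs $S_1,S_2$, and contract all edges with both endvertices on the $S_1$-side. This is a minor, so it still induces a matroid by \autoref{minor_consistency}; by \autoref{cont_cut} no two vertices of the other side are identified, and by \autoref{id_bond} the contracted side becomes essentially a single vertex $v$ incident to $r_1,r_2$. The distinct ears $g_n$ then become internally disjoint pseudo-arcs from the common vertex $v$ to distinct points of the arc $S$ accumulating at $y$, whose union (a star through $v$) contains no pseudo-circle. This is exactly the configuration forbidden by \autoref{cir_eli_3} (or, when the $g_n$ attach to $C$ at genuinely distinct circle-points, directly by \autoref{cir_eli_single}), contradicting that the minor induces a matroid.

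Case (b) is the genuine obstacle. Here two pseudo-circles $\overline o$ and $\overline{o_g}$ share an infinite set of edges $e_n$ that accumulate at $y$ in $\overline o$. The plan is to transfer the problem onto $\overline{o_g}$: the arcs of $\overline o$ lying strictly between consecutive shared edges provide an infinite family of pseudo-arcs attaching to $\overline{o_g}$, and after contracting one arc of $\overline{o_g}$ to a vertex I again invoke \autoref{cir_eli_3} (via \autoref{lem_cir_eli_3_half_way}). The delicate points — which is where most of the work lies — are to show that the shared edges accumulate on $\overline{o_g}$ and not merely on $\overline o$, and to arrange the contraction so that the connecting arcs of $\overline o$ become \emph{vertex-disjoint} pseudo-arcs emanating from a single vertex whose union contains no pseudo-circle. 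Verifying disjointness and the absence of a pseudo-circle will rely on the tameness bound \autoref{gls_is_tame} together with the arc-decomposition \autoref{components_expl}, which control how these arcs meet the cuts $b_n$ and prevent them from closing up.
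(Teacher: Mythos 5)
Your proposal has a fatal structural flaw: nothing in it ever uses uncountability of $o$ in an essential way, so if the argument worked it would prove that every circuit in a 3-connected induced matroid is \emph{finite}, which is false. Concretely: a strictly monotone sequence of edges $e_1<_L e_2<_L\cdots$ with midpoints converging to some point $y$, together with cocircuits $b_n$ satisfying $o\cap b_n=\{r_0,e_n\}$ containing extra edges $g_n$, exists in \emph{every} infinite circuit of a 3-connected matroid, countable or not. (Your extraction step is also shaky as stated: a complete accumulation point of an uncountable set need not be a sequential limit --- in a long-line pseudo-circle no countable sequence of edges converges to the top point; one can instead just take any monotone $\omega$-sequence of edges, but that only underlines that the configuration is available already for countable circuits.) The place where the contradiction evaporates is case (a). The edges $g_n$ cross the cuts $b_{e_n}$, which \emph{vary with $n$}; there is no single fixed cut $(S_1,S_2)$ that infinitely many of them cross, and their endvertices need not lie on $C$ at all, since you never contract a base of $M/o$ to make $o$ spanning (that contraction is precisely what the paper does so that bridges attach to the circle, cf.\ \autoref{bridgestruc}). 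After contracting one side of your single fixed cut, the $g_n$ are in general untouched chords with both ends on the $S_2$-side, not ``ears from the common vertex $v$''. A concrete test case: the infinite wheel with the hub identified with the end it dominates induces a matroid, the rim is an infinite circuit, $y$ is the hub-end point, and the $g_n$ are the (distinct!) spokes --- your case (a) --- all emanating from $y$ itself; the hypotheses of \autoref{cir_eli_3} and \autoref{lem_cir_eli_3_half_way} fail because the required nontrivial $v$--$y$ pseudo-arc disjoint from the $Q_n-v$ is trivial. So case (a) produces no forbidden substructure, correctly, and case (b) you explicitly leave open (``most of the work lies'' there). The forbidden-substructure lemmas can only bound \emph{local} accumulation (e.g.\ finitely many attachment points between two fixed vertices, as in \autoref{few_sons}, where \autoref{cir_eli_single} applies legitimately because the relevant edges are bridges with distinct endvertices on the circle crossing one fixed cut); they cannot, applied to a single convergent sequence, distinguish uncountable from countably infinite circuits.

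The paper's proof is correspondingly global rather than local. It contracts a base $s$ of $M/o$ so that $o$ is a spanning circuit of $M'=M/s$ and every bridge has both endvertices on $C'$; 3-connectedness enters twice, through \autoref{sepbridge} (any two edges of $C$ are separated by the endvertices of some bridge) and through \autoref{every_bridge} (every bridge eventually enters the recursion, proved by exhibiting a 2-separation otherwise); it then builds the partition tree, shows each node has countably many children (\autoref{few_sons}) and sets up the good-node bookkeeping (\autoref{good_son}, \autoref{all_but_fin_good}, \autoref{part_leq_1}) to obtain an \emph{injection} from the edges of $C$ into a countable tree. Your proposal has no analogue of this counting mechanism, and without one the strategy cannot be completed: you would need the full web of bridges and a density statement like \autoref{sepbridge}, not a single sequence of fundamental cocircuits near one accumulation point.
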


For the remainder of the section we fix such a graph-like space $G$, inducing a 3-connected matroid $M$, and we also fix a pseudo-circle $C$ of $G$, whose edge set gives a circuit $o$ of $M$.

We begin by taking a base $s$ of $M/o$, and letting $G' = G/s$. Thus by \autoref{minor_consistency} $G'$ induces the matroid $M' = M/s$ in which $o$ is a spanning circuit. For any $e \in o$, $o - e$ is a base of $o$ and so $s \cup o - e$ is a base of $M$, which we shall denote $s^e$. We shall call the edges of $E(M') \sm o$ which are not loops {\em bridges}. We denote the set of bridges by $\Br$. The endpoints of each bridge lie on the pseudo-circle $C'$ corresponding to $o$ in $G'$. The edges of $C'$ are the same as those of $C$, but the vertices are different: recall that the vertices of the contraction $G' = G/s$ were defined to be equivalence classes of vertices of $G$. Each of these can contain at most one vertex of $C$, since $o$ is a circuit of $M'$. Thus each vertex of $C'$ contains a unique vertex of $C$.

\begin{lem}\label{bridgestruc}
Let $g \in o$ and let $f$ be a bridge with endpoints $v'$ and $w'$ in $G'$. Let $v$ be the vertex of $C$ contained in $v'$, and $w$ the vertex of $C$ contained in $w'$. Let $x$ be the endvertex of $f$ in $G$ contained in $v'$, and $y$ the endvertex of $f$ in $G$ in contained in $w'$. Then the fundamental circuit $o_f$ of $f$ with respect to the base $s^g$ of $M$ is given by concatenating 4 pseudo-arcs: the first, from $x$ to $y$, consists of only $f$. The second, from $y$ to $w$, contains only edges of $s$. The third, from $w$ to $v$ contains only edges of $o$ - it is the interval of $C - g$ from $w$ to $v$. The fourth, from $v$ to $x$, contains only edges of $s$.
\end{lem}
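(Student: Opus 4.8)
The plan is to identify the fundamental circuit $o_f$ by constructing an explicit pseudo-circle through $f$ whose edge set lies in $s^g + f$, and then to invoke the uniqueness of fundamental circuits. Throughout I would work with the base $s^g = s \cup (o - g)$ of $M$. Since $M$ is induced by $G$, the circuit $o_f$ is a topological circuit, so $\overline{o_f}$ is a pseudo-circle and, by \autoref{arc_cir}, $o_f - f$ is a pseudo-arc from $x$ to $y$ whose edges lie in $s^g$. By \autoref{weak_base_arccon} this is the \emph{unique} $s^g$-pseudo-arc from $x$ to $y$, so it suffices to realise $o_f - f$ as the concatenation of the three claimed arcs.

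First I would produce the two $s$-arcs. Since $x$ and $v$ both lie in the vertex $v'$ of $G' = G/s$, we have $x \equiv_s v$, so every topological cut separating them meets $s$. Consider the unique $s^g$-pseudo-arc from $v$ to $x$ given by \autoref{weak_base_arccon}. By \autoref{weak_base_arccon_2} it uses an edge $e \in s^g$ exactly when the fundamental cocircuit $b_e$ (with respect to $s^g$) separates $v$ from $x$; but for $e \in o - g$ we have $b_e \cap s = \emptyset$, since $b_e$ meets the base $s^g$ only in $e$. Such a $b_e$ therefore cannot separate the $s$-equivalent pair $v, x$, so this arc, call it $P_1$, uses only edges of $s$. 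The same argument applied to $w, y$ (which lie in $w'$) yields an $s$-arc $P_2$ from $w$ to $y$. The third arc is the interval $I$ of $C - g$ from $v$ to $w$, a pseudo-arc using only edges of $o - g$; note $v \neq w$, since $f$, being a bridge, is not a loop and hence $v' \neq w'$.

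Set $A = E(P_1) \cup E(I) \cup E(P_2) \se s^g$; this set is independent. The key step is to show that the four pseudo-arcs $f$, $P_2$, $I$, $P_1$ assemble into a pseudo-circle. I would first check they are pairwise internally disjoint: any extra shared vertex would yield two internally disjoint pseudo-arcs between a common pair of points, using disjoint edge sets drawn from $A$, which by \autoref{arc_cir} would produce a pseudo-circle inside the independent standard subspace $\overline A$ -- impossible. For instance, if $P_1$ met $I$ at some $u \neq v$, then taking $u$ to be the first such point along $I$, the sub-arcs $vP_1u$ and $vIu$ would be two such arcs; the cases $P_1 \cap P_2$ and the meetings with $f$ are handled the same way, using \autoref{unique_rem} and the independence of $A$. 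Concatenating $P_2$, $I$, $P_1$ via \autoref{con_parc} then gives a pseudo-arc from $y$ to $x$ inside $s^g$, internally disjoint from $f$, so by \autoref{arc_cir} adjoining $f$ closes it into a pseudo-circle $K$ with $E(K) = \{f\} \cup A \se s^g + f$ and $f \in E(K)$. As $G$ induces $M$, the set $E(K)$ is a circuit of $M$ contained in $s^g + f$ and containing $f$, so by uniqueness of the fundamental circuit $E(K) = o_f$. This exhibits $o_f$ as the claimed concatenation and identifies its $o$-part with the interval $w(C-g)v$, which via \autoref{rest_cir} applied to $M' = M/s$ is the interval of $C' - g$ from $v'$ to $w'$.

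The main obstacle is the internal-disjointness verification in the assembly step: one must rule out every way the three $s^g$-arcs and the edge $f$ could overlap, each time extracting a forbidden pseudo-circle from the independent set $A$. Once this bookkeeping is complete, the uniqueness of fundamental circuits finishes the proof immediately.
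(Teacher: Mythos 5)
Your argument is correct in substance but runs in the opposite direction to the paper's. The paper works top-down: it starts from the existing pseudo-circle $\overline{o_f}$, observes that $o_f \cap o$ must be the fundamental circuit of $f$ with respect to the base $o-g$ of $M' = M/s$ (hence the interval of $C'-g$ from $w'$ to $v'$, whose closure is the arc $v(C-g)w$ lying on $\overline{o_f}$), removes $f$ and that interval, and identifies the two remaining pieces as $s$-arcs, matching endpoints with a one-line connectivity observation ($v$ and $y$ lie in different components of $G\restric_s$, so the piece at $v$ must end at $x$). You work bottom-up: construct the candidate pseudo-circle from two $s$-arcs and the interval $I$ of $C-g$, verify the pieces are internally disjoint via \autoref{unique_rem} and the independence of $A \se s^g$, and finish by uniqueness of fundamental circuits. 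Your route costs more bookkeeping (the disjointness checks, and the pairing of $v$ with $x$ rather than $y$, are free in the paper because its side arcs are extracted from $\overline{o_f}$ itself), but it buys an explicit construction, and your Step 2 --- fundamental cocircuits of edges in $o-g$ avoid $s$, so cannot separate the $\equiv_s$-equivalent pair $v,x$ --- is a clean substitute for the paper's identification of $o_f\cap o$ with the $M'$-fundamental circuit.

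One step needs repair: you invoke \autoref{weak_base_arccon} for the \emph{existence} of the arc $P_1$ from $v$ to $x$, but that lemma requires both endpoints to be endvertices of edges, and $v$, being merely a vertex of the pseudo-circle $C$, need not be incident with any edge (it may be a limit vertex of $C$, like the point at infinity in the one-point compactification of a double ray); \autoref{unique_rem} waives the endvertex hypothesis only for uniqueness, not existence. The gap closes easily precisely because your base is $s^g \supseteq o-g$: join $v$ within the pseudo-line $C-g$ to an endvertex $u$ of some edge of $o-g$, apply \autoref{weak_base_arccon} to $u$ and $x$, and concatenate via \autoref{con_parc} to obtain a $v$--$x$ pseudo-arc with edges in $s^g$; your \autoref{weak_base_arccon_2} argument then shows as before that it avoids $o-g$, hence lies in $s$. (The paper's top-down proof sidesteps this issue entirely, since its side arcs exist a priori as pieces of $\overline{o_f}$.) With that patch, and granting that your sketched disjointness cases --- in particular $P_1$ meeting $P_2$, where both arcs lie in $s$ and the contradiction comes from comparing the resulting $v$--$w$ arc with the nontrivial $I$ rather than from disjoint edge sets --- are carried out, the proof is complete.
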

\begin{proof}
$o_f \cap o$ must consist of the fundamental circuit of $f$ with respect to the base $o - g$ of $M'$ - that is, of the interval of $C' - g$ from $w'$ to $v'$. 
So the pseudo-arc $v(C-g)w$, which is the closure of this set of edges, lies on the pseudo-circle $\bar o_f$.
So $(\bar o_f-f)\sm v(C-g)w$ consists of two pseudo-arcs joining $v$ and $w$ to $x$ and $y$.
These two pseudo-arcs use edges from $s$ only. Since $v$ and $y$ lie in different connected components of $G\restric_s$, we must have that the first goes from $v$ to $x$, and the second goes from $w$ to $y$. This completes the proof.
\end{proof}

\begin{lem}\label{sepbridge}
For any distinct edges $e$ and $f$ of $C$, there is a bridge whose endvertices separate $e$ from $f$ in $C$.
\end{lem}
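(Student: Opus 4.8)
The plan is to argue by contradiction: assume that no bridge has endvertices separating $e$ from $f$ on $C$, and deduce that $\{e,f\}$ is a cocircuit of $M$, which is impossible in a $3$-connected matroid with more than three edges. The entire argument can be carried out in the minor $M' = M/s$, where $o$ is a spanning circuit, so that $o-e$ is a base of $M'$.

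First I would reinterpret the separation condition matroidally, working with the base $o-e$ of $M'$. Let $b_f$ be the fundamental cocircuit of $f$ with respect to $o-e$ (note $f \in o-e$). For a bridge $h$ with $C$-endvertices $v$ and $w$, \autoref{bridgestruc} (applied with $g = e$) shows that the fundamental circuit $o_h$ of $h$ with respect to $o-e$ meets $o$ exactly in the arc of $C-e$ joining $v$ and $w$, that is, the $v$--$w$ arc of $C$ avoiding $e$. Hence $f \in o_h$ precisely when $f$ lies in the $v$--$w$ arc not containing $e$, which is exactly the statement that $v$ and $w$ separate $e$ from $f$ on $C$. By \autoref{fdt}(2), $f \in o_h \iff h \in b_f$. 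Thus ``$h$ separates $e$ from $f$'' is equivalent to $h \in b_f$, and the contradiction hypothesis becomes: $b_f$ contains no bridge.

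Next I would pin down $b_f$. Since $b_f \se (E(M') \sm (o-e)) + f = \Br \cup L \cup \{e,f\}$, where $L$ is the set of loops of $M'$, and since no loop lies in a cocircuit while (by hypothesis) no bridge lies in $b_f$, we get $b_f \se \{e,f\}$. As $f$ lies on the circuit $o$ it is not a coloop, so $b_f \neq \{f\}$; therefore $b_f = \{e,f\}$, a cocircuit of $M'$. Because the cocircuits of $M/s$ are exactly the cocircuits of $M$ avoiding $s$ (the dual of \autoref{rest_cir}; directly, $(M/s)^* = M^* \sm s$), and $\{e,f\}$ is disjoint from $s$, it follows that $\{e,f\}$ is a cocircuit of $M$.

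Finally I would exhibit the forbidden separation. A base $s_B$ of the hyperplane $E \sm \{e,f\}$ (the complement of the cocircuit $\{e,f\}$) is independent in $M$, and $s_B + e$ is then a base of $M$; taking $s_A = \{e,f\}$ (independent because $e,f$ are distinct members of the circuit $o$) gives $|s_A \cup s_B \sm (s_B + e)| = |\{f\}| = 1 < 2$, so $(\{e,f\}, E \sm \{e,f\})$ is a $2$-separation, contradicting $3$-connectedness. I expect the main obstacle to be the very first, translation step: one must check carefully, via \autoref{bridgestruc}, that the topological notion ``the endvertices of the bridge separate $e$ from $f$ in $C$'' really matches membership of the bridge in $b_f$, including the boundary cases in which an endvertex of the bridge coincides with an endvertex of $e$ or $f$. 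Once this dictionary is established the remainder is routine; the degenerate cases $|o| = 2$ or $|E(M)| \le 3$ either yield a $1$-separation by the same computation or are vacuous, and can be dispatched separately.
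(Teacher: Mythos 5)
Your proof is correct and is essentially the paper's argument run in the contrapositive: the paper directly observes that, since $3$-connectedness rules out $\{e,f\}$ being a bond, the fundamental cocircuit of $f$ with respect to the base $s^e$ contains some $g\notin\{e,f\}$, and then uses \autoref{fdt} and the structure of fundamental circuits (as in \autoref{bridgestruc}) to conclude that $g$ is a bridge separating $e$ from $f$ — exactly your dictionary ``$h$ separates $e$ from $f$ $\iff h\in b_f$'', noting that your $b_f$ in $M'$ is the same cocircuit as the paper's fundamental cocircuit in $M$ since it avoids $s$. Your added explicit $2$-separation and the flagged degenerate cases are details the paper glosses over with the same assertion, so the two proofs coincide in substance.
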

\begin{proof}
Since $M$ is 3-connected, $\{e,f\}$ is not a bond of $M$, so we can pick some $g\not \in \{e,f\}$ in the fundamental bond of $f$ with respect to the base $s^e$. Then $f$ lies in the fundamental circuit $o_g$ of $g$, which is therefore not a subset of $s + g$. Thus $g$ is a bridge, and since the fundamental circuit of $g$ with respect to the base $o - e$ of $M'$ contains $f$ but not $e$ the endpoints of $g$ separate $e$ from $f$. 
\end{proof}

Given that we are aiming to prove \autoref{countcir}, we may as well assume that $o$ has at least $2$ elements, and by \autoref{sepbridge} we obtain that there is at least one bridge. We now fix a particular bridge $e_0$, and make use of the 3-connectedness of $M$ to build a tree structure capturing the way the endpoints of the bridges divide up $C'$. We will call this tree the \emph{partition tree}, and define it in terms of certain auxiliary sequences $(I_n \subseteq \Br)$, $(J_n  \subseteq V(C'))$ and $(K_n)$ indexed by natural numbers, given recursively as follows:

We always construct $J_n$ from $I_n$ as the set of endvertices of elements of $I_n$, and $K_n$ as the set of components of $C' \setminus J_n$.
We take $I_0$ to be $\{e_0\}$, and $I_{n+1}$ to be the set of bridges that have endvertices in different elements of $K_n$ or at least one endvertex in $J_n$.

Then the nodes of the tree at depth $n$ will be the elements of $K_n$, with $p$ a child of $q$ if and only if it is a subset of $q$.

\begin{lem}\label{every_bridge}
 Every bridge is in some $I_n$.
\end{lem}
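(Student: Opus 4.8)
The plan is to argue by contradiction, exploiting a \emph{propagation principle}: once a bridge has been ``captured'' (placed in some $I_n$), every bridge crossing it is captured one step later, and $3$-connectedness forces the crossing relation to reach every bridge starting from $e_0$.

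First I would record the elementary monotonicity $I_n\subseteq I_{n+1}$: if $e\in I_n$ then both its endvertices lie in $J_n$, so the defining condition of $I_{n+1}$ (``at least one endvertex in $J_n$'') holds. Hence $J_0\subseteq J_1\subseteq\cdots$; write $J=\bigcup_n J_n$. The key lemma is the propagation principle: if $g$ is a bridge whose two endvertices lie in $J_n$ (in particular any $g\in I_n$) and $h$ is a bridge whose two endvertices are separated on $C'$ by the endvertices of $g$, then $h\in I_{n+1}$. Indeed, the two endvertices of $h$ lie in distinct components of $C'$ minus the two endvertices of $g$, hence \emph{a fortiori} in distinct elements of $K_n$ (unless one of them already lies in $J_n$); either way the defining condition of $I_{n+1}$ is met. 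In particular every bridge crossing $e_0$ lies in $I_1$.

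Now suppose for a contradiction that some bridge $f$ lies in no $I_n$. By propagation $f$ crosses no captured bridge, so in particular it does not cross $e_0$. Let $u,w$ be the endvertices of $f$, let $E_f$ be the open arc of $C'$ between $u$ and $w$ containing both endvertices of $e_0$, and let $D_f$ be the other open arc. For every $n$ we have $f\notin I_{n+1}$, so $u,w\notin J_n$ and $u,w$ lie in a common component of $C'\setminus J_n$; since $E_f$ always meets $J_n$ (it contains the endvertices of $e_0\in I_0$), the $J_n$-free arc between $u$ and $w$ must be $D_f$. Thus $u,w\notin J$ and, crucially, $D_f\cap J=\emptyset$: no captured bridge has an endvertex inside $D_f$.

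Finally I would use $3$-connectedness to manufacture a forbidden substructure inside $D_f$. Applying \autoref{sepbridge} to two edges of $D_f$ produces a bridge whose separating endvertices necessarily fall in $D_f$, and each such bridge is uncaptured by the previous paragraph. Taking an increasing sequence of edges of $D_f$ and applying \autoref{sepbridge} to consecutive pairs yields infinitely many bridges with pairwise distinct endvertices in $D_f$; after thinning with Ramsey's theorem (as in the proof of \autoref{cir_eli_3}) one may assume their second endvertices behave coherently. Contracting the far arc $E_f$ (so that $u$ and $w$ are identified and, by \autoref{circle_contract}, $C'$ remains a pseudo-circle one of whose arcs is $D_f$) turns this family into an infinite set of edges each having one endvertex in each of the two arcs of the pseudo-circle and pairwise distinct endvertices on the $D_f$ side. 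This is exactly the configuration forbidden by \autoref{cir_eli_single} (equivalently \autoref{cir_eli_3}); since this minor still induces a matroid by \autoref{minor_consistency}, we reach the desired contradiction. The main obstacle is precisely this last step: arranging the infinitely many uncaptured bridges into the exact shape demanded by the forbidden-substructure lemmas, and handling the degenerate cases where $D_f$ carries only finitely many edges, where one must instead exploit $3$-connectedness directly (via \autoref{sepbridge} at adjacent edges meeting at $u$ or $w$) to drive a captured bridge into $\{u,w\}$.
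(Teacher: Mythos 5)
Your setup is correct and even parallels the paper: the monotonicity $I_n\subseteq I_{n+1}$, the propagation principle, and the conclusion that the arc $D_f$ between the endvertices $u,w$ of an uncaptured bridge satisfies $D_f\cap J=\emptyset$ are all fine (the paper records essentially this as ``the endvertices of $e$ lie in $B=\bigcap_n B_n$''). The genuine gap is in the last step, where you try to convert \autoref{sepbridge} into the forbidden substructure of \autoref{cir_eli_single} or \autoref{cir_eli_3}. Each application of \autoref{sepbridge} pins only \emph{one} endvertex of the produced bridge inside $D_f$; the other endvertex is merely somewhere in the complementary arc, and no amount of Ramsey thinning rules out the case in which \emph{both} families of attachment points accumulate only at $u$ and $w$ (equivalently, at the identified vertex $z$ after contracting $E_f$). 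In that case the forbidden-substructure lemmas simply do not apply: \autoref{cir_eli_single} needs infinitely many distinct endvertices inside a \emph{fixed} component $S_2$ of $C\sm\{r_1,r_2\}$ with partners in $S_1$, but if the attachment points converge to $z$ then every closed arc avoiding $z$ contains only finitely many of them, so no choice of $r_1,r_2$ works; likewise \autoref{lem_cir_eli_3_half_way} needs a limit point $y$ of the endvertices joined to $v$ by a \emph{nontrivial} pseudo-arc avoiding the $Q_n-v$, which fails when $y=z=v$. This bad case is not an artifact: a pseudo-circle with chords from a vertex $z$ to points converging to $z$ is a \emph{compact} graph-like space and hence does induce a matroid (the Part~II result the paper cites), so it is provably not forbidden, and the contradiction must come from $3$-connectedness used more strongly than via \autoref{sepbridge} alone. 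Moreover, when $E(D_f)$ has order type $\omega$, every infinite family of disjoint separating pairs necessarily marches to $w$, so you cannot dodge the bad case by choosing your pairs more cleverly. Your degenerate case (finitely many edges in $D_f$, e.g.\ a single edge parallel to $f$ in $M'$) has the same problem: \autoref{sepbridge} applied at edges meeting at $u$ produces \emph{some} bridge with endvertex $u$, but nothing forces that bridge to be captured, so no contradiction with $u\notin J$ results; and an edge of $E_f$ incident with $u$ need not even exist, since vertices may accumulate at $u$ from the $E_f$ side.

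The paper avoids all of this by not reducing to ``$G$ does not induce a matroid'' at all. Instead it fixes an edge $f$ of your arc $D_f$, forms the partition class $B=\bigcap_n B_n$ around $f$, and sorts the whole edge set of $G$ by where the unique $s^f$-pseudo-arc to a reference vertex $v_0$ first meets $C$, obtaining a bipartition $(A_E,B_E)$. Using \autoref{bridgestruc} it verifies that $s^f\cap A_E$ is a base of $A_E$ and $(s^f\cap B_E)+f$ is a base of $B_E$; since the union of these bases exceeds the base $s^f$ by a single element, $(A_E,B_E)$ is a $2$-separation, directly contradicting $3$-connectedness. That argument is uniform and in particular handles exactly the accumulation and degenerate configurations on which your reduction stalls. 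To repair your proof you would need a replacement for the forbidden-substructure step in the accumulation-at-$\{u,w\}$ case, and the natural replacement is precisely a $2$-separation argument of the paper's kind.
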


\begin{proof}
Suppose not, for a contradiction, and let $e$ be any bridge which is in no $I_n$. In particular, the endpoints of $e$ both lie in the same component of $C - J_0$, so there is a pseudo-arc joining them in $C$ that meets neither endvertex of $e_0$. Let $f$ be any edge of this pseudo-arc. 
Let $v_0'$ be any endvertex of $e_0$, and let $v_0$ be the unique vertex of $C$ contained in $v_0'$.

For each $n$, let $B_n$ be the element of $K_n$ of which $f$ is an edge, and let $B=\bigcap_{n \in \Nbb} B_n$ and $A = C \setminus B$. 
Note that any 2 vertices in $B$ are joined by a unique pseudo-arc in $B$, and that $A$ has the same property. Since the two endvertices of $e_0$ (in $G'$) avoid $B_1$, they are both in $A$. Since $e$ is in no $I_n$, its two endvertices lie in $B$. 

Let $A_V$ be the set of endvertices $v$ of edges of $G$ such that the first point of $vs^fv_0$ on $C$ is contained in a vertex in $A$. Let $A_E$ be the set of edges of $G$ that have both endvertices in $A_V$, and let $B_E = E(M) \sm A_E$. 
Note that for any vertex $v\in A_V$, all edges of the unique $v$-$C$-path included in $s_f$ lie in $A_E$. And for any $v\not\in A_V$, all edges of the unique $v$-$C$-path included in $s_f$ lie in $B_E$.

We shall show that $(A_E,B_E)$ is a 2-separation of $M$, which will give the desired contradiction since we are assuming that $M$ is 3-connected.

First, we show that $s^f \cap A_E$ is a base of $A_E$. It is clearly independent. Let $g$ be any edge in $A_E \sm s^f$. Suppose first of all that $g$ is a bridge. We decompose the fundamental circuit of $g$ as in \autoref{bridgestruc}, taking the notation from that lemma. Then since each of the endpoints $x$ and $y$ of $g$ is in $A_V$, every edge of this fundamental circuit is in $A_E$, as required. 

So suppose instead that $g$ isn't a bridge, that is, $g$ is a loop in $M'$. Let $R_1$ and $R_2$ be the pseudo-arcs from the endpoints $x$ and $y$ of $g$ to $v_0$ which use only edges from $s^f$. Let $z$ be the first point of $R_1$ to lie on $R_2$. Then $zR_1v_0$ and $zR_2v_0$ must be identical, as both are pseudo-arcs from $z$ to $v_0$ using only edges of $s^f$. Let $k$ be the first point on this pseudo-arc that is in $C$. By assumption, $k \in A$. Also, $xR_1zR_2y$ is a pseudo-arc from $x$ to $y$ using only edges from $s^f$, so must form (with $g$) the fundamental circuit of $g$ with respect to $s^f$, so can meet $C$ at most in a single vertex (
since $g$ is a loop in $M'$). Thus all edges in this fundamental circuit lie on either $xR_1k$ or $yR_2k$, and so are in $A_E$, as required.

Next, we show that $(s_f \cap B_E) + f$ is a base of $B_E$. It is independent since $A$ includes some edge as $e_0$ is a bridge. Let $g$ be any edge in  $B_E \sm s^f - f$. If $g$ isn't a bridge we can proceed as before, so we suppose it is a bridge. We decompose the fundamental circuit of $g$ as in \autoref{bridgestruc}, taking the notation from that Lemma. At least one of $v'$ and $w'$ lies in $B$: without loss of generality it is $v'$. Suppose for a contradiction that $w'$ is in $A$. Then either $w'$ is in some $J_n$ or it is an element of some $K_n$ not containing $f$. In either case, $g \in I_{n+1}$ and so $v' \in J_{n+1}$, giving the desired contradiction since we are assuming $v' \in B$. Thus $w'$ is also in $B$. Let $R$ be the pseudo-arc from $v$ to $w$ in $B$. Then $g$ is spanned by the pseudo-arc $xs^fvRws^fy$, which uses only edges of $s_f \cap B_E + f$.
To see this we apply \autoref{bridgestruc} with some edge not in $B_1$ in place of $f$ of that lemma.

Since each of $A_E$ and $B_E$ has at least 2 elements, and the union of the bases for them given above only contains one more element than the base $s^f$ of $M$, this gives a 2-separation of $M$, completing the proof.
\end{proof}

\begin{lem}\label{few_sons}
 Every node of the Partition-tree has at most countably many children.
\end{lem}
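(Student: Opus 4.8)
The plan is to show that for a node $q\in K_n$ the set $D:=J_{n+1}\cap q$ of new endvertices lying in $q$ is countable, and then to deduce that $q$ has at most countably many children. Recall that $q$ is a component of $C'\sm J_n$, hence (being a component of the pseudo-circle $C'$ minus finitely many of its vertices) an open sub-arc of $C'$ whose two endpoints $a,b$ lie in $J_n\se J_{n+1}$; thus the children of $q$, namely the elements of $K_{n+1}$ contained in $q$, are exactly the components of $q\sm D$. First I would observe that every bridge $f\in I_{n+1}$ with an endvertex in $q$ has its other endvertex outside $q$: if both endvertices lay in $q$ they would lie in the same element $q$ of $K_n$ with neither in $J_n$ (as $q\cap J_n=\varnothing$), so $f$ would not be in $I_{n+1}$. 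Hence each point of $D$ is the endvertex, lying in $q$, of a bridge whose other endvertex lies in $C'\sm q$.

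The heart of the argument is the following local finiteness statement, which I would prove using \autoref{cir_eli_single}: for any two distinct edges $r_1,r_2$ of $q$, the open sub-arc of $q$ strictly between them contains only finitely many points of $D$. Indeed, suppose not; let $S_2$ be the component of $C'\sm\{r_1,r_2\}$ contained in $q$ and $S_1$ the other. Since $S_2\se q$, the set $C'\sm q$, which contains the other endvertex of every relevant bridge, is contained in $S_1$. Choosing, for each of the infinitely many points of $D$ in $S_2$, a bridge having it as an endvertex, we obtain an infinite set $W$ of edges of $G'$ each with one endvertex in $S_2$ and the other in $S_1$, with their endvertices in $S_2$ pairwise distinct (a bridge has at most one endvertex in $q$). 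By \autoref{cir_eli_single} this forces $G'$ not to induce a matroid, contradicting \autoref{minor_consistency}, since $G'=G/s$ induces $M'=M/s$.

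Given this, I would conclude that $D$ is countable by a purely order-theoretic argument, since local finiteness does not yet forbid $D$ from accumulating at the endpoints $a,b$ of $q$; this is exactly why the lemma asserts only countability, not finiteness, and it is the main point to get right. Fix any edge $r_0$ of $q$ and write $D^-$ and $D^+$ for the points of $D$ below and above $r_0$ in the linear order of the arc. For $d\in D^-$ choose an edge $p<d$ (possible as $d\neq a$); then the points of $D^-$ above $d$ all lie strictly between the edges $p$ and $r_0$, so there are only finitely many of them by the local finiteness statement. Thus $d\mapsto |\{d'\in D^-:d'>d\}|$ is a strictly decreasing, hence injective, map $D^-\to\Nbb$, so $D^-$ is countable; symmetrically $D^+$ is countable, and therefore so is $D$. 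Finally, to pass from $D$ to the children I would note that no component of $q\sm D$ is a single point, because $D$ consists of vertices while all interior points of edges of $q$ survive in $q\sm D$; hence each component $c$ is a nondegenerate sub-arc and, by maximality, its supremum in $\bar q$ (which exists by \autoref{min_element}) lies in $D\cup\{b\}$. As distinct components have distinct suprema, there are at most $|D|+1$ of them, which is countable.

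The step I expect to be the main obstacle is precisely the passage from the local finiteness provided by \autoref{cir_eli_single} to global countability: one must resist concluding finiteness, recognise that $D$ may genuinely accumulate at the endpoints of $q$ (whence the countably-infinite case really occurs), and carry out the monotone injection into $\Nbb$ on each side of a fixed edge. Everything else is bookkeeping about the arc structure of the components of $C'$ minus finitely many vertices, for which the pseudo-line theory of \autoref{sec4} (in particular that any two distinct vertices of a pseudo-line are separated by an edge) supplies the needed tools.
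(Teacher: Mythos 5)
Your proof is correct and takes essentially the same route as the paper's: the same application of \autoref{cir_eli_single} in $G'=G/s$ (using that every relevant bridge has exactly one endvertex in $q$, so that the infinitely many attachment points give the required set $W$ crossing between $S_1$ and $S_2$) yields the paper's local finiteness statement, which the paper phrases between two vertices rather than two edges. Your passage to countability via a monotone injection of each half of $D$ into $\Nbb$ is merely different bookkeeping for the paper's observation that a linear order with only finitely many elements between any two is finite or of order type $\Nbb$, $-\Nbb$ or $\Zbb$, and your counting of the components of $q\sm D$ matches the paper's final step.
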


\begin{proof}
Let $x\in K_n$  be a node of the Partition-tree. Then the closure $\bar x$ of the set of interior points of edges of $x$ is a pseudo-arc. Let $\hat x$ be the set obtained from this pseudo-arc by removing its end-vertices.
An $x$-bridge is a bridge with one endvertex in $\hat x$ and one in its complement. Thus every element of $J_{n+1} \cap x$ must be an endvertex of an $x$-bridge or of $\bar x$.

Let $v_1$ and $v_2$ be vertices of $\hat x$ with $v_1 \leqq_{\bar x} v_2$. Suppose for a contradiction that there are infinitely many elements of $J_{n+1}$ between $v_1$ and $v_2$. Pick a corresponding set $W$ of infinitely many $x$-bridges with different attachment points between $v_1$ and $v_2$. 
Since neither of $v_1$ and $v_2$ is an endpoint of $\bar x$, there are edges $e_1$ and $e_2$ in $x$
such that all points of $e_1$ are $\leqq_{\bar x}$-smaller than $v_1$, and similarly 
all points of $e_2$ are $\leqq_{\bar x}$-bigger than $v_2$.
Then by \autoref{cir_eli_single} with $r_1=e_1$ and $r_2=e_2$, $G'$ does not induce a matroid, which gives the desired contradiction.

We have established that between any two elements of $J_{n+1}  \cap \hat x$ there are only finitely many others. Hence $J_{n+1} \cap \hat x$ is finite or has the order type of $\Nbb$, $-\Nbb$
or $\Zbb$. In all these cases there are only countably many children of $x$, since these children are the connected components of $x \sm (J_{n+1} \cap x)$.
\end{proof}

We now consider rays in the partition tree: a {\em ray} consists of a sequence $(k_n \in K_n | n \in \Nbb)$ such that for each $n$ the node $k_{n+1}$ is a child of $k_n$. Given such a ray, we call the set $\bigcap_{n \in \Nbb} k_n$ its {\em partition class}.

\begin{lem}\label{part_leq_1}
 The partition class of any ray includes at most one edge.
\end{lem}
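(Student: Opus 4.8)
The plan is to argue by contradiction, combining the two lemmas just established with the separation property of bridges from \autoref{sepbridge}. Suppose that the partition class $\bigcap_{n\in\Nbb} k_n$ of some ray $(k_n)$ contains two distinct edges $e$ and $f$. Since $e$ and $f$ are distinct edges of $C$, \autoref{sepbridge} supplies a bridge $g$ whose endvertices separate $e$ from $f$ in $C$. Transferring this to $C'$ (which carries the same edge set as $C$, with each vertex of $C'$ containing a unique vertex of $C$), the two endvertices of $g$, which lie on $C'$, divide $C'$ into two arcs with $e$ on one and $f$ on the other.

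Next I would apply \autoref{every_bridge} to locate $g$: it lies in $I_m$ for some $m\in\Nbb$, so by the construction of $J_m$ both endvertices of $g$ belong to $J_m$. Because these two endvertices already separate $e$ from $f$ in $C'$, and $J_m$ contains them, the edges $e$ and $f$ must lie in different components of $C'\sm J_m$: passing from the removal of just the two endvertices of $g$ to the removal of all of $J_m$ can only split components further, never merge them, so the separation of $e$ and $f$ is preserved. In other words, $e$ and $f$ lie in two distinct elements of $K_m$.

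This is the desired contradiction. Since $k_m$ is one term of the nested intersection $\bigcap_{n\in\Nbb} k_n$, the partition class is contained in the single component $k_m\in K_m$, so both $e$ and $f$ lie in $k_m$, contradicting the conclusion of the previous paragraph. Hence the partition class includes at most one edge. There is no genuinely hard step; the only points needing care are the verbatim transfer of the separation statement between $C$ and $C'$, and the observation that enlarging the removed vertex set from $\{$endvertices of $g\}$ to all of $J_m$ cannot undo the separation of $e$ and $f$.
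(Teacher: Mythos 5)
Your proof is correct and follows essentially the same route as the paper: contradiction via \autoref{sepbridge} to get a separating bridge $g$, \autoref{every_bridge} to place $g$ in some $I_m$, and the observation that $e$ and $f$ then fall in different elements of $K_m$, contradicting $e,f\in k_m$. The extra detail you supply (the transfer of the separation from $C$ to $C'$, and that enlarging the removed vertex set to all of $J_m$ preserves the separation) is exactly what the paper's terser argument leaves implicit.
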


\begin{proof}
Suppose for a contradiction that there is some ray $(k_n)$ whose partition class includes 2 different edges $e$ and $f$. Then by \autoref{sepbridge} there is a bridge $g$ whose endvertices separate $e$ from $f$ in $C$. By \autoref{every_bridge}, $g$ lies in some $I_n$. But then $e$ and $f$ lie in different elements of $K_n$, so can't both lie in $k_n$, which is the desired contradiction.
\end{proof}

For any element $k$ of $K_n$ with $n \geq 1$, the {\em parent} $p(k)$ is the unique element of $K_{n-1}$
including $k$.

An element $k$ of $K_n$ with $n \geq 2$ is \emph{good}
if no bridge in $I_n$ has endvertices in two different components of $\overline{p(p(k))\sm k}$.
Note that $\overline{p(p(k))\sm k}$ has at most two components.
Note that if $k$ is not good, there have to be two vertices in different components of 
not only $\overline{p(p(k))\sm k}$ but also $p(p(k))\sm k$. 

\begin{lem}\label{good_son}
Every node of the Partition-tree has at most one good child.
\end{lem}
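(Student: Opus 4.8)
The plan is to argue by contradiction. Suppose some node $q\in K_{n-1}$ has two distinct good children $k_1,k_2\in K_n$, and write $P:=p(q)\in K_{n-2}$ for their common grandparent. The closures $\overline q\subseteq\overline P$ are pseudo-arcs (as in the proof of \autoref{few_sons}), so I would fix the linear order $\leqq$ on $\overline P$ and assume $k_1\leqq k_2$. Since $k_1$ and $k_2$ are distinct components of $q\setminus J_n$, the right endvertex $w$ of $k_1$ is a vertex of $J_n$ lying strictly inside $q$ -- it is not the right endvertex of $q$, as $k_2$ lies to its right -- so $w$ is interior to $P$ and $w\notin J_{n-1}$. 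The goal is then to exhibit a single bridge incident with $w$ that straddles one of $k_1,k_2$ inside $P$, contradicting goodness.

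Since $w\in J_n$ it is an endvertex of some bridge $b\in I_n$; let $w'$ be the other endvertex. The crux is to locate $w'$. First, $w'$ avoids the interior of $q$: as $b\in I_n$ and $w\notin J_{n-1}$, the definition of $I_n$ forces either $w'\in J_{n-1}$ or $w'$ to lie in a component of $K_{n-1}$ other than $q$, and in both cases $w'$ misses the interior of $q$. Second -- the step I expect to be the main obstacle -- $w'$ cannot leave the grandparent: were $w'$ on the complementary arc $C'\setminus\overline P$, or were $w'$ an endvertex of $P$ (hence in $J_{n-2}$), then $b$ would already meet the defining condition of $I_{n-1}$ (its endvertices in distinct components of $K_{n-2}$, or one of them in $J_{n-2}$), so both endvertices of $b$, in particular $w$, would lie in $J_{n-1}$, contradicting $w\notin J_{n-1}$. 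As the interior of $P$ is disjoint from $J_{n-2}$, these two facts pin $w'$ down to the interior of $P$ but outside the interior of $q$.

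To finish I would use that $\overline q$ is a connected sub-arc of $\overline P$: a point of the interior of $P$ lying off the interior of $q$ must sit weakly to the left of $q$ or weakly to the right of $q$ in $\leqq$. If $w'$ is at or to the left of the left endvertex of $q$, then (as $w'$ is interior to $P$) neither $q$ nor $k_1$ reaches the left end of $P$, so $\overline{P\setminus k_1}$ genuinely has two components, with $w'$ in the left one and $w$ (the right endvertex of $k_1$) in the right one; thus $b$ witnesses that $k_1$ is not good. Symmetrically, if $w'$ is at or to the right of the right endvertex of $q$, then $\overline{P\setminus k_2}$ has two components containing $w$ and $w'$ respectively, so $b$ witnesses that $k_2$ is not good. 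Either alternative contradicts the choice of $k_1,k_2$, so $q$ has at most one good child.

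The principal difficulty, as flagged, is controlling the far endvertex $w'$: the naive hope that the bridge creating the division point $w$ straddles a child \emph{within} $q$ fails outright, since $w'$ lies outside $q$. The argument only closes because goodness is measured in the grandparent $P$, and because the recursive definition of the $I_n$ forbids $w'$ from escaping $\overline P$ or hitting its endpoints -- such a bridge would have been caught one level earlier, forcing $w\in J_{n-1}$. A secondary subtlety, handled automatically by the same analysis, is the degenerate possibility $q=P$: there the interior of $P$ equals the interior of $q$, leaving no room for $w'$ and hence no interior division point $w$ at all, so $q$ cannot have two children in the first place.
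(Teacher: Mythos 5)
Your argument has exactly the same skeleton as the paper's proof: given two good children of a common parent, find a bridge with one endvertex at a vertex of $J_n$ lying between them; observe that the other endvertex cannot have been ``caught'' a level earlier (your escape-to-the-grandparent step is precisely the paper's ``since $i\notin J_n$, $e\notin I_n$, so the other endvertex must lie in $p(x)$''); and then split according to whether that endvertex lies weakly left or weakly right of the parent, contradicting goodness of the left or the right child respectively. Up to a shift of indices, the two proofs coincide.

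There is, however, one step that you assert without justification and that is not automatic: that the right endvertex $w$ of $k_1$ belongs to $J_n$. A component of $C'\setminus J_n$ need not have its boundary points in $J_n$ when $J_n$ is infinite and not closed --- a component can contain its own supremum. (Remove the set $\{\tfrac12+\tfrac1k : k\geq 2\}$ from $[0,1]$: then $[0,\tfrac12]$ is a component whose right endpoint is not in the removed set.) Only the boundary points \emph{not} belonging to the component are forced into $J_n$ by maximality; if $J_n$ accumulated at $w$ from the right, your $w$ would lie in $k_1$ and carry no bridge, and the argument would stall. In this paper the claim can in fact be rescued, because the proof of \autoref{few_sons} (which precedes this lemma and rests on \autoref{cir_eli_single}) shows that $J_n$ is order-discrete in the interior of the parent, so no such accumulation at an interior point is possible --- but your parenthetical citation of \autoref{few_sons} covers only the fact that closures of nodes are pseudo-arcs, not this. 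The cheaper repair, and what the paper actually does, is to take not the endpoint of $k_1$ but an \emph{arbitrary} vertex $i\in J_n$ weakly between $k_1$ and $k_2$: such a vertex exists simply because otherwise the convex hull of $k_1\cup k_2$ would avoid $J_n$ and the two components would be one. Every property you use of $w$ --- it lies in $J_n$, strictly inside $q$ (hence outside $J_{n-1}$), weakly to the right of $k_1$ and not in $k_1$, weakly to the left of $k_2$ and not in $k_2$ --- holds verbatim for such an $i$, so with this substitution the rest of your proof goes through unchanged.
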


\begin{proof}
Suppose for a contradiction that some $x\in K_n$ with $n \geq 1$ has two good children
$y_1$ and $y_2$. Since they are different, there is an element $i$ of $J_{n+1}$ separating them, and a bridge $e$ in $I_{n+1}$ of which $i$ is an endvertex. Since $i \not \in J_n$, $e \not \in I_{n}$ and so the other endvertex $j$ of $e$ must lie in $p(x) = p(p(y_1)) = p(p(y_2))$.
Now the two endvertices of $e$  have to be in different components of $\overline{p(p(y_1))\sm y_1}$
or $\overline{p(p(y_2))\sm y_2}$. Hence $y_1$ and $y_2$ cannot both be good at the same time, a contradiction.
\end{proof}

\begin{lem}\label{all_but_fin_good}
Let $(k_n)$ be a ray whose partition class includes an edge.
Then all but finitely many nodes on it are good.
\end{lem}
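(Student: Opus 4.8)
The plan is to argue by contradiction: assuming infinitely many nodes of the ray are not good, I will extract an infinite family of bridges crossing a fixed cut of the pseudo-circle $C'$ and apply \autoref{cir_eli_single} inside $G' = G/s$, contradicting \autoref{minor_consistency}. Call a node \emph{bad} if it is not good. Let $h$ be the unique edge in the partition class $P = \bigcap_n k_n$; it exists by hypothesis and is unique by \autoref{part_leq_1}. Recall $C'$ induces $M' = M/s$. Cutting $C'$ at an endvertex of $e_0$ linearises $k_0$, so that all $k_n$ become nested intervals containing $h$; write $l = \inf P$ and $r = \sup P$, so that the interior of $h$ lies strictly between $l$ and $r$ and no edge of $o$ other than $h$ meets $[l,r]$.

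Suppose for contradiction that infinitely many nodes $k_n$ are bad. Since $p(p(k_n)) = k_{n-2}$, the remark following the definition of goodness gives, for each bad $k_n$, a bridge $e_n \in I_n$ whose endvertices $u_n, w_n$ lie in two distinct components of the open set $k_{n-2} \sm k_n$; say $u_n$ lies in the component $L_n$ to the left of $k_n$ and $w_n$ in the component $R_n$ to its right. Because $P \se k_n \se k_{n-2}$, we have $u_n < \inf k_n \le l$ and $w_n > \sup k_n \ge r$, so $u_n$ is strictly left of $h$ and $w_n$ strictly right of $h$ on $C'$; in particular each bridge $e_n$ crosses $h$.

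The key extraction step is to show that for any fixed vertex $v$ there are only finitely many bad $n$ with $u_n = v$. Indeed $u_n = v$ forces $v \in L_n \se k_{n-2} \sm k_n$, so $v \in k_{n-2}$ while $v \notin k_n$; if this held for two bad indices $n < m$ with $m \ge n+2$, then by nestedness $v \in k_{m-2} \se k_n$, contradicting $v \notin k_n$. Hence the left endvertices $u_n$ realise infinitely many distinct values as $n$ ranges over the bad nodes, and in particular infinitely many of the bridges $e_n$ are pairwise distinct. To finish, take $r_1 = h$ and $r_2$ any edge of $o \sm \{h\}$ (which exists since $o$ has at least two edges), and let $S_1, S_2$ be the two arcs of $C' \sm \{r_1, r_2\}$, with $S_2$ the arc meeting the region just left of $h$. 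Since $\inf k_{n-2}$ has supremum $l$ and $\sup k_{n-2}$ has infimum $r$ (by the definition of $P$), for all sufficiently large bad $n$ the endvertices satisfy $u_n \in S_2$ and $w_n \in S_1$. Then $W = \{e_n\}$ is an infinite set of bridges, each with one endvertex in $S_1$ and one in $S_2$, and with pairwise distinct endvertices in $S_2$; so \autoref{cir_eli_single} shows that $G'$ does not induce a matroid, contradicting \autoref{minor_consistency}. Therefore only finitely many nodes are bad.

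The main obstacle is precisely the extraction/distinctness step. A priori the witnessing bridges could pile up on a single vertex, producing only an infinite parallel class of bridges joining the two ends of $P$ — a configuration that \autoref{cir_eli_single} cannot detect, since it demands distinct endvertices on one side. The short nestedness argument above is exactly what rules this out, showing each vertex serves as a left endvertex for only finitely many bad nodes and thereby forcing genuinely distinct endvertices in $S_2$. The remaining points — linearising the cyclic order on $C'$, verifying that $u_n, w_n$ fall into the correct open arcs for large $n$, and checking that the $e_n$ are distinct — are routine order-theoretic bookkeeping.
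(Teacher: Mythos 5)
Your proof is correct and takes essentially the same approach as the paper's: both argue by contradiction, extract from infinitely many bad nodes witnessing bridges whose attachment vertices must be distinct (your nestedness argument that $u_n = v$ forces $v \in k_{n-2} \setminus k_n$ is the paper's observation that the endvertices of $e_n$ lie in $J_n$ but not $J_{n-2}$, exploited there by thinning to a set of non-consecutive indices), and then apply \autoref{cir_eli_single} with $r_1$ the partition-class edge to contradict \autoref{minor_consistency}. The only cosmetic difference is the choice of $r_2$: the paper takes $f \in C \setminus k_0$, so the side conditions hold for all bad $n$ at once, whereas your arbitrary $r_2 \in o - h$ needs your (correct) tail argument that $r_2$ eventually leaves $k_{n-2}$, by \autoref{part_leq_1}.
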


\begin{proof}
Let $e$ be the edge in the partition class of this ray.
Let $f$ be any edge of $C\sm k_0$.

Suppose for a contradiction that there is an infinite set $N$ of natural numbers such that
$k_n$ is not good for any $n\in N$. Let $N'$ be an infinite subset of $N$ that does not contain 0, 1 or any pair of consecutive natural numbers.
For each $n \in N'$, pick a bridge $e_n$ in $I_n$ with endvertices 
in both components of $\overline{p(p(k_n))\sm k_n}$, which is possible since $k_n$ is not good. The endvertices of $e_n$ are in $J_n$ but not $J_{n-2}$ and so we cannot find $m\neq n \in N'$ such that $e_m$ and $e_n$ share an endvertex. 
Applying \autoref{cir_eli_single} with $r_1=e$, $r_2=f$ and $W = \{e_n| n \in \Nbb\}$ yields that $G'$ does not induce a matroid, a contradiction. This completes the proof.
\end{proof}

\begin{proof}[Proof of \autoref{countcir}.]
For each edge of $C$ there is a unique ray whose partition class contains that edge. 
By \autoref{all_but_fin_good}, we can find a first node on that ray such that it and all successive nodes are good. This gives a map from the edges of $C$ to the nodes of the partition tree. By \autoref{good_son} and \autoref{part_leq_1}, this map is injective. By \autoref{few_sons} the partition tree has only countably many nodes.
\end{proof}

Having proved \autoref{countcir}, we conclude with a slight strengthening.

\begin{cor}\label{p-arcs_count2}
Let $G$ be a graph-like space inducing a $3$-connected matroid $M$ such that any vertex of $G$ lies on 
a pseudo-circle. Then every pseudo-arc of $G$ is countable.
\end{cor}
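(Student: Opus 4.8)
The plan is to show that the edge set of any pseudo-arc $P$ is a countable union of topological circuits, each of which is countable by \autoref{countcir}. We may assume $P$ is nontrivial, so its endvertices $v$ and $w$ are distinct. Since $M$ is $3$-connected it is in particular connected, and by hypothesis both $v$ and $w$ lie on pseudo-circles, so \autoref{cor8_2} supplies a pseudo-circle $D$ through $v$ and $w$. As $D$ is a topological circuit, \autoref{countcir} tells us that $D$ has only countably many edges.

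Next I would cut $P$ along $D$. The intersection $P\cap D$ is closed, and any edge of $P$ whose interior meets $D$ must coincide with a $D$-edge (a point is interior to a unique edge), so the edges of $P$ lying on $D$ form a subset of $E(D)$ and are countable. Each component of $P\sm D$ is a nontrivial sub-pseudo-arc $P_i$ of $P$ whose two endpoints $a_i,b_i$ are vertices on $D$: a limit of $P\sm D$ lying in $D$ cannot be an interior point of an edge (such a point has a whole neighbourhood inside $D$), hence is a vertex of $D$, and since $P_i$ is nontrivial we have $a_i\neq b_i$. Choosing one of the two arcs $a_iDb_i$ of $D$ between these feet, the pseudo-arcs $P_i$ and $a_iDb_i$ meet only in $a_i$ and $b_i$, so by \autoref{arc_cir} their union is a pseudo-circle, and by \autoref{countcir} this pseudo-circle, and hence $P_i$, is countable. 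Thus $E(P)=\big(E(P)\cap E(D)\big)\cup\bigcup_i E(P_i)$, and it remains only to bound the number of components.

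To count them, observe that distinct components have distinct feet-pairs, since a pseudo-arc has a unique sub-arc between any two of its points, and that each span $a_iDb_i$, being a nonempty open piece of $D$, contains at least one edge of $D$. Hence, assigning to each component some edge of $D$ in its span, it suffices to prove that every edge of $D$ lies in only countably many spans; then the set of components is a countable union of countable sets. The crux, and the step I expect to be the main obstacle, is to rule out that some edge $f$ of $D$ lies in uncountably many spans. Here I would argue as in the proof of \autoref{countcir}: removing $f$ turns $D$ into a pseudo-line on which the feet of these uncountably many bridges lie on opposite sides of $f$; choosing a second edge $r$ of $D$ suitably far to one side, infinitely many of the bridges $P_i$ then have one foot in each component $S_1,S_2$ of $D\sm\{f,r\}$. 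For each such bridge, pick a single edge crossing the bond that separates $S_1$ from $S_2$; since distinct bridges are vertex-disjoint, these crossing edges have pairwise distinct end-vertices in $S_2$, and form an infinite set of edges from $S_1$ to $S_2$, contradicting \autoref{cir_eli_single}. This contradiction shows each edge of $D$ lies in countably many spans, completing the proof. The delicate points in this last step are the point-set bookkeeping on the pseudo-circle $D$ (selecting $r$ so that infinitely many bridges genuinely cross) and verifying that the chosen crossing edges satisfy the hypotheses of \autoref{cir_eli_single}.
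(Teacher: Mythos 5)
Your decomposition half is sound and is genuinely different from the paper's route: cutting $P$ along a pseudo-circle $D$ through its endvertices (\autoref{cor8_2}), noting the feet of each component of $P\sm D$ are vertices of $D$, closing each bridge $\overline{P_i}$ with an arc $a_iDb_i$ into a pseudo-circle via \autoref{arc_cir}, and applying \autoref{countcir} twice -- all of that works. The genuine gap is in your ``crux'' step. From ``$f$ lies in uncountably many spans $a_iDb_i$'' you cannot conclude that the feet lie ``on opposite sides of $f$'': removing a single edge does not disconnect $D$, so the statement that the chosen arc passes through $f$ places no constraint whatever on where $a_i$ and $b_i$ sit. And this matters, because although $E(D)$ is countable, $D$ may have uncountably many vertices, so uncountably many pairwise disjoint bridges could a priori have feet clustered inside a tiny arc avoiding $f$, with the long way around (through $f$) chosen as span each time; then no choice of a second edge $r$ produces infinitely many bridges with one foot in each component of $D\sm\{f,r\}$, and your contradiction never fires. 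The repair is to assign to each component a \emph{pair} of edges, one from each of the two arcs between its feet (every arc between distinct vertices of $D$ contains an edge); since $E(D)$ is countable there are only countably many pairs, so uncountably many components would share a pair $\{f,g\}$, and those bridges really do have one foot in each component of $D\sm\{f,g\}$.

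Even granting that, \autoref{cir_eli_single} does not apply as you invoke it: it requires an infinite set of \emph{single edges of $G$} each with one endvertex in $S_1$ and the other in $S_2$, with distinct endvertices in $S_2$. Your bridges are pseudo-arcs, and ``the single edge crossing the bond'' $b$ with $b\cap E(D)=\{f,g\}$ (from \autoref{o_cap_b}) has its endvertices off $D$ in general, not in $S_1$ and $S_2$. You would instead have to redo the proof of \autoref{cir_eli_single} at the level of arcs: contract $E(S_1)$ so that all $S_1$-feet are identified into a single vertex $v$ of the pseudo-circle $\overline{S_2}+f+g$ (using \autoref{cont_cut} and \autoref{id_bond}), pass to an infinite subfamily of bridges with pairwise distinct feet (each vertex of $P$ is a foot of at most two components), verify that the union of the bridge arcs contains no pseudo-circle, and then apply \autoref{cir_eli_3} together with \autoref{minor_consistency}. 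That is a substantive missing chunk, though it can be carried out. It is worth knowing that the paper sidesteps all of this topological bookkeeping: it takes a countable $x$-$y$-pseudo-arc $P$ on the circle given by \autoref{cor8_2} and \autoref{countcir}, extends $E(Q)$ to a base $s$, and counts via the bipartite incidence ``$p\in b_q$'': each $q\in E(Q)$ has a neighbour on $P$ because its fundamental cocircuit separates the two ends of $Q$, while each $p\in E(P)$ has only countably many neighbours by \autoref{fdt} and \autoref{countcir}. Your geometric route is salvageable, but the matroidal counting argument is considerably shorter and avoids both of the delicate points you correctly flagged.
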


\begin{proof}
Let $Q$ be some pseudo-arc in $G$, and let $x$ and $y$ be its endvertices.
By \autoref{cor8_2}, there is a pseudo-circle
containing the vertices $x$ and $y$. By \autoref{countcir}, this pseudo-circle has only countably many edges, and so it includes an $x$-$y$-pseudo-arc $P$ with only countably many edges. 

We shall now compare the pseudo-arc $P$ with $Q$ in order to show that $Q$ also has only countably many edges.
For this, we extend the edge set $E(Q)$ of $Q$ to a base $s$ of $M$.
Now we define a bipartite graph whose left bipartition class is $E(Q)$, and whose right class is $E(P)$. We join $q\in E(Q)$ to $p\in E(P)$ if the fundamental cocircuit of $q$ with respect to $s$ contains $p$. For any $p \in E(P) \cap s$, $p$ can have at most one neighbour in $Q$. For any $p \in E(P) \setminus s$, by \autoref{fdt} any neighbour of $p$ must lie in the fundamental circuit of $p$ in $s$. Since this circuit is countable by \autoref{countcir},
each $p$ has only countably many neighbours.

Next, we show that each $q\in E(Q)$ has at least one neighbour $p\in E(P)$. 
Let $b_q$ be the fundamental cocircuit of $q$.
Let $v$ and $w$ be the two endvertices of $q$. Then
$v$ and $w$ are on different sides of $b_q$, and since $Qs$ and $tQ$ both avoid
$b_q$, the vertices $v$ and $w$ are on different sides of $b_q$.
Hence the $x$-$y$-pseudo-arc $P$ has to meet $b_q$ in some edge $p$.

Having shown that each $q\in E(Q)$ has at least one neighbour in $E(P)$,
we are now in a position to show that $E(Q)$ is countable.
For this we write $E(Q)$ as a countable union of countable sets, namely the neighbourhoods of the $p \in E(P)$. This completes the proof.
\end{proof}

\section{Planar graph-like spaces}\label{sec10}

A nice consequence of \autoref{countcir} is the following.

\begin{cor}\label{planar}
Let $M$ be a tame $3$-connected matroid such that all finite minors are planar.
Then $E(M)$ is at most countable.
\end{cor}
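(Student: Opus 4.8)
The goal is to bound $|E(M)|$ for a tame, $3$-connected matroid all of whose finite minors are planar. The key prior result is \autoref{countcir}, which tells us that every topological circuit in a graph-like space inducing a $3$-connected matroid is countable; by \autoref{main}, the hypotheses (tame, all finite minors graphic — planar implies graphic) guarantee that $M$ is induced by some graph-like space $G$. So the strategy is to first realize $M$ inside such a $G$, then exploit the countability of circuits to control the whole ground set, using connectivity to show that $E(M)$ cannot be spread across uncountably many ``independent'' pieces.

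\textbf{Step 1: Reduce to the connected/spanning-circuit picture.} Since $M$ is $3$-connected it is in particular connected, so by \autoref{swseq} any two edges lie in a common circuit, and more importantly the whole ground set is tied together. First I would fix a base $s$ of $M$ and argue that it suffices to bound $|s|$ and then bound the number of non-basis edges. For the non-basis edges $e \notin s$: each has a fundamental circuit $o_e \subseteq s + e$, which is a circuit of the $3$-connected matroid $M$, hence countable by \autoref{countcir}. The map $e \mapsto o_e - e$ sends each non-basis edge to a countable subset of $s$, but crucially (by \autoref{fdt}) distinct edges can share these circuits, so this alone does not immediately bound things — this is where the planarity must be invoked.

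\textbf{Step 2: Bound the base via a spanning pseudo-arc structure.} The heart of the argument should be to show $s$ itself is countable. Contract $o$ for a fixed spanning circuit as in \autoref{sec9}, or better, pick a pseudo-circle $C$ with edge set a circuit $o$ and run the bridge analysis. By \autoref{countcir} the circuit $o$ is countable; the bridges (edges of $E(M') \setminus o$ that are not loops, in the notation of \autoref{sec9}) attach to $C'$, and the partition-tree argument of \autoref{few_sons}, \autoref{every_bridge} and \autoref{good_son} already shows that the tree controlling the bridges has only countably many nodes, so there are only countably many bridges. Iterating this — peeling off a countable spanning circuit, then the countably many bridges attach to a countable structure, and so on — should let me build up the entire ground set from countably many countable layers, giving $|E(M)| \leq \aleph_0$. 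The planarity hypothesis enters precisely to guarantee that $M$ is graphic (so \autoref{main} applies and $G$ exists) and to rule out the uncountable-circuit obstruction; alternatively planarity could be used to control the cocircuit structure, since in the dual picture a planar matroid's circuits and cocircuits are symmetric.

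\textbf{The main obstacle.} The delicate point is going from ``every single circuit is countable'' and ``there are countably many bridges at each stage'' to ``the entire ground set is countable.'' Countability is not obviously preserved under the kind of transfinite accumulation that could occur if the bridge-of-bridge-of-bridge process fails to terminate at a countable stage. The natural fix is to show the process stabilizes: use \autoref{p-arcs_count2} (every pseudo-arc is countable when every vertex lies on a pseudo-circle) together with \autoref{path_con} (any two vertices joined by a pseudo-arc in a base) to argue that a single countable spanning pseudo-arc or pseudo-circle, plus its countably many directly-attached bridges, already spans $M$ — so no transfinite iteration is needed. Establishing that a countable spanning substructure exists, and that everything else attaches to it within one countable layer, is the crux; I expect this to follow by combining the partition-tree countability of \autoref{few_sons} with the $3$-connectivity (which prevents the ground set from splitting off into separated uncountable chunks, via the $2$-separation contradictions already used in \autoref{every_bridge}).
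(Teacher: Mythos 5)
There is a genuine gap, and it sits exactly where you flagged uncertainty. Your main route uses planarity only to deduce graphicness, and then tries to prove countability of $E(M)$ by the bridge/partition-tree machinery of \autoref{sec9} together with a countable spanning substructure. But any argument of that shape would prove that every tame $3$-connected \emph{graphic} matroid has countable ground set, and that is false: the finite-cycle matroid of the complete graph on $\omega_1$ vertices is finitary (hence tame), $3$-connected, has all finite minors graphic, and all its circuits are finite, so \autoref{countcir} gives no obstruction --- yet its ground set, and every one of its bases, is uncountable. In particular the goal of your Step 2 (``show $s$ itself is countable'') and the hoped-for ``single countable spanning pseudo-arc or pseudo-circle'' are unattainable from the ingredients you list. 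A second concrete misstep: the partition tree having countably many nodes (\autoref{few_sons}, \autoref{good_son}) does \emph{not} yield countably many bridges; uncountably many bridges may share attachment vertices, nothing in \autoref{sec9} bounds the cardinality of the sets $I_n$, and the tree is only ever used to inject the edges of $C$ into its nodes.

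What is missing is the dual use of planarity, which you mention only as an undeveloped aside but which is in fact the whole point, since planarity is the self-dual strengthening of graphicness. Because duals of planar matroids are planar, $M^*$ is again tame and $3$-connected with all finite minors graphic; so by \autoref{main} it is induced by a graph-like space, and \autoref{countcir} applied to $M^*$ makes every \emph{cocircuit} of $M$ countable as well. With both circuits and cocircuits countable, the paper finishes with a short counting argument that sidesteps all of your transfinite-accumulation worries: fix an edge $e$; by induction on $n$ there are only countably many switching sequences of length $n$ starting at $e$, since each one-step extension must lie in a countable fundamental circuit or fundamental cocircuit; and by \autoref{swseq} every edge of the connected matroid $M$ is the last term of some switching sequence starting at $e$. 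Hence $E(M)$ is a countable union of countable sets. Your proposal cannot be repaired along its stated main line; the correct proof runs entirely through the dual observation you set aside.
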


\begin{proof}
Let $e$ be some edge.
By \autoref{swseq}, there is a switching sequence from $e$ to any other edge.
Hence it suffices to show that there are only countably many different switching sequences starting at $e$. We show by induction that there are only countably many switching sequences of length $n$ for each $n$. The case $n=1$ is obvious.
The first $n-1$ elements of a switching sequence of length $n$ 
form a switching sequence of length {$n-1$}. On the other hand, there are only countably many ways to extend a given switching sequence of length $n-1$ to one of length $n$ since all  circuits and cocircuits of $M$ are countable by \autoref{countcir}.
Hence there are only countably many  switching sequences of length $n$.
This completes the proof.
\end{proof}

This raises the question how to embed the graph-like space constructed from a tame matroid all of whose finite minors are planar in the plane.
However, we shall construct such a matroid that does not seem to be embeddable in this sense the plane.
Let $N$ be the matroid whose circuits are the edge sets of topological circles in the topological space depicted in \autoref{not-planar}. We omit the proof that this gives a matroid - it can be found in \cite{C:algebraic_psi-matroids}. However, much of the complication of this matroid was introduced to make it 3-connected, and if we do not require 3-connectedness then it is easy to construct other simpler examples sharing the essential property of this matroid: it is tame and all finite minors are planar, but the topology of the graph-like space it induces has no countable basis of neighbourhoods for the vertex at the apex, so it cannot be embedded into the plane.

   \begin{figure} [htpb]   
\begin{center}
   	  \includegraphics[width=10cm]{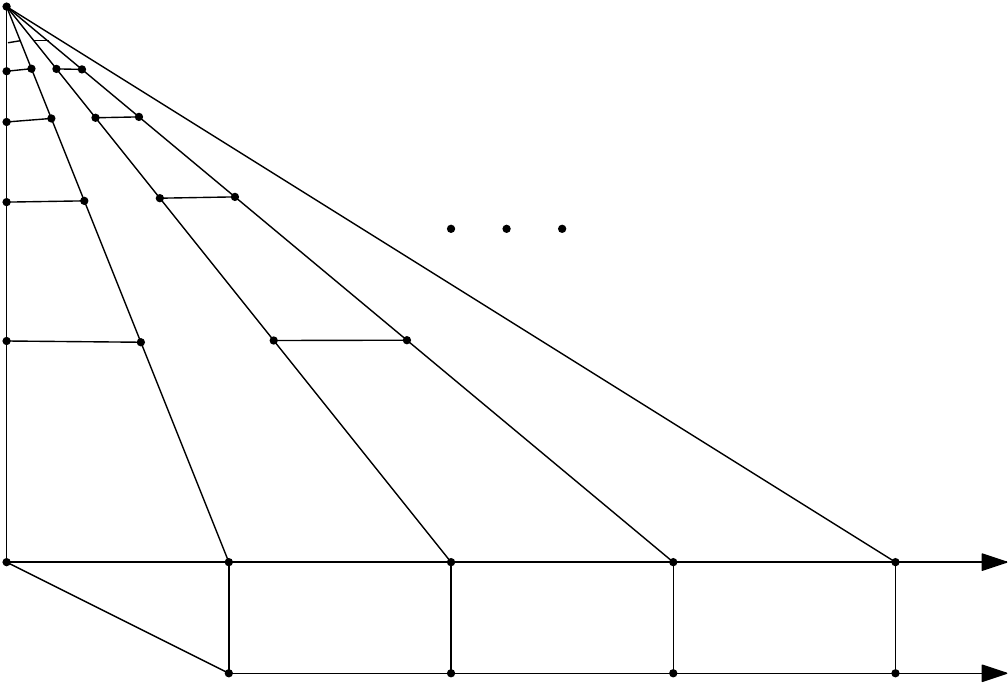}
   	  \caption{The matroid $N$.}
   	  \label{not-planar}
\end{center}
   \end{figure}

\bibliographystyle{plain}
\bibliography{literatur-1}

\end{document}